\newlength{\depthofsumsign}
\let\I\@undefined
\newbox\shell
\newcommand{\dia}[2]{\setbox\shell=\hbox{\begin{picture}(180,120)(-90,-60)#1
\put(-90,-60){\makebox(180,120)[b]{\large #2}}\end{picture}}\dimen0=\ht
\shell\multiply\dimen0by7\divide\dimen0by16\raise-\dimen0\box\shell\hfill}
\DeclareSymbolFont{operators}{OT1}{txr}{m}{n}
\def\operator@font{\mathgroup\symoperators}
\DeclareSymbolFont{italic}{OT1}{txr}{m}{it}
\DeclareSymbolFontAlphabet{\mathrm}{operators}
\DeclareMathAlphabet{\mathbf}{OT1}{txr}{bx}{n}
\DeclareMathAlphabet{\mathit}{OT1}{txr}{m}{it}
\SetMathAlphabet{\mathit}{bold}{OT1}{txr}{bx}{it}
\DeclareSymbolFont{letters}{OML}{txmi}{m}{it}
\DeclareSymbolFont{lettersA}{U}{txmia}{m}{it}
\DeclareSymbolFontAlphabet{\mathfrak}{lettersA}
\DeclareSymbolFont{symbols}{OMS}{txsy}{m}{n}
\DeclareMathOperator{\IKM}{\mathbf{IKM}}
\DeclareMathOperator{\IvKM}{\mathbf{\widetilde IKM}}
\DeclareMathOperator{\IKvM}{\mathbf{I\widetilde KM}}
\DeclareMathOperator{\IpKM}{\mathbf{\acute IKM}}
\DeclareMathOperator{\IKpM}{\mathbf{I\acute KM}}
\DeclareMathOperator{\Span}{span}
\DeclareMathOperator{\Adj}{adj}
\DeclareMathOperator{\D}{d}
\DeclareMathOperator{\I}{Im}
\DeclareMathOperator{\R}{Re}
\DeclareMathOperator{\cof}{cof}
\DeclareMathOperator{\Kl}{Kl}
\DeclareMathOperator{\Sym}{Sym}
\def\XXint#1#2#3{{\setbox0=\hbox{$#1{#2#3}{\int}$}
     \vcenter{\hbox{$#2#3$}}\kern-.5\wd0}}
\def\RotSymbol#1#2#3{\rotatebox[origin=c]{#1}{$#2#3$}}
\def\isosc{\mathpalette{\resizebox{.1em}{.52em}{|}\hspace{-.11em}\RotSymbol{-180}}\angle}
\def\eor{\hfill$ \square$}
\theoremstyle{plain}
\newtheorem{theorem}{Theorem}[section]
\newtheorem{proposition}[theorem]{Proposition}
\newtheorem{lemma}[theorem]{Lemma}
\newtheorem{corollary}[theorem]{Corollary}
\newenvironment{remark}[1][Remark]{\begin{trivlist}
\item[\hskip \labelsep {\bfseries #1}]}{\end{trivlist}}
\theoremstyle{definition}
\numberwithin{equation}{section}
\newcommand\reallywidehat[1]{%
\savestack{\tmpbox}{\stretchto{%
  \scaleto{%
    \scalerel*[\widthof{\ensuremath{#1}}]{\kern-.6pt\bigwedge\kern-.6pt}%
    {\rule[-\textheight/2]{1ex}{\textheight}}
  }{\textheight}%
}{0.5ex}}%
\stackon[1pt]{#1}{\tmpbox}%
}
\begin{document}

\pagenumbering{roman}
\selectlanguage{english}
\title[Broadhurst--Roberts quadratic relations]{Wro\'nskian algebra and Broadhurst--Roberts quadratic relations}
\author[Yajun Zhou]{Yajun Zhou}
\address{Program in Applied and Computational Mathematics (PACM), Princeton University, Princeton, NJ 08544} \email{yajunz@math.princeton.edu}\curraddr{ \textsc{Academy of Advanced Interdisciplinary Studies (AAIS), Peking University, Beijing 100871, P. R. China}}\email{yajun.zhou.1982@pku.edu.cn}

\date{\today}
\thanks{\textit{Keywords}:   Bessel moments,  Feynman integrals, Wro\'nskian matrices, Bernoulli numbers\\\indent\textit{Subject Classification (AMS 2020)}: 11B68, 33C10, 34M35 (Primary)    81T18, 81T40 (Secondary)\\\indent* This research was supported in part  by the Applied Mathematics Program within the Department of Energy
(DOE) Office of Advanced Scientific Computing Research (ASCR) as part of the Collaboratory on
Mathematics for Mesoscopic Modeling of Materials (CM4).
}

\dedicatory{\begin{center}\footnotesize{\textit{To the memory of Dr.\ W} \emph{(1986--2020)}}\end{center}}
\begin{abstract}
     Through algebraic manipulations on Wro\'nskian matrices whose entries are reducible to Bessel moments, we present a new analytic proof of the quadratic relations conjectured by Broadhurst and Roberts, along with some generalizations. In the Wro\'nskian framework, we reinterpret the de Rham intersection pairing through polynomial coefficients in Vanhove's differential operators, and compute the Betti intersection pairing via linear sum rules for on-shell and off-shell Feynman diagrams at threshold momenta.  From the ideal generated by Broadhurst--Roberts quadratic relations, we derive new non-linear sum rules for on-shell Feynman diagrams, including an infinite family of determinant identities that are compatible with Deligne's conjectures for critical values of motivic   $L$-functions. \end{abstract}
\maketitle

\tableofcontents

\clearpage

\pagenumbering{arabic}

\section{Introduction}
In perturbative  quantum field theory, one frequently encounters Feynman diagrams in the form of on-shell Bessel moments  \cite{LaportaRemiddi1996,Groote2007,BBBG2008,Laporta2008,Laporta:2017okg,Broadhurst2016,BroadhurstMellit2016,Broadhurst2017Paris,BroadhurstRoberts2018,BroadhurstRoberts2019,FresanSabbahYu2020b}  \begin{align} \IKM(a,b;n)\colonequals \int_0^\infty[I_0(t)]^a[K_0(t)]^{b}t^{n}\D t\end{align}for certain non-negative integers $a,b,n\in\mathbb Z_{\geq0}$, where $ I_0(t)=\frac{1}{\pi}\int_0^\pi e^{t\cos\theta}\D\theta$ and $K_0(t)=\int_0^\infty e^{-t\cosh u}\D u $ are modified Bessel functions of zeroth order. These Bessel moments satisfy a wealth of algebraic relations, which had been discovered by  Broadhurst and collaborators through numerical experimentations \cite{Broadhurst2016,BroadhurstMellit2016,Broadhurst2017Paris,BroadhurstRoberts2018,BroadhurstRoberts2019}, before becoming formally proven truths  \cite{HB1,Zhou2017WEF,Zhou2017BMdet,Zhou2018ExpoDESY,Zhou2019BMdimQ,FresanSabbahYu2020b}.

The Broadhurst--Mellit determinant formulae \cite[Conjectures 4 and 7]{Broadhurst2016} were among the earliest verified \cite{Zhou2017BMdet} non-linear algebraic relations for Bessel moments:
the $k\times k$
matrices  $\mathbf M_k\colonequals (\IKM(a,2k+1-a;2b-1))_{1\leq a,b\leq k}$  and  $\mathbf N_k\colonequals (\IKM(a,2k+2-a;2b-1))_{1\leq a,b\leq k}$ have determinants\begin{align}
\det\mathbf M_k=\prod_{j=1}^k\frac{(2j)^{k-j}\pi^j}{\sqrt{(2j+1)^{2j+1}}},\quad \det\mathbf N_k=\frac{2\pi^{(k+1)^2/2}}{\Gamma((k+1)/2)}\prod_{j=1}^{k+1}\frac{(2j-1)^{k+1-j}}{(2j)^j},\label{eq:detMdetN}
\end{align}  where $ \Gamma(x)\colonequals \int_0^\infty t^{x-1}e^{-t}\D t$ for $x>0$.

These determinant formulae are refined by the Broadhurst--Roberts quadratic relations   $\mathbf B_{m}^{}=\mathbf P_{m}^{}\mathbf D_{m}^{}\mathbf P^{\mathrm T} _m$ \cite{Broadhurst2017Paris,BroadhurstRoberts2018,BroadhurstRoberts2019} for the period matrix $ \mathbf P_{m}\colonequals((-1)^{b-1}\pi^{a-\frac{m+3}{2}}\IKM(a,m+2-a;2b-1))_{1\leq a,b\leq\left\lfloor \frac{m+1}2 \right\rfloor}$, in which the Betti matrix  $\mathbf B_{m}\in\mathbb Q^{\left\lfloor \frac{m+1}{2} \right\rfloor\times \left\lfloor \frac{m+1}{2} \right\rfloor}$ and the de Rham matrix $\mathbf D_{m}\in\mathbb Q^{\left\lfloor \frac{m+1}{2} \right\rfloor\times \left\lfloor \frac{m+1}{2} \right\rfloor}$ are filled with rational numbers.  Fres\'an, Sabbah and  Yu have recently proved an equivalent form  \cite[Theorem 1.4]{FresanSabbahYu2020b} of the Broadhurst--Roberts quadratic relations, with an algebro-geometric \textit{tour de force} that draws on their profound insights into   certain exponential motives \cite{FresanJossen2020}, whose associated Galois representations are intimately related to moments of Kloosterman sums \cite{FresanSabbahYu2018,FresanSabbahYu2020a}  (analogs of  on-shell Bessel moments in positive characterisitic).

In this paper, we present an analytic proof of the Broadhurst--Roberts quadratic relations, by revisiting the Wro\'nskian method in our verification of the Broadhurst--Mellit determinant formulae  \cite{Zhou2017BMdet}. Our Wro\'nskian-based proof also produces a new family of quadratic relations, which embody the Broadhurst--Roberts relations  for on-shell Bessel moments as special cases.

In \S\ref{sec:W_alg}, writing   $ D^n
f(u)\colonequals \D^nf(u)/\D u^n$ for $n\in\mathbb Z_{>0}$, and $ D^0f(u)\colonequals f(u)$, we will consider a Wro\'nskian matrix $\mathbf W[f_1(u),\dots,f_m(u)]=(D^{i-1}f_j(u))_{1\leq i,j\leq m}$  whose entries are reducible to on-shell Bessel moments (more precisely, entries of $ \mathbf P_{m}$ and $ \mathbf P_{m-2}$) as $u\to 1^-$, and whose determinant $ \det \mathbf W\colonequals W[f_1(u),\dots,f_m(u)]=\det(D^{i-1}f_j(u))_{1\leq i,j\leq m}$ is explicitly computable through our proof of the Broadhurst--Mellit determinant formulae  \cite{Zhou2017BMdet}. In \S\ref{sec:AdjW_alg}, our key observation is the adjugate relation $ \mathbf W^\star \mathbf W=\mathbf W\mathbf W^\star=W\mathbf I_m$, where the  matrix adjugate $ \mathbf W^\star\colonequals \Adj \mathbf W$ of $ \mathbf W[f_1(u),\dots,f_m(u)]$ factorizes into  $  \mathbf W^\star=\mathbf S\mathbf W^{\mathrm T}\mathbf U$, with $ \mathbf S\in\mathbb Q^{m\times m}$ and $ \mathbf U/W\in\mathbb Q(u)^{m\times m}$. In \S\ref{sec:BR_quad}, we explore the  $ u\to1^- $ limit of the matrix equation $ \mathbf S\mathbf W^{\mathrm T}\mathbf U\mathbf W=W\mathbf I_m$, and establish quadratic relations among   on-shell Bessel moments.

For a generic parameter $u\in(0,1)$,
the individual elements of the  Wro\'nskian matrix $\mathbf W[f_1(u),\dots,f_m(u)]$ in \S\ref{sec:W_alg} will be representable by  off-shell Bessel moments \cite[Definition 2.1]{Zhou2017BMdet}\begin{align}
\IvKM(a+1,b;n|u)\colonequals {}&\int_0^\infty I_{0}(\sqrt{u}t)[I_0(t)]^a[K_0(t)]^{b}t^{n}\D t,\label{eq:IvKM_defn}\\\IKvM(a,b+1;n|u)\colonequals {}&\int_0^\infty K_{0}(\sqrt{u}t)[I_0(t)]^a[K_0(t)]^{b}t^{n}\D t,\label{eq:IKvM_defn}
\end{align}  and  differentiated Bessel moments
 \cite[Definition 2.3]{Zhou2017BMdet}\begin{align}
\IpKM(a+1,b;n|u)\colonequals {}&+\int_0^\infty I_1(\sqrt{u}t)[I_0(t)]^a[K_0(t)]^bt^{n+1}\D t,\label{eq:IpKM_defn}\\\IKpM(a,b+1;n|u)\colonequals {}&-\int_0^\infty K_1(\sqrt{u}t)[I_0(t)]^a[K_0(t)]^bt^{n+1}\D t,\label{eq:IKpM_defn}
\end{align}
where $I_{1}(x)=+\D I_0(x)/\D x $ and $ K_1(x)=-\D K_0(x)/\D x$. Accordingly, our efforts in  \S\ref{sec:AdjW_alg} will culminate in a quadratic  relation involving these generalized Bessel moments and some explicitly computable rational functions (in lieu of the rational numbers in the Betti matrix  $\mathbf B$ and the de Rham matrix $\mathbf D$ of the Broadhurst--Roberts formulation),  as  stated below.
\begin{theorem}[Wro\'nskian algebra]\label{thm:W_alg}For $m\in\mathbb Z_{>0},u\in\big(0,\frac{5-3(-1)^{m}}{2}\big)$,  normalize off-shell Feynman diagrams as \begin{align}
\mathcal F_{m,j}(u)\colonequals \begin{cases}\frac{\IvKM(1,m+1;1|u)+(m+1)\IKvM(1,m+1;1|u)}{(m+2)\pi^{(m+1)/2}}, & j=1, \\
\frac{\IvKM(j,m+2-j;1|u)}{\pi^{(m+1)/2+1-j}}, & j\in\mathbb Z\cap\left[2,\left\lfloor \frac{m}{2} \right\rfloor+1\right] ,\\
\frac{\IKvM(j-\left\lfloor \frac{m}{2} \right\rfloor,m+2+\left\lfloor \frac{m}{2} \right\rfloor-j;1|u)}{\pi^{(m+1)/2+\left\lfloor m/2 \right\rfloor+1-j}}, & j\in\mathbb Z\cap\left[\left\lfloor \frac{m}{2} \right\rfloor+2,m\right],  \\
\end{cases}\label{eq:Fmj_defn}
\end{align}and define a polynomial\begin{align}
\mathcal L_{m}(u)\colonequals u^{\left\lfloor\frac {m+1}2\right\rfloor}\prod_{\substack{n\in\mathbb Z\cap[1,m+1]\\n\equiv m+1\hspace{-.5em}\pmod2}}(u-n^2),\label{eq:Lm_u_defn}
\end{align}where    $ \lfloor x\rfloor$ is the greatest integer less than or equal to $x$. For $ u\in\big(0,\frac{5-3(-1)^{m}}{2}\big)$, the Wro\'nskian matrix $ \mathbf W_m(u)\colonequals\mathbf W[\mathcal F_{m,1}(u),\dots,\mathcal F_{m,m}(u)]$ has determinant \begin{align}
W_{m}(u)\colonequals\det \mathbf W_m(u)=\frac{m+1}{m+2}\frac{\smash[t]{(-1)^{\left\lfloor m/4\right\rfloor }}}{2^{ m (m-1)/2}}\frac{[(m+1)!]^m}{\prod _{n=1}^{m+1} n^n}\frac{1}{|\mathcal L_{m}(u)|^{m/2}}\label{eq:detWm}
\end{align}and satisfies a quadratic relation \begin{align}\mathbf
W_m^{}(u)\mathbf S^{}_m\mathbf W^{\mathrm T}_m(u)=\frac{[\mathbf V_m(u)]^{-1}}{|\mathcal L_{m}(u)|}
\end{align}for matrices $\mathbf V_m^{\vphantom{\mathrm T}}(u)=(-1)^{m+1}\mathbf V_m^{{\mathrm T}}(u)\in\mathbb Q(u)^{m\times m}$ and $ \mathbf S_m^{\vphantom{\mathrm  T}}=(-1)^{m+1}\mathbf S_m^{{\mathrm  T}}\in\mathbb Q^{m\times m}$.

Explicitly, the Vanhove matrix  $\mathbf V_m^{\vphantom{\mathrm T}}(u)\in\mathbb Q(u)^{m\times m}$ is upper-left triangular, with entries\footnote{Throughout this article, we write  ${ n\choose m}\colonequals \frac{n!}{m!(n-m)!}$ for the standard binomial coefficient with $ n\in\mathbb Z_{\geq0},m\in\mathbb Z\cap[0,n]$. By convention, we also define $ {n\choose k}=0$ when $ n\in\mathbb Z_{\geq0},k\in\mathbb Z\smallsetminus[0,n]$. Empty sums like $ \sum_{n=A}^{B}(\cdots)$ with $ A>B$ are treated as zero.}\begin{align}
( \mathbf V^{\vphantom{\mathrm T}}_{m}
(u))_{a,b}\colonequals \sum_{n=a+b-1}^{m}(-1)^{a+n+m+1}{n-a\choose b-1}\frac{D^{n-a-b+1}\ell_{m,n}(u)}{\mathcal L_{m}(u)},\label{eq:Vmat_defn}
\end{align}where the  polynomials\footnote{These polynomials can be constructed by a finite enumeration over integer parameters $ \alpha_1,\dots,\alpha_n$ in \eqref{eq:VVpoly_defn}. See Table \ref{tab:Vanhove_Lm} for a partial list of the Vanhove--Verrill polynomials. }  $ \ell _{m,j}(u)\in\mathbb Z[u]$ are defined through the Vanhove--Verrill relation (cf.\ \cite[\S3.6.1]{Vanhove2019}  and \cite[\S3.6.1]{Vanhove2020}):\begin{align}
\sum_{j=0}^m\ell_{m,j}(u)D^j\colonequals {}&u\widehat \vartheta^m+\sum_{k=1}^{\left\lfloor\frac {m\vphantom{b}}2\right\rfloor+1}u^{1-k}\sum_{\substack{n\in\mathbb Z\cap[1,k],\alpha_{k+1}=1\\\alpha_{n}\in\mathbb Z\cap[1,m+1]\\\alpha_{n+1}\leq \alpha_{n}-2}}(\widehat \vartheta-k)^{m+1-\alpha_1}\prod_{n=1}^{k}\alpha_{n}(\alpha_{n}-m-2)(\widehat \vartheta-k+n)^{\alpha_{n}-\alpha_{n+1}},\label{eq:VVpoly_defn}
\end{align}for  $(\widehat \vartheta f)(u)=D^1[uf(u)]$; the matrix  $ \mathbf S_m^{\vphantom{\mathrm  T}}\in\mathbb Q^{m\times m}$ can be partitioned into $ {{\mathbf S_m}}=\left(\begin{smallmatrix}\mathbf S_m^A &\mathbf  S_m^B \\
\mathbf S_m^C &\mathbf  S_m^D \\
\end{smallmatrix}\right)$ for\footnote{ The Kronecker delta is defined as  $ \delta_{m,n}=1$ when $ m=n$ and $ \delta_{m,n}=0$ when $ m\neq n$. } \begin{align}(\mathbf S_m^A)_{a,b}\colonequals{}&\frac{1+(-1)^{a+b+m+1}}{2^{1-m}} \frac{[1+(m+1) \delta _{a,1}][1+(m+1)\delta_{b,1}]{\sum\limits _{s=1}^{\left\lfloor \frac m2\right\rfloor+2-a} (-1)^s} \binom{m+2-a}{\left\lfloor \frac {m+1}2\right\rfloor+s} \binom{\left\lfloor \frac m2\right\rfloor+1-s}{b-1}}{(-1)^{\left\lfloor\frac{a\vphantom{b}}{2}\right\rfloor+\left\lfloor \frac{b}{2}-\frac{1+(-1)^{m}}{4}\right\rfloor -\left\lfloor \frac m2\right\rfloor-1}(a-1)!(m+2-a)!} \label{eq:SAmat_defn}\end{align}
where  $ a,b\in\mathbb Z\cap\left[1,\left\lfloor\frac{m}2\right\rfloor+1\right]$, \begin{align}\begin{split}&(\mathbf S_m^B)_{a,b'}=(-1)^{m+1}(\mathbf S_m^C)_{b',a}\\\colonequals{}&\frac{1+(-1)^{a+b'+m}}{2^{1-m}} \frac{[1+(m+1) \delta _{a,1}]{\sum\limits _{s=1}^{\left\lfloor \frac m2\right\rfloor+2-a} (-1)^s }\binom{m+2-a}{\left\lfloor \frac {m+1}2\right\rfloor+s}\left[\binom{\left\lfloor \frac m2\right\rfloor+1-s}{b'+1}+(-1)^{b'} \binom{\left\lfloor \frac {m+1}2\right\rfloor+s}{b'+1}\right]}{(-1)^{(a-1)m+\left\lfloor\frac{a\vphantom{b}}{2}-\frac{1+(-1)^{m}}{4}\right\rfloor+\left\lfloor \frac{\smash{b'}\vphantom1}{2}+\frac{1+(-1)^{m}}{4}\right\rfloor -\left\lfloor \frac {m+1}2\right\rfloor}(a-1)!(m+2-a)!}\end{split}\label{eq:SBmat_defn}\end{align}where $ a\in\mathbb Z\cap\left[1,\left\lfloor\frac{m}2\right\rfloor+1\right],b'\in\mathbb Z\cap\left[ 1, \left\lfloor \frac{ m-1}2\right\rfloor\right]$, and \begin{align}(\mathbf  S_m^D )_{a',b'}\colonequals{}&\frac{1+(-1)^{m+1}}{2}\frac{(-4)^{\left\lfloor \frac {m+1}2\right\rfloor-1}}{\left(\left\lfloor \frac {m+1}2\right\rfloor!\right)^{2}}\frac{[1+(-1)^{a'}][1+(-1)^{b'}]}{(-1)^{\left\lfloor \frac{\smash{a'}\vphantom1}{2}\right\rfloor+\left\lfloor \frac{\smash{b'}\vphantom1}{2}\right\rfloor }}\binom{\left\lfloor \frac {m+1}2\right\rfloor}{a'+1}\binom{\left\lfloor \frac {m+1}2\right\rfloor}{b'+1}\label{eq:SDmat_defn}\end{align} where $a', b'\in\mathbb Z\cap\left[ 1, \left\lfloor \frac {m-1}2\right\rfloor\right]$.
 \end{theorem}

\begin{table}\caption{A partial list of Vanhove's operators $ \widetilde L_{m}=\sum_{j=0}^m\ell_{m,j}(u)D^j$\label{tab:Vanhove_Lm}}
\begin{scriptsize}\begin{tabular}{c|l}\hline\hline
$m$ & $\vphantom{\frac\int1}\widetilde L_m$ \\\hline
$1$ & $(u-4) uD^{1}+(u-2)D^{0}$\\
$2$ & $(u-9) (u-1) uD^{2}+(3 u^2-20 u+9)D^{1}+(u-3)D^{0}$\\
$3$ & $(u-16) (u-4) u^2D^{3}+6 u (u^2-15 u+32)D^{2}+(7 u^2-68 u+64)D^{1}+(u-4)D^{0}$\\
$4$ & $(u-25) (u-9) (u-1) u^2D^{4}+2 u (5 u^3-140 u^2+777 u-450)D^{3}+(25 u^3-518 u^2+1839 u-450)D^{2}+(3 u-5) (5 u-57)D^{1}+(u-5)D^{0}$\\
$5$ & $(u-36) (u-16) (u-4) u^3D^{5}+5 u^2 (3 u^3-140 u^2+1568 u-3456)D^{4}+u (65 u^3-2408 u^2+19836 u-27648)D^{3}$\\&$+6 (15 u^3-406 u^2+2078 u-1152)D^{2}+(31 u^2-516 u+1020)D^{1}+(u-6)D^{0}$\\
$6$ & $(u-49) (u-25) (u-9) (u-1) u^3D^{6}+3 u^2 (7 u^4-504 u^3+9870 u^2-51664 u+33075)D^{5}$\\&$+2 u (70 u^4-4179 u^3+64749 u^2-247993 u+99225)D^{4}+2 (175 u^4-8232 u^3+92394 u^2-217012 u+33075)D^{3}$\\&$+(301 u^3-10218 u^2+69561 u-62020)D^{2}+3 (21 u^2-428 u+1071)D^{1}+(u-7)D^{0}$\\
$7$ & $(u-64) (u-36) (u-16) (u-4) u^4D^{7}+28 u^3 (u^4-105 u^3+3276 u^2-32800 u+73728)D^{6}$\\&$+2 u^2 (133 u^4-11928 u^3+307950 u^2-2432512 u+3981312)D^{5}+30 u (35 u^4-2590 u^3+52420 u^2-297856 u+294912)D^{4}$\\&$+3 (567 u^4-32820 u^3+474944 u^2-1616896 u+589824)D^{3}+6 (161 u^3-6645 u^2+57096 u-70912)D^{2}$\\&$+(127 u^2-3076 u+9312)D^{1}+(u-8)D^{0}$\\
$8$ & $(u-81) (u-49) (u-25) (u-9) (u-1) u^4D^{8}+12 u^3 (3 u^5-440 u^4+20482 u^3-345620 u^2+1762035 u-1190700)D^{7}$\\&$+6 u^2 (77 u^5-9856 u^4+391358 u^3-5455164 u^2+21884733 u-10716300)D^{6}$\\&$+6 u (441 u^5-48048 u^4+1569150 u^3-17075168 u^2+48951657 u-14288400)D^{5}$\\&$+3 (2317 u^5-207295 u^4+5264259 u^3-40651797 u^2+68940108 u-7144200)D^{4}$\\&$+6 (1295 u^4-89980 u^3+1614657 u^2-7325514 u+4441878)D^{3}+(3025 u^3-148126 u^2+1547883 u-2474334)D^{2}$\\&$+(255 u^2-7172 u+25461)D^{1}+(u-9)D^{0}$\\\hline\hline
\end{tabular}\end{scriptsize}

\end{table}

 In \S\ref{sec:BR_quad}, we push the theorem above to the      $u\to 1^{-}$ limit, in the following form.
\begin{theorem}[Broadhurst--Roberts quadratic relations]\label{thm:BRquad}Define the  Bessel--Feynman moments $ \mathcal F^\ell_{m,j}(u)$ by replacing all the occurrences of $ \IvKM(\cdot,\cdot;1|u)$ [resp.\ $ \IKvM(\cdot,\cdot;1|u)$] in \eqref{eq:Fmj_defn} with  $ \IvKM(\cdot,\cdot;2\ell-1|u)$ [resp.\ $ \IKvM(\cdot,\cdot;2\ell-1|u)$]. For each $ m\in\mathbb Z_{>1}$, the Broadhurst--Roberts period matrix  $\mathbf P_m\colonequals\linebreak((-1)^{b-1}\mathcal F^b_{m,a}(1))_{1\leq a,b\leq \left\lfloor \frac{m+1}{2} \right\rfloor}$ satisfies a quadratic relation \begin{align}
\mathbf P^{}_m\mathbf D^{}_m\mathbf P^{\mathrm T}_m=\mathbf B_m,
\end{align}for  de Rham matrix $ \mathbf D_m^{}=(-1)^{m+1}\mathbf D_m^{\mathrm T}\in\mathbb Q^{\left\lfloor \frac{m+1}{2} \right\rfloor\times \left\lfloor \frac{m+1}{2} \right\rfloor}$ and   Betti matrix $ \mathbf B_m^{}=(-1)^{m+1}\mathbf B_m^{\mathrm T}\in\mathbb Q^{\left\lfloor \frac{m+1}{2} \right\rfloor\times \left\lfloor \frac{m+1}{2} \right\rfloor}$.

Explicitly, the de Rham matrix $ \mathbf D_{m-2}^{}$ is upper-left triangular, with entries\begin{align}
(\mathbf D_{m-2})_{a,b}\colonequals\lim_{u\to1^-} \frac{|\mathcal L_{m}(u)|((\pmb{\boldsymbol \beta}_{m}^{-1\vphantom{\mathrm T}})^{\mathrm T} \mathbf V^{\vphantom{\mathrm T}}_{m}
(u)\pmb{\boldsymbol \beta}_{m}^{-1})_{a+\left\lfloor \frac{m-1}{2}\right\rfloor+1,b+\left\lfloor \frac{m-1}{2}\right\rfloor+1}}{4(m+2)(-1)^{\left\lfloor \frac{m-1}{2}\right\rfloor}}\label{eq:deRhamD}
\end{align}  computable via the Vanhove matrix $ \mathbf V_m(u)$ [see \eqref{eq:Vmat_defn}] and  the Bessel matrix  $ \pmb{\boldsymbol \beta}_{m}\in\mathbb Z^{m\times m}$  for\begin{align}(\pmb{\boldsymbol \beta}_{m})_{a,b}\colonequals {}&\begin{cases}(-4)^{a-1}\frac{(a-1)!}{(b-a)!}{a-1\choose b-a}, & a\in\mathbb Z\cap\left[1,\left\lfloor\frac{m+1}2\right\rfloor\right], \\[5pt]
(-4)^{a-\left\lfloor\frac{m+1}2\right\rfloor-1}\frac{2\left(a-\left\lfloor\frac{m+1}2\right\rfloor-1\right)!}{\left(b-a+\left\lfloor\frac{m+1}2\right\rfloor-1\right)!}  \Big(\begin{smallmatrix}a-\left\lfloor\frac{m+1}2\right\rfloor \\ b-a+\left\lfloor\frac{m+1}2\right\rfloor\end{smallmatrix}\Big) ,& a\in\mathbb Z\cap\left[\left\lfloor\frac{m+1}2\right\rfloor+1,m\right]; \\
\end{cases}\label{eq:Bessel_mat_beta}
\end{align}the Betti matrix  $ \mathbf B_{m-2}^{}$ satisfies \begin{align}\begin{split}(\mathbf B_{m-2})_{a,b}\colonequals{}&\frac{(\mathbf S^{-1}_{m})_{a+\left\lfloor \frac{m}{2}\right\rfloor+1,b+\left\lfloor \frac{m}{2}\right\rfloor+1}-(\mathbf S^{-1}_{m})_{a+1,b+\left\lfloor \frac{m}{2}\right\rfloor+1}-(\mathbf S^{-1}_{m})_{a+\left\lfloor \frac{m}{2}\right\rfloor+1,b+1}}{4(m+2)(-1)^{\left\lfloor \frac{m-1}{2}\right\rfloor}}\\={}&\frac{(-1)^{a+1}}{2^{m+1}}\frac{   (m-a)! (m-b)!}{(m+1-a-b)!}\frac{\mathsf B_{m+1-a-b}}{ (-1)^{\left\lfloor \frac{m-a-b}{2} \right\rfloor }},
\end{split}\label{eq:BettiB}\end{align}where $ {{\mathbf S_m}}^{}=\left(\begin{smallmatrix}\mathbf S_m^A &\mathbf  S_m^B \\
\mathbf S_m^C &\mathbf  S_m^D \\
\end{smallmatrix}\right)=(-1)^{m+1}{\mathbf S_m^{\mathrm T}}$ is defined through \eqref{eq:SAmat_defn}--\eqref{eq:SDmat_defn}, and the Bernoulli numbers  $\mathsf B_n,n\in\mathbb Z_{\geq0}$ are generated by\begin{align}
\frac{t}{e^{t}-1}\colonequals \sum_{n=0}^\infty \mathsf B_n\frac{t^n}{n!},\quad|t|<\pi.\label{eq:Bn_defn}
\end{align} \end{theorem}

We note that Lee and Pomeransky  \cite[Appendix C]{LeePomeranksy2019} have recently produced some quadratic relations
for off-shell Bessel moments, but not for differentiated Bessel moments. There are technical difficulties when one attempts to push their results to the on-shell limit \cite[\S6.4]{LeePomeranksy2019} and recover Broadhurst--Roberts quadratic relations therefrom.

For each $ m\in\mathbb Z_{>1}$, our Betti matrix $\mathbf  B_m $ agrees with the original formulation of Broadhurst--Roberts \cite[(5.4)--(5.6)]{BroadhurstRoberts2018}, up to    reordering of rows and columns. Instead of considering  Betti intersection pairing that inspired the conjecture of  Broadhurst--Roberts \cite{Broadhurst2017Paris,BroadhurstRoberts2018,BroadhurstRoberts2019} and the proof by Fres\'an--Sabbah--Yu \cite[Theorem 1.4]{FresanSabbahYu2020b}, we generate $\mathbf  B_m $ by inverting
   $  \mathbf  S_{m+2}  $. The latter matrix $  \mathbf  S_{m+2}  $ draws on explicit sum rules (with rational coefficients) for on-shell and off-shell Bessel moments, which extend our earlier works \cite{HB1,Zhou2018ExpoDESY,Zhou2018LaportaSunrise} on the same topic.

In addition to elucidating the relations between Betti matrices and sum rules for individual Bessel moments in \S\ref{subsec:onshell_quad}, we will also establish a family of  sum rules for minor determinants of $ {\mathbf M}_k,k\in\mathbb{Z}_{>1} $ in \S\ref{subsec:app_Feynman} (see Proposition \ref{prop:det_refl} for details):\begin{align}\begin{split}&
\det(\IKM(2a,2(k-a)+1;2b-1))_{1\leq a,b\leq\left\lfloor \frac{k}{2} \right\rfloor}\\={}&\frac{1}{\pi^{\left\lfloor \frac{k+1}{2}\right\rfloor}}\frac{\sqrt{[(2k+1)!!]^{2-(-1)^k}}}{2^{\left\lfloor \frac{k}{2}\right\rfloor}(k-1)!!(k!!)^{1-(-1)^{k}}}\det(\IKM(2a-1,2(k-a+1);2b-1))_{1\leq a,b\leq\left\lfloor \frac{k+1}{2} \right\rfloor}.\label{eq:det_refl}\end{split}
\end{align}Such sum rules may have some arithmetic interest: Deligne's conjecture \cite{Deligne1979} for the motive $ \mathrm M_{2k+1}$ (as defined in \cite[\S7.c]{FresanSabbahYu2020b}) predicts that  the determinants on both sides of \eqref{eq:det_refl} are $ \mathbb Q^\times\pi^{\mathbb Z}$ multiples of critical $ L$-values, while these  critical values for $ L(\mathrm M_{2k+1},s)$ are related to each other by a functional equation \cite[Theorem 1.2]{FresanSabbahYu2018}. No such analogs appear to exist in minor determinants of $ {\mathbf N}_k ,k\in\mathbb{Z}_{>1}$, for both algebraic and analytic reasons.

Our algorithmic construction of the de Rham matrix   $\mathbf D_m $  is based on derivatives of the polynomials $ \ell _{m+2,j}(u)$ at $u=1$, which are readily adaptable to systems different from Bessel moments (see \S\ref{subsec:cofactor}). Within the same Wro\'nskian framework, one can establish several other equivalent representations [see \eqref{eq:deRhamD_1}--\eqref{eq:deRhamd_2} for details] of    $\mathbf D_m $, along with new algebraic relations [see \eqref{eq:M0M_quad}  for details] connecting the period matrix   $\mathbf P_m$  to \begin{align}
\smash{\underset{^\circ }{\mathbf{P}}}_m\colonequals {}&\left( \frac{(-1)^{b-1}}{\pi^{\frac{m+3}{2}-a}}\int_{0}^{\infty}\left\{\frac{[I_{0}(t)]^{a}\log t}{[K_{0}(t)]^{a}}-\frac{\delta_ {a,1}}{m+2}\right\}[K_0(t)]^{m+2}t^{2b-1}\D t\right)_{1\leq a,b\leq\left\lfloor \frac{m+1}2 \right\rfloor}.\label{eq:P0_defn}
\end{align} Outside our Wro\'nskian framework, there are still other approaches to  de Rham representations, such as the  Broadhurst--Roberts   combinatorial recursion formulae \cite[(5.7)--(5.12)]{BroadhurstRoberts2018} and the Fres\'an--Sabbah--Yu self-duality pairing \cite[\S3.b]{FresanSabbahYu2020b}. A direct proof of  the equivalence between Wro\'nskian and non-Wro\'nskian approaches to de Rham matrices  will not appear in this article. Nevertheless, we will outline general principles that may reconcile our representation of de Rham matrices with those of Broadhurst--Roberts and  Fres\'an--Sabbah--Yu, in \S\ref{subsec:deRham_alt}.

\noindent{\textbf{Acknowledgments}} The author thanks David Broadhurst and Javier Fres\'an for their  feedback on the first draft. The author is grateful to Pierre Vanhove for his invitation to \textit{S\'eminaire Motifs et int\'egrales de Feynman}. The author is indebted to two anonymous referees for their suggestions on improving the presentation of this manuscript.

\section{$\mathbf W$ algebra\label{sec:W_alg}}
In \S\ref{subsec:W_Vanhove}, we  open by recapitulating our previous work \cite{Zhou2017BMdet} on  the Broadhurst--Mellit determinant formulae, with an extended discussion on the ordinary differential equations satisfied by off-shell Bessel moments $ \mathcal F_{m,j}(u)$.

In \S\ref{subsec:BM_W}, we elucidate the relation between  Wro\'nskian  matrices and on-shell Bessel moments. We will achieve this by studying the   Wro\'nskian  matrices $ \mathbf W_m(u)\colonequals\mathbf W[\mathcal F_{m,1}(u),\dots,\mathcal F_{m,m}(u)]$ for $m\in\mathbb Z_{>0}$ in the $ u\to1^-$ limit, similar to  procedures reported in
\cite{Zhou2017BMdet}.

In \S\ref{subsec:sum_rules}, we establish, among other things, several families of $\mathbb Q $-linear dependence relations concerning on-shell and off-shell Bessel moments, when the parameter $ u$ is a perfect square. This sets the stage for the analysis of the  adjugate matrices $ \mathbf W^\star_{m}(u)$ for $m\in\mathbb Z_{>0}$, which will unfold in \S\ref{sec:AdjW_alg}.\subsection{Wro\'nskian matrices and Vanhove operators\label{subsec:W_Vanhove}}For $ m\in\mathbb Z_{>0}$, consider the functions $ \mathcal F_{m,j}(u),j\in\mathbb Z\cap[1,m]$ introduced in \eqref{eq:Fmj_defn}, which are normalized versions of the notations in  \cite[Definition 4.1]{Zhou2017BMdet}. In our previous work  \cite[\S4]{Zhou2017BMdet}, we have manipulated
Wro\'nskian matrices
 $ \mathbf W_m(u)\colonequals\mathbf W[\mathcal F_{m,1}(u),\dots,\mathcal F_{m,m}(u)]$ of all sizes, and have shown that their determinants $ W_{m}(u)\colonequals\det \mathbf W_m(u)$  [see \eqref{eq:detWm}] satisfy (cf.~\cite[(4.36) and (4.62)]{Zhou2017BMdet}):\begin{align}
W_{2k-1}(u)={}&\frac{(-1)^{\frac{(k-1)(k-2)}{2}}k[\Gamma(k/2)]^{2}}{\pi ^{k^{2}}u^{k(2k-1)/2}(2k+1)}\frac{(\det\mathbf N_{k-1})^2}{2^{(k-1)(2k-1)+1}}\prod _{j=1}^k\left[\frac{(2j)^2}{(2j)^2-u}\right]^{k-\frac{1}{2}},\quad \forall u\in(0,4),\\W_{2k}(u)={}&\frac{(-1)^{\frac{k(k-1)}{2}}}{\pi^{k(k+1)}}\frac{(2k+1)(\det \mathbf M_k)^{2}}{2^{(2k-1)k+1}u^{k^{2}}(k+1)}\prod _{j=1}^{k+1}\left[\frac{(2j-1)^{2}}{(2j-1)^2-u}\right]^{k},\quad \forall u\in(0,1),
\end{align}for each $ k\in\mathbb Z_{\geq2}$.  In particular, these collateral results for the Broadhurst--Mellit determinant formulae  \eqref{eq:detMdetN} tell us that the  Wro\'nskian matrix $\mathbf W_m(u) $  is non-singular for $  u\in\big(0,\frac{5-3(-1)^{m}}{2}\big)$, hence the linear independence of the functions $ \mathcal F_{m,1}(u),\dots,\mathcal F_{m,m}(u)$  on the same interval.

Furthermore, we may deduce from \cite[Lemma 4.2]{Zhou2017BMdet} that the aforementioned linearly independent functions form complete sets of fundamental solutions to certain ordinary differential equations:\footnote{Using the same techniques of integration by parts as in  \cite[Lemma 4.2]{Zhou2017BMdet}, one can show that $ \widetilde L_m$ annihilates $\IKvM(a,m+2-a;1|u) $ for $ a\in\mathbb Z\cap[2,m+1]$, when $u$ belongs to a certain open interval that ensures the  convergence of the respective integral. } $ C^{\infty}\big(0,\linebreak\tfrac{5-3(-1)^{m}}{2}\big)\cap\ker\widetilde L_{m}=\Span_{\mathbb C}\big\{\mathcal F_{m,j}(u),u\in\big(0,\tfrac{5-3(-1)^{m}}{2}\big)\big|j\in\mathbb Z\cap[1,m]\big\}$,  where Vanhove's operator  $ \widetilde L_{m}$ assumes the following form (omitting lower order terms)  \cite[(9.11)--(9.12)]{Vanhove2014Survey}:\begin{align}
\widetilde L_m={}&\mathcal L_m(u)D^{m}+\frac{m}{2}\frac{\D \mathcal L_m(u)}{\D u}D^{m-1}+\cdots,\label{eq:VL_m}
\end{align}with the polynomial $ \mathcal L_m(u)$ defined in \eqref{eq:Lm_u_defn}. The leading and sub-leading terms in \eqref{eq:VL_m}  explain why $ W_{m}(u)$  is a constant multiple of $ |\mathcal L_m(u)|^{-m/2}$ [see \eqref{eq:detWm}], according to standard manipulations of Wro\'nskian determinants \cite[\S5.2, p.~119]{Ince1956ODE}.

In general,  Vanhove's operator $ \widetilde L_m,m\in\mathbb Z_{>0}$ takes the form $ \widetilde L_m=\sum_{j=0}^m\ell_{m,j}(u)D^j $, where $ \ell _{m,j}(u)\in\mathbb Z[u]$. These polynomials $\ell_{m,j}(u)$ can be deduced from the closed-form representations  in the next proposition, which in turn is amplified and extrapolated from  \cite[\S3.6.1]{Vanhove2019}  and \cite[\S3.6.1]{Vanhove2020}.\begin{proposition}[Vanhove's operators in explicit form]\label{prop:Vanhove_Verrill}For each $ m\in\mathbb Z_{>0}$, we normalize Verrill's polynomials    as $\mathscr  V_{m,0}(t)=t^m$ and\begin{align}\begin{split}
\mathscr V_{m,k}(t)\colonequals {}&t^{m}\sum_{\substack{n\in\mathbb Z\cap[1,k]\\\alpha_{n}\in\mathbb Z\cap[1,m+1]\\\alpha_{n+1}\leq \alpha_{n}-2}}\prod_{n=1}^{k}\alpha_{n}(\alpha_{n}-m-2)\left(\frac{t-n}{t-n+1} \right)^{\alpha_{n}-1}\\={}&\sum_{\substack{n\in\mathbb Z\cap[1,k],\alpha_{k+1}=1\\\alpha_{n}\in\mathbb Z\cap[1,m+1]\\\alpha_{n+1}\leq \alpha_{n}-2}}t^{m+1-\alpha_1}\prod_{n=1}^{k}\alpha_{n}(\alpha_{n}-m-2)(t-n)^{\alpha_{n}-\alpha_{n+1}}\in\mathbb Z[t]\end{split}
\end{align}for $ k\in\mathbb Z_{>0}$, with empty sums being  $0$ and empty products being $1$.

If we define $(\widehat \vartheta f)(u)=D^1[uf(u)]$ and write   $ \lfloor x\rfloor$ for the greatest integer less than or equal to $x$, then we have the following explicit representation for Vanhove's differential operator of $m$-th order:\begin{align}
\widetilde L_m=(-1)^{m}\sum_{k=0}^{\left\lfloor\frac{ m\vphantom{b}}2\right\rfloor+1}u^{1-k}\mathscr V_{m,k}(k-\widehat \vartheta ).\label{eq:Vanhove_Verrill}
\end{align} \end{proposition}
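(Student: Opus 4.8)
The plan is to read the claim as an equality of $m$-th order differential operators and to show that the operator $\mathcal{L}_m:=(-1)^{m}\sum_{k=0}^{\lfloor m/2\rfloor+1}u^{1-k}\mathscr V_{m,k}(k-\widehat\vartheta)$ coincides with Vanhove's operator $\widetilde L_m$. Since $\mu^{1}_{k,1}(u),\dots,\mu^{1}_{k,2k-1}(u)$ (resp.\ $\nu^{1}_{k,1}(u),\dots,\nu^{1}_{k,2k}(u)$) form a complete set of fundamental solutions of $\widetilde L_{2k-1}$ (resp.\ $\widetilde L_{2k}$), any differential operator of order $\le m$ that annihilates all of these functions automatically has order exactly $m$ and the same kernel as $\widetilde L_m$, hence equals $\widetilde L_m$ up to a left factor in $\mathbb C(u)$. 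It therefore suffices to verify that \textit{(i)} $\mathcal L_m$ has order $m$, with leading coefficient $\mathfrak m_{2k-1}(u)$ when $m=2k-1$ and $\mathfrak n_{2k}(u)$ when $m=2k$ (this pins the left factor to $1$, and then polynomiality and integrality of the $\ell_{m,j}(u)$ are inherited from $\widetilde L_m$); and \textit{(ii)} $\mathcal L_m$ annihilates each $\mu^{1}_{k,j}(u)$, resp.\ $\nu^{1}_{k,j}(u)$. The computational engine for both is the elementary fact that $\widehat\vartheta=D\circ(u\,\cdot\,)$ multiplies $u^{p}$ by $p+1$ (and more generally $u^{j}F(\widehat\vartheta)=F(\widehat\vartheta-j)u^{j}$ for any polynomial $F$), so that on a formal series $y(u)=\sum_{p}a_{p}u^{p}$,
\begin{align}
\mathcal L_m\,y=(-1)^{m}\sum_{i}u^{i}\sum_{k\ge0}\mathscr V_{m,k}(-i)\,a_{i+k-1},
\end{align}
and hence $\mathcal L_m y=0$ is equivalent to the scalar recursion $\sum_{k\ge0}\mathscr V_{m,k}(-i)\,a_{i+k-1}=0$ for all $i$.

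For \textit{(i)}, the coefficient of $D^{m}$ in $\mathcal L_m$ equals $\sum_{k}\gamma_{m,k}\,u^{m+1-k}$, where $\gamma_{m,k}=\sum_{(\alpha_1,\dots,\alpha_k)}\prod_{n=1}^{k}\alpha_{n}(\alpha_{n}-m-2)$ is the top coefficient of $\mathscr V_{m,k}(t)$; I would evaluate this finite sum explicitly — the spacings $\alpha_{n+1}\le\alpha_{n}-2$ collapse it to a product-type expression — and confirm that $\sum_{k}\gamma_{m,k}u^{m+1-k}$ factors as $u^{k}\prod_{j=1}^{k}[u-(2j)^{2}]$, resp.\ $u^{k}\prod_{j=1}^{k+1}[u-(2j-1)^{2}]$, an identity that simultaneously shows the outer sum truly terminates at $k=\lfloor m/2\rfloor+1$ (the first index for which the ladder $(\alpha_n)$ cannot fit inside $[1,m+1]$), so that $\mathcal L_m$ has order exactly $m$. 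For \textit{(ii)}, substituting $I_0(\sqrt{u}\,t)=\sum_{p}\frac{(t^{2}/4)^{p}}{(p!)^{2}}u^{p}$ shows that the holomorphic solutions $\mu^{1}_{k,j}(u)=\IvKM(j,2k+1-j;1|u)$ (for $j\in\mathbb Z\cap[2,k]$) have power-series coefficients $a_{p}=\IKM(j-1,2k+1-j;2p+1)/(4^{p}(p!)^{2})$, so that the recursion $\sum_{k}\mathscr V_{m,k}(-i)\,a_{i+k-1}=0$ becomes a linear relation among the on-shell moments $\IKM(a,b;2p+1)$ with $a+b=2k$. I would produce that relation by pairing $t^{2p+1}$ with the symmetric-power equation $\mathrm{Sym}^{2k}(\theta_t^{2}-t^{2})$ satisfied by $[I_0(t)]^{j-1}[K_0(t)]^{2k+1-j}$ and integrating by parts, where $\theta_t=t\,\mathrm{d}/\mathrm{d}t$ acts on $t^{s}$ by the scalar $s$ and multiplication by $t^{2}$ shifts the moment index by $2$. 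The moments built on $K_0(\sqrt{u}\,t)$ are treated the same way after splitting $K_0(\sqrt{u}\,t)$ into a $\log u$-multiple of $I_0(\sqrt{u}\,t)$ plus a regular part, which produces the companion recursion for the $\IKvM$-type integrals indicated in the footnote after \cite[Lemma 4.2]{Zhou2017BMdet}.

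The hard part will be the combinatorial identification underlying \textit{(ii)}: verifying that the coefficients of the $\IKM$-recursion generated by $\mathrm{Sym}^{m+1}(\theta_t^{2}-t^{2})$, once rewritten as functions of the $u$-exponent $i$, are exactly $\mathscr V_{m,k}(-i)$. This requires tracing the iterated integration by parts one rung at a time: each invocation of the Bessel relation $\theta_t^{2}\rightsquigarrow t^{2}$ contributes an indicial-type weight of the shape $\alpha(\alpha-m-2)$ and lowers the differential order by $2$, which is precisely what the product $\prod_{n}\alpha_{n}(\alpha_{n}-m-2)$ and the gap conditions $\alpha_{n+1}\le\alpha_{n}-2$ record; one must then check that no further terms survive and that the admissible ladders of lengths $0,1,\dots,\lfloor m/2\rfloor+1$ exhaust the output. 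An equivalent route, packaging the same content as an induction on $m$, is to show that the claimed closed form obeys the recursive construction of $\widetilde L_m$ in \cite[\S3.6.1]{Vanhove2019} and \cite[\S3.6.1]{Vanhove2020}, the inductive step being the matching recursion for Verrill's polynomials $\mathscr V_{m,k}$; either way, the explicit low-order operators $\widetilde L_{1},\dots,\widetilde L_{4}$ serve as base and consistency checks and fix the overall sign $(-1)^{m}$.
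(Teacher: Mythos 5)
Your skeleton (show the claimed operator has order $m$ with the right leading coefficient, show it annihilates enough solutions, then conclude equality up to a factor in $\mathbb C(u)$ which the leading coefficient pins to $1$) is sound, and your reduction of $\mathcal L_m y=0$ to the coefficient recursion $\sum_{k\geq0}\mathscr V_{m,k}(-i)\,a_{i+k-1}=0$ via $u^{j}F(\widehat\vartheta)=F(\widehat\vartheta-j)u^{j}$ is correct; this is in fact the same skeleton the paper uses. The genuine gap is that the two statements which actually carry the proposition are asserted rather than proven. First, the leading-coefficient factorization: that $\sum_{k}\gamma_{m,k}u^{m+1-k}$, with $\gamma_{m,k}$ the sum over ladders of $\prod_{n}\alpha_{n}(\alpha_{n}-m-2)$, equals $u^{\lfloor (m+1)/2\rfloor}\prod_{n\equiv m+1\ (\mathrm{mod}\ 2)}(u-n^{2})$ is not an obvious ``collapse'' forced by the spacings $\alpha_{n+1}\leq\alpha_{n}-2$; it is the nontrivial combinatorial identity of Djakov--Mityagin and Zagier, which the paper cites rather than re-derives. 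Second, the step you yourself flag as the hard part---that the $\IKM$-recursion produced by integrating the symmetric-power Bessel equation by parts has coefficients exactly $\mathscr V_{m,k}(-i)$---is essentially Verrill's theorem itself; the heuristic that each rung contributes a weight $\alpha(\alpha-m-2)$ and that ``no further terms survive'' is precisely the bookkeeping that must be carried out, and it is not carried out. In addition, your route through the full fundamental system obliges you to treat the $\IKvM$-type solutions with their $\log u$ parts and to justify termwise application of the operator there, which is extra work your sketch only gestures at.

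For comparison, the paper sidesteps both burdens by testing against a single, well-chosen solution: $\IKvM(m+1,1;1|u)=\sum_{n\geq0}W_{m+1}(2n)u^{-n-1}$ for large $u$ (termwise integration as in Glasser--Montaldi), whose coefficients are exactly the numbers to which Verrill's recursion $\sum_{k}\mathscr V_{m,k}(n)W_{m+1}(2(n-k))=0$ applies; annihilation by the claimed operator is then immediate by citation, a right-division argument gives $\rho(u)\widetilde L_{m}=\widetilde R_{m}$, and the cited Djakov--Mityagin/Zagier evaluation of $r_{m,m}(u)$ forces $\rho=1$. If you either prove or cite those two identities, your outline closes (your $\mathrm{Sym}^{m+1}$ integration-by-parts route is a legitimate, if laborious, way to re-prove Verrill's recursion, but it needs the induction on ladder length written out); without them, the proposal reduces the proposition to exactly the combinatorial content it was supposed to establish.
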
\begin{proof}Using the arguments in  \cite[Lemma 4.2]{Zhou2017BMdet}, one can show that Vanhove's operator $ \widetilde L_m$ annihilates\begin{align}
\IKvM(m+1,1;1|u)\colonequals \int_0^\infty K_0(\sqrt{u}t)[I_0(t)]^{m+1}t\D t,\quad u\in ((m+1)^{2},\infty).
\end{align}Thanks to a  convergent Taylor series  \cite[(2.4)]{BSWZ2012}\begin{align}
[I_0(t)]^{m+1}={}&\sum_{k=0}^\infty\mathsf W_{m+1}(2k)\left( \frac{t^k}{2^{k}k!} \right)^2, \text{ where }\mathsf W_{m+1}(2k)=\sum_{\substack{a_1,a_2,\dots, a_{m+1}\in\mathbb Z_{\geq0}\\a_1+a_2+\cdots+a_{m+1}=k}}\left( \frac{k!}{a_1!a_2!\cdots a_{m+1}!} \right)^2,\label{eq:I0_pow}
\end{align} we may  invoke the dominated convergence theorem, and integrate termwise, as done by Glasser--Montaldi  \cite[(A3)]{GlasserMontaldi1993}:\begin{align}\IKvM(m+1,1;1|u)={}&\int_0^\infty K_0(\sqrt{u}t)\left[\sum_{k=0} ^{\infty}\mathsf W_{m+1}(2k)\left( \frac{t^k}{2^{k}k!} \right)^2\right]t\D t=\sum_{k=0} ^{\infty}\frac{\mathsf W_{m+1}(2k)}{u^{k+1}}.
\end{align}

We  write the right-hand side of \eqref{eq:Vanhove_Verrill} as $ \widetilde R_m=\sum_{j=0}^mr_{m,j}(u)D^j$, with $ r_{m,j} (u)\in\mathbb Q(u)$ being rational functions in $u$. By virtue  of Verrill's recursion \cite[Theorem 1]{Verrill2004} $ \sum_{k=0}^{\left\lfloor\frac m2\right\rfloor+1}\mathscr V_{m,k}(n)\mathsf W_{m+1}(2(n-k))=0$, we know that $ \widetilde R_m$ annihilates $ \IKvM(m+1,1;1|u),u\in ((m+1)^{2},\infty)$, so there exists a rational function  $ \rho(u)\in\mathbb Q(u)$ such that $ \rho(u)\widetilde L_m=\widetilde R_m$. Moreover, we recover Vanhove's leading term $ \mathcal L_m(u)D^{m}$ in \eqref{eq:VL_m}   from \begin{align}
r_{m,m}(u)={}&u^{m+1}+u^{\left\lfloor\frac {m+1}2\right\rfloor}\sum_{k=1}^{\left\lfloor\frac {m\vphantom{b}}2\right\rfloor+1}u^{\left\lfloor\frac {m\vphantom{b}}2\right\rfloor+1-k}\sum_{\substack{n\in\mathbb Z\cap[1,k]\\\alpha_n\in\mathbb Z\cap[1,m+1]\\\alpha_{n+1}\leq \alpha_n-2}}\prod_{n=1}^k\alpha_{n}(\alpha_{n}-m-2)=u^{\left\lfloor\frac {m+1}2\right\rfloor}\prod_{\substack{n\in\mathbb Z\cap[1,m+1]\\n\equiv m+1\hspace{-.5em}\pmod2}}(u-n^2)
\end{align} where the last combinatorial identity owes to Djakov--Mityagin (see \cite[Theorem 4.1]{Djakov2004} or \cite[Theorem 8]{Djakov2007}) and Zagier \cite[Theorem 2.7]{BSWZ2012}. Hence $ \rho(u)=1$ and $ \mathcal L_m(u)=\ell_{m,m}(u)$. \end{proof}\begin{remark}For an equivalent formulation of Vanhove's operator in the variable $ s=\frac1u$ and its proof, see the recent work of B\"onisch--Fischbach--Klemm--Nega--Safari \cite[\S3.1]{Klemm2020}. \eor\end{remark}

By Vanhove's original construction of  $ \widetilde L_m=\sum_{j=0}^m\ell_{m,j}(u)D^j $ in \cite[\S9]{Vanhove2014Survey}, we know that  $ \ell _{m,j}(u)$ is always a polynomial in $u$ with integer coefficients. In the next corollary, we will use the explicit formula \eqref{eq:Vanhove_Verrill} to show that such  polynomials have certain divisibility properties, thereby paving the way for the asymptotic analysis (to be elaborated in Proposition \ref{prop:u0_sum}) of solutions to Vanhove's differential equations
in the $u\to0^+$ regime.\begin{corollary}[Regular singular point]\label{cor:reg_sing}For all $ m\in\mathbb Z_{>0}$, we have $ u^{m-j-\left\lfloor\frac {m+1}2\right\rfloor}\ell_{m,j}(u)\in \mathbb Z[u]$ for $ j\in\mathbb Z\cap[0,m]$. As a consequence, for each $ m\in\mathbb Z_{>0}$, the point $u=0$ is a regular singular point for  Vanhove's differential equation $ \widetilde L_m f(u)=0$, so that for a suitably chosen $ \delta_m\in(0,1)$, every solution $f(u),u\in(0,\delta_m)$  is expressible as a convergent series in (fractional) powers of $ u$ and $\log u$.\end{corollary}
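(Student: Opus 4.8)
The plan is to read both assertions directly off the explicit form $\widetilde L_m=(-1)^{m}\sum_{k=0}^{\lfloor m/2\rfloor+1}u^{1-k}\mathscr V_{m,k}(k-\widehat\vartheta)$ supplied by Proposition~\ref{prop:Vanhove_Verrill}, exploiting that each summand contributes only \emph{monomials} to the coefficients $\ell_{m,j}(u)$. The one algebraic input I need is the standard expansion of the Euler-type operator $\widehat\vartheta=uD+1$: since $uD$ acts on $u^{s}$ by the integer $s$ and $u^{p}D^{p}$ by the falling factorial $(s)_{p}$, one has $\widehat\vartheta^{\,i}=\sum_{p=0}^{i}c_{i,p}\,u^{p}D^{p}$ with all $c_{i,p}\in\mathbb Z$ (integer combinations of Stirling numbers). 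Hence, for any $c$ and any $P\in\mathbb Z[t]$, the differential operator $u^{c}P(\widehat\vartheta)$ equals $\sum_{j}\gamma_{j}\,u^{c+j}D^{j}$ with $\gamma_{j}\in\mathbb Z$; that is, its $D^{j}$-coefficient is the single monomial $\gamma_{j}u^{c+j}$. Since $\mathscr V_{m,k}\in\mathbb Z[t]$ and $k\in\mathbb Z$, applying this with $P(t)=\mathscr V_{m,k}(k-t)$ and $c=1-k$ shows the $D^{j}$-coefficient of $u^{1-k}\mathscr V_{m,k}(k-\widehat\vartheta)$ to be an integer multiple of $u^{\,j+1-k}$.

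Summing over $k\in\{0,1,\dots,\lfloor m/2\rfloor+1\}$, I conclude that $\ell_{m,j}(u)$ is a $\mathbb Z$-linear combination of the monomials $u^{\,j+1},u^{\,j},\dots,u^{\,j-\lfloor m/2\rfloor}$. Multiplying through by $u^{\lfloor m/2\rfloor-j}$ turns these into $u^{\lfloor m/2\rfloor+1},u^{\lfloor m/2\rfloor},\dots,u^{0}$, so $u^{\lfloor m/2\rfloor-j}\ell_{m,j}(u)\in\mathbb Z[u]$; and the elementary identity $\lfloor\frac m2\rfloor+\lfloor\frac{m+1}2\rfloor=m$ rewrites $\lfloor\frac m2\rfloor-j$ as $m-j-\lfloor\frac{m+1}2\rfloor$, which is exactly the first assertion of the corollary.

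For the consequence, I would put Vanhove's equation in normalized form $D^{m}f+\sum_{j=0}^{m-1}p_{j}(u)D^{j}f=0$ with $p_{j}=\ell_{m,j}/\ell_{m,m}$. The proof of Proposition~\ref{prop:Vanhove_Verrill} exhibits $\ell_{m,m}(u)$ as $u^{\lfloor(m+1)/2\rfloor}$ times a product of factors $u-n^{2}$ with $n\ge1$; hence $\ell_{m,m}$ vanishes at $u=0$ to order \emph{exactly} $\lfloor\frac{m+1}2\rfloor$ and is nonzero on the punctured disc $0<|u|<1$ (its remaining zeros lying at $u=1,\,4,\dots$). Combining this with the first assertion gives $\operatorname{ord}_{0}\ell_{m,j}\ge j-\lfloor\frac m2\rfloor$ for every $j$, so $\operatorname{ord}_{0}p_{j}\ge\bigl(j-\lfloor\tfrac m2\rfloor\bigr)-\lfloor\tfrac{m+1}2\rfloor=j-m$; equivalently $u^{m-j}p_{j}(u)$ is holomorphic at $u=0$. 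This is precisely the Fuchs criterion, so $u=0$ is a regular singular point, and the Frobenius method then produces a fundamental system on any interval $(0,\delta_{m})$ with $\delta_{m}\in(0,1)$ — the largest disc about the origin free of further singularities of the equation having radius at least $1$ — in which the solutions are convergent series in powers $u^{\rho+n}$, with $\rho$ ranging over the roots of the indicial polynomial, multiplied by powers of $\log u$ when two such roots differ by an integer (cf.\ \cite{Ince1956ODE}).

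The argument is conceptually routine once Proposition~\ref{prop:Vanhove_Verrill} is available; the only real care needed is in bookkeeping the powers of $u$ under the correspondence $\widehat\vartheta^{\,i}\leftrightarrow u^{p}D^{p}$ and verifying the floor-function identity, together with the observation — already contained in the proof of Proposition~\ref{prop:Vanhove_Verrill} — that $\ell_{m,m}$ has the \emph{exact} order of vanishing $\lfloor\frac{m+1}2\rfloor$ at the origin, which is what lets the order estimates pass from the $\ell_{m,j}$ to the normalized coefficients $p_{j}$.
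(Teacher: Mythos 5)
Your proof is correct, but it reaches the key divisibility statement by a genuinely different route from the paper. The paper's own argument is asymptotic--recursive: it applies $\widetilde L_m$ to the test powers $u^{j}$, reads off from \eqref{eq:Vanhove_Verrill} that $u^{m+1-j-\lfloor(m+1)/2\rfloor}\widetilde L_m u^{j}\to 0$ as $u\to 0^{+}$, and then, writing $\widetilde L_m=\sum_{n}\ell_{m,n}(u)D^{n}$ and proceeding recursively in $j=0,1,\dots,m$, extracts $\lim_{u\to0^{+}}u^{m+1-j-\lfloor(m+1)/2\rfloor}\ell_{m,j}(u)=0$, which together with Vanhove's a priori fact $\ell_{m,j}\in\mathbb Z[u]$ yields the claim. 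You instead expand the operator algebraically in the basis $u^{p}D^{p}$, using $\widehat\vartheta=1+uD$ and the integrality of the resulting Stirling-type coefficients, so that each summand $u^{1-k}\mathscr V_{m,k}(k-\widehat\vartheta)$ contributes to $\ell_{m,j}$ only an integer multiple of the single monomial $u^{j+1-k}$; this is purely algebraic, needs no limits and no recursion on $j$, does not presuppose $\ell_{m,j}\in\mathbb Z[u]$, and in fact gives slightly more, namely the exact monomial support of $\ell_{m,j}$ (exponents between $j-\lfloor m/2\rfloor$ and $j+1$). For the Fuchs step the two arguments coincide in substance: both use the input from the proof of Proposition \ref{prop:Vanhove_Verrill} that $\ell_{m,m}(u)=u^{\lfloor(m+1)/2\rfloor}\prod_{n\equiv m+1\,(\mathrm{mod}\,2)}(u-n^{2})$ vanishes at the origin to order exactly $\lfloor(m+1)/2\rfloor$ and has no further zeros in $0<|u|<1$, so your estimate $\operatorname{ord}_{0}(\ell_{m,j}/\ell_{m,m})\ge j-m$, the appeal to Ince's criterion, and the Frobenius conclusion match the paper's, and the floor identity $\lfloor m/2\rfloor+\lfloor(m+1)/2\rfloor=m$ you invoke is the correct translation between the two normalizations of the exponent.
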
\begin{proof}By  \eqref{eq:Vanhove_Verrill}, we have $\widetilde L_mu^{j}=(-1)^{m}\sum_{k=0}^{\left\lfloor\frac{ m\vphantom{b}}2\right\rfloor+1}u^{j+1-k}\mathscr V_{m,k}(k-j-1 )
$ for all $ j\in\mathbb R$. Hence, we obtain $ \lim_{u\to0^+}u^{m+1-j-\left\lfloor\frac {m+1}2\right\rfloor}\widetilde L_mu^{j}=0$.

From   $ \widetilde L_m=\sum_{j=0}^m\ell_{m,j}(u)D^j $, we have  $ u^{m+1-j-\left\lfloor\frac {m+1}2\right\rfloor}\widetilde L_mu^{j}= u^{m+1-j-\left\lfloor\frac {m+1}2\right\rfloor}\sum_{n=0}^j\ell_{m,n}(u)D^n u^j$ for  $ j\in\mathbb Z\cap[0,m]$. In view of the output from the last paragraph, we can deduce $\lim_{u\to0^+}u^{m+1-j-\left\lfloor\frac {m+1}2\right\rfloor} \ell_{m,j}(u)=0$ for all $j\in\mathbb Z\cap[0,m]$, beginning with the $j=0$ case, and proceeding with finitely many recursions. This shows that  $ u^{m-j-\left\lfloor\frac {m+1}2\right\rfloor}\ell_{m,j}(u)\in \mathbb Z[u]$ for $ j\in\mathbb Z\cap[0,m]$, as claimed.

Now that  $ u^{m-j}\ell_{m,j}(u)/\ell_{m,m}(u)$ is holomorphic in a non-void $ \mathbb C$-neighborhood of  $u=0$ for every $ j\in\mathbb Z\cap[0,m-1]$, we know that the differential equation $ \widetilde L_m f(u)=0,u\in(0,1)$ has a regular singular point at $u=0$, according to the Fuchs condition \cite[\S15.3]{Ince1956ODE}. The existence of generalized power series expansion then follows immediately.  \end{proof}

For any differential operator $
\widehat O=\sum_{k=0}^{n}a_{k}(u)D^{k}$ with $a_k\in C^\infty(a,b),k\in\mathbb Z\cap[0,n] $,
its formal adjoint (or ``formal dual'') $ \widehat O^* $ acts on every smooth function $ f\in C^\infty(a,b)$ in the following manner: $ \widehat O^*f(u)=\sum_{k=0}^{n}(-1)^{k}D^{k}[a_{k}(u)f(u)]$. This formal adjoint is the unique operator satisfying $ \langle \widehat Of,g\rangle_{(a,b)}=\langle f,\widehat O^*g\rangle_{(a,b)}$  for every smooth function $ f\in C^\infty(a,b)$ and every compactly supported smooth function $ g\in C^\infty_{c}(a,b)$, where $ \langle f,g\rangle_{(a,b)}=\int_a^bf(u)g(u)\D u$.

We study the formal adjoint of each Vanhove operator in the next proposition, which will become useful later in Proposition \ref{prop:Vanhove_dual}.\begin{proposition}[Parity of Vanhove operators]\label{prop:parity_VL}For every $ m\in\mathbb Z_{>1}$, we have $\widetilde L_m^*=(-1)^m\widetilde L_m $. \end{proposition}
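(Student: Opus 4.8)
The plan is to exploit the explicit Verrill-form representation \eqref{eq:Vanhove_Verrill} of $\widetilde L_m$ obtained in Proposition \ref{prop:Vanhove_Verrill}, together with the identity $\widehat\vartheta^* = -\widehat\vartheta$ for the operator $(\widehat\vartheta f)(u)=D^1[uf(u)]$. Indeed, from $\widehat\vartheta = D\circ u = uD + 1$ one computes $\widehat\vartheta^* f = -D[u\,uf(u)]/u\cdots$ — more carefully, $\langle \widehat\vartheta f,g\rangle = \langle D(uf),g\rangle = -\langle uf, Dg\rangle = -\langle f, uDg\rangle$, and since $uD = \widehat\vartheta - 1$, this equals $\langle f, -(\widehat\vartheta-1)g\rangle = \langle f, (1-\widehat\vartheta)g\rangle$. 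So the formal adjoint of $\widehat\vartheta$ is $1-\widehat\vartheta$, hence the formal adjoint of $\widehat\vartheta - c$ is $(1-c)-\widehat\vartheta = -(\widehat\vartheta - (1-c))$. Equivalently, writing $\theta_0 := \widehat\vartheta - \tfrac12$ one has $\theta_0^* = -\theta_0$, so $\widehat\vartheta$ is "anti-self-adjoint up to the shift by $\tfrac12$."

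First I would record how adjunction interacts with the building blocks of \eqref{eq:Vanhove_Verrill}. Multiplication by $u^{1-k}$ is self-adjoint (it is multiplication by a function), but it does \emph{not} commute with $\widehat\vartheta$; the key commutation relation is $\widehat\vartheta \circ u^c = u^c(\widehat\vartheta + c)$, equivalently $u^{1-k}\circ(\widehat\vartheta - k) = (\widehat\vartheta - 1)\circ u^{1-k}$, which lets one move powers of $u$ past polynomials in $\widehat\vartheta$ at the cost of a shift. Then, using $(AB)^* = B^*A^*$ and the fact that a product $u^{1-k}\mathscr V_{m,k}(k-\widehat\vartheta)$ is a polynomial in $\widehat\vartheta$ times a power of $u$, I would compute the adjoint of each summand $u^{1-k}\mathscr V_{m,k}(k-\widehat\vartheta)$ and show it equals (up to sign $(-1)^m$) another such summand — in fact the \emph{same} summand, because the whole point of Verrill's normalization $\mathscr V_{m,k}(t) = t^m\sum(\cdots)\left(\frac{t-n}{t-n+1}\right)^{\alpha_n-1}$ is that the substitution dual to adjunction permutes the data $(\alpha_1,\dots,\alpha_k)$ and the shift structure in a way that preserves the sum. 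Concretely: $\big(u^{1-k}\mathscr V_{m,k}(k-\widehat\vartheta)\big)^* = \mathscr V_{m,k}(k-\widehat\vartheta)^*\circ u^{1-k} = \mathscr V_{m,k}(k-\widehat\vartheta^*)\circ u^{1-k} = \mathscr V_{m,k}(k-1+\widehat\vartheta)\circ u^{1-k}$, and then one pushes $u^{1-k}$ back to the left via the commutation relation, landing on $u^{1-k}\mathscr V_{m,k}(\text{shifted argument})$; the claim is that after accounting for the degree-$m$ leading factor $t^m$ (which contributes the global $(-1)^m$) the shifted polynomial coincides with $\mathscr V_{m,k}(k-\widehat\vartheta)$. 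Summing over $k$ from $0$ to $\lfloor m/2\rfloor+1$ and keeping the prefactor $(-1)^m$ from \eqref{eq:Vanhove_Verrill}, the two copies of $(-1)^m$ cancel the extra sign and one gets $\widetilde L_m^* = (-1)^m \widetilde L_m$ exactly. An alternative, perhaps cleaner, route is to verify the identity on a dense set of test inputs: since $\widetilde L_m$ and $(-1)^m\widetilde L_m^*$ are both $m$-th order operators with polynomial coefficients, it suffices to check $\widetilde L_m^* u^j = (-1)^m \widetilde L_m u^j$ for all $j\in\mathbb R$, and the action of both sides on $u^j$ is computed in closed form from \eqref{eq:Vanhove_Verrill} — one gets $\widetilde L_m u^j = (-1)^m\sum_k u^{j+1-k}\mathscr V_{m,k}(k-j-1)$, and a parallel computation (using $D^i[a(u)u^j]$ expansions, or more slickly the $\widehat\vartheta$-calculus above applied to the eigen-like behavior $\widehat\vartheta u^j = (j+1)u^j$) gives the adjoint side, after which the equality reduces to a symmetry of the scalar quantities $\mathscr V_{m,k}$ under $j\mapsto$ (reflected index).

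I would finish by handling the leading-coefficient normalization as a consistency check: from \eqref{eq:VL_2k-1}–\eqref{eq:VL_2k} and \eqref{eq:m_n_poly}, $\ell_{m,m}(u) = u^{\lfloor(m+1)/2\rfloor}\prod_{n\equiv m+1(2)}(u-n^2)$, and the sub-leading coefficient is exactly $\tfrac{m}{2}\ell_{m,m}'(u)$ (resp. the pattern in \eqref{eq:VL_2k-1}, \eqref{eq:VL_2k}), which is precisely the relation $\ell_{m,m-1} = \tfrac{m}{2}D\ell_{m,m}$ that a formally (anti-)self-adjoint operator must satisfy — so the parity statement is forced in the top two coefficients and the content is that it propagates to all lower-order terms, which is what the Verrill-form computation delivers.

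The main obstacle I anticipate is bookkeeping the index substitution in $\mathscr V_{m,k}$ under adjunction: one must show that the map sending the tuple $(\alpha_1,\dots,\alpha_k)$ (with the constraints $\alpha_{n+1}\le\alpha_n-2$, $\alpha_n\in[1,m+1]$, $\alpha_{k+1}=1$) to the tuple governing the adjoint summand is a well-defined involution on the index set that multiplies each term by $(-1)^m$ and leaves the product $\prod_n \alpha_n(\alpha_n-m-2)$ invariant — the telescoping factor $\left(\frac{t-n}{t-n+1}\right)^{\alpha_n-1}$ is what makes the $\widehat\vartheta$-shifts align, and getting the combinatorics of the exponents $\alpha_n - \alpha_{n+1}$ to match up after the reflection $t\mapsto m+1-t$-type substitution is the delicate point. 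If that involution is awkward to exhibit directly, the $u^j$-test-input route sidesteps it entirely, reducing everything to an identity among the explicitly-summable scalars $\mathscr V_{m,k}(k-j-1)$, at the cost of a slightly longer computation with $D^i[u^{1-k}\mathscr V_{m,k}(\cdots)u^j]$; I would pursue whichever is shorter once the first few cases $m=2,3,4$ are worked out by hand to reveal the pattern.
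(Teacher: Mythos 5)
Your overall strategy is the same as the paper's: expand $\widetilde L_m$ through the Verrill-form representation \eqref{eq:Vanhove_Verrill}, take adjoints summand by summand, and test against power functions (the paper uses $u^{-r-1}$ with $r\in\mathbb R$ arbitrary; your computation $\widehat\vartheta^{*}=1-\widehat\vartheta$ and the reduction to per-summand statements are correct). The problem is that, as written, the proposal leaves the actual crux unproven. In both of your routes everything collapses to the single polynomial identity $\mathscr V_{m,k}(k-t)=(-1)^{m}\mathscr V_{m,k}(t)$: pushing $u^{1-k}$ through with $\widehat\vartheta\circ u^{c}=u^{c}(\widehat\vartheta+c)$ turns the adjoint summand into $u^{1-k}\mathscr V_{m,k}(\widehat\vartheta)$, and evaluating on $u^{j}$ (an eigenfunction of $\widehat\vartheta$ with eigenvalue $j+1$) turns the desired operator identity into $\mathscr V_{m,k}(j+1)=(-1)^{m}\mathscr V_{m,k}(k-j-1)$ for all $j$. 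So your claim that the $u^{j}$-test-input route ``sidesteps'' the index combinatorics is not correct: the scalars $\mathscr V_{m,k}(k-j-1)$ are not explicitly summable in a form that makes the symmetry visible; they are sums over the constrained tuples $(\alpha_1,\dots,\alpha_k)$, and the reflection symmetry is exactly the combinatorial statement you defer. Working out $m=2,3,4$ by hand does not substitute for a proof of it.

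The missing ingredient is the explicit involution the paper supplies: pair tuples via $\beta_n=m+2-\alpha_{k+1-n}$, with the conventions $\alpha_0=m+1$, $\alpha_{k+1}=1$. One checks that this reverses and reflects the constraint chain ($\beta_{n+1}\le\beta_n-2$ and $\beta_n\in\mathbb Z\cap[1,m+1]$ hold iff the same constraints hold for the $\alpha$'s), fixes the boundary values $\beta_0=m+1$, $\beta_{k+1}=1$, preserves each weight because $\beta_n(\beta_n-m-2)=\alpha_{k+1-n}(\alpha_{k+1-n}-m-2)$, and transports the exponent $\alpha_j-\alpha_{j+1}$ attached to the factor $t-j$ to the position $k-j$; since $(k-t)-j=-\bigl(t-(k-j)\bigr)$ and the exponents sum to the total degree $m$, the global sign $(-1)^{m}$ emerges and the sum maps onto itself. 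With this involution in hand your plan closes (the $k=0$ term is the trivial case $\mathscr V_{m,0}(t)=t^{m}$, handled by a direct integration by parts), and the argument becomes essentially identical to the paper's; without it, the central step of your proposal is an assertion rather than a proof.
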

\begin{proof}We compute  each summand in \begin{align}
\widetilde L_m^{*}=(-1)^{m}\sum_{k=0}^{\left\lfloor\frac {m\vphantom{b}}2\right\rfloor+1}[u^{1-k}\mathscr V_{m,k}(k-\widehat \vartheta )]^{*}\label{eq:Vanhove_Verrill_dual}\tag{\ref{eq:Vanhove_Verrill}$^*$}\end{align}separately.

For $ k=0$, we have $ [u\mathscr V_{m,0}(-\widehat \vartheta )]^{*}=(-1)^m(u\widehat \vartheta ^{m})^{*}=u\widehat \vartheta^{m}$ through integration by parts.

For $ k\in\mathbb Z_{>0}$, we  operate on power functions $ u^{-1-r},r\in\mathbb R$:{\allowdisplaybreaks\begin{align}
u^{1-k}\mathscr V_{m,k}(k-\widehat \vartheta )\frac{1}{u^{r+1}}={}&\sum_{\substack{n\in\mathbb Z\cap[1,k],\alpha_{0}=m+1,\alpha_{k+1}=1\\\alpha_n\in\mathbb Z\cap[1,m+1],\alpha_{n+1}\leq \alpha_n-2}}\frac{\prod_{n=1}^{k}\alpha_{n}(\alpha_{n}-m-2)}{u^{r+k}}\prod_{j=0}^{k}(r+k-j)^{\alpha_{j}-\alpha_{j+1}},\\(-1)^{m}[u^{1-k}\mathscr V_{m,k}(k-\widehat \vartheta )]^{*}\frac{1}{u^{r+1}}={}&\sum_{\substack{n\in\mathbb Z\cap[1,k],\beta_{0}=m+1,\beta_{k+1}=1\\\beta_n\in\mathbb Z\cap[1,m+1],\beta_{n+1}\leq \beta_n-2}}\frac{\prod_{n=1}^{k}\beta_{n}(\beta_{n}-m-2)}{u^{r+k}}\prod_{j=0}^{k}(r+j)^{\beta_{j}-\beta_{j+1}}.
\end{align}}One can identify the left-hand sides of the last  two equations, by pairing up the dummy indices  $ \beta_n=m+2-\alpha_{k+1-n}$ for $ n\in\mathbb Z\cap[1,k]$.

 Since the parameter $ r$ in the last paragraph is an arbitrary real number, we have $ [u^{1-k}\mathscr V_{m,k}(k-\widehat \vartheta )]^{*}=(-1)^{m}u^{1-k}\mathscr V_{m,k}(k-\widehat \vartheta )$ for $k\in\mathbb Z_{\geq0}$,  finally leading us to the claimed parity relation $\widetilde L_m^*=(-1)^m\widetilde L_m $. \end{proof}

An immediate consequence of the last proposition is a combinatorial constraint  on the coefficients $ \ell_{m,j}(u)$ in  Vanhove's operator $ \widetilde L_m =\sum_{j=0}^m\ell_{m,j}(u)D^j $ for each $ m\in\mathbb Z_{>1}$.
\begin{corollary}[Combinatorial constraint on $ \ell_{m,j}(u)$] For each $ m\in\mathbb Z_{>1}$, we have \begin{align}
(-1)^{m}\ell_{m,j}(u)=\sum_{n=j}^{m}(-1)^n{n\choose j} D^{n-j}
\ell_{m,n}(u)\label{eq:VV_poly_rec}\end{align}for $j\in\mathbb Z\cap[0,m] $, which incorporates Vanhove's sub-leading  term [cf.~\eqref{eq:VL_m}]\begin{align}
\ell_{m,m-1}(u)=\frac{m}{2}D^{1}\ell_{m,m}(u)
\label{eq:sublead}\end{align}as a special case.\end{corollary}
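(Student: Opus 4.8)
The plan is to read the identity \eqref{eq:VV_poly_rec} off directly from the parity relation $\widetilde L_m^*=(-1)^m\widetilde L_m$ of Proposition \ref{prop:parity_VL}, by expanding both operators in the basis $D^0,D^1,\dots,D^m$ and matching the coefficient of $D^j$ for each $j$. No new input beyond Proposition \ref{prop:parity_VL} and the Leibniz rule is needed.

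First I would write $\widetilde L_m=\sum_{n=0}^m\ell_{m,n}(u)D^n$ and apply the definition of the formal adjoint recalled just before Proposition \ref{prop:parity_VL}, so that $\widetilde L_m^*f=\sum_{n=0}^m(-1)^nD^n[\ell_{m,n}(u)f(u)]$ for every $f\in C^\infty$. Then I would invoke the Leibniz rule $D^n[\ell_{m,n}f]=\sum_{i=0}^n\binom{n}{i}(D^i\ell_{m,n})(D^{n-i}f)$, substitute $j=n-i$, and reorganize to obtain $\widetilde L_m^*f=\sum_{j=0}^m\big[\sum_{n=j}^m(-1)^n\binom{n}{j}D^{n-j}\ell_{m,n}(u)\big]D^jf$. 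Since Proposition \ref{prop:parity_VL} gives $\widetilde L_m^*=(-1)^m\widetilde L_m=\sum_{j=0}^m(-1)^m\ell_{m,j}(u)D^j$, comparing the coefficient of $D^j$ on the two sides yields \eqref{eq:VV_poly_rec}. The only point worth a sentence is the passage from the functional identity $\widetilde L_m^*f=(-1)^m\widetilde L_mf$ (valid for all smooth $f$) to the equality of coefficient polynomials; this is immediate because the coefficients of an order-$m$ differential operator are recovered by testing successively against the monomials $1,u,\dots,u^m$ (equivalently, the expansions above are already in operator form).

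Finally I would specialize to $j=m-1$: only the terms $n=m-1$ and $n=m$ survive in \eqref{eq:VV_poly_rec}, so $(-1)^m\ell_{m,m-1}(u)=(-1)^{m-1}\ell_{m,m-1}(u)+(-1)^m m\,D^1\ell_{m,m}(u)$; moving the first term to the left gives $2\ell_{m,m-1}(u)=m\,D^1\ell_{m,m}(u)$, which is \eqref{eq:sublead}. I do not anticipate any real obstacle here — the argument is pure bookkeeping with the Leibniz rule — so the write-up is essentially as short as the sketch above.
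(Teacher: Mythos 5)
Your proposal is correct and follows essentially the same route as the paper: expand $\widetilde L_m^*f=\sum_n(-1)^nD^n[\ell_{m,n}f]$ by the Leibniz rule, equate the coefficient of $D^jf$ with that of $(-1)^m\widetilde L_m$ using Proposition \ref{prop:parity_VL}, and then specialize to $j=m-1$ to obtain \eqref{eq:sublead}. The extra remark on why matching coefficients of a differential operator is legitimate is harmless but not needed beyond what the paper already does.
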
 \begin{proof}With the product rule for differentiation, we compute \begin{align}
(-1)^m\widetilde L_mf(u)=\widetilde L_{m}^*f(u)={}&\sum_{n=0}^{m}(-1)^nD^n[\ell_{m,n}(u)f(u)]=\sum_{n=0}^{m}(-1)^n\sum _{j=0}^n{n\choose j}[D^{n-j}\ell_{m,n}(u)]D^{j}f(u),
\end{align}  whose coefficients for $ D^{j}f(u)$ boil down to  \eqref{eq:VV_poly_rec}.

For $ j=m-1 $, the constraint   \eqref{eq:VV_poly_rec} reduces to $(-1)^{m}\ell_{m,m-1}(u)=(-1)^{m}mD^1\ell_{m,m}(u)+(-1)^{m-1} \ell_{m,m-1}(u)$, which is equivalent to \eqref{eq:sublead}, a vital result for the computations of Wro\'nskian determinants.  \end{proof}\begin{remark}We refer our readers to Corollary \ref{cor:VvSs} in \S\ref{subsec:cofactor}, for non-trivial applications of the generic recursion  \eqref{eq:VV_poly_rec}. \eor\end{remark}

During our proof of Proposition~\ref{prop:Vanhove_Verrill}, we have considered solutions to Vanhove's differential equations $\widetilde L_m f(u)=0,u\in (m+1,\infty)$. Later on in \S\S\ref{subsec:threshold}--\ref{subsec:offshell_quad}, we will also find it helpful to solve $ \widetilde L_{m}f(u)=0,u\in\mathbb R\smallsetminus\big\{\big(2j-\frac{1+(-1)^{m}}{2}\big)^2\big|j\in\mathbb Z\cap\big[1,\left\lfloor \frac m2\right\rfloor+1\big]\big\}$ for each $m\in\mathbb Z_{>0}$.  To solve Vanhove's differential equations  in such scenarios, we need to extend our previous notations for the  Bessel--Feynman moments $ \mathcal F^\ell_{m,j}(u)$ to  $ \mathcal F^\ell_{m,j}(u)=\pi^{j-(m+1)/2-\left\lfloor \frac m2\right\rfloor -1}\IKvM\big(j-\left\lfloor \frac m2\right\rfloor,m+2+\left\lfloor \frac m2\right\rfloor-j;2\ell-1\big|u\big)$ for  $  j\in\mathbb Z\cap\big[\left\lfloor \frac m2\right\rfloor+2,m+\left\lfloor \frac m2\right\rfloor+1\big]$.
In addition, it is also appropriate to  enlist the differentiated Bessel moments in \eqref{eq:IpKM_defn}--\eqref{eq:IKpM_defn},  and define $ \acute{\mathcal F}^\ell_{m,j}(u)$ by updating each occurrence of  $ \IvKM(\cdot,\cdot;2\ell-1|u)$ [resp.\ $ \IKvM(\cdot,\cdot;2\ell-1|u)$]  in   $ \mathcal F^\ell_{m,j}(u)$  with  $ \IpKM(\cdot,\cdot;2\ell-1|u)$ [resp.\ $ \IKpM(\cdot,\cdot;2\ell-1|u)$].\subsection{Block diagonalization of on-shell Wro\'nskian matrices\label{subsec:BM_W}}Before moving on, we need to reduce the higher-order derivatives in our formulation of  Wro\'nskian matrices into off-shell Bessel moments \eqref{eq:IvKM_defn}--\eqref{eq:IKvM_defn} and differentiated Bessel moments \eqref{eq:IpKM_defn}--\eqref{eq:IKpM_defn}.

As in \cite[\S4.1]{Zhou2017BMdet}, we apply the Bessel differential operator iteratively, to derive\begin{align}
\frac{\mathcal F^\ell_{m,j}(u)}{4^{\ell-1}}={}&(uD^{2}+D^{1})^{\ell-1}\mathcal F_{m,j}(u),\\\frac{\acute{\mathcal F}^\ell_{m,j}(u)}{4^{\ell-1}\sqrt{u}}={}&2D^{1}(uD^{2}+D^{1})^{\ell-1}\mathcal F_{m,j}(u).
\end{align} In the  next lemma, we will spell out the differential operators $ (uD^{2}+D^{1})^{\ell-1}$ and $ D^{1}(uD^{2}+D^{1})^{\ell-1}$ as linear differential operators with polynomial coefficients.\begin{lemma}[Iterations of Bessel differential operators]\label{lm:Bessel_diff_iter}For $ n\in\mathbb Z_{\geq0}$, we have\begin{align}
(uD^{2}+D^{1})^{n}={}&\sum_{m=0}^n\frac{n!}{m!}{n\choose m}u^{m}D^{n+m},\label{eq:Bessel_it_pow}\\D^{1}(uD^{2}+D^{1})^{n}={}&\sum_{m=0}^n\frac{n!}{m!}{{n+1}\choose m+1}u^{m}D^{n+m+1}.\end{align}\end{lemma}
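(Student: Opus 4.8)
The plan is to prove the first identity \eqref{eq:Bessel_it_pow} by induction on $n$, and then to deduce the second identity from it by a single application of $D^1$ together with Pascal's rule $\binom{n}{m+1}+\binom{n}{m}=\binom{n+1}{m+1}$.

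For the base case $n=0$ of \eqref{eq:Bessel_it_pow}, both sides are the identity operator, since $\frac{0!}{0!}\binom{0}{0}u^0D^0=1$. For the inductive step, I would suppose $(uD^2+D^1)^n=\sum_{m=0}^n c_{n,m}u^mD^{n+m}$ with $c_{n,m}:=\frac{n!}{m!}\binom{n}{m}=\frac{(n!)^2}{(m!)^2(n-m)!}$, and apply one further factor of $uD^2+D^1$ to a single monomial operator $u^mD^{n+m}$. A direct Leibniz expansion gives $D^1(u^mD^{n+m})=mu^{m-1}D^{n+m}+u^mD^{n+m+1}$ and $uD^2(u^mD^{n+m})=m(m-1)u^{m-1}D^{n+m}+2mu^mD^{n+m+1}+u^{m+1}D^{n+m+2}$, hence $(uD^2+D^1)(u^mD^{n+m})=m^2u^{m-1}D^{n+m}+(2m+1)u^mD^{n+m+1}+u^{m+1}D^{n+m+2}$, each term being again of the shape $u^{m'}D^{(n+1)+m'}$. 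Collecting contributions yields the three-term recursion $c_{n+1,m}=(m+1)^2c_{n,m+1}+(2m+1)c_{n,m}+c_{n,m-1}$, which holds for every $m$ under the paper's convention that binomial coefficients with arguments outside $[0,n]$ vanish; inserting the closed form and clearing denominators reduces the claim to the elementary identity $(n-m+1)(n-m)+(2m+1)(n-m+1)+m^2=(n+1)^2$, obvious on writing $a=n-m+1$ and observing that the left-hand side collapses to $(a+m)^2$.

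For the second identity, I would differentiate the first identity once it is established: $D^1(uD^2+D^1)^n=\sum_{m=0}^n c_{n,m}\bigl(mu^{m-1}D^{n+m}+u^mD^{n+m+1}\bigr)$. Reindexing so that the exponent of $u$ equals $m'$ while the order of differentiation is $(n+1)+m'$, the coefficient of $u^{m'}D^{n+m'+1}$ becomes $(m'+1)c_{n,m'+1}+c_{n,m'}=\frac{n!}{m'!}\bigl(\binom{n}{m'+1}+\binom{n}{m'}\bigr)=\frac{n!}{m'!}\binom{n+1}{m'+1}$ by Pascal's rule, precisely the asserted coefficient.

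No genuine obstacle arises here: the substance lies entirely in the bookkeeping of the three-term recursion and the index shift $D^{n+m}=D^{(n+1)+(m-1)}$, together with the trivial algebraic identity $(n-m+1)(n-m)+(2m+1)(n-m+1)+m^2=(n+1)^2$ and Pascal's rule. The only detail that wants a moment's care is the behaviour at the extreme indices $m=0$ and $m=n+1$ in the recursion, and this is handled cleanly by the vanishing of out-of-range binomial coefficients. One could alternatively exploit the operator factorization $uD^2+D^1=D^1uD^1$, but the direct induction above is self-contained and yields the explicit polynomial coefficients required later.
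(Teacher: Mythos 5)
Your proof is correct and follows essentially the same route as the paper: induction on $n$ via the three-term expansion $(uD^2+D^1)(u^mD^{n+m})=m^2u^{m-1}D^{n+m}+(2m+1)u^mD^{n+m+1}+u^{m+1}D^{n+m+2}$ for the first identity, then termwise differentiation and Pascal's rule for the second. Your explicit reduction of the coefficient recursion to $(a+m)^2=(n+1)^2$ is just a spelled-out version of the paper's identification of the bracketed expression with $\frac{(N+1)!}{m!}\binom{N+1}{m}$.
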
\begin{proof}It is clear that \eqref{eq:Bessel_it_pow} is valid for $n=0 $. Suppose that it holds for $ n=N\in \mathbb Z_{\geq0}$, then{\allowdisplaybreaks\begin{align}\begin{split}&
(uD^{2}+D^{1})^{N+1}f(u)=(uD^2+D^1)\sum_{m=0}^N\frac{N!}{m!}{N\choose m}u^{m}D^{N+m}f(u)\\={}&\sum_{m=0}^N\frac{N!}{m!}{N\choose m}
u^{m-1}[m^2D^{0}+(2 m +1)u D^{1}+u^2 D^{2}]D^{N+m}f(u)\\={}&u^{N+1}D^{2N}f(u)+\sum_{m=0}^N\left[(m+1)^2\frac{N!}{(m+1)!}{N\choose{ m+1}}+(2 m +1)\frac{N!}{m!}{N\choose m}
 \right.\\&\left.+\frac{N!}{(m-1)!}{N\choose{ m-1}}
\right]u^{m}D^{N+1+m}f(u).\end{split}
\end{align}}Since the last bracketed expression in equal to $ \frac{(N+1)!}{m!}{{N+1}\choose m}$, we know that  \eqref{eq:Bessel_it_pow}  also holds for $n=N+1$.

Now that we have verified \eqref{eq:Bessel_it_pow}  by induction, we may differentiate its right-hand side termwise, to obtain\begin{align}\begin{split}&
D^{1}(uD^{2}+D^{1})^{n}f(u)=D^1\sum_{m=0}^n\frac{n!}{m!}{n\choose m}u^{m}D^{n+m}f(u)
\\={}&\sum_{m=0}^n\frac{n!}{m!}{n\choose m}u^{m-1}(mD^{0}+uD^{1})D^{n+m}f(u)
\\={}&\sum_{m=0}^n\left[ (m+1)\frac{n!}{(m+1)!}{n\choose{ m+1}}+\frac{n!}{m!}{n\choose m} \right]u^{m}D^{n+m+1}f(u).
\end{split}\end{align}Here, the last bracketed expression can be identified with  $\frac{n!}{m!}{n+1\choose m+1}$, hence our claim. \end{proof}

   \begin{proposition}[Bessel moment representations of Wro\'nskian matrices]\label{prop:Bessel_mat}We have\begin{align}
\begin{split}&
 (\mathcal F_{m,j}^{1}(u),(-1)^{2-1}\mathcal F_{m,j}^{2}(u),\ldots,(-1)^{\left\lfloor\frac{ m+1}{2}\right\rfloor-1}\mathcal F_{m,j}^{\left\lfloor\frac{ m+1}{2}\right\rfloor}(u),\\&\sqrt{u}\acute{\mathcal F}_{m,j}^1(u),(-1)^{2-1}\sqrt{u}\acute{\mathcal F}_{m,j}^2(u),\ldots,(-1)^{k-2}\sqrt{u}\acute{\mathcal F}_{m,j}^{\left\lfloor\frac{ m}{2}\right\rfloor}(u))^{\mathrm T} \\
={}&\pmb{\boldsymbol \beta}_{m}(u)(D^0\mathcal F_{m,j}(u),\ldots,D^{m-1}\mathcal F_{m,j}(u))^{\mathrm T},\end{split}
\end{align} where the $ (a,b)$-th entries of the square matrix $ \pmb{\boldsymbol \beta}_{m}(u)\in\mathbb Z[u]^{m\times m}$ are given by\begin{align}
(\pmb{\boldsymbol \beta}_{m}(u))_{a,b}={}&\begin{cases}(-4)^{a-1}\frac{(a-1)!}{(b-a)!}{a-1\choose b-a}u^{b-a}, & a\in\mathbb Z\cap\left[1,\left\lfloor\frac{m+1}2\right\rfloor\right], \\[5pt]
(-4)^{a-\left\lfloor\frac{m+1}2\right\rfloor-1}\frac{2\left(a-\left\lfloor\frac{m+1}2\right\rfloor-1\right)!}{\left(b-a+\left\lfloor\frac{m+1}2\right\rfloor-1\right)!}  \Big(\begin{smallmatrix}a-\left\lfloor\frac{m+1}2\right\rfloor \\ b-a+\left\lfloor\frac{m+1}2\right\rfloor\end{smallmatrix}\Big) u^{b-a+\left\lfloor\frac{m+1}2\right\rfloor},& a\in\mathbb Z\cap\left[\left\lfloor\frac{m+1}2\right\rfloor+1,m\right]. \\
\end{cases}
\end{align}Here, it is understood that $ {N\choose M}=0$ and $ \frac{1}{(-N-1)!}=0$ if $ N\in\mathbb Z_{\geq0}$ and $ M\in\mathbb Z\smallsetminus[0,N]$.

\end{proposition}\begin{proof}This is essentially a transcription of the results from Lemma \ref{lm:Bessel_diff_iter}. [Note: we have abbreviated $ \pmb{\boldsymbol \beta}_{m}(1)$ as $ \pmb{\boldsymbol \beta}_{m}$ in \eqref{eq:Bessel_mat_beta}, by slight abuse of notation.] \end{proof}\begin{remark}It is not hard to show, through elementary manipulations of determinants, that $ |\det\pmb{\boldsymbol \beta}_{m}(u)|=2^{m(m-1)/2}|u|^{\left\lfloor \vphantom{\frac11}{m^2}/4 \right\rfloor}$. Therefore, the matrix $ \pmb{\boldsymbol \beta}_{m}(u)$ is invertible when $ u\neq0$.\eor\end{remark}

As we may recall from \cite[\S4]{Zhou2017BMdet}, upon suitable column manipulations, some elements of
the matrix $\pmb{\boldsymbol \beta}_{m}(u)\mathbf W_{m}(u) $  are  expressible as on-shell Bessel moments when $u\to1^-$. Such reductions have led us to factorizations of Wro\'nskian determinants into products of  Broadhurst--Mellit determinants  \cite[\S4]{Zhou2017BMdet}. In the next proposition, we transfer our earlier results onto
the matrix $\pmb{\boldsymbol \beta}_{m}(u)\mathbf W_{m}(u) $ explicitly.\begin{proposition}[Block tridiagonalization of Wro\'nskian matrices]\label{prop:lim_W} For $ k\in\mathbb Z_{>0}$, define $ \pmb{\boldsymbol \alpha}_{k}\in\mathbb Z^{(2k-1)\times(2k-1)}$ as \begin{align}
(\pmb{\boldsymbol \alpha}_{k})_{a,b}\colonequals \begin{cases}\delta_{a,b}-\delta_{a+k-1,b}, & a\in\mathbb Z\cap[2,k]; \\
\delta_{a,b}, & a\in\mathbb Z\cap(\{1\}\cup[k+1,2k-1]),\\
\end{cases}\label{eq:Amat_defn}
\end{align}and $\pmb{\boldsymbol\psi}_{k}\in\mathbb Z^{2k\times( 2k-1) },\pmb{\boldsymbol \varrho}_k\in\mathbb Z^{(2k-1)\times 2k}$ as\begin{align}
(\pmb{\boldsymbol\psi}_{k})_{a,b}\colonequals {}&\begin{cases}\delta_{a,b}(1-\delta_{a,k+1}), & a\in\mathbb Z\cap[1,k+1]; \\
\delta_{a,b+1}, & a\in\mathbb Z\cap[k,2k],\\
\end{cases}\\(\pmb{\boldsymbol \varrho}_k)_{a,b}\colonequals {}&\delta_{a,b},
\end{align} so that  the $(k+1)$-st column and the bottom row of $ \mathbf X\in\mathbb R^{2k\times 2k}$ are dropped to form $ \pmb{\boldsymbol \varrho}_k\mathbf X\pmb{\boldsymbol\psi}_{k}\in\mathbb R^{(2k-1)\times(2k-1)}$. For  period matrices $ \mathbf P_{m}\colonequals((-1)^{b-1}\pi^{a-\frac{m+3}{2}}\IKM(a,m+2-a;2b-1))_{1\leq a,b\leq\left\lfloor \frac{m+1}2 \right\rfloor}$ of all sizes,  we have\begin{align}
\pmb{\boldsymbol \beta}_{2k-1}(1)\mathbf W_{2k-1}(1)\pmb{\boldsymbol \alpha}_{k}^{\vphantom{\mathrm T}}={}&\begin{pmatrix}\mathbf P_{2k-1}^{\mathrm T} &    \\[5pt]
\acute{\mathbf{P}}_{2k-1}^{\mathrm T} & -\mathbf{P}_{2k-3}^{\mathrm T}  \\
\end{pmatrix}\label{eq:Mat1}
\intertext{and}
\pmb{\boldsymbol \varrho}_k\pmb{\boldsymbol \beta}_{2k}(1)\lim_{u\to1^-}\mathbf W_{2k}(u)\pmb{\boldsymbol\psi}_{k}\pmb{\boldsymbol \alpha}_{k}={}&\begin{pmatrix}\mathbf P_{2k}^{\mathrm T} &    \\[5pt]
\acute{\mathbf{P}}_{2k}^{\mathrm T} & -\mathbf{P}_{2k-2}^{\mathrm T}  \\
\end{pmatrix},\label{eq:Nat1}
\end{align} where the unwritten blocks in the top-right positions are   filled with zeros, and $\acute{\mathbf P}_m\colonequals\linebreak ((-1)^{b-1}\acute{\mathcal F}_{m,a}^b(1))_{1\leq a\leq\left\lfloor \frac{m+1}{2} \right\rfloor,1\leq b\leq\left\lfloor \frac{m-1}{2} \right\rfloor} $.\end{proposition}\begin{proof}Multiplying $ \pmb{\boldsymbol \alpha}_{k}$ to the right of a $ (2k-1)\times(2k-1)$ matrix, we are effectively subtracting its 2nd to $ k$-th columns from its last $ (k-1)$ columns. By definition, we have $ \mathcal{F}^\ell_{m,j}(1)-\mathcal{F}^\ell_{m,j+\left\lfloor \frac{m+1}2 \right\rfloor-1}(1)=0,\forall j\in\mathbb Z\cap\big[2,\left\lfloor \frac{m+1}2 \right\rfloor\big]$, accounting for the top-right blocks with vanishing entries in \eqref{eq:Mat1} and \eqref{eq:Nat1}. Meanwhile, the Wro\'nskian relation $ I_1(t)K_0(t)+I_0(t)K_1(t)=\frac{1}{t}$ leads us to an identity  $ \acute{\mathcal{F}}^\ell_{m,j}(1)-\acute{\mathcal F}^\ell_{m,j+\left\lfloor \frac{m+1}2 \right\rfloor-1}(1)=-\mathcal{F}^\ell_{m-2,j-1}(1),\forall j\in\mathbb Z\cap\big[2,\left\lfloor \frac{m+1}2 \right\rfloor\big]$, accounting for the bottom-right blocks  in \eqref{eq:Mat1} and \eqref{eq:Nat1}. It is clear that the bottom-left blocks $ \acute{\mathbf{P}}_{2k-1}^{\mathrm T}  $ and $ \acute{\mathbf{P}}_{2k}^{\mathrm T}$ originate from the intact entries in $ \pmb{\boldsymbol \beta}_{2k-1}(1)\mathbf W_{2k-1}(1)$ and $ \pmb{\boldsymbol \varrho}_k\pmb{\boldsymbol \beta}_{2k}(1)\lim_{u\to1^-}\mathbf W_{2k}(u)\pmb{\boldsymbol\psi}_{k}$.\end{proof}The configurations in \eqref{eq:Mat1} and \eqref{eq:Nat1} were precisely the forms that we prepared \cite[Propositions 4.3 and 4.6]{Zhou2017BMdet} for  our proof of the Broadhurst--Mellit determinant formulae. To get ready for later developments in \S\ref{subsec:onshell_quad} of the current article, we need to go one step further,  eliminating the bottom-left blocks $ \acute{\mathbf{P}}_{2k-1}^{\mathrm T}  $ and $ \acute{\mathbf{P}}_{2k}^{\mathrm T}$ in the next proposition.
\begin{proposition}[Block diagonalization of Wro\'nskian matrices]\label{prop:block_diag_W}Define square matrices $\pmb{\boldsymbol \vartheta}_{m}$, $\pmb{\boldsymbol \varphi}_{m}\in\linebreak\mathbb Q^{\left(2\left\lfloor \frac{m+1}{2}\right\rfloor-1\right)\times\left(2\left\lfloor \frac{m+1}{2}\right\rfloor-1\right)}$ as follows:{\allowdisplaybreaks\begin{align}(
\pmb{\boldsymbol \vartheta}_{m})_{a,b}\colonequals{}&\begin{cases}\delta_{a,b}, & a \in\mathbb Z\cap\left[1,\left\lfloor \frac{m+1}{2}\right\rfloor\right],\\
\delta_{a,b}+\frac{2\left(a-\left\lfloor \frac{m+1}{2}\right\rfloor\right)}{m+2}\delta_{a-\left\lfloor \frac{m+1}{2}\right\rfloor,b}, & a\in\mathbb Z\cap\left[\left\lfloor \frac{m+1}{2}\right\rfloor+1,2\left\lfloor \frac{m+1}{2}\right\rfloor-1\right];
\end{cases}\\(
\pmb{\boldsymbol \varphi}_{m})_{a,b}\colonequals{}&\begin{cases}\delta_{a,b}, & a \in\mathbb Z\cap\left[1,\left\lfloor \frac{m+1}{2}\right\rfloor\right],\\
\delta_{a,b}+\left(1-\frac{b}{m+2}\right)\delta_{a-\left\lfloor \frac{m+1}{2}\right\rfloor+1,b}, & a\in\mathbb Z\cap\left[\left\lfloor \frac{m+1}{2}\right\rfloor+1,2\left\lfloor \frac{m+1}{2}\right\rfloor-1\right].
\end{cases}
\end{align}}For $ k\in\mathbb Z_{>1}$, we have \begin{align}
\pmb{\boldsymbol \vartheta}_{2k-1}\pmb{\boldsymbol \beta}_{2k-1}(1)\mathbf W_{2k-1}(1)\pmb{\boldsymbol \alpha}_{k}^{\vphantom{\mathrm T}}\pmb{\boldsymbol \varphi}_{2k-1}={}&\begin{pmatrix}\mathbf P_{2k-1}^{\mathrm T} &    \\[5pt]
 & -\mathbf P_{2k-3}^{\mathrm T}   \\
\end{pmatrix}\intertext{and}\pmb{\boldsymbol \vartheta}_{2k}\pmb{\boldsymbol \varrho}_k\pmb{\boldsymbol \beta}_{2k}(1)\lim_{u\to1^-}\mathbf W_{2k}(u)\pmb{\boldsymbol\psi}_{k}\pmb{\boldsymbol \alpha}_{k}\pmb{\boldsymbol \varphi}_{2k}={}&\begin{pmatrix}\mathbf P_{2k}^{\mathrm T}  &  \\[5pt]
& -\mathbf P_{2k-2}^{\mathrm T}   \\
\end{pmatrix}.
\end{align}\end{proposition}\begin{proof}
 Clearly, it would suffice to check that  the matrix elements of  $ \acute{\mathbf P}_m$ can be alternatively represented by\begin{align}
(\acute{\mathbf P}_m)_{a,b}\colonequals {}&\begin{cases}(-1)^{b}\frac{2b}{m+2}\mathcal{F}^b_{m,1}(1), & a=1, \\[5pt]
(-1)^{b}\left[\frac{2b}{m+2}\mathcal{F}^b_{m,a}(1)-\left( 1-\frac{a}{m+2} \right)\mathcal{F}^b_{m-2,a-1}(1)\right], & a\in\mathbb Z\cap\big[2,\left\lfloor \frac{m+1}{2} \right\rfloor\big]; \\
\end{cases}\label{eq:P'alt}
\end{align}

Integrating by parts, we have \begin{align}\begin{split}\acute{\mathcal F}_{m,1}^{\ell}(1)\colonequals {}&
\frac1m\int_0^\infty I_1(t)[K_0(t)]^{m-1}t^{2\ell}\D t-\frac{m-1}{m}\int_0^\infty K_1(t)I_{0}(t)[K_0(t)]^{m-2}t^{2\ell -1}\D t\\={}&\frac1m\int_0^\infty t^{2\ell }\frac{\D}{\D t}\{I_{0}(t)[K_0(t)]^{m-1}\}\D t=-\frac{2\ell}{m} \mathcal{F}_{m,1}^{\ell}(1).\end{split}\label{eq:xi'1}
\end{align}Likewise, for $ j\in\mathbb Z\cap\big[2,\left\lfloor \frac{m+1}{2} \right\rfloor\big]$, we have {\allowdisplaybreaks\begin{align}\begin{split}
\acute{\mathcal F}_{m,j}^\ell(1): ={}&\int_0^\infty I_1(t)[I_{0}(t)]^{j-1}[K_0(t)]^{m-j}t^{2\ell}\D t=\frac{1}{j}\int_0^\infty [K_0(t)]^{m-j}t^{2\ell}\frac{\D}{\D t}[I_{0}(t)]^{j}\D t\\={}&-\frac{2\ell}{j}\mathcal{F}_{m,j}^{\ell}(1)+\frac{m-j}{j}\int_0^\infty K_1(t)[I_{0}(t)]^{j}[K_0(t)]^{m-j-1}t^{2\ell}\D t\\={}&-\frac{2\ell}{j}\mathcal{F}_{m,j}^{\ell}(1)+\frac{m-j}{j}\left[ \mathcal{F}_{m-2,j-1}^\ell(1)-\acute{\mathcal F}_{m,j}^\ell(1) \right].\end{split}
\end{align}}This can be rearranged into\begin{align}
\acute{\mathcal F}_{m,j}^\ell(1)=\left(1-\frac{j}{m}\right)\mathcal{F}_{m-2,j-1}^\ell(1)-\frac{2\ell}{m}\mathcal{F}_{m,j}^{\ell}(1),\label{eq:xi'j}
\end{align}which completes our task.
\end{proof}\subsection{Sum rules for on-shell and off-shell Bessel moments\label{subsec:sum_rules}}When $ u=\big(2j-\frac{1+(-1)^{m}}{2}\big)^2,j\in\mathbb Z\cap[1,\lfloor m/2\rfloor+1]$, certain functions among $ \mathcal{F}_{m,j}^{\ell}(u),\acute{\mathcal F}_{m,j}^{\ell}(u)$ for  $j\in\mathbb Z\cap[1,m+\lfloor m/2\rfloor+1]$  assume finite values, and are $ \mathbb Q$-linearly dependent. We refer to such  $ \mathbb Q$-linear dependence relations (together with their modest generalizations) as sum rules for Bessel moments at thresholds. Here, we choose the word ``threshold'' to acknowledge the fact that certain  functions among $ \mathcal{F}_{m,j}^{\ell}(u),\acute{\mathcal F}_{m,j}^{\ell}(u)$ for $j\in\mathbb Z\cap[1,m+\lfloor m/2\rfloor+1]$  go unbounded as  $ u\to\big(2j-\frac{1+(-1)^{m}}{2}\big)^2\pm0^+,j\in\mathbb Z\cap[1,\lfloor m/2\rfloor+1]$. (See Lemma \ref{lm:loc} for quantitative characterization of such divergent behavior near the threshold.)

These sum rules with rational coefficients will be vital to the  analysis  of Wro\'nskian cofactors in \S\ref{subsec:threshold}, which revolves around the identity \begin{align}
\begin{pmatrix*}[r](-1)^{m+1}W\big[\reallywidehat{\mathcal{F}_{m,1}(u)},\dots,{\mathcal{F}_{m,r}(u),}\dots,\mathcal{F}_{m,m}(u)\big] \\
\vdots\phantom{spacespacespace} \\
(-1)^{m+r}W\big[\mathcal{F}_{m,1}(u),\dots,\reallywidehat{\mathcal{F}_{m,r}(u)},\dots,\mathcal{F}_{m,m}(u)\big] \\
\vdots\phantom{spacespacespace} \\
W\big[\mathcal{F}_{m,1}(u),\dots,{\mathcal{F}_{m,r}(u),}\dots,\reallywidehat{\mathcal{F}_{m,m}(u)}\big] \\
\end{pmatrix*}= \frac{\Lambda_m}{|\mathcal L_{m}(u)|^{\frac{m-2}{2}}}\mathbf S_m\begin{pmatrix*}[r]\mathcal{F}_{m,1}(u) \\[3pt]
\vdots\phantom{11\,\,} \\[3pt]
\mathcal{F}_{m,r}(u) \\[3pt]
\vdots\phantom{11\,\,} \\[3pt]
\mathcal{F}_{m,m}(u) \\
\end{pmatrix*}.\label{eq:Sm_fit}
\end{align}Here on the left-hand side,  a caret indicates removal of the term underneath while evaluating the Wro\'nskian determinant of $(m-1)$ functions; on the right-hand side, we have [see \eqref{eq:detWm}]\begin{align}
\Lambda_m\colonequals W_{m}(u)|\mathcal L_{m}(u)|^{m/2}=\frac{m+1}{m+2}\frac{\smash[t]{(-1)^{\left\lfloor m/4\right\rfloor }}}{2^{ m (m-1)/2}}\frac{[(m+1)!]^m}{\prod _{n=1}^{m+1} n^n}\label{eq:Lambda_m_defn}
\end{align}and $ \mathbf S_m\in\mathbb Q^{m\times m}$ is a matrix filled with rational numbers. At the end of this section, we will illustrate the usefulness of $ \mathbb Q$-linear sum rules with an explicit computation of $ \mathbf S_3$ (Corollary \ref{cor:S3_comp}), while deferring the treatment of generic $\mathbf S_m$ until the next section.

To facilitate  future discussion of sum rules, we introduce the following  formal notations {for $m\in\mathbb Z_{>0},s\in\mathbb Z\cap\big[1,\left\lfloor \frac{m}{2}\right\rfloor+1\big]$:\footnote{It is understood that expressions like $ \sum_{j=1}^0(\cdots)$ are empty sums, hence vanishing.} }{\allowdisplaybreaks\begin{align}\begin{split}\mathcal I_{m,s}^\ell(u)\colonequals {}&(m+2)\mathcal{F}_{m,1}^{\ell}(u)+\sum_{j=1}^{\left\lfloor\frac{\left\lfloor m/2\right\rfloor+1-s}{2}\right\rfloor}(-1)^{j}{\left\lfloor \frac{m}{2}\right\rfloor+1-s\choose 2j}\mathcal{F}_{m,2j+1}^{\ell}(u)\\{}&-(-1)^{m}i\sum_{j=0}^{\left\lfloor\frac{\left\lfloor m/2\right\rfloor-s}{2}\right\rfloor}(-1)^{j}{\left\lfloor \frac{m}{2}\right\rfloor+1-s\choose 2j+1} \mathcal F_{m,2j+2}^{\ell}(u),
\label{eq:GI_sum}\end{split}\\\begin{split}\mathcal K_{m,s}^\ell(u)\colonequals {}&-\sum_{j=1}^{\left\lfloor\frac{\left\lfloor(m+1)/2\right\rfloor-1+s}{2}\right\rfloor}(-1)^{j}\left[{\left\lfloor \frac{m}{2}\right\rfloor+1-s\choose 2j+1}+{\left\lfloor \frac{m+1}{2}\right\rfloor+s\choose 2j+1}\right]\mathcal F_{m,2j+\left\lfloor \frac{m}{2}\right\rfloor+1}^{\ell}(u)\\&-(-1)^{m}i\sum_{j=1}^{\left\lfloor\frac{\left\lfloor(m+1)/2\right\rfloor+s}{2}\right\rfloor}(-1)^{j}\left[{\left\lfloor \frac{m}{2}\right\rfloor+1-s\choose 2j}-{\left\lfloor \frac{m+1}{2}\right\rfloor+s\choose 2j}\right]\mathcal F_{m,2j+\left\lfloor \frac{m}{2}\right\rfloor}^{\ell}(u).\end{split}
\label{eq:GK_sum}\end{align}}The domains of definition for the real-valued functions $ \R\mathcal I_{m,s}^\ell(u)$, $ \I \mathcal I_{m,s}^\ell(u)$ and so forth will be clear from context.\footnote{Note that $ \R\mathcal I_{m,s}^\ell(u)$ and  $ \I \mathcal I_{m,s}^\ell(u)$ do not necessarily share their domains of definition, neither do  $ \R\mathcal K_{m,s}^\ell(u)$ and  $ \I \mathcal K_{m,s}^\ell(u)$. In view of this, the notations   $ \mathcal I_{m,s}^\ell(u)$,  $  \mathcal K_{m,s}^\ell(u)$ and so forth are convenient short-hands, instead of complex-valued functions in the usual sense.
} We also write $ \acute{\mathcal I}_{m,s}^\ell(u)$ [resp.\ $ \acute{\mathcal K}_{m,s}^\ell(u)$] for expressions that trade $\mathcal F $   in  $ {\mathcal I}_{m,s}^\ell(u)$ [resp.\ $ {\mathcal K}_{m,s}^\ell(u)$] with $ \acute{\mathcal F}$, and set
  $ \mathsf F_{k,s}^\ell=\mathcal I^\ell_{2k-1,s}-\mathcal K^\ell_{2k-1,s}$, $ \acute{\mathsf F}_{k,s}^\ell=\acute{\mathcal I}^\ell_{2k-1,s}-\acute{\mathcal K}^\ell_{2k-1,s}$,  $ \mathsf G_{k,s}^\ell=\mathcal I^\ell_{2k,s}-\mathcal K^\ell_{2k,s}$,  $ \acute{\mathsf G}_{k,s}^\ell=\acute{\mathcal I}^\ell_{2k,s}-\acute{\mathcal K}^\ell_{2k,s}$.

\begin{proposition}[Sum rules at thresholds]\label{prop:u_sq}In what follows, assume that $ k\in\mathbb Z_{>1}$. \begin{enumerate}[leftmargin=*,  label=\emph{(\alph*)},ref=(\alph*),
widest=a, align=left]
\item For $ s\in\mathbb Z\cap[1,k],\ell\in\mathbb Z\cap\big[1,\left\lfloor\frac{k}{2}\right\rfloor\big]$, we have $\mathsf F_{k,s}^\ell((2s)^2)=0 $; for $
s\in\mathbb Z\cap[1,k],\ell\in\mathbb Z\cap\big[1,\left\lfloor\frac{k-1}{2}\right\rfloor\big]$, we have $ \acute{\mathsf F}_{k,s}^\ell((2s)^2)=0$.

\item
For $ s\in\mathbb Z\cap[1,k+1],\ell\in\mathbb Z\cap\big[1,\left\lfloor\frac{k}{2}\right\rfloor\big]$, we have $ \mathsf G_{k,s}^\ell((2s-1)^2)=0$; for $
s\in\mathbb Z\cap[1,k+1],\ell\in\mathbb Z\cap\big[1,\left\lfloor\frac{k-1}{2}\right\rfloor\big]$, we have  $ \acute{\mathsf G}_{k,s}^\ell((2s-1)^2)=0$.
\end{enumerate}
\end{proposition}\begin{proof}As in \cite[\S2.1, \S3]{Zhou2018ExpoDESY} and \cite[Remark after Lemma 3.2]{Zhou2018LaportaSunrise}, we will prove  these sum rules
by studying contour integrals involving the cylindrical Hankel functions  $ H^{(1)}_0(z), H_0^{(2)}(z),H_1^{(1)}(z),H_1^{(2)}(z)$ for $ z\in\mathbb C\smallsetminus(-\infty,0]$, whose  asymptotic behavior reads\begin{align}
\left\{\begin{array}{c}
H_n^{(1)}(z)=\sqrt{\frac{2}{\pi z}}e^{+i\left(z-\frac{n\pi}{2}-\frac{\pi}{4}\right)}\left[1+O\left( \frac{1}{|z|} \right)\right] \\
H_n^{(2)}(z)=\sqrt{\frac{2}{\pi z}}e^{-i\left(z-\frac{n\pi}{2}-\frac{\pi}{4}\right)}\left[1+O\left( \frac{1}{|z|} \right)\right] \\
\end{array}\right.\label{eq:H0H1_asympt}
\end{align}for $ n\in\{0,1\}$, as  $ |z|\to\infty,-\pi<\arg z<\pi$.
 Furthermore,  these Hankel functions are related to modified Bessel functions $I_0(t),K_0(t),I_1(t),K_1(t) $ for $ t>0$ in the following manner:{\allowdisplaybreaks\begin{align}
\left\{\begin{array}{l}
H^{(1)}_n(+it)=-\frac{2i^{1-n}}{\pi} K_n(t)\\
H^{(1)}_n(-it)=\frac{2}{i^{n}} I_n(t)-\frac{2i^{n+1}}{\pi} K_n(t)\\
\end{array}\right.\text{ and  }\,\,\left\{\begin{array}{l}
H^{(2)}_n(+it)=2i^nI_n(t)+\frac{2i^{1-n}}{\pi}K_n(t) \\
H^{(2)}_n(-it)=\frac{2i^{n+1}}{\pi}K_n(t) \\
\end{array}\right.\label{eq:HnInKn}
\end{align}}where $ n\in\{0,1\}$.
\begin{enumerate}[leftmargin=*,  label={(\alph*)},ref=(\alph*),
widest=a, align=left]
\item
Armed with the aforementioned preparations, we  use two methods to evaluate the  contour integral\begin{align}
\int_{-i\infty}^{i\infty}H_n^{(2)}(2sz)[H_0^{(1)}(z)]^{2s}[H_0^{(1)}(z)H_0^{(2)}(z)]^{k-s}z^{2\ell+n-1}\D z,\label{eq:H_vert_int}
\end{align}which converges for  $ k\in\mathbb Z_{>1},
s\in\mathbb Z\cap[1,k],n\in\{0,1\},\ell\in\mathbb Z\cap\big[1,\left\lfloor\frac{k-n}{2}\right\rfloor\big]$. First, by the asymptotic behavior of cylindrical Hankel functions in \eqref{eq:H0H1_asympt}, we can close the contour to the right, and show that the integral vanishes. Second,  with the aid of     \eqref{eq:HnInKn}, we may recognize the contour  integral as a constant multiple of\begin{align}\begin{split}&\int_0^\infty[i^n\pi I_n(2st)+i^{1-n}K_n(2st)][-iK_0(t)]^{k+s}[\pi I_{0}(t)+iK_{0}(t)]^{k-s}t^{2\ell+n-1}\D t\\
&-(-1)^{n}\int_0^\infty i^{n+1}K_{n}(2st)[\pi I_{0}(t)-iK_{0}(t)]^{k+s}[iK_{0}(t)]^{k-s}t^{2\ell+n-1}\D t\end{split}
\end{align}For $ n=0$,  the last vanishing integral reduces to a sum rule\begin{align}\begin{split}&
\int_0^\infty[\pi I_0(2st)+iK_0(2st)]\sum_{j=0}^{k-s}{k-s\choose j}[\pi I_{0}(t)]^j[iK_{0}(t)]^{2k-j}t^{2\ell+n-1}\D t\\&-\int_0^\infty iK_{0}(2st)\sum_{j=0}^{k+s}{k+s\choose j}[\pi I_{0}(t)]^j[-iK_{0}(t)]^{2k-j}t^{2\ell+n-1}\D t=0,\end{split}
\end{align}which is equivalent to  $ \mathsf F_{k,s}^\ell((2s)^2)=0$. For $ n=1$, we can deduce  $\acute{\mathsf F}_{k,s}^\ell((2s)^2)=0$ instead.
\item
When   $ k\in\mathbb Z_{>1},
s\in\mathbb Z\cap[1,k+1],n\in\{0,1\},\ell\in\mathbb Z\cap\big[1,\left\lfloor\frac{k-n}{2}\right\rfloor\big]$, we can evaluate\begin{align}
\int_{-i\infty}^{i\infty}H_n^{(2)}((2s-1)z)[H_0^{(1)}(z)]^{2s-1}[H_0^{(1)}(z)H_0^{(2)}(z)]^{k+1-s}z^{2\ell+n-1}\D z=0
\end{align}by closing the contour to the right, while noting that the integrand has $ O(|z|^{-2})$ behavior for large $|z|$. This can be reinterpreted into the sum rules $ \mathsf G_{k,s}^\ell((2s-1)^2)=0$ and  $ \acute{\mathsf G}_{k,s}^\ell((2s-1)^2)=0$.
 \qedhere\end{enumerate}
\end{proof}

The sum rules in the last proposition will be essential to local analysis\footnote{Here, by ``local analysis'', we refer to asymptotic expansions for both sides  of  \eqref{eq:Sm_fit} (with appropriate real-analytic continuations if necessary) when $u$ approaches a threshold. Corollary \ref{cor:S3_comp} illustrates such local analysis in the case where  $ m=3$.} in \S\ref{subsec:threshold}, especially the determination of the blocks   $ \mathbf S_m^A$ [as given in \eqref{eq:SAmat_defn}] and $ \mathbf S_m^B=(-1)^{m+1}(\mathbf S_m^C)^{\mathrm T}$ [as given in \eqref{eq:SBmat_defn}] for the partitioned matrix  $  {{\mathbf S_m}}=\left(\begin{smallmatrix}\mathbf S_m^A &\mathbf  S_m^B \\
\mathbf S_m^C &\mathbf  S_m^D \\
\end{smallmatrix}\right)$ that fits into \eqref{eq:Sm_fit}.
To compute the block $ \mathbf  S_m^D$, and to complete the proof of Theorem \ref{thm:W_alg} in \S\ref{subsec:offshell_quad}, we will  need to investigate another  threshold $u\to0^+$ in the next  proposition.

In what follows, we define rational numbers  $ \mathsf H_{k,n}$ for $ k,n\in\mathbb Z_{>0}$ through the following asymptotic expansion (cf.~\cite[(66)]{Zhou2018ExpoDESY}):\begin{align}h_{k,N}(z)\colonequals
\left[\frac{\pi}{2}H^{(1)}_0(z)H^{(2)}_0(z)\right]^k-\sum^N_{n=1}\frac{\mathsf H_{k,n}}{z^{2n+k-2}}=O\left( \frac{1}{|z|^{2N+k}} \right),\label{eq:hmn_defn}
\end{align}where $ k,N\in\mathbb Z_{>0}$.  These rational numbers also turn up in the relations for generalized Crandall numbers (cf.~\cite[(65)]{Zhou2018ExpoDESY}): \begin{align}(-1)^{n+1}
\left(\frac{\pi}{2}\right)^k {\mathsf H_{k,n}}={}&\int_0^\infty \frac{[\pi I_0(t)+i K_{0}(t)]^{k}-[\pi I_0(t)-i K_{0}(t)]^{k}}{\pi i}[K_0(t)]^{k}t^{2n+k-3}\D t.\label{eq:Crandall_num}
\end{align}By the convention that  empty sums must vanish, we retroactively define $ h_{k,0}(z)=\big[\frac{\pi}{2}H^{(1)}_0(z)H^{(2)}_0(z)\big]^k$ and $ \mathsf H_{k,0}=0$. Furthermore, we introduce a function\begin{align}
\mathsf L_{k}(u,t)\colonequals {}&\sum _{n=1}^{\infty } \frac{\pi^{1-k} u^{n+\frac{k}{2}-1}t^{2n+k-3} }{2^{2 n-1}\left[ \Gamma \left(n+\frac{k}{2}\right)\right]^2}\equiv\frac{  u^{k/2} t^{k-1}}{  2\pi^{k-1}\left[\Gamma \left(\frac{k}{2}+1\right)\right]^2}{_1F_2}\left(\left.\begin{array}{c}
1 \\
\frac{k}{2}+1,\frac{k}{2}+1 \\
\end{array}\right|\frac{ut^2 }{4}\right).
\end{align} For each fixed $u\in(0,1)$, we have $\mathsf L_{k}(u,t)=O(1) $ as $ t\to0^+$, and $\mathsf L_{k}(u,t)=O(e^{\sqrt{u}t}/\sqrt{t}) $  as $ t\to\infty$, so the dominated convergence theorem allows us to  conclude that either side of  \begin{align}
\int_0^\infty \frac{[\pi I_0(t)+i K_{0}(t)]^{k}-[\pi I_0(t)-i K_{0}(t)]^{k}}{\pi i}[K_0(t)]^{k}\mathsf L_{k}(u,t)\D t={}&\sum _{n=1}^{\infty } \frac{\pi^{} (-1)^{n+1} \mathsf H_{k,n}u^{n+\frac{k}{2}-1}}{2^{2 n+k-1}\left[ \Gamma \left(n+\frac{k}{2}\right)\right]^2}
\end{align}represents a continuous function in the variable $u\in(0,1)$.\begin{proposition}[``Sum rules'' at threshold $ u\to0^+$]\label{prop:u0_sum}\begin{enumerate}[leftmargin=*,  label=\emph{(\alph*)},ref=(\alph*),
widest=a, align=left]
\item For $\ell\in\mathbb Z_{>0} $, we have\footnote{The asymptotic behavior of $ \mathcal{F}^\ell_{2k-1,2k-1}(u)$ and  $ \acute{\mathcal F}^\ell_{2k-1,2k-1}(u)$ was misstated as $ O(1/\sqrt{u})$ (irrespective of $\ell$) in \cite[Proposition 4.4]{Zhou2017BMdet}. The corrections here do not affect the  proof of  \cite[Proposition 4.4]{Zhou2017BMdet}, the latter of which only requires $ \lim_{u\to0^+}{u^{k/2}\mathcal{F}^\ell_{2k-1,2k-1}(u)}\log u=\lim_{u\to0^+}{u^{k/2}\sqrt{u}\acute{\mathcal F}^\ell_{2k-1,2k-1}(u)}\log u=0$ for $ \ell\in\mathbb Z\cap[1,k-1]$ to work properly. }{\allowdisplaybreaks\begin{align}
\mathcal F^\ell_{2k-1,2k-1}(u)=O\left( \frac{1}{(\sqrt{u})^{\max\{1,2\ell-k\}}} \right),\; \sqrt{u}\acute{\mathcal F}^\ell_{2k-1,2k-1}(u)=O\left( \frac{1}{(\sqrt{u})^{\max\{1,2\ell-k\}}} \right),\label{eq:bare_asympt}
\end{align}}as  $ u\to0^+$.

For each $k\in1+2\mathbb Z_{\geq0}$ and $ \ell\in\mathbb  Z_{>0}$, we have the following quantitative refinements of \eqref{eq:bare_asympt} for  $u\in(0,1)$: {\allowdisplaybreaks\begin{align}\begin{split}O_k^{\ell}(u)\colonequals {}&(2k+1)\mathcal{F}^\ell_{2k-1,1}(u)+\sum_{b=2}^{k}\frac{1+(-1)^{b-1}}{2(-1)^{\left\lfloor \frac{b-1}{2}\right\rfloor }}\binom{k}{b-1}{} \mathcal{F}^\ell_{2k-1,b}(u)+\sum_{b=k+1}^{2k-1}\frac{1+(-1)^{b-k}}{(-1)^{\left\lfloor \frac{b-k}{2}\right\rfloor }}\binom{k}{b-k+1}{} \mathcal{F}^\ell_{2k-1,b}(u)\\={}&4^\ell(uD^2+D^1)^{\ell}\int_0^\infty \frac{[\pi I_0(t)+i K_{0}(t)]^{k}-[\pi I_0(t)-i K_{0}(t)]^{k}}{(-1)^{\frac{k+1}{2}}2^{k}\pi i}[K_0(t)]^{k}\mathsf L_{k}(u,t)\D t;\end{split}\label{eq:Okl}\\\begin{split}
\acute O_k^\ell(u)\colonequals {}&(2k+1)\acute{\mathcal{F}}^\ell_{2k-1,1}(u)+\sum_{b=2}^{k}\frac{1+(-1)^{b-1}}{2(-1)^{\left\lfloor \frac{b-1}{2}\right\rfloor }}\binom{k}{b-1}{} \acute{\mathcal{F}}^\ell_{2k-1,b}(u)+\sum_{b=k+1}^{2k-1}\frac{1+(-1)^{b-k}}{(-1)^{\left\lfloor \frac{b-k}{2}\right\rfloor }}\binom{k}{b-k+1}{} \acute{\mathcal{F}}^\ell_{2k-1,b}(u)\\={}&4^\ell\sqrt{u}D^{1}(uD^2+D^1)^{\ell}\int_0^\infty \frac{[\pi I_0(t)+i K_{0}(t)]^{k}-[\pi I_0(t)-i K_{0}(t)]^{k}}{(-1)^{\frac{k+1}{2}}2^{k-1}\pi i}[K_0(t)]^{k}\mathsf L_{k}(u,t)\D t.\end{split}\tag{\ref{eq:Okl}$'$}\label{eq:O'kl}
\end{align}}

For each
 $k\in2\mathbb Z_{>0}$ and $ \ell\in\mathbb  Z_{>0}$, we have the following quantitative refinements of \eqref{eq:bare_asympt} in  the $ u\to0^+$ regime: {\allowdisplaybreaks\begin{align}\begin{split}E_k^{\ell}(u)\colonequals {}&\sum_{b=k+1}^{2k-1}\frac{1-(-1)^{b-k}}{(-1)^{\left\lfloor \frac{b-k}{2}\right\rfloor }}\binom{k}{b-k+1}{} \mathcal{F}^\ell_{2k-1,b}(u)+\frac{2\log\frac{\sqrt{u}}{2}}{\pi}\left[ (2k+1)\mathcal{F}^\ell_{2k-1,1}(u)+\vphantom{\binom{k}{b}}\right.\\&\left.+\sum_{b=k+1}^{2k-1}\frac{1+(-1)^{b-k}}{(-1)^{\left\lfloor \frac{b-k}{2}\right\rfloor }}\binom{k}{b-k+1}{} \mathcal{F}^\ell_{2k-1,b}(u) \right]-\frac{\log^{2}\frac{\sqrt{u}}{2}}{\pi^{2}}\sum_{b=2}^k \frac{1+(-1)^b}{(-1)^{\left\lfloor \frac{b}{2}\right\rfloor }} \binom{k}{b-1}\mathcal{F}^\ell_{2k-1,b}(u)\\={}&\mathscr E_k^\ell(u)+\sum_{n\in\mathbb Z\cap\left[1,\max\left\{0,\ell-\frac{k}{2}\right\}\right]}\frac{ 2^{2 (\ell- n-k)+1}\left[ \left(\ell-n-\frac{k}{2}\right)!\right]^2}{(-1)^{n+k/2}\pi u^{\ell-n+1-\frac{k}{2}}}\mathsf H_{k,n},\end{split}\label{eq:Ekl}\\\begin{split}
\acute E_k^{\ell}(u)\colonequals {}&\sum_{b=k+1}^{2k-1}\frac{1-(-1)^{b-k}}{(-1)^{\left\lfloor \frac{b-k}{2}\right\rfloor }}\binom{k}{b-k+1}{} \acute{\mathcal{F}}^\ell_{2k-1,b}(u)+\frac{2\log\frac{\sqrt{u}}{2}}{\pi}\left[ (2k+1)\acute{\mathcal{F}}^\ell_{2k-1,1}(u)+\vphantom{\binom{k}{b}}\right.\\&\left.+\sum_{b=k+1}^{2k-1}\frac{1+(-1)^{b-k}}{(-1)^{\left\lfloor \frac{b-k}{2}\right\rfloor }}\binom{k}{b-k+1}{} \acute{\mathcal{F}}^\ell_{2k-1,b}(u) \right]-\frac{\log^{2}\frac{\sqrt{u}}{2}}{\pi^{2}}\sum_{b=2}^k \frac{1+(-1)^b}{(-1)^{\left\lfloor \frac{b}{2}\right\rfloor }} \binom{k}{b-1}\acute{\mathcal{F}}^\ell_{2k-1,b}(u)\\={}&\frac{\acute{\mathscr E}_k^\ell(u)}{\sqrt{u}}+\sum_{n\in\mathbb Z\cap\left[1,\max\left\{0,\ell-\frac{k}{2}\right\}\right]}\frac{ 4^{\ell- n-k+1} \left(\ell-n-\frac{k}{2}\right)!\left(\ell-n-\frac{k}{2}+1\right)!}{(-1)^{n-1+k/2}\pi u^{\ell-n+\frac{3}{2}-\frac{k}{2}}}\mathsf H_{k,n},\end{split}\label{eq:E'kl}
\tag{\ref{eq:Ekl}$'$}\end{align}}where $ \mathscr E_k^\ell(u)$ and $ \acute{\mathscr E}_k^\ell(u)$ are holomorphic in a non-void $ \mathbb C$-neighborhood of $u=0$.

 Furthermore, if a function in $\Span_{\mathbb C}\{\mathcal F_{2k-1,b}(u),u\in(0,\varepsilon)|b\in\mathbb Z\cap(\{1\}\cup[k+1,2k-1])\} $ (where $ 0<\varepsilon<1$) extends to a  holomorphic function in a non-void $\mathbb C$-neighborhood of $u=0$, then it must be identically zero. \item If a function in $\Span_{\mathbb C}\{\mathcal F_{2k,b}(u),u\in(0,\varepsilon)|b\in\mathbb Z\cap(\{1\}\cup[k+2,2k])\} $ (where $ 0<\varepsilon<1$) extends to a  holomorphic function in a non-void $\mathbb C$-neighborhood of $u=0$, then it must be identically zero.\end{enumerate} \end{proposition}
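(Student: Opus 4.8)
Every assertion is local information about Vanhove's equations $\widetilde L_{2k-1}f=0$ and $\widetilde L_{2k}f=0$ at the regular singular point $u=0$ furnished by Corollary \ref{cor:reg_sing}. I would first prove the asymptotic refinements \eqref{eq:bare_asympt}--\eqref{eq:E'kl} of~(a), and then extract from them the holomorphicity statements in (a) and (b) by a short Frobenius argument.

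\textbf{The asymptotics of (a).} By Lemma \ref{lm:Bessel_diff_iter} it suffices to treat the case $\ell=1$ and propagate $\ell$ through $(uD^2+D^1)^\ell$ and $\sqrt u\,D^1(uD^2+D^1)^\ell$. Forming the $\binom{k}{\bullet}$-weighted combinations of the $\smash{\underset{^\smile}{\mu}}^\ell_{k,b}(u)$ appearing in \eqref{eq:Okl} and \eqref{eq:Ekl} collapses, via the binomial expansion of $[\pi I_0(t)\pm iK_0(t)]^k$, to integrals of the $\R$- and $\I$-parts of $[\pi I_0(t)+iK_0(t)]^k[K_0(t)]^kt^{2\ell-1}$ against $I_0(\sqrt u t)$ (for the $\IvKM$-block) or $K_0(\sqrt u t)$ (for the $\IKvM$-block). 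Substituting $I_0(z)=\sum_{n\geq0}(z/2)^{2n}/(n!)^2$ and $K_0(z)=-(\log(z/2)+\gamma)I_0(z)+\sum_{n\geq1}(z/2)^{2n}H_n/(n!)^2$ with $z=\sqrt u t$ and integrating term by term is legitimate wherever the tail factor $e^{\sqrt u t}$ is dominated by the Bessel decay, which covers every block except the one built on $[I_0(t)K_0(t)]^kt$, i.e.\ the one carrying $\smash{\underset{^\smile}{\mu}}^\ell_{k,2k-1}(u)$. For that block the naive splitting of $K_0(\sqrt u t)$ yields a divergent integral, so I would use the regularization already prepared in \eqref{eq:hmn_defn}--\eqref{eq:Crandall_num}: writing $\big[\tfrac\pi2 H^{(1)}_0(z)H^{(2)}_0(z)\big]^k$ as its principal asymptotic part $\sum_{n\leq N}\mathsf H_{k,n}z^{2-2n-k}$ plus the fast-decaying remainder $h_{k,N}(z)$, and matching the principal part against $\mathsf L_k(u,t)$, reproduces exactly the right-hand sides of \eqref{eq:Okl}--\eqref{eq:E'kl}, precisely along the lines of \cite{Zhou2018ExpoDESY}; the bound \eqref{eq:bare_asympt} then drops out. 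The qualitative upshot I will need: in its generalized-power expansion at $u=0$, $\smash{\underset{^\smile}{\mu}}^1_{k,2k-1}(u)$ carries a \emph{strictly more singular} term than any $\smash{\underset{^\smile}{\mu}}^1_{k,b}(u)$ with $b\in\mathbb Z\cap[k+1,2k-2]$ — a nonzero $u^{k/2-1}(\log u)^2$ term when $k$ is even, a nonzero $u^{k/2-1}$ (half-integer-power) term when $k$ is odd. Equivalently, since $I_0(t)K_0(t)\sim\tfrac1{2t}(1+O(t^{-2}))$ has a purely power-law tail, the Mellin transform in $\sqrt u$ of $\int_0^\infty K_0(\sqrt u t)[I_0(t)K_0(t)]^kt\,\D t$ has at $\sigma=2-k$ a simple pole for odd $k$ and a triple pole for even $k$, because $2-k$ is a pole of $2^{\sigma-2}\Gamma(\sigma/2)^2$ exactly when $k$ is even, whereas the analogous integrands for $b\in[k+1,2k-2]$ (where $[I_0(t)]^p[K_0(t)]^q$ has $p<q$ and so decays exponentially) have no pole there at all.

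\textbf{Frobenius picture and the logarithmic structure.} By the non-vanishing of the Wronskians $\varOmega_{2k-1}(u)$ and $\omega_{2k}(u)$ recorded in \S\ref{subsec:W_Vanhove}, $\{\smash{\underset{^\smile}{\mu}}^1_{k,j}(u):j\in\mathbb Z\cap[1,2k-1]\}$ is a basis of $\ker\widetilde L_{2k-1}$ and $\{\smash{\underset{^\smile}{\nu}}^1_{k,j}(u):j\in\mathbb Z\cap[1,2k]\}$ a basis of $\ker\widetilde L_{2k}$. The $\IvKM$-type members $\smash{\underset{^\smile}{\mu}}^1_{k,j}(u)=\pi^{j-k-1}\int_0^\infty I_0(\sqrt u t)[I_0(t)]^{j-1}[K_0(t)]^{2k+1-j}t\,\D t$ with $j\in\mathbb Z\cap[2,k]$ (resp.\ $\smash{\underset{^\smile}{\nu}}^1_{k,j}(u)$ with $j\in\mathbb Z\cap[2,k+1]$) are holomorphic in $u$ on a disk about $u=0$, since $I_0(\sqrt u t)=\sum_{n\geq0}(ut^2/4)^n/(n!)^2$ and the defining integral converges for $|u|$ small; so the holomorphic-solution space $\mathcal H$ contains the span $W_1$ of these, and the full solution space is $W_1\oplus W_2$ with $W_2$ the span appearing in the statement. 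For the remaining generators, when $b\in\{1\}\cup\mathbb Z\cap[k+1,2k-2]$ the defining integral of $\smash{\underset{^\smile}{\mu}}^1_{k,b}(u)$ is $\int_0^\infty K_0(\sqrt u t)[I_0(t)]^p[K_0(t)]^q t\,\D t$ (plus, for $b=1$, an extra holomorphic $\IvKM$-piece) with $p<q$, so $[I_0(t)]^p[K_0(t)]^q$ decays exponentially and the splitting of $K_0(\sqrt u t)$ is term-by-term convergent; this gives $\smash{\underset{^\smile}{\mu}}^1_{k,b}(u)=-\tfrac12(\log u)\,g_b(u)+h_b(u)$ with $g_b,h_b$ holomorphic near $u=0$, and a one-line index computation shows $g_b$ is a nonzero scalar multiple of $\smash{\underset{^\smile}{\mu}}^1_{k,b-k+2}(u)$ (with $b-k+2:=2$ when $b=1$), i.e.\ of one of the basis vectors of $W_1$; as $b$ runs over $\{1\}\cup[k+1,2k-2]$ these exhaust $\{\smash{\underset{^\smile}{\mu}}^1_{k,j}(u):j\in[2,k]\}$ up to scalars, hence are linearly independent. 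The very same computation applies to the $\nu$'s, where \emph{every} generator of $W_2$, including $\smash{\underset{^\smile}{\nu}}^1_{k,2k}(u)$, has $p<q$: one obtains $\smash{\underset{^\smile}{\nu}}^1_{k,b}(u)=-\tfrac12(\log u)\,\tilde g_b(u)+\tilde h_b(u)$ with $\tilde g_b$ a nonzero multiple of $\smash{\underset{^\smile}{\nu}}^1_{k,b-k+1}(u)$ (with $b-k+1:=2$ when $b=1$), and these $\tilde g_b$ exhaust $\{\smash{\underset{^\smile}{\nu}}^1_{k,j}(u):j\in[2,k+1]\}$ up to scalars.

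\textbf{Conclusion, and the main obstacle.} If $f=\sum_{b\in\{1\}\cup[k+1,2k-1]}c_b\smash{\underset{^\smile}{\mu}}^1_{k,b}(u)$ extends holomorphically near $u=0$, then in the generalized-power expansion at $u=0$ the $u^{k/2-1}(\log u)^2$ term (even $k$) or $u^{k/2-1}$ term (odd $k$) can arise only from $\smash{\underset{^\smile}{\mu}}^1_{k,2k-1}(u)$ with a nonzero coefficient, so $c_{2k-1}=0$; then $f=-\tfrac12(\log u)\big(c_1g_1(u)+\sum_{b=k+1}^{2k-2}c_bg_b(u)\big)+(\text{holomorphic})$, and since $(\log u)G(u)$ with $G$ holomorphic is holomorphic at $u=0$ only when $G\equiv0$, we get $c_1g_1+\sum_b c_bg_b\equiv0$, whence all $c_b=0$ by the linear independence just noted. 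This is the final claim of (a). For (b), taking $f=\sum_{b\in\{1\}\cup[k+2,2k]}c_b\smash{\underset{^\smile}{\nu}}^1_{k,b}(u)$ holomorphic, the same reasoning gives $\sum_b c_b\tilde g_b\equiv0$ at once, so all $c_b=0$; no exceptional term is needed here because none of the $W_2$-generators for $\widetilde L_{2k}$ is borderline. The conceptual part of this is routine; the genuine difficulty is the first step — proving \eqref{eq:Okl}--\eqref{eq:E'kl}, and in particular pinning down with the correct nonzero coefficient the anomalous $(\log u)^2$ (even $k$) or half-integer-power (odd $k$) component of $\smash{\underset{^\smile}{\mu}}^1_{k,2k-1}(u)$ — because the integral $\int_0^\infty I_0(\sqrt u t)[I_0(t)K_0(t)]^kt\,\D t$ that would naively supply its $\log u$-coefficient diverges, forcing the detour through the generalized Crandall numbers $\mathsf H_{k,n}$ and the hypergeometric weight $\mathsf L_k(u,t)$.
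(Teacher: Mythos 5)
The concluding Frobenius-type argument you give (kill $c_{2k-1}$ via the anomalous half-integer or $\log^2$ component, then kill the remaining coefficients because the functions multiplying $\log u$ are independent) is sound and is a legitimate variant of the paper's ending: the paper reads off the Taylor coefficients of the $\log$-part and invokes $\det\mathbf N_{k-1}\neq0$ (resp.\ $\det\mathbf M_k\neq 0$), while you invoke linear independence of the kernel basis elements $\smash{\underset{^\smile}{\mu}}^1_{k,j}(u)$, $j\in\mathbb Z\cap[2,k]$; both rest on the same Wro\'nskian non-vanishing. The genuine gap is in the step you yourself flag as the difficulty: the identities \eqref{eq:Okl}--\eqref{eq:E'kl}. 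Your proposal reduces them to the sentence that splitting off the power-law tail via $h_{k,N}$ and ``matching the principal part against $\mathsf L_k(u,t)$ reproduces exactly the right-hand sides''. That is an assertion of the theorem, not a proof. For odd $k$, \eqref{eq:Okl} says the full combination $O_k^\ell(u)$ is a \emph{pure} half-integer power series, even though its $\IvKM$-constituents are nonzero integer-power (holomorphic) series and the regularized $\IKvM$-block, expanded the way you describe, produces integer powers and a $\log\frac{\sqrt u}{2}$ term; so \eqref{eq:Okl} encodes an exact cancellation of every integer power and every logarithm, for all $\ell$, and nothing in your expand-and-regularize scheme explains why the two blocks conspire in this way. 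The paper does not obtain this by direct expansion at all: it rotates contours by Jordan's lemma so that the odd-$k$ combination couples to the bounded, log-free kernel $J_0(\sqrt{u}x)$ against $h_{k,N}(x)x^{2\ell-1}$ plus the explicit $\mathsf H_{k,n}$-principal part (the log-carrying kernel $Y_0$ being diverted into the companion combination used for even $k$), computes the $u\to0^+$ limits of all Bessel derivatives by dominated convergence and a second rotation, and only then upgrades these limit statements to identities via the generalized-power-series structure at the regular singular point (Corollary \ref{cor:reg_sing}) and real-analytic continuation.

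For even $k$ the gap is wider still: the proof of \eqref{eq:Ekl}--\eqref{eq:E'kl} in the paper additionally requires the comparison of $Y_0(\sqrt u x)$ with $\frac2\pi J_0(\sqrt u x)\log\frac{\sqrt u}{2}$, a compensator $h_{2,0}(x)x$ whose contribution can only be controlled through the explicit Mellin--Barnes expansions of $\smash{\underset{^\smile}{\mu}}^1_{2,1}(u)$ and $\smash{\underset{^\smile}{\mu}}^1_{2,3}(u)$, the generalized Bailey--Borwein--Broadhurst--Glasser sum rule identifying $\mathsf H_{k,\max\{0,\ell+1-k/2\}}$, and the recursion for $\mathsf b_n(u,t)$ to propagate the Laurent structure through all $\ell$; none of this (nor any substitute) appears in your plan, and pointing to \cite{Zhou2018ExpoDESY} does not supply it, since that reference proves on-shell, Crandall-type identities rather than the off-shell small-$u$ expansions needed here. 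Likewise \eqref{eq:bare_asympt} does not simply ``drop out'': in the paper it has its own elementary, logically prior proof via Heaviside's formula and induction on $\ell$. Since your final argument needs the anomalous component of $\smash{\underset{^\smile}{\mu}}^1_{k,2k-1}(u)$ with a provably nonzero coefficient, which only \eqref{eq:Okl} and \eqref{eq:Ekl} deliver (your Mellin-pole remark is a heuristic, not a derivation of that coefficient), the gap propagates to the concluding claims of both (a) and (b).
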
\begin{proof}\begin{enumerate}[leftmargin=*,  label={(\alph*)},ref=(\alph*),
widest=a, align=left]
\item From $ \sup_{t>0}t^{\varepsilon}I_0(t)K_0(t)<\infty$ for any $ \varepsilon\in(0,1]$, we can deduce that  (cf.~\cite[(2.27), (2.30)]{Zhou2017BMdet}\begin{align}
\int_0^\infty K_0(\sqrt{u}t)[I_0(t)K_0(t)]^kt^{2\ell-1}\D t={}&O\left(\int_0^\infty K_0(\sqrt{u}t)\D t\right)=O\left( \frac{1}{\sqrt{u}} \right),\\\int_0^\infty \sqrt{u}K_1(\sqrt{u}t)[I_0(t)K_0(t)]^kt^{2\ell}\D t={}&O\left(\int_0^\infty \sqrt{u}K_1(\sqrt{u}t)t\D t\right)=O\left( \frac{1}{\sqrt{u}} \right),
\end{align}when $ 2\ell-1\leq k$. Meanwhile, from Heaviside's integral formula \cite[\S13.21(8)]{Watson1944Bessel} for    $ \int_0^\infty K_0(t)t^{n-1}\D t$, we have
\begin{align}
\begin{split}&\frac{1}{2^{k}}\int_0^\infty K_0(\sqrt{u}t)t^{2\ell-1-k}\D t+\int_0^\infty K_0(\sqrt{u}t)\left\{[I_0(t)K_0(t)]^k-\frac{1}{(2t)^{k}}\right\}t^{2\ell-1}\D t\\={}&\frac{\left[\Gamma \left(\ell-\frac{k}{2}\right)\right]^2}{4^{k-\ell+1}(\sqrt{u})^{2\ell-k}} +O\left( \int_0^\infty K_0(\sqrt{u}t)t^{2\ell-3-k}\D t \right) \end{split}
\end{align}
  when $ 2\ell-1> k$, so the first half of \eqref{eq:bare_asympt} follows from induction on $\ell$. The second half of  \eqref{eq:bare_asympt} founds on a similar argument.

Recall the notation $ h_{k,N}(z)  $ for $k,N\in\mathbb Z_{>0} $ from \eqref{eq:hmn_defn} and set  $ h_{k,0}(z)=\big[\frac{\pi}{2}H^{(1)}_0(z)H^{(2)}_0(z)\big]^k$.  The following contour integral\begin{align}\begin{split}&
i(-1)^{\ell-1}\left( \frac{2}{\pi} \right)^{k+1}\int_0^\infty K_0(\sqrt{u}t)[K_0(t)]^k[K_0(t)-i\pi I_0(t)]^{k}t^{2\ell-1}\D t\\&-\sum_{n\in\mathbb Z\cap\left[1,\max\left\{0,\left\lfloor\ell-\frac{k-1}{2}\right\rfloor\right\}\right]}\frac{(2 i)^{2( \ell- n)+1-k} \left[\Gamma \left(\ell-n+1-\frac{k}{2}\right)\right]^2}{\pi(  \sqrt{u})^{2 (\ell-n+1)-k}}\mathsf H_{k,n}\\={}&\int_0^{i\infty}H_0^{(1)}(\sqrt{u}z)h_{k,\max\left\{0,\left\lfloor\ell-\frac{k-1}{2}\right\rfloor\right\}}(z)z^{2\ell-1}\D z\end{split}\label{eq:H0_u_to_0}
\end{align}can be identified with \begin{align}
\int_0^{\infty}H_0^{(1)}(\sqrt{u}x)h_{k,\max\left\{0,\left\lfloor\ell-\frac{k-1}{2}\right\rfloor\right\}}(x)x^{2\ell-1}\D x,\label{eq:H0_u_to_0_Wick}
\end{align}because Jordan's lemma allows us to rotate the contour of integration $90^\circ$ clockwise, from the positive $\I z$-axis to the positive $\R z$-axis, as in \cite[\S2]{Zhou2017PlanarWalks}.

When $k$ is odd, we can relate the real parts of the last two displayed equations
as follows:
{\allowdisplaybreaks\begin{align}\begin{split}&2^k (-1)^{\ell-1}\left[ (2k+1)\mathcal{F}^\ell_{2k-1,1}(u)-\frac{\IvKM(1,2k,2\ell-1|u)}{\pi^{k}}+\sum_{b=k+1}^{2k-1}\frac{1+(-1)^{b-k}}{(-1)^{\left\lfloor \frac{b-k}{2}\right\rfloor }}\binom{k}{b-k+1}{} \mathcal{F}^\ell_{2k-1,b}(u) \right]\\&-(-1)^{\frac{k-1}{2}}4^\ell(uD^2+D^1)^{\ell}\sum_{n\in\mathbb Z\cap\left[1,\max\left\{0,\ell-\frac{k-1}{2}\right\}\right]}\frac{\pi u^{n+\frac{k}{2}-1} (-1)^{n+1}}{2^{2 n+k-1}\left[ \Gamma \left(n+\frac{k}{2}\right)\right]^2}\mathsf H_{k,n}\\
={}&\int_0^{\infty}J_{0}(\sqrt{u}x)h_{k,\max\left\{0,\ell-\frac{k-1}{2}\right\}}(x)x^{2\ell-1}\D x,\end{split}\label{eq:k_odd_J0_int}
\end{align}}where $ J_0(z)=\frac{1}{2}[H_0^{(1)}(z)+H_0^{(2)}(z)]$ is the Bessel function of the first kind, satisfying $|J_0(x)|\leq 1$ for all real-valued $x$. Here, the function $ h_{k,\max\left\{0,\ell-\frac{k-1}{2}\right\}}(x)x^{2\ell-1}$ is bounded in the $ x\to0^+$ regime, and has $ O(x^{-2})$ behavior as $ x\to\infty$, so we can  invoke  the dominated convergence theorem to show that \begin{align}\lim_{u\to0^+}
\int_0^{\infty}J_{0}(\sqrt{u}x)h_{k,\max\left\{0,\ell-\frac{k-1}{2}\right\}}(x)x^{2\ell-1}\D x=\int_0^{\infty}h_{k,\max\left\{0,\ell-\frac{k-1}{2}\right\}}(x)x^{2\ell-1}\D x.
\end{align}After  rotating the contour of integration $90^\circ$ counterclockwise, we can identify the right-hand side of the last equation  with\begin{align}
\begin{split}&\R\int_0^{i\infty}h_{k,\max\left\{0,\ell-\frac{k-1}{2}\right\}}(z)z^{2\ell-1}\D z\\={}&(-1)^\ell\left( \frac{2}{\pi} \right)^{k}\R\int_0^{\infty}[K_0(t)]^k[K_0(t)-i\pi I_0(t)]^{k}t^{2\ell-1}\D t\\={}&2^k (-1)^\ell\lim_{u\to0^+}\left[\frac{\IvKM(1,2k,2\ell-1|u)}{\pi^{k}}+\sum_{b=2}^{k}\frac{1+(-1)^{b-1}}{2(-1)^{\left\lfloor \frac{b-1}{2}\right\rfloor }}\binom{k}{b-1}{} \mathcal{F}^\ell_{2k-1,b}(u)\right].\end{split}
\end{align}  So far, we have already arrived at a precursor to \eqref{eq:Okl}, namely $\lim_{u\to0^+} [O^\ell_k(u)-\smash{\underset{\widetilde{}}O}^\ell_k(u)]=0$, where $ \smash{\underset{\widetilde{}}O}^\ell_k(u)$ denotes the right-hand side of  \eqref{eq:Okl}. This precursor can be rewritten as \begin{align}\lim_{u\to0^+} (uD^2+D^1)^{\ell-1}[O^1_k(u)-\smash{\underset{\widetilde{}}O}^1_k(u)]=0,\label{eq:Okl_deriv0}\end{align} for all  $ \ell\in\mathbb Z_{>0}$, since we have $ O^\ell_k(u)=4^{\ell-1}(uD^2+D^1)^{\ell-1}O^1_k(u)$ by the Bessel differential equation.
Now that $O^1_k(u),u\in(0,\varepsilon_{k})$ is equal to a generalized power series  (by Corollary \ref{cor:reg_sing}) for a suitably chosen   $ \varepsilon_{k}\in(0,1)$, and so is $ \smash{\underset{\widetilde{}}O}^1_k(u),u\in(0,1)$ (by its construction and the dominated convergence theorem), the vanishing derivatives in \eqref{eq:Okl_deriv0} bring us $ O^1_k(u)= \smash{\underset{\widetilde{}}O}^1_k(u),u\in(0,\varepsilon_{k})$. By real-analytic continuations and differentiations, we can establish   \eqref{eq:Okl} and   \eqref{eq:O'kl}  in their entirety.

 When $k$ is even, we can identify the imaginary parts of  \eqref{eq:H0_u_to_0} and \eqref{eq:H0_u_to_0_Wick} as follows:{\allowdisplaybreaks\begin{align}\begin{split}
&2^k (-1)^{\ell-1}\left[ \frac{2\IKvM(0,2k+1,2\ell-1|u)}{\pi^{k+1}}-\sum_{b=k+1}^{2k-1}\frac{1-(-1)^{b-k}}{(-1)^{\left\lfloor \frac{b-k}{2}\right\rfloor }}\binom{k}{b-k+1}{} \mathcal{F}^\ell_{2k-1,b}(u) \right]\\{}&-\sum_{n\in\mathbb Z\cap\left[1,\max\left\{0,\ell-\frac{k}{2}\right\}\right]}\frac{(-4)^{ \ell- n}2^{1-k} \left[ \left(\ell-n-\frac{k}{2}\right)!\right]^2}{(-1)^{k/2}\pi u^{\ell-n+1-\frac{k}{2}}}\mathsf H_{k,n}\\={}&\int_0^{\infty}Y_{0}(\sqrt{u}x)h_{k,\max\left\{0,\ell-\frac{k}{2}\right\}}(x)x^{2\ell-1}\D x,\end{split}
\end{align}}where $ Y_0(z)=\frac{1}{2i}[H_0^{(1)}(z)-H_0^{(2)}(z)]$ is the Bessel function of the second kind. Meanwhile, the real parts of  \eqref{eq:H0_u_to_0} and \eqref{eq:H0_u_to_0_Wick}  leave us{\allowdisplaybreaks\begin{align}\begin{split}&2^k (-1)^{\ell-1}\left[ (2k+1)\mathcal{F}^\ell_{2k-1,1}(u)-\frac{\IvKM(1,2k,2\ell-1|u)}{\pi^{k}}+\sum_{b=k+1}^{2k-1}\frac{1+(-1)^{b-k}}{(-1)^{\left\lfloor \frac{b-k}{2}\right\rfloor }}\binom{k}{b-k+1}{} \mathcal{F}^\ell_{2k-1,b}(u) \right]\\
={}&\int_0^{\infty}J_{0}(\sqrt{u}x)h_{k,\max\left\{0,\ell-\frac{k}{2}\right\}}(x)x^{2\ell-1}\D x,\end{split}
\end{align}}which is subtly different from \eqref{eq:k_odd_J0_int}. Here, we point out that \begin{align}\begin{split}&
\IKvM(0,2k+1,2\ell-1|u)+\IvKM(1,2k,2\ell-1|u)\log\frac{\sqrt{u}}{2}\\={}&\int_0^\infty\left[ K_{0} (\sqrt{u}t)+I_{0}(\sqrt{u}t)\log\frac{\sqrt{u}}{2}\right][K_0(t)]^{2k}t^{2\ell-1}\D t\end{split}
\end{align}equals its own Maclaurin series for  $ u\in(0,(2k)^2)$, because the dominated convergence theorem allows us to perform termwise integration on  the  convergent series   \cite[\S3.71(14)]{Watson1944Bessel}\begin{align}
K_{0}(\sqrt{u}t)+I_0(\sqrt{u}t)\log\frac{\sqrt{u}t}{2}=\sum_{m=0}^\infty\frac{\psi^{(0)}(m+1)}{(m!)^2}\left( \frac{ut^{2}}{4} \right)^{m},\label{eq:K0_I0log}
\end{align} where $ \psi^{(0)}(m+1)\colonequals \frac1{m!}\int_0^\infty x^m e^{-x}\log x\D x$ goes like $\log m+O\left( \frac{1}{m} \right)$ as $m\to\infty$.
 As we may recall from \cite[\S3.51]{Watson1944Bessel}, the following limit\begin{align}
\lim_{u\to0^+}\left[ Y_{0}(\sqrt{u}x)-\frac{2}{\pi}J_{0}(\sqrt{u}x)\log\frac{\sqrt{u}}{2} \right]=\frac{2(\gamma_0+\log x)}{\pi}
\end{align} holds for all $x>0 $, where $ \gamma_0=-\psi^{(0)}(1)$ is the Euler--Mascheroni constant.   Meanwhile, we note that the function $
\widetilde h_{k,\ell}(x)x^{2\ell-1}\colonequals h_{k,\max\left\{0,\ell-\frac{k}{2}\right\}}(x)x^{2\ell-1}-\mathsf H_{k,{\max\left\{0,\ell+1-\frac{k}{2}\right\}}}h_{2,0}(x)x
$ is bounded in the $ x\to0^+$ regime, and has $ O(x^{-3})$ behavior as $ x\to\infty$, so the dominated convergence theorem  allows us to conclude that\begin{align}
\lim_{u\to0^+}\int_0^{\infty}\left[ Y_{0}(\sqrt{u}x)-\frac{2}{\pi}J_{0}(\sqrt{u}x)\log\frac{\sqrt{u}}{2} \right]\widetilde h_{k,\ell}(x)x^{2\ell-1}\D x=\frac{2}{\pi}\int_0^{\infty}\widetilde h_{k,\ell}(x)z^{2\ell-1}(\gamma_0+\log x)\D x
\end{align} is a finite real number.  To compensate for the difference $ h_{k,\max\left\{0,\ell-\frac{k}{2}\right\}}(x)x^{2\ell-1}-\widetilde h_{k,\ell}(x)x^{2\ell-1}$, we need  the explicit knowledge that (cf.~\cite[Lemma 3.2]{Zhou2018LaportaSunrise}){\allowdisplaybreaks\begin{align}
\begin{split}\mathcal{F}_{3,1}(u)={}&\frac{\sqrt{3}}{20\pi^{1/2}}\frac{1}{2\pi i }\int_{\frac14-i\infty}^{\frac14+i\infty}\frac{ \Gamma \left(\frac{1}{3}-s\right) \Gamma \left(\frac{2}{3}-s\right) \left[\Gamma (s)\right]^2}{ (4-u)\Gamma (1-s) \Gamma \left(s+\frac{1}{2}\right)}\left[ \frac{108 u}{(4-u)^3} \right]^{-s}\D s\\={}&-\frac{\log \frac{u}{(4-u)^3}}{10 (4-u)}+\frac{O\left( \frac{u}{(4-u)^3} \right)}{4-u},\end{split}\label{eq:F31log}\\\begin{split}\mathcal{F}_{3,3}(u)={}&\frac{\sqrt{3}}{8 \pi ^{5/2} }\frac{1}{2\pi i }\int_{\frac14-i\infty}^{\frac14+i\infty}\frac{\Gamma \left(\frac{1}{3}-s\right) \Gamma \left(\frac{1}{2}-s\right) \Gamma \left(\frac{2}{3}-s\right) [\Gamma (s)]^3}{4-u}\left[ \frac{108 u}{(4-u)^3} \right]^{-s}\D s-\mathcal{F}_{3,2}(u)\\={}&\frac{\pi ^2+3 \log ^2\frac{u}{(4-u)^3}}{24 \pi  (4-u)}+\frac{O\left( \frac{u}{(4-u)^3} \right)}{4-u}\end{split}\label{eq:F33log2}
\end{align}}hold in the $u\to0^+$ regime (where the logarithmic terms are attributed to residues at $s=0$), so as to  deduce\begin{align}\begin{split}&
\mathsf H_{k,{\max\left\{0,\ell+1-\frac{k}{2}\right\}}}\int_0^{\infty}\left[ Y_{0}(\sqrt{u}x)-\frac{2}{\pi}J_{0}(\sqrt{u}x)\log\frac{\sqrt{u}}{2} \right] h_{2,0}(x)x\D x\\={}&\mathsf H_{k,{\max\left\{0,\ell+1-\frac{k}{2}\right\}}}\left\{-8\mathcal{F}_{3,3}(u)-\frac{40}{\pi}\mathcal{F}_{3,1}(u)\log\frac{\sqrt{u}}{2}+\vphantom{\left(\frac{2}{\pi}\right)^3}\right.\\&\left.+\left( \frac{2}{\pi} \right)^3\int_0^\infty \left[ K_{0}(\sqrt{u}t)+I_{0}(\sqrt{u}t)\log\frac{\sqrt{u}}{2} \right] [K_{0}(t)]^{4}t\D t\right\}\\={}&\frac{ 2^{k}(-1)^{\ell}\log ^2\frac{\sqrt{u}}{2}}{ \pi^{2} }\sum _{b=2}^k \frac{1+(-1)^b}{(-1)^{\left\lfloor \frac{b}{2}\right\rfloor }} \binom{k}{b-1}\mathcal{F}^\ell_{2k-1,b}(0)+O(1).\end{split}
\end{align}Here, in the last step, we have combined  \eqref{eq:Crandall_num} with the generalized Bailey--Borwein--Broadhurst--Glasser sum rule \cite[(5)]{Zhou2018ExpoDESY} into the following form for
 $k\in2\mathbb Z_{>0}$ and $ \ell\in\mathbb  Z_{>0}$:\begin{align}
(-1)^{\ell-\frac{k}{2}}\mathsf H_{k,{\max\left\{0,\ell+1-\frac{k}{2}\right\}}}=\int_0^\infty \frac{[\pi I_0(t)+i K_{0}(t)]^{k}-[\pi I_0(t)-i K_{0}(t)]^{k}}{\pi i}\left[\frac{2K_0(t)}{\pi}\right]^{k}t^{2\ell-1}\D t.
\end{align}  Let  $ \smash{\underset{\widetilde{}}E}^\ell_k(u)$ be the finite sum on the right-hand side of \eqref{eq:Ekl}, then the efforts so far lead us to $ E^\ell_k(u)-\smash{\underset{\widetilde{}}E}^\ell_k(u)=O(1)$ as $ u\to0^+$.  Before proceeding further, we note that the sequence\begin{align}
\mathsf b_{n}(u,t)\colonequals \left[t^{2n}-4^n(uD^2+D^1)^n\right]\left[ K_{0}(\sqrt{u}t)\log\frac{\sqrt{u}}{2}+\frac{1}{2}I_{0}(\sqrt{u}t)\log^{2}\frac{\sqrt{u}}{2}\right]
\end{align}satisfies a recursion \begin{align}
4(uD^2+D^1)\mathsf b_{n}(u,t)-\mathsf b_{n+1}(u,t)=\frac{t^{2n}}{u}\left\{I_0(\sqrt{u}t)+2 \sqrt{u} t\left[I_1(\sqrt{u}t)\log \frac{\sqrt{u}}{2}- K_1(\sqrt{u}t)\right]\right\}
\end{align}  with the initial condition $ \mathsf b_{0}(u,t)=0$, and \cite[\S3.71(15)]{Watson1944Bessel} \begin{align}
 \sqrt{u} t\left[I_1(\sqrt{u}t)\log \frac{\sqrt{u}t}{2}- K_1(\sqrt{u}t)\right]=-1+\frac{1}{2}\sum_{m=0}^\infty\frac{\psi^{(0)} (m+1)+\psi ^{(0)}(m+2) }{m! (m+1)!}\left(\frac{ut^2 }{4}\right)^{m+1},
\end{align}  so we can show inductively  that \begin{align}
\mathsf b_{n}(u,t)-\sum _{m=1}^n \frac{4^{m-1} [(m-1)!]^2 }{u^m}t^{2 (n-m)}=
\sum_{m=0}^\infty \mathsf a_{n,m}(t) u^m\label{eq:b_n_Laurent}\end{align}  defines a holomorphic function in $u\in\mathbb C$ for each fixed $t\in(0,\infty)$ and $ n\in\mathbb Z_{\geq0}$. Therefore, we have $ E^\ell_k(u)-\smash{\underset{\widetilde{}}E}^\ell_k(u)=4^{\ell-1}(uD^2+D^1)^{\ell-1}E^1_k(u)+O(1)$ as $ u\to0^+$. By Corollary \ref{cor:reg_sing}, there exists a suitable $ \varepsilon_k\in(0,1)$  such that the function  $ E^1_k(u),u\in(0,\varepsilon_k)$ equals a generalized power series in (fractional) powers of $u$ and $\log u$; such a series is in fact a Maclaurin series, the coefficients of which are computable from the finite numbers $\lim_{u\to0^+}(uD^2+D^1)^{\ell-1}E^1_k(u) $ for  $\ell\in\mathbb Z_{>0}$. This reveals $ E^1_k(u)=\mathscr E^1_k(u)$ as a holomorphic function in a non-void $ \mathbb C$-neighborhood of $u=0$. The full version of \eqref{eq:Ekl}  then follows from repeated applications of the Bessel differentiation operator $ uD^2+D^1$ and invocations of the Laurent expansion for $ \mathsf b_n(u,t)$, as given in \eqref{eq:b_n_Laurent}. To prove \eqref{eq:E'kl}, we differentiate  \eqref{eq:Ekl}  while  noting that\begin{align}\begin{split}&2uD^1\left[ K_{0}(\sqrt{u}t)\log\frac{\sqrt{u}}{2}+\frac{1}{2}I_{0}(\sqrt{u}t)\log^{2}\frac{\sqrt{u}}{2}\right]-\sqrt{u}t\left[ -K_{1}(\sqrt{u}t)\log\frac{\sqrt{u}}{2}+\frac{1}{2}I_{1}(\sqrt{u}t)\log^{2}\frac{\sqrt{u}}{2}\right]
\\={}&K_{0}(\sqrt{u}t)+I_{0}(\sqrt{u}t)\log\frac{\sqrt{u}}{2}\end{split}\end{align}extends to a holomorphic function of $ u\in\mathbb C$.

According to \eqref{eq:K0_I0log}, we know that $ \pi\mathcal{F}_{2k-1,1}(u)+\frac{2k}{2k+1}\mathcal{F}_{2k-1,2}(u)\log\frac{\sqrt{u}}{2}$ and $ \pi\mathcal{F}_{2k-1,b}(u)+\linebreak\mathcal{F}_{2k-1,b-k+2}(u)\log\frac{\sqrt{u}}2$ for  $b\in\mathbb Z\cap[k+1,2k-2]$ are holomorphic in a non-void $ \mathbb C$-neighborhood of $ u=0$. From \eqref{eq:Okl}  and \eqref{eq:Ekl}, we know that the generalized power series expansion for $ \mathcal{F}_{2k-1,2k-1}(u)$ additionally involve $ u^{\mathbb Z+\frac{1}{2}}$ or $ u^{\mathbb Z}\log^2\frac{\sqrt{u}}{2}$ terms, according as $k$ is odd or even.

If a linear combination of these functions [say, $ c_1 \pi ^{k}\mathcal F_{2k-1,1}(u)+\sum_{b=k+1}^{2k-1}c_{b}^{\vphantom1}\pi^{2k-b}\mathcal{F}_{2k-1,b}(u)$] is  holomorphic near the origin, then it should not involve  $ u^{\mathbb Z+\frac{1}{2}}$ or $ u^{\mathbb Z}\log^2\frac{\sqrt{u}}{2}$ terms,  hence $c_{2k-1}=0$. Furthermore, we require that in the generalized power series expansion for    $ c_1 \pi ^{k}\mathcal F_{2k-1,1}(u)+\sum_{b=k+1}^{2k-2}c_{b}^{\vphantom1}\pi^{2k-b}\linebreak\mathcal{F}_{2k-1,b}(u)$, the coefficients for $ u^n\log\frac{\sqrt{u}}{2},n\in\mathbb Z\cap[0,k-2]$ should all vanish, that is,\begin{align}
\frac{2kc_{1}\pi ^{k-1} }{2k+1}\mathcal F_{2k-2,1}(1)+\sum_{b=k+1}^{2k-2}\frac{c_{b}^{\vphantom1}\mathcal F_{2k-2,b-k+1}(1)}{\pi^{b-2k+1}}=0,\quad\ell\in\mathbb Z\cap[1,k-1].
\end{align} Since $ \det\mathbf N_{k-1}=\det(\pi^{k+\frac{1}{2}-b}\mathcal F_{2k-2,b}^\ell(1))_{1\leq b,\ell\leq k-1}\neq0$, we must have $ c_b=0$ for $ b\in\mathbb Z\cap(\{1\}\cup[k+1,2k-2])$.

 \item In a non-void $ \mathbb C$-neighborhood of $ u=0$, we have holomorphic functions  $ \pi\mathcal{F}_{2k,1}(u)+\frac{2k+1}{2k+2}\mathcal{F}_{2k,2}(u)\log\frac{\sqrt{u}}{2}$ and $ \pi\mathcal{F}_{2k,b}(u)+\mathcal{F}_{2k,b-k+1}(u)\log\frac{\sqrt{u}}2$ for  $b\in\mathbb Z\cap[k+2,2k]$. The conclusion follows from    the coefficients for $ u^n\log\frac{\sqrt{u}}{2},n\in\mathbb Z\cap[0,k-1]$ and the fact that $ \det\mathbf M_k=\det(\pi^{k+1-b}\mathcal F_{2k-1,b}^\ell(1))_{1\leq b,\ell\leq k}\neq0$. \qedhere\end{enumerate}\end{proof}

\begin{corollary}[Determination of $ \mathbf S_3$]\label{cor:S3_comp}The matrix\begin{align}
\mathbf S_3=\begin{pmatrix*}[r]
 -25 &  &  \\
  & -\frac{4}{3} & 4 \\
  & 4 &  \\
\end{pmatrix*}
\end{align}fits into a special case of \eqref{eq:Sm_fit}, namely\begin{align}
\frac{\sqrt{|\mathcal L_3(u)|} }{\Lambda_3}\begin{pmatrix*}[r]W\big[\mathcal{F}_{3,2}(u),\mathcal{F}_{3,3}(u)\big]\\[5pt]
- W\big[\mathcal{F}_{3,1}(u),\mathcal{F}_{3,3}(u)\big] \\[5pt]
 W\big[\mathcal{F}_{3,1}(u),\mathcal{F}_{3,2}(u)\big] \\
\end{pmatrix*}={}&\mathbf S_3\begin{pmatrix*}[r]\mathcal{F}_{3,1}(u) \\[5pt]
\mathcal{F}_{3,2}(u) \\[5pt]
\mathcal{F}_{3,3}(u) \\
\end{pmatrix*}\label{eq:cof_Omega3}
\end{align}for  $|\mathcal L_3(u)|=u^2(4-u)(16-u),u\in(0,4)  $ and  $ \Lambda_3=\frac{1}{20}$. \end{corollary}\begin{proof}During the proof  of \cite[Proposition 5.5]{Zhou2017BMdet}, we have already pointed out that the left-hand side of \eqref{eq:cof_Omega3} is annihilated by $\widetilde L_3$ (which is a special case of Proposition~\ref{prop:Vanhove_dual} below), and have evaluated the first row of $\mathbf S_3$.

To compute the last row of   $ {\mathbf S}_3$, simply check that $ \IvKM(2,3;1|u)=\IKM(1,3;1)[1+O(u^{2})]=\frac{\pi^{2}}{16}[1+O(u^{2})]$ \cite[(55)]{BBBG2008} as well as  \begin{align}\begin{split}{}&\lim_{u\to0^+}
\sqrt{|\mathcal L_3(u)|} W\big[\mathcal{F}_{3,1}(u),\mathcal{F}_{3,2}(u)=\lim_{u\to0^+}\frac{32}{5\pi^3}\det\begin{pmatrix*}[r]D^0\IKvM(1,4;1|u) & D^0\IvKM(2,3;1|u) \\
uD^1\IKvM(1,4;1|u) & uD^1\IvKM(2,3;1|u) \\
\end{pmatrix*}\\={}&\lim_{u\to0^+}\frac{32}{5\pi^3}\det\begin{pmatrix*}[r]-\frac{\pi^{2}\log u}{32} +O(u\log u)& \frac{\pi^{2}}{16} +O(u^2)\\
-\frac{\pi^{2}}{32}+O(u\log u) & O(u) \\
\end{pmatrix*}=\frac{\pi}{80}=\frac{1}{5}\mathcal{F}_{3,2}(0).\end{split}\end{align}This accounts for the only  non-vanishing element in the  last row of  $ {\mathbf S}_3$, since any non-trivial linear combination of   $ \mathcal{F}_{3,1}(u)=-\frac{\log u}{40}[1+O(u)]$  (cf.~\cite[(55)]{BBBG2008} or \eqref{eq:F31log} above) and $\mathcal{F}_{3,3}(u)=\frac{\log^{2} u}{32\pi}+O(\log u)$ (cf.~\cite[(5.43)]{Zhou2017BMdet} or \eqref{eq:F33log2} above) in the $ u\to0^+$ regime is incompatible with such a finite limit.
[To  argue  for $ \mathbf S_{2k-1},k\in\mathbb Z_{>2}$, one will need sum rules \eqref{eq:Okl} and \eqref{eq:Ekl} in Proposition \ref{prop:u0_sum}, instead of the explicit formulae like  \eqref{eq:F31log} and  \eqref{eq:F33log2}.]

To compute the second row of  $ {\mathbf S}_3$, we quote in advance the symmetry ${\mathbf S}^{\vphantom{\mathrm T}}_3={\mathbf S}_3 ^{{\mathrm T}}$ from Corollary \ref{cor:VvSs}, which enables us to set up an equation   $ -\sqrt{|\mathcal L_3(u)|} W\big[\mathcal{F}_{3,1}(u),\mathcal{F}_{3,3}(u)\big]=c\mathcal{F}_{3,2}(u)+\frac{1}{5}\mathcal{F}_{3,3}(u)$ for a certain constant $c$. We observe that   $\lim_{u\to4^-} \sqrt{|\mathcal L_3(u)|} W\big[\mathcal{F}_{3,1}(u),\mathcal{F}_{3,3}(u)\big]=0$  (since  $ W\big[\mathcal{F}_{3,1}(u),\mathcal{F}_{3,3}(u)\big]$ is holomorphic in a neighborhood of $u=4$) and that $ \mathcal{F}_{3,2}(4)-3\mathcal{F}_{3,3}(4)=0$ follows from the sum rule $ \I\mathsf F_{2,1}^1(4)=0$,  a special case of Proposition~\ref{prop:u_sq}(a). Thus,  we must have
$c=-\frac{1}{15}$.
\end{proof}
\section{$ \mathbf W^\star$ algebra\label{sec:AdjW_alg}}The main purpose of this section is to construct analogs of Corollary \ref{cor:S3_comp} for matrices  of all sizes, and explore their consequences. In other words, for each $ m\in\mathbb Z_{>2}$, we will build a matrix $ \mathbf S_m\in\mathbb Q^{m\times m}$ that fits into  \eqref{eq:Sm_fit}---the rational entries of $ \mathbf S_m$ enable us to express the Wro\'nskian determinant  $ W[f_1(u),\dots,f_{m-1}(u)]$ as a $ \mathbb Q$-linear combination of $\{ |\mathcal L_m(u)|^{(2-m)/2}\mathcal F_{m,j}(u)|j\in\mathbb Z\cap[1,m]\}$, whenever $ \{f_n|n\in\mathbb Z\cap[1,m-1]\}$ is a subset of $ \{\mathcal F_{m,j}|j\in\mathbb Z\cap[1,m]\}$.

This master plan decomposes into several subtasks to be completed in the subsections to follow.

In \S\ref{subsec:cofactor}, we study  algebraically all the cofactors of the Wro\'nskian matrix $\mathbf  W[h_1(u),\dots,h_m(u)]$,
 where $h_1(u),\dots,h_m(u)$ are annihilated by a differential operator $\widetilde L_{m} =\sum_{j=0}^m\ell_{m,j}(u)D^j=(-1)^m\widetilde L_{m} ^*$, with  smooth coefficients $ \ell_{m,j}(u)$.\footnote{Being indifferent to detailed structures of the polynomials $ \ell_{m,j}(u)$, these algebraic mechanisms extend naturally to differential systems other than Vanhove's operators. It is our hope that such extensions will enable us to construct other types of quadratic relations among  periods of connections \cite{FresanSabbahYu2020a}, going beyond the  cases of Bessel moments treated in \cite{FresanSabbahYu2020b} and the current work.}  The algebraic properties of these cofactors  account for not only the Vanhove matrix  $ \mathbf  V_{m}
(u)$ in Theorem \ref{thm:W_alg}, but also the parity relation $ \mathbf S^{}_m=(-1)^{m+1}\mathbf S^{\mathrm T}_m$.

In \S\ref{subsec:threshold}, we scrutinize  the adjugate matrix for  $\mathbf  W[\mathcal F_{m,1}(u),\dots,\mathcal F_{m,m}(u)]$ analytically, by revisiting some themes in \S\ref{sec:W_alg}, namely Bessel differential equations and sum rules for Bessel moments. In particular, we will show that the local expansions of certain Wro\'nskian cofactors and certain linear combinations of  $\mathcal F_{m,1}(u),\dots,\mathcal F_{m,m}(u)$ match each other, when $u$ approaches a threshold value.

In \S\ref{subsec:offshell_quad}, we prove Theorem \ref{thm:W_alg} in its entirety, by carefully handling real-analytic continuations of Wro\'nskian cofactors  across threshold values.
In addition to proving the representation of $ \mathbf S_{m}$ given by  Theorem \ref{thm:W_alg}, we will also explore the recursive structures of the combinatorial sums therein. Such combinatorial analysis will prepare us for the inversion of  $ \mathbf S_{m}$  later in \S\ref{subsec:onshell_quad}.

Simply put, in \S\ref{subsec:cofactor} we settle the qualitative viability
of   \eqref{eq:Sm_fit} through differential equations, and augment it to $ \mathbf W_m^\star(u)=\frac{\Lambda_m}{|\mathcal L_m(u)|^{(m-2)/2}}\mathbf S_{m}^{}\mathbf{W}_{m}^{\mathrm T}(u)\mathbf V^{}_{m}
(u)$; in \S\ref{subsec:threshold} we perform quantitative analysis on
both sides of   \eqref{eq:Sm_fit} near thresholds, preparing us for a complete calculation of   $ \mathbf S_{m}$  in \S\ref{subsec:offshell_quad}, as well as an off-shell quadratic relation that descends from $ \mathbf W_m^\star(u)=\frac{\Lambda_m}{|\mathcal L_m(u)|^{m/2}}[\mathbf W_m^{}(u)]^{-1}$.

\subsection{Cofactors of Wro\'nskians\label{subsec:cofactor}}The adjugate $ \mathbf W^\star$ of a Wro\'nskian matrix $\mathbf W=\mathbf W[f_1(u),\dots,f_m(u)]$ is the transpose of its cofactor matrix $ \cof \mathbf W$. Each entry in the last column of  $ \mathbf W^\star=(\cof\mathbf W)^{\mathrm T}$ is expressible as a Wro\'nskian determinant $ W[f_{i_1}(u),\dots,f_{i_{m-1}}(u)]$, where $\{i_1,\dots,i_{m-1}\}\subset\mathbb Z\cap[1,m] $.


%
\begin{proposition}[Vanhove duality]\label{prop:Vanhove_dual} For any smooth differential operator $ \widetilde L_m=\mathcal L_m(u)D^m+\cdots$ (omitting lower order terms)  satisfying the parity relation $ \widetilde L_m^*=(-1)^m\widetilde L_m$, and $ \{f_n(u),u\in I|n\in\mathbb Z\cap[1,m-1]\}\subset C^\infty(I)\cap \ker \widetilde L_m $ for an open interval $I$,  we have \begin{align}
\widetilde L_{m} (|\mathcal L _{m}(u)|^{(m-2)/2}W[f_{1}(u),\dots,f_{m-1}(u)])=0\label{eq:Vanhove_dual_f}
\end{align} on the same open interval.

In particular, this is true when  $ \widetilde L_m$ is the $m$-th order Vanhove operator (defined in Proposition \ref{prop:Vanhove_Verrill}) with parity $ (-1)^m$ (proved in Proposition \ref{prop:parity_VL}), and $\mathcal L_m(u) $ is the polynomial specified by \eqref{eq:Lm_u_defn}.

\end{proposition}\begin{proof}Without loss of generality, we may assume that the functions $ f_1,\dots,f_{m-1}$  are linearly independent. We may further assume that there exists a function $ f_m$, such that $ \Span_{\mathbb C}\{f_j(u),u\in I|j\in\mathbb Z\cap[1,m]\}=C^\infty(I)\cap\ker\widetilde L_{m}$.

Using Ince's notation \cite[\S5.21, p.~120]{Ince1956ODE} for Wro\'nskian determinants \begin{align}
\varDelta_0=1;\quad\varDelta_r=W[f_{1}(u),\dots,f_{r}(u)]\text{ where }r\in\mathbb Z\cap[1,m],
\end{align}we have\begin{align}
(-1)^{m}\frac{\widetilde L_{m}f(u)}{\mathcal L_{m}(u)}={}&\frac{W[f(u),f_1(u),\dots,f_m(u)]}{\varDelta_{m}}
=(-1)^{m}\frac{\varDelta_{m}}{\varDelta_{m-1}}\frac{\D }{\D u}\frac{\varDelta_{m-1}^{2}}{\varDelta_{m}\varDelta_{m-2}}\frac{\D }{\D u}\cdots\frac{\D }{\D u}\frac{\varDelta_{1}^2}{\varDelta_2\varDelta_0}\frac{\D }{\D u}\frac{\varDelta_0f(u)}{\varDelta_{1}}.\end{align}

From the combinatorial constraint \eqref{eq:sublead} that directly descends from the parity
relation $ \widetilde L_m^*=(-1)^m\widetilde L_m$, we conclude that  $ \widetilde L_m=\mathcal L_m(u)D^m+\frac{m}{2}\frac{\D \mathcal L_m(u)}{\D u}D^{m-1}\cdots$   [cf.~\eqref{eq:VL_m}], up to sub-leading order, so $ \varDelta_{m} $ is proportional to $ |\mathcal L_m(u)|^{-m/2}$. In view of  the linear independence of  $ f_1,\dots,f_m$, we know that  $ \varDelta_{m} =C|\mathcal L_m(u)|^{-m/2},u\in I$  for a non-vanishing constant $C$. Appealing again to the parity relation $ \widetilde L_m^*=(-1)^m\widetilde L_m$, we subsequently have\begin{align}
\widetilde L_{m}f(u)=\frac{C\varDelta_0}{\varDelta_{1}}\frac{\D }{\D u}\frac{\varDelta_{1}^2}{\varDelta_2\varDelta_0}\frac{\D }{\D u}\cdots\frac{\D }{\D u}\frac{\varDelta_{m-1}^{2}}{\varDelta _m\varDelta_{m-2}}\frac{\D }{\D u}\frac{\mathcal L_m(u)f(u)}{\varDelta_{m-1}|\mathcal L_m(u)|^{m/2}},
\end{align}so \eqref{eq:Vanhove_dual_f} follows immediately.
\end{proof}\begin{remark}
The Vanhove duality relations for  Vanhove's operators $ \widetilde L_3$  and $ \widetilde L_4$ (see Table \ref{tab:Vanhove_Lm})  have already appeared in \cite[(5.34) and (5.67)]{Zhou2017BMdet}.\eor\end{remark}

In the next proposition, we show that  all the cofactors of $   \mathbf W_{m}$   are expressible via linear combinations of fundamental solutions to Vanhove's differential equation, together with their derivatives.
\begin{proposition}[Vanhove matrix $ \mathbf V_m
(u)$]\label{prop:Vv}Fix an integer $ m\in\mathbb Z_{>1}$. For a smooth differential operator $\widetilde L_{m} =\sum_{j=0}^m\ell_{m,j}(u)D^j=(-1)^m\widetilde L_{m} ^*$ where $ \ell_{m,j}\in C^\infty(I)$, consider    $f_1,\dots,f_{m-1}\in{}C^{\infty}(I)\cap\ker\widetilde L_{m} $.  Define\begin{align}
F_r(u)\colonequals {}&|\ell_{m,m}(u)|^{\frac{m-2}{2}}\det(D^{i-1}f_{j}(u))_{ i\in\mathbb{Z}\cap([1,m-1-r]\cup[m+1-r,m]), j\in\mathbb{Z}\cap[1,m-1]}
\end{align}for $ r\in\mathbb Z\cap[0,m-1]$,  where all the  row indices     are arranged in increasing order (even when they are not consecutive integers). We have \begin{align}\begin{split}&
((-1)^{m-1}F_{m-1}(u),\ldots,(-1)^{r}F_{r}(u),\ldots,F_{0}(u))\\={}&(-1)^{m+1}(D^{0}F_{0}(u),\ldots,D^{m-r}F_{0}(u),\ldots,D^{m-1}F_{0}(u))\mathbf V^{{\mathrm T}}_{m}
(u)\end{split}
\end{align}where\begin{align}
( \mathbf V^{\vphantom{\mathrm T}}_{m}
(u))_{a,b}\colonequals \sum_{n=a+b-1}^{m}(-1)^{a+n+m+1}{n-a\choose b-1}\frac{D^{n-a-b+1}\ell_{m,n}(u)}{\mathcal \ell_{m,m}(u)}\label{eq:V_mat_ab}
\end{align} for $a,b\in\mathbb Z\cap[1,m]$.

In particular,  when  $ \widetilde L_m$ is the $m$-th order Vanhove operator (defined in Proposition \ref{prop:Vanhove_Verrill}) with parity $ (-1)^m$ (proved in Proposition \ref{prop:parity_VL}), this explains the origin of the Vanhove matrix $  \mathbf V^{\vphantom{\mathrm T}}_{m}
(u)$ defined in \eqref{eq:Vmat_defn}.\end{proposition}\begin{proof} Set  $\mathcal L_m(u)\colonequals \ell_{m,m} (u)$. If $f_1,\dots,f_{m-1}\in{}C^{\infty}(I)\cap\ker\widetilde L_m $, then the first-order derivative of \begin{align}
F_0(u)\colonequals {}&|\mathcal L_m(u)|^{\frac{m-2}{2}}\det(D^{i-1}f_{j}(u))_{  i,j\in\mathbb Z \cap[1,m-1]}=|\mathcal L_m(u)|^{\frac{m-2}{2}}W[f_1(u),\dots,f_{m-1}(u)]\in C^{\infty}(I)\cap\ker\widetilde L_{m}\end{align}reads\begin{align}\begin{split}D^{1}F_0(u)={}&|\mathcal L_m(u)|^{\frac{m-2}{2}}\det(D^{i-1}f_{j}(u))_{ i\in\mathbb{Z}\cap([1,m-2]\cup\{m\}), j\in\mathbb{Z}\cap[1,m-1]}+\frac{m-2}{2}\frac{F_0(u)}{\mathcal L_m(u)}D^{1}\mathcal L_m(u)\\={}&F_1(u)+\frac{m-2}{2}\frac{F_0(u)}{\mathcal L_m(u)}D^{1}\mathcal L_m(u).\end{split}\label{eq:F1_rec}
\end{align}For higher order derivatives, we have the following recursion:\begin{align}\begin{split}\frac{F_r(u)}{|\mathcal L_m(u)|^{\frac{m-2}{2}}}\colonequals {}&\det(D^{i-1}f_{j}(u))_{ i\in\mathbb{Z}\cap([1,m-1-r]\cup[m+1-r,m]), j\in\mathbb{Z}\cap[1, m-1]}\\={}&D^{1}\det(D^{i-1}f_{j}(u))_{ i\in\mathbb{Z}\cap([1,m-r]\cup[m+2-r,m]), j\in\mathbb{Z}\cap[1, m-1]}\\{}&-\det(D^{i-1}f_{j}(u))_{ i\in\mathbb{Z}\cap([1, m-r]\cup[m+2-r,m-1]\cup\{m+1\}), j\in\mathbb{Z}\cap[1, m-1]}\end{split}
\end{align}according to fundamental properties of determinants. Upon invoking the differential equations $ \widetilde L_{m}f_j(u)=0,j\in\mathbb Z\cap[1,m-1]$, we can further convert this recursion into\begin{align}
F_r(u)={}&D^{1}F_{r-1}(u)+\frac{F_{r-1}(u)}{ \mathcal L_m(u)^{}}D^{1} \mathcal L_m(u)+\frac{(-1)^{r}\ell_{m,m-r}(u)}{ \mathcal L_m(u)}F_{0}(u).
\end{align}Such a recursion is actually retroactively compatible with the $r=1$ case in \eqref{eq:F1_rec}, since $ \ell_{m,m-1}(u)=\frac{m}{2}\frac{\D }{\D u} \mathcal L_m(u)$ is a consequence of the parity relation $\widetilde L_{m} =\sum_{j=0}^m\ell_{m,j}(u)D^j=(-1)^m\widetilde L_{m} ^*$. In other words, to build a linear combination\begin{align}
F_r(u)=\sum_{q}\varphi_{r,q}(u)D^{q}F_{0}(u)
\end{align}for $ r\in\mathbb Z\cap[0,m-1]$, it would suffice to set $\varphi_{r,q}(u)\equiv0 $ for $ q\in\mathbb Z\smallsetminus[0,r] $, and \begin{align}\left\{\begin{array}{l}\varphi_{0,0}(u)\equiv1,\\\varphi_{r,q}(u)=\varphi_{r-1,q-1}(u)+\dfrac{D^{1}[\mathcal L_m(u)\varphi_{r-1,q}(u)]}{ \mathcal L_m(u)}+\dfrac{(-1)^{r}\ell_{m,m-r}(u)}{\mathcal L_m(u)}\varphi_{0,q}(u).\end{array}\right.\end{align}It is then not hard to verify that \begin{align}
\varphi_{r,q}(u)=\sum_{p=m-r+q}^{m}(-1)^{p+1}{r+p-m\choose  q}\frac{D^{r+p-m-q}\ell_{m,p}(u)}{ \mathcal L_m(u)}
\end{align}for  $ q\in\mathbb Z\cap[0,r] $ is the unique solution to the aforementioned recursion. This explains the origin of \eqref{eq:V_mat_ab}.
 \end{proof}

Suppose that the Wro\'nskian matrix $ \mathbf W_m(u)=\mathbf W[f_1(u),\dots,f_m(u)]$ of our concern has determinant $ \Lambda_m|\mathcal L_m(u)|^{-m/2}$, where $ \Lambda_m\neq0$.  The statement of Proposition \ref{prop:Vanhove_dual} guarantees the existence of a constant matrix $ \mathbf S_{m}$ such that the last column of $ \mathbf W^\star=\mathbf W_m^\star(u)$ satisfies [cf.~\eqref{eq:Sm_fit}] \begin{align}
\frac{|\mathcal L_m(u)|^{\frac{m-2}{2}}}{\Lambda_m}((\mathbf W^\star)_{1,m},\ldots,(\mathbf W^\star)_{m,m})^{\mathrm T}=\mathbf S_{m}(f_{1}(u),\dots,f_{m}(u))^{\mathrm T}.
\end{align}The result in Proposition \ref{prop:Vv} then tells us that   $ \mathbf W_m^\star(u)=\frac{(-1)^{m+1}\Lambda_m}{|\mathcal L_m(u)|^{(m-2)/2}}\mathbf S_{m}^{}\mathbf{W}_{m}^{\mathrm T}(u)\mathbf V^{\mathrm T}_{m}
(u)$, where $|\det\mathbf V^{\mathrm T}_{m}
(u)|=1$, so the adjugate equation $ \mathbf W\mathbf W ^\star =\det (\mathbf W)\mathbf I$ for  $ \mathbf W= \mathbf W_m(u)$  and $ \det(\mathbf W)=\Lambda_m|\mathcal L_m(u)|^{-m/2}$ translates into
\begin{align}\mathbf S_{m}=(-1)^{m+1}\frac{[\mathbf W_m(u)]^{-1}[\mathbf V^{\mathrm T}_{m}
(u)]^{-1}[\mathbf{W}_{m}^{\mathrm T}(u)]^{-1}}{|\mathcal L_m(u)|}.\label{eq:Sigma_symm_prep}
\end{align}
\begin{corollary}[(Skew) symmetries of certain matrices]\label{cor:VvSs} Under the same assumptions as Proposition  \ref{prop:Vv}, we have the relation $  \mathbf V^{\vphantom{\mathrm T}}_{m}
(u)=(-1)^{m+1} \mathbf V^{{\mathrm T}}_{m}
(u)$,  which entails $\mathbf S_{m}^{\vphantom{\mathrm T}}=(-1)^{m+1}  \mathbf S^{{\mathrm T}}_{m} $.\end{corollary}\begin{proof}First we show that
$ ( \mathbf V^{\vphantom{\mathrm T}}_{2k-1}
(u))_{a,b}=( \mathbf V^{\vphantom{\mathrm T}}_{2k-1}
(u))_{b,a}$ for  $k\in\mathbb Z_{>0}$.

Along the main anti-diagonal, we have $( \mathbf V^{\vphantom{\mathrm T}}_{2k-1}
(u))_{a,2k-a}=(-1)^{a-1}=( \mathbf V^{\vphantom{\mathrm T}}_{2k-1}
(u))_{2k-a,a}$ for $ a\in\mathbb Z\cap[1,2k-1]$. As we march away from the main  anti-diagonal, and work with $ a+b\in\mathbb Z\cap(2,2k),a\in\mathbb Z\cap[1,2k-2],b\in\mathbb Z\cap[2,2k-2]$, we have the following recursion:\begin{align}\begin{split}
( \mathbf V^{\vphantom{\mathrm T}}_{2k-1}
(u))_{a,b}+( \mathbf V^{\vphantom{\mathrm T}}_{2k-1}
(u))_{a+1,b-1}={}&\sum_{n=a+b-1}^{2k-1}(-1)^{a+n}\left[{n-a\choose b-1}-{n-a-1\choose b-2}\right]\frac{D^{n-a-b+1}\ell_{2k-1,n}(u)}{^{}\mathcal L_{2k-1}(u)}\\={}&\sum_{n=a+b}^{2k-1}(-1)^{a+n}{n-a-1\choose b-1}\frac{D^{n-a-b+1}\ell_{2k-1,n}(u)}{^{}\mathcal L_{2k-1}(u)}\\={}&-\frac{D^{1}[\mathcal L_{2k-1}(u)( \mathbf V^{\vphantom{\mathrm T}}_{2k-1}
(u))_{a+1,b}]}{\mathcal L_{2k-1}(u)}.\end{split}\label{eq:anti_diag_neighbor}
\end{align}

In view of this recursion, in order to show that $ ( \mathbf V^{\vphantom{\mathrm T}}_{2k-1}
(u))_{a,2k-1-a}=( \mathbf V^{\vphantom{\mathrm T}}_{2k-1}
(u))_{2k-1-a,a}$ for all  $ a\in\mathbb Z\cap[1,2k-2]$, it would suffice to verify that $ ( \mathbf V^{\vphantom{\mathrm T}}_{2k-1}
(u))_{1,2k-2}=( \mathbf V^{\vphantom{\mathrm T}}_{2k-1}
(u))_{2k-2,1}$. Direct computations reveal that\begin{align}\begin{split}
( \mathbf V^{\vphantom{\mathrm T}}_{2k-1}
(u))_{1,2k-2}-( \mathbf V^{\vphantom{\mathrm T}}_{2k-1}
(u))_{2k-2,1}={}&\sum_{n=2k-2}^{2k-1}(-1)^{1+n}\left[{n-1\choose 2k-3}+{n-2k+2\choose0}\right]\frac{D^{n-2k+2}\ell_{2k-1,n}(u)}{^{}\mathcal L_{2k-1}(u)}\\={}&\frac{1}{\mathcal L_{2k-1}(u)}[(2k-1)D^{1}\ell_{2k-1,2k-1}(u)-2\ell_{2k-1,2k-2}(u)]=0,\end{split}
\end{align}according to \eqref{eq:sublead}.

The efforts in the last paragraph amount to a verification that  $ ( \mathbf V^{\vphantom{\mathrm T}}_{2k-1}
(u))_{a,b}=( \mathbf V^{\vphantom{\mathrm T}}_{2k-1}
(u))_{b,a}$ holds for $ a+b=2k-1$. To show that the same identity holds true for  smaller values of $ a+b$, we exploit \eqref{eq:anti_diag_neighbor} again, and build up on the ``boundary conditions''  $ ( \mathbf V^{\vphantom{\mathrm T}}_{2k-1}
(u))_{1,b}=( \mathbf V^{\vphantom{\mathrm T}}_{2k-1}
(u))_{b,1}$ for $ b\in\mathbb Z\cap[2,2k-3]$. By \eqref{eq:V_mat_ab} and elementary combinatorics, we have\begin{align}\begin{split}
( \mathbf V^{\vphantom{\mathrm T}}_{2k-1}
(u))_{1,b}-( \mathbf V^{\vphantom{\mathrm T}}_{2k-1}(u))_{b,1}={}&\sum_{n=b}^{2k-1}\left[(-1)^{1+n}{n-1\choose b-1}-(-1)^{b+n}{n-b\choose 0}\right]\frac{D^{n-b}\ell_{2k-1,n}(u)}{^{}\mathcal L_{2k-1}(u)}\\={}&\sum_{n=b}^{2k-1}\left[(-1)^{1+n}{n\choose b}+(-1)^{ n}{n-1\choose b}-(-1)^{b+n}\right]\frac{D^{n-b}\ell_{2k-1,n}(u)}{^{}\mathcal L_{2k-1}(u)}.\end{split}\label{eq:anti_diag_rec}
\end{align}Referring back to the combinatorial constraint in \eqref{eq:VV_poly_rec}, which says\begin{align}
-\ell_{2k-1,b}(u)={}&\sum_{n=b}^{2k-1}(-1)^{n}{n\choose b}D^{n-b}\ell_{2k-1,n}(u),
\end{align}we may further reduce \eqref{eq:anti_diag_rec} into\begin{align}\begin{split}&
( \mathbf V^{\vphantom{\mathrm T}}_{2k-1}
(u))_{1,b}-( \mathbf V^{\vphantom{\mathrm T}}_{2k-1}(u))_{b,1}\\={}&\frac{\ell_{2k-1,b}(u)-\ell_{2k-1,b}(u)}{\mathcal L_{2k-1}(u)}-\sum_{n=b+1}^{2k-1}\left[(-1)^{ 1+n}{n-1\choose b+1-1}-(-1)^{b+1+n}\right]\frac{D^{n-b-1+1}\ell_{2k-1,n}(u)}{^{}\mathcal L_{2k-1}(u)}\\={}&-\frac{D^{1}\{\mathcal L_{2k-1}(u)[( \mathbf V^{\vphantom{\mathrm T}}_{2k-1}
(u))_{1,b+1}-( \mathbf V^{\vphantom{\mathrm T}}_{2k-1}(u))_{b+1,1}]\}}{\mathcal L_{2k-1}(u)}.\end{split}\tag{\ref{eq:anti_diag_rec}$'$}
\end{align}Thus, it is clear that   $ ( \mathbf V^{\vphantom{\mathrm T}}_{2k-1}
(u))_{1,b}=( \mathbf V^{\vphantom{\mathrm T}}_{2k-1}
(u))_{b,1}$ indeed holds for $ b\in\mathbb Z\cap[2,2k-3]$, so  $  \mathbf V^{\vphantom{\mathrm T}}_{2k-1}
(u)= \mathbf V^{{\mathrm T}}_{2k-1}
(u)$  is true.

Since  $ [\mathbf W_{2k-1}^{\mathrm T}(u)]^{-1}$ is the transpose of $ [\mathbf W_{2k-1}^{\vphantom{\mathrm T}}(u)]^{-1}$, the relation \eqref{eq:Sigma_symm_prep} allows us to deduce  $\mathbf S_{2k-1}^{\vphantom{\mathrm T}}=\mathbf S^{{\mathrm T}}_{2k-1} $
from   $  \mathbf V^{\vphantom{\mathrm T}}_{2k-1}
(u)= \mathbf V^{{\mathrm T}}_{2k-1}
(u)$.

The aforementioned techniques also lead us to    $ ( \mathbf V^{\vphantom{\mathrm T}}_{2k}
(u))_{1,b}=-( \mathbf V^{\vphantom{\mathrm T}}_{2k}
(u))_{b,1}$  for $ b\in\mathbb Z\cap[2,2k]$, and consequently     $ ( \mathbf V^{\vphantom{\mathrm T}}_{2k}
(u))_{a,b}=-( \mathbf V^{\vphantom{\mathrm T}}_{2k}
(u))_{b,a}$  for $ a+b\in\mathbb Z\cap[3,2k+1]$. What remains to  be investigated is the case where $a=b=1 $. By direct computation, we have{\allowdisplaybreaks\begin{align}
( \mathbf V^{\vphantom{\mathrm T}}_{2k}
(u))_{1,1}={}&\sum_{n=1}^{2k}(-1)^{n}{n-1\choose 0}\frac{D^{n-1}\ell_{2k,n}(u)}{^{}\mathcal L_{2k}(u)}=\sum_{n=1}^{2k}(-1)^{n}\frac{D^{n-1}\ell_{2k,n}(u)}{^{}\mathcal L_{2k}(u)},\\( \mathbf V^{\vphantom{\mathrm T}}_{2k}
(u))_{2,1}={}&\sum_{n=2}^{2k}(-1)^{n+1}{n-2\choose 0}\frac{D^{n-2}\ell_{2k,n}(u)}{^{}\mathcal L_{2k}(u)}=\sum_{n=2}^{2k}(-1)^{n+1}\frac{D^{n-2}\ell_{2k,n}(u)}{^{}\mathcal L_{2k}(u)},\\( \mathbf V^{\vphantom{\mathrm T}}_{2k}
(u))_{1,2}={}&\sum_{n=2}^{2k}(-1)^{n}{n-1\choose 1}\frac{D^{n-2}\ell_{2k,n}(u)}{^{}\mathcal L_{2k}(u)}=\sum_{n=2}^{2k}(-1)^{n}(n-1)\frac{D^{n-2}\ell_{2k,n}(u)}{^{}\mathcal L_{2k}(u)},\end{align}}so another invocation of  \eqref{eq:VV_poly_rec} brings us\begin{align}\begin{split}2( \mathbf V^{\vphantom{\mathrm T}}_{2k}
(u))_{1,1}={}&2( \mathbf V^{\vphantom{\mathrm T}}_{2k}
(u))_{1,1}+\frac{D^1\{\mathcal L_{2k}(u)[( \mathbf V^{\vphantom{\mathrm T}}_{2k}
(u))_{2,1}+( \mathbf V^{\vphantom{\mathrm T}}_{2k}
(u))_{1,2}]\}}{\mathcal L_{2k}(u)}\\={}&-\frac{2\ell_{2k,1}(u)}{^{}\mathcal L_{2k}(u)}+\sum_{n=2}^{2k}(-1)^{n}{n\choose 1}\frac{D^{n-1}\ell_{2k,n}(u)}{^{}\mathcal L_{2k}(u)}\\={}&\frac{-2\ell_{2k,1}(u)+\ell_{2k,1}(u)+\ell_{2k,1}(u)}{^{}\mathcal L_{2k}(u)}=0,\end{split}\end{align}as expected.

 Now that we have   $  \mathbf V^{\vphantom{\mathrm T}}_{2k}
(u)=- \mathbf V^{{\mathrm T}}_{2k}
(u)$,  the skew symmetry  $ \mathbf S_{2k}^{\vphantom{\mathrm T}}=-\mathbf S_{2k}^{{\mathrm T}}$ follows from \eqref{eq:Sigma_symm_prep}.
\end{proof}\subsection{Threshold behavior of Wro\'nskian cofactors\label{subsec:threshold}}
Naturally, our next goal is to seek explicit formulae (in terms of fundamental solutions to Vanhove's differential equations) for the cofactor Wro\'nskian determinants \begin{align}\begin{split}(-1)^{r+m}(\mathbf W_{m}^{\star}(u))_{r,m}={}& W\big[\mathcal{F}_{m,1}(u),\dots,\reallywidehat{\mathcal{F}_{m,r}(u)},\dots,\mathcal{F}_{m,m}(u)\big]\\={}&\det(D^{i-1}\mathcal{F}_{m,j}(u))_{i\in\mathbb Z\cap[1, m-1],j\in\mathbb Z\cap([1,r-1]\cup[r+1,m])},\end{split}\end{align}
for all $  r\in\mathbb Z\cap[1,m]$. This will amount to the complete characterizations of the matrix $ \mathbf S_m $ that fits into the identity \eqref{eq:Sm_fit} for $ u\in(0,1)$.

Even though the (skew) symmetry relations in Corollary \ref{cor:VvSs} may nearly halve our workload on the elements of  $ \mathbf S_{m}$, we still need somewhat extensive preparations in Lemmata \ref{lm:loc} and \ref{lm:spec_det} below, before carrying out our computations  for $ k\in\mathbb Z_{>2}$ in Proposition \ref{prop:Sigma_sigma}.

Let $ C^\omega(a,b)$ be the totality of real-analytic functions\footnote{For each $ u_0\in(a,b)$, there exists a positive number $ \varepsilon$, such that
the Taylor series $ \sum_{n=0}^\infty \frac{(u-u_0)^ n}{n!}D^nf(u_0)$ of a real-analytic function $ f\in C^\omega(a,b)$ converges to $ f(u)$ for all $u\in (u_0-\varepsilon,u_0+\varepsilon)$. As a slight abuse of notations, we sometimes use the same symbol for a real-analytic function and its real-analytic continuation to a larger domain of definition.} on the interval $(a,b)$. The following patterns \begin{align}
\frac{|\mathcal L_{2k-1}(u)|^{k-\frac{3}{2}}}{|4-u|^{k-\frac{3}{2}}}W\big[\mathcal F_{2k-1,1}(u),\dots,\reallywidehat{\mathcal F_{2k-1,k}(u)},\dots,\mathcal F_{2k-1,2k-1}(u)\big]\in {}&C^{\omega}(0,16),\label{eq:pivot_mu}\\\frac{|\mathcal L_{2k}(u)|^{k-1}}{|1-u|^{k-1}}W\big[\mathcal F_{2k,1}(u),\dots,\reallywidehat{\mathcal F_{2k,k+1}(u)},\dots,\mathcal F_{2k,2k}(u)]\in{}&C^\omega(0,9)
\label{eq:pivot_nu}\end{align}motivate us to search  for certain linear combinations of Bessel moments with  suitable  orders of vanishing and analytic behavior as $u $ approaches a threshold, in the next lemma.
\begin{lemma}[Local behavior near thresholds]\label{lm:loc}In what follows, the exact value of a positive number $\varepsilon $ may vary from context to context.\begin{enumerate}[leftmargin=*,  label=\emph{(\alph*)},ref=(\alph*),
widest=a, align=left]
\item\label{itm:loc_mu} For each $ s\in\mathbb Z\cap[1,k]$, there exists a real-analytic function $  \mathsf f_{k,s}\in C^\omega((2s)^{2}-\varepsilon,(2s)^{2}+\varepsilon)$ such that we have  \begin{align}
\R\mathsf F_{k,s}^1(u)={}&\frac{(-4s)^{1-k}}{(2k-3)!!}\frac{(-1)^{\left\lfloor\frac{k-s}{2}\right\rfloor+(k-1)(k-s)}}{2}|u-(2s)^{2}|^{k-\frac{3}{2}}\left\{ 1+[u-(2s)^{2}] \mathsf f_{k,s}(u)\right\}, \label{eq:mu_loc1}
\intertext{for $ ((2s)^{2}-u)(-1)^{k-s}\in[0,\varepsilon)$, and}
\I\mathsf F_{k,s}^1(u)={}&\frac{(-4s)^{1-k}}{(2k-3)!!}\frac{(-1)^{\left\lfloor\frac{k-s-1}{2}\right\rfloor+(k-1)(k-s-1)}}{2}|u-(2s)^{2}|^{k-\frac{3}{2}}\left\{ 1+[u-(2s)^{2}] \mathsf f_{k,s}(u)\right\} ,\label{eq:mu_loc2}\end{align}for $((2s)^{2}-u)(-1)^{k-s-1}\in[0,\varepsilon) $. \item \label{itm:loc_nu}For each $ s\in\mathbb Z\cap[1,k+1]$, there exists a real-analytic function $  \mathsf g_{k,s}\in C^\omega((2s-1)^{2}-\varepsilon,(2s-1)^{2}+\varepsilon)$ such that we have\begin{align}\begin{split}
\mathfrak G_{k,s}(u)\colonequals {}&\frac{(-1)^{\left\lfloor \frac{k-s\vphantom{b}}{2}\right\rfloor+k-s}}{2^{2k}(k-1)!}\sqrt{\frac{\pi}{(2s-1)^{2k-1}}}[u-(2s-1)^{2}]^{k-1}\left\{ 1+[u-(2s-1)^{2}] \mathsf g_{k,s}(u)\right\}\\={}&\begin{cases}\R \mathsf G^{1}_{k,s}(u), & (-1)^{k-s}=+1, \\
\I \mathsf G^{1}_{k,s}(u), & (-1)^{k-s}=-1, \\
\end{cases}
\end{split}\end{align} and the  expression\begin{align}
\begin{split}&\mathfrak L_{k,s}(u)\\\colonequals {}&\begin{cases}\mathcal I^1_{2k,s} (u)+\frac{(-1)^{k-s-1}-i}{\pi(1-i)}\mathfrak G_{k,s}(u)\big[\log|u-(2s-1)^{2}|-\frac{i\pi}{2}\big], & u\in((2s-1)^{2}-\varepsilon,(2s-1)^{2}), \\
\mathcal K^1_{2k,s} (u)+\frac{(-1)^{k-s-1}-i}{\pi(1-i)}\mathfrak G_{k,s}(u)\big[\log|u-(2s-1)^{2}|+\frac{i\pi}{2}\big], & u\in((2s-1)^{2},(2s-1)^{2}+\varepsilon) \\
\end{cases}\label{eq:log_jump}\end{split}
\end{align}extends to a real-analytic function in $ C^\omega((2s-1)^{2}-\varepsilon,(2s-1)^{2}+\varepsilon)$. \end{enumerate}\end{lemma}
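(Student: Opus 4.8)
The plan is to treat both parts as a Frobenius analysis of Vanhove's differential equation at the regular singular point $u_0$ — with $u_0=(2s)^2$ in part \ref{itm:loc_mu} and $u_0=(2s-1)^2$ in part \ref{itm:loc_nu} — feeding in the threshold sum rules of Proposition~\ref{prop:u_sq} to decide which Frobenius solution each of the combinations $\mathsf F^1_{k,s}$, $\mathsf G^1_{k,s}$ actually is, and then reading off the leading constants and phases from the Hankel-function contour integrals used to prove that proposition.

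First I would record the local structure at $u_0$. Since the leading polynomial coefficient — $\mathfrak m_{2k-1}(u)$ for $\widetilde L_{2k-1}$, $\mathfrak n_{2k}(u)$ for $\widetilde L_{2k}$ — has only a \emph{simple} zero at $u_0$, while every lower coefficient $\ell_{m,j}(u)$ is a polynomial, the Fuchs condition \cite[\S15.3]{Ince1956ODE} holds automatically, so $u_0$ is a regular singular point. Writing $\ell_{m,m}(u)=(u-u_0)\widetilde\ell(u)$ with $\widetilde\ell(u_0)\neq0$, and using the sub-leading relation $\ell_{m,m-1}(u)=\tfrac m2 D^1\ell_{m,m}(u)$ from \eqref{eq:sublead} (whence $\ell_{m,m-1}(u_0)=\tfrac m2\widetilde\ell(u_0)$), the indicial polynomial factors as $\rho(\rho-1)\cdots(\rho-m+2)\big[(\rho-m+1)+\tfrac m2\big]$, so the characteristic exponents at $u_0$ are $0,1,\dots,m-2$ together with $\tfrac m2-1$. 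For $m=2k-1$ this last exponent is $k-\tfrac32\notin\mathbb Z$, hence unresonant, and its Frobenius solution equals $(u-u_0)^{k-3/2}$ times a convergent power series with nonzero constant term; for $m=2k$ it equals $k-1$, repeating an exponent already in $\{0,\dots,2k-2\}$, so the local solution space carries a logarithmic solution of the shape $(u-u_0)^{k-1}A(u)\log(u-u_0)+(u-u_0)^{k-1}B(u)$ with $A,B$ analytic and $A(u_0)\neq0$. (The exponent sums are consistent with the Wronskian determinants \eqref{eq:detOmega_smile}, \eqref{eq:detomega_smile} and with the pivot analyticity patterns \eqref{eq:pivot_mu}, \eqref{eq:pivot_nu}, which I would use as a cross-check.)

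Next comes the identification step. Each $\smash{\underset{^\smile}{\mu}}^1_{k,j}$ and $\smash{\underset{^\smile}{\nu}}^1_{k,j}$ solves the relevant Vanhove equation, hence so do $\R\mathsf F^1_{k,s}$, $\I\mathsf F^1_{k,s}$, $\mathfrak G_{k,s}$. Using $\smash{\underset{^\smile}{\xi}}^{\ell}_{k,j}=4^{\ell-1}(uD^2+D^1)^{\ell-1}\smash{\underset{^\smile}{\xi}}^1_{k,j}$ and $\smash{\underset{^\smile}{\acute\xi}}^{\ell}_{k,j}=2\cdot4^{\ell-1}\sqrt u\,D^1(uD^2+D^1)^{\ell-1}\smash{\underset{^\smile}{\xi}}^1_{k,j}$ for $\xi=\mu,\nu$, together with Lemma~\ref{lm:Bessel_diff_iter}, the vanishing at $u_0\neq0$ of $(uD^2+D^1)^{\ell-1}$ and $D^1(uD^2+D^1)^{\ell-1}$ applied to the relevant combination — over the ranges of $\ell$ in Proposition~\ref{prop:u_sq} (unprimed and primed sum rules) — unwinds by a short induction, peeling off $D^0,D^1,D^2,\dots$, into $D^j\R\mathsf F^1_{k,s}(u_0)=0$ for $j=0,1,\dots,k-2$, and likewise for $\I\mathsf F^1_{k,s}$ and $\mathfrak G_{k,s}$. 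Since no nonzero combination of the integer-exponent Frobenius solutions with exponents $\leq k-2$ can be $O\big((u-u_0)^{k-1}\big)$, this confines $\R\mathsf F^1_{k,s}$ and $\I\mathsf F^1_{k,s}$ to the span of the solutions with exponent $>k-2$; a pivot/Vanhove-duality argument (Proposition~\ref{prop:Vanhove_dual} applied to the cofactor Wronskians behind \eqref{eq:pivot_mu}, or the auxiliary determinant identity) then shows that the only integer-exponent directions $\geq k-1$ reachable by these combinations are already occupied by the analytic pivot solution, leaving $\R\mathsf F^1_{k,s}$ and $\I\mathsf F^1_{k,s}$ as scalar multiples of the $(u-u_0)^{k-3/2}$-branch, i.e.\ of the shape $C|u-u_0|^{k-3/2}\{1+[u-u_0]\mathsf f_{k,s}(u)\}$ with $\mathsf f_{k,s}\in C^\omega$; the same reasoning forces $\mathfrak G_{k,s}$ to be a multiple of the \emph{non-logarithmic} exponent-$(k-1)$ solution, giving the $[u-(2s-1)^2]^{k-1}\{1+[u-(2s-1)^2]\mathsf g_{k,s}(u)\}$ form with $\mathsf g_{k,s}\in C^\omega$.

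Finally I would compute the multiplicative constants, signs, and (in part \ref{itm:loc_nu}) the logarithmic coefficient by returning to the Hankel-function contour integrals of Proposition~\ref{prop:u_sq}, replacing the integer $2s$ (resp.\ $2s-1$) there by the free parameter $\sqrt u$. This yields contour-integral representations of $\mathsf F^1_{k,s}(u)$ (resp.\ $\mathsf G^1_{k,s}(u)$, $\mathsf I^1_{k,s}(u)$, $\mathsf K^1_{k,s}(u)$) on one side of the threshold, and the fractional power $|u-(2s)^2|^{k-3/2}$ — resp.\ the $[u-(2s-1)^2]^{k-1}\log$ singularity — emerges from the coalescence of exponential rates in the integrand at $\sqrt u=2s$, resp.\ $2s-1$; matching the asymptotics \eqref{eq:H0H1_asympt} against the binomial expansions defining $\mathsf F,\mathsf G,\mathsf I,\mathsf K$ produces the factor $\tfrac{(-4s)^{1-k}}{(2k-3)!!}$, the $(-1)$-powers indexed by $\lfloor\tfrac{k-s}2\rfloor$ and $(k-1)(k-s)$, the constant $\tfrac{1}{2^{2k}(k-1)!}\sqrt{\pi/(2s-1)^{2k-1}}$, and the coefficient $\tfrac{(-1)^{k-s-1}-i}{\pi(1-i)}$. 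For part \ref{itm:loc_nu} the two clauses of \eqref{eq:log_jump} are the boundary values, from $u<u_0$ and from $u>u_0$, of a single multivalued solution around $u_0$; its monodromy is exactly $2\pi i$ times the non-logarithmic exponent-$(k-1)$ solution, which is proportional to $\mathfrak G_{k,s}$, so subtracting the common term $\tfrac{(-1)^{k-s-1}-i}{\pi(1-i)}\mathfrak G_{k,s}(u)\big[\log|u-u_0|\mp\tfrac{i\pi}2\big]$ removes the singular part coherently on both sides and produces a function in $C^\omega((2s-1)^2-\varepsilon,(2s-1)^2+\varepsilon)$. The main obstacle is this last step: the Frobenius bookkeeping and the sum-rule reductions are routine, but establishing the exact normalizations — and above all the sign/phase data (the $\lfloor\cdot\rfloor$-indexed signs, the branch of $|u-u_0|^{k-3/2}$ on each side of the threshold, and the $\mp i\pi/2$) — requires careful tracking of which Riemann sheet the analytic continuation of the Hankel contour integral lands on and of how those asymptotics interlock with the combinatorial coefficients.
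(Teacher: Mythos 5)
Your local Frobenius analysis at $u_0$ is sound (the exponents $0,1,\dots,m-2$ together with $\tfrac m2-1$ do follow from the simple zero of $\ell_{m,m}$ and the sub-leading relation \eqref{eq:sublead}, and they are consistent with \eqref{eq:detOmega_smile}--\eqref{eq:detomega_smile}), and your reduction of Proposition~\ref{prop:u_sq} to the one-sided vanishing $D^{j}\R\mathsf F^1_{k,s}(u_0)=0$ for $j\in\mathbb Z\cap[0,k-2]$ is exactly what the paper does. The gap is in your identification step. Vanishing of these $k-1$ derivatives only places $\R\mathsf F^1_{k,s}$ in the subspace of local solutions vanishing to order $>k-2$, which besides the exponent-$\bigl(k-\tfrac32\bigr)$ branch still contains the integer-exponent directions $k-1,k,\dots,2k-3$ (possibly log-modified) --- a $(k-1)$-dimensional family. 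The lemma claims far more than the leading order: it asserts that the \emph{same} real-analytic $\mathsf f_{k,s}$ serves in \eqref{eq:mu_loc1} and \eqref{eq:mu_loc2} on the two sides of the threshold, i.e.\ that the generalized power series of $\mathsf F^1_{k,s}$ contains no integer powers of $u-(2s)^2$ at all; this two-sided matching is precisely what Proposition~\ref{prop:Sigma_sigma} later needs to continue the cofactor Wronskians real-analytically across $u_0$. Your proposed mechanism for excluding the integer-exponent components --- that a ``pivot/Vanhove-duality argument'' shows those directions are ``already occupied by the analytic pivot solution'' --- does not work: Proposition~\ref{prop:Vanhove_dual} and the patterns \eqref{eq:pivot_mu}--\eqref{eq:pivot_nu} give no information on how the specific combination $\mathsf F^1_{k,s}$ decomposes over the Frobenius basis, and appealing to the cofactor Wronskians here is circular, because their identification with $\mathsf F^1_{k,s}$ in \S\ref{subsec:offshell_quad} is \emph{deduced from} this lemma. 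The paper closes this gap with an analytic input absent from your plan: the subtracted asymptotic expansions $\mathscr F_{k,s,M}(u,t)$ and the associated contour integrals for the entire tower of moments $\R\mathsf F^{\frac k2+M}_{k,s}$ and $\R\acute{\mathsf F}^{\frac k2+M}_{k,s}$, $M\in\mathbb Z_{\geq0}$ (dominated convergence, closing the contour to the right), which show that no $[\sqrt u-2s]^{\mathbb Z}$ terms occur at any order.

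The same deficit is more serious in part \ref{itm:loc_nu}. The content of \eqref{eq:log_jump} is that the functions obtained from $\mathsf I^1_{k,s}$ on the left and from $\mathsf K^1_{k,s}$ on the right, after removing the explicit term $\tfrac{(-1)^{k-s-1}-i}{\pi(1-i)}\mathfrak G_{k,s}(u)\bigl[\log|u-(2s-1)^2|\mp\tfrac{i\pi}2\bigr]$, have \emph{identical} Taylor coefficients at $(2s-1)^2$. A monodromy statement for the ODE only tells you that the jump lies in the one-dimensional logarithmic direction; it does not produce the normalization of the log coefficient, the $\mp i\pi/2$ phases, or the coefficient matching, which in the paper come from the explicit $\mathscr H_{k,s,M}$ construction, Jordan's lemma, and the expansion of $\int_0^{\pm i\infty}e^{-i[\sqrt u-(2s-1)]z}(z+1)^{-1}\,\mathrm dz$ (and, for the leading constant, the arc-subtracted contour \eqref{eq:nu_Crandall}). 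You yourself flag this normalization and sheet-tracking as ``the main obstacle'' and defer it; but that is exactly where the lemma's content lies, so as written the proposal is a plausible outline with the decisive steps missing rather than a proof.
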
\begin{proof}\begin{enumerate}[leftmargin=*,  label=(\alph*),ref=(\alph*),
widest=a, align=left]
\item If $ k-s$ is even, then $\R\mathsf F_{k,s}^1(u)$ [see \eqref{eq:GI_sum} and \eqref{eq:GK_sum} for explicit formulae] is well-defined and real-analytic for $ u\in((2s-2)^{2},(2s)^{2})$, and  the function $ (-1)^{\left\lfloor\frac{k-s}{2}\right\rfloor}(1+2k\delta_{s,k})\mathcal F_{2k-1,2\left\lfloor\frac{k-s}{2}\right\rfloor+1}(u)=(-1)^{\left\lfloor\frac{k-s}{2}\right\rfloor} (1+ 2k\delta_{s,k})\mathcal F_{2k-1,k-s+1}(u)$ is the only addend that fails to be  real-analytic in a neighborhood of $ u=(2s)^{2}$. Moreover, the function $\R\mathsf F_{k,s}^1(u)$ has vanishing derivatives up to order $ k-2=\max\big\{2\big(\left\lfloor\frac{k}{2}\right\rfloor-1\big),2\left\lfloor\frac{k-1}{2}\right\rfloor-1\big\}$, according to Proposition \ref{prop:u_sq}(a). We claim that the leading order behavior for $\R\mathsf F_{k,s}^1(u)$ is $ A[(2s)^{2}-u]^{k-\frac{3}{2}}[1+o(1)]$, where the constant $ A$ can be determined through l'H\^opital's rule. Concretely speaking, when  $k$ is odd,
we have \begin{align}\begin{split}
A={}&\lim_{u\to(2s)^2-0^+}\frac{\R\mathsf F_{k,s}^1(u)}{[(2s)^{2}-u]^{k-\frac{3}{2}}}=\lim_{u\to(2s)^2-0^+}\frac{D^{k-1}\R\mathsf F_{k,s}^1(u)}{D^{k-1}\{[(2s)^{2}-u]^{k-\frac{3}{2}}\}}\\={}&\frac{(1+2k\delta_{s,k})(-1)^{\left\lfloor\frac{k-s}{2}\right\rfloor}}{(-2)^{1-k}(2k-3)!!}\lim_{u\to(2s)^2-0^+}{\sqrt{(2s)^{2}-u}}D^{k-1} \mathcal F_{2k-1,k-s+1}(u),\end{split}
\end{align} where \begin{align}\begin{split}&
\lim_{u\to(2s)^2-0^+}{\sqrt{(2s)^{2}-u}}D^{k-1}\mathcal  F_{2k-1,k-s+1}(u)\\={}&\frac{1}{(4s)^{k-1}}\lim_{u\to(2s)^2-0^+}{\sqrt{(2s)^{2}-u}} \mathcal F_{2k-1,k-s+1}^{\frac{k+1}{2}}(u)\\={}&\frac{1}{(4s)^{k-1}}\lim_{u\to(2s)^2-0^+}\frac{{\sqrt{(2s)^{2}-u}}}{\pi^{s}(1+2k\delta_{s,k})}\int_0^\infty\frac{e^{\sqrt{u}t}}{\sqrt{2\pi \sqrt{u} t}}\left( \frac{e^{t}}{\sqrt{2\pi t}} \right)^{k-s}\left( \sqrt{\frac{\pi}{2t}}e^{-t} \right)^{k+s}t^{k}\D t\\={}&\frac{2^{2-3k}s^{1-k}}{1+2k\delta_{s,k}}\end{split}\label{eq:lHopitalHankel}
\end{align}follows from  the asymptotic behavior \cite[\S7.23]{Watson1944Bessel}\begin{align}I_0(t)=
\frac{e^{t}}{\sqrt{2\pi t}}\left[ 1+O\left( \frac{1}{t} \right) \right]\quad\text{and}\quad K_0(t)=\sqrt{\frac{\pi}{2t}}e^{-t}\left[ 1+O\left( \frac{1}{t} \right) \right],
\end{align}along with Bessel's differential equations $ (uD^2+D^1)I_0(t)=\frac{t^2}{4}I_0(t)$ and  $ (uD^2+D^1)K_0(t)=\frac{t^2}{4}K_0(t)$. This shows that  $ A=\frac{(-4s)^{1-k}}{(2k-3)!!}\frac{(-1)^{\left\lfloor\frac{k-s}{2}\right\rfloor}}{2}$ when $k$ is odd. Based on the asymptotic behavior \cite[\S7.23]{Watson1944Bessel}\begin{align}
I_1(t)=
\frac{e^{t}}{\sqrt{2\pi t}}\left[ 1+O\left( \frac{1}{t} \right) \right]\quad\text{and}\quad K_1(t)=\sqrt{\frac{\pi}{2t}}e^{-t}\left[ 1+O\left( \frac{1}{t} \right) \right],
\end{align}one can perform a similar analysis when $k $ is even.

If $k-s $ is odd, then $ \R\mathsf F^1_{k,s}(u)$ is well-defined and real-analytic for $ u\in((2s)^{2},(2s+2)^{2})$, and the function $ (-1)^{\left\lfloor\frac{k+s-1}{2}\right\rfloor}\mathcal F_{2k-1,2\left\lfloor\frac{k+s-1}{2}\right\rfloor+k}^{\ell}(u)=(-1)^{\left\lfloor\frac{k+s-1}{2}\right\rfloor} \mathcal F_{2k-1,3k+s-1}^{\ell}(u)$ is the only addend that fails to be  real-analytic in a neighborhood of $ u=(2s)^{2}$. One can repeat the remaining procedures in the last paragraph to find the leading order  behavior in such scenarios.

By  the Fuchs condition \cite[\S15.3]{Ince1956ODE}, the operator  $\widetilde L_{2k-1}$ has a regular singular point at $u=(2s)^2$ for each  $ s\in\mathbb Z\cap[1,k]$, so the left-hand sides of  \eqref{eq:mu_loc1}  and   \eqref{eq:mu_loc2} should be equal to convergent generalized power series. According to the Frobenius method \cite[\S16.1]{Ince1956ODE}, such series may (in principle) involve both integer powers   $ [u-(2s)^2]^{\mathbb Z}$ and half-integer powers $ |u-(2s)^2|^{\mathbb Z+\frac{1}{2}}$. Our next task is to show that  integer powers   $ [u-(2s)^2]^{\mathbb Z}$ actually will not appear in these scenarios.

Without loss of generality, we only elaborate on  the cases where $ u<(2s)^2$, $ k-s$ is even, and   $k$ is odd.  We start from the following  asymptotic expansions for $M\in\mathbb Z_{\geq0},t\to\infty $:{\allowdisplaybreaks\begin{align}\begin{split}
\mathscr F_{k,s,M}(u,t)t^{k+2M}\colonequals {}&\left\{ I_0(\sqrt{u}t)[-iK_0(t)]^{k+s}\frac{[\pi I_{0}(t)+iK_{0}(t)]^{k-s}+[\pi I_{0}(t)-iK_{0}(t)]^{k-s}}{2}\right.\\{}&+iK_0(\sqrt{u}t)[-iK_0(t)]^{k+s}\frac{[\pi I_{0}(t)+iK_{0}(t)]^{k-s}-[\pi I_{0}(t)-iK_{0}(t)]^{k-s}}{2}\\{}&\left.+iK_0(\sqrt{u}t)[-iK_0(t)]^{k-s}\frac{[\pi I_{0}(t)+iK_{0}(t)]^{k+s}-[\pi I_{0}(t)-iK_{0}(t)]^{k+s}}{2} \right\}\frac{t^{k+2M}}{\pi^{k+1}}\\{}&-\frac{e^{(\sqrt{u}-2s)t}}{\sqrt{t}}\sum_{n=0}^{2M}\frac{f_{k,s,n}(u)}{t^{n-2M}}=O\left( \frac{e^{(\sqrt{u}-2s)t}}{|t|^{3/2}} \right),\end{split}
\end{align}}where $ f_{k,s,n}(u)$ is holomorphic in a $ \mathbb C$-neighborhood of $ u=(2s)^2$. Integrating the left-hand side of the equation above over $t\in(0,\infty)$, while invoking the dominated convergence theorem, we obtain\begin{align}
\lim_{u\to(2s)^2-0^+}
\left[\R\mathsf F_{k,s}^{\frac{k}{2}+M}(u)-\sum_{n=0}^{2M}\frac{f_{k,s,n}(u)\Gamma \left(2 M-n+\frac{1}{2}\right)}{|\sqrt{u}-2s|^{2M-n+\frac{1}{2}}}\right]=\int_{0}^\infty\mathscr F_{k,s,M}((2s)^2,t)t^{k+2M}\D t.
\end{align}The right-hand side of the equation above  vanishes, because it is a constant multiple of [cf.~\eqref{eq:H_vert_int}]\begin{align}
&\R\int_{-i\infty}^{i\infty}\left\{H_0^{(2)}(2sz)[H_0^{(1)}(z)]^{2s}[H_0^{(1)}(z)H_0^{(2)}(z)]^{k-s}z^{k+2M}-\frac{1}{\sqrt{z}}\sum_{n=0}^{2M}\frac{h_{k,s,n}}{z^{n-2M}}\right\}\D z,
\end{align} where the integrand has $ O(|z|^{-3/2})$ asymptotics for $|z|\to\infty,|\arg z|\leq\frac{\pi}{2}$, and the contour closes to the right. So far, we know that the generalized power series expansion for  $ \R\mathsf F_{k,s}^{\frac{k}{2}+M}(u)$ contains no $ [\sqrt{u}-2s]^{\mathbb Z_{\leq 0}}$ terms, for each  $M\in\mathbb Z_{\geq0}$. Likewise, one can argue that the same property applies to   $ \R\acute{\mathsf F}_{k,s}^{\frac{k}{2}+M}(u),M\in\mathbb Z_{\geq0}$. This proves that the generalized power series expansion for  $ \mathsf F_{k,s}^{1}(u)$ contains no $ [\sqrt{u}-2s]^{\mathbb Z}$ terms, and equivalently, no  $ [u-(2s)^{2}]^{\mathbb Z}$ terms, as claimed.

\item  Without loss of generality, we will focus on the scenarios where  $ k-s$ is even, and   $k$ is odd. Under such assumptions, one can show (through direct examination of the integral representations for off-shell Bessel moments) that $ \R \mathsf G^1_{k,s}(u)$ is real-analytic for $ u\in((\max\{0,2s-3\})^2,(2s+1)^2)$. At the threshold $u=(2s-1)^2 $, this function has vanishing derivatives up to $(k-2)$nd order, according to Proposition \ref{prop:u_sq}(b).

As a variation on our proof \cite[Lemma 2]{Zhou2018ExpoDESY} of the integral formulae for  generalized Crandall numbers \eqref{eq:Crandall_num}, we consider \begin{align}
\left(\int_{-iT}^{-i\varepsilon}+\int_{C_\varepsilon}+\int_{i\varepsilon}^{iT}\right)\left\{\frac{H_0^{(2)}((2s-1)z)}{(2/\pi)^{k+1}}[H_0^{(1)}(z)]^{2s-1}[H_0^{(1)}(z)H_0^{(2)}(z)]^{k+1-s}z^{k}-\frac{(-1)^{\left\lfloor \frac{s\vphantom{b}}{2}\right\rfloor}}{\sqrt{2s-1}z}\right\}\D z\label{eq:nu_Crandall}
\end{align} where $C_\varepsilon $ is a semi-circular arc in the right half-plane, joining $-i\varepsilon$ to $i\varepsilon$.  For each
fixed $\varepsilon> 0$, we can close the contour to the right, as $ T\to\infty$. Noting that  $ \int_{C_\varepsilon}\frac{\D z}{z}=\pi i$, we may read off the imaginary part of the last displayed equation as   $ \R \mathsf G^{\frac{k+1}{2}}_{k,s}((2s-1)^{2})=\frac{(-1)^{\left\lfloor \frac{s\vphantom{b}}{2}\right\rfloor}}{2^{k+1}} \sqrt{\frac{\pi}{2s-1}}$. Falling back on  l'H\^opital's rule, we can deduce  $ \R \mathsf G^{1}_{k,s}(u)=\frac{(-1)^{\left\lfloor \frac{k-s\vphantom{b}}{2}\right\rfloor}}{2^{2k}(k-1)!}\sqrt{\frac{\pi}{(2s-1)^{2k-1}}}[u-(2s-1)^{2}]^{k-1}[1+o(1)]$ as $ u\to (2s-1)^2$. Here, the  $ o(1)$ term can be rewritten as $ [u-(2s-1)^{2}] \mathsf g_{k,s}(u)$ with  $  \mathsf g_{k,s}\in C^\omega((2s-1)^{2}-\varepsilon,(2s-1)^{2}+\varepsilon)$, because $ \R \mathsf G^{1}_{k,s}(u)$ is real-analytic  as $u$ approaches $(2s-1)^2 $.

One can represent the Taylor coefficients $ g_{k,s,n}=n!D^n \mathsf g_{k,s}((2s-1)^{2}),n\in\mathbb Z_{\geq0}$ through the off-shell moments  $ \R \mathsf G^{\frac{k+1}{2}+M}_{k,s}((2s-1)^{2})$ and   $ \R \acute{\mathsf G}^{\frac{k+1}{2}+M}_{k,s}((2s-1)^{2})$  for $ M\in\mathbb Z_{\geq0}$. Here, one can  evaluate  $\R \mathsf G^{\frac{k+1}{2}+M}_{k,s}((2s-1)^{2}) $ via a contour integral over\begin{align}
&\frac{H_0^{(2)}((2s-1)z)}{(2/\pi)^{k+1}}[H_0^{(1)}(z)]^{2s-1}[H_0^{(1)}(z)H_0^{(2)}(z)]^{k+1-s}z^{k+2M}-\sum_{n=1}^{2M+1}\frac{\eta_{k,s,n}}{z^{n-2M}}=O\left( \frac{1}{z^{2}} \right),
\end{align}akin to our treatment of \eqref{eq:nu_Crandall}. It then follows that     $ \R \mathsf G^{\frac{k+1}{2}+M}_{k,s}((2s-1)^{2})=\frac{\sqrt{\pi}}{2^{k+1}} (-1)^{M}\eta_{k,s,2M+1}$. There is a similar relation that expresses    $ \R \acute{\mathsf G}^{\frac{k+1}{2}+M}_{k,s}((2s-1)^{2})$  through coefficients of asymptotic expansions.

 Now, for $ M\in\mathbb Z_{\geq0}$, we consider\begin{align}\begin{split}
\mathscr H_{k,s,M}(u,z)z^{k+2M}\colonequals {}&\frac{H_0^{(2)}(\sqrt{u}z)}{(2/\pi)^{k+1}}[H_0^{(1)}(z)]^{2s-1}[H_0^{(1)}(z)H_0^{(2)}(z)]^{k+1-s}z^{k+2M}\\{}&-e^{-i[\sqrt{u}-(2s-1)]z}\left[\frac{\eta_{k,s,2M+1}(u)}{z+1}+\sum_{n=1}^{2M}\frac{\eta_{k,s,n}(u)}{z^{n-2M}}\right],
\end{split}\end{align} where the functions $\eta_{k,s,n}(u) $ are holomorphic in a $ \mathbb C$-neighborhood of $ u=(2s-1)^2$, and $ \mathscr H_{k,s,M}(u,z)=O(e^{-i[\sqrt{u}-(2s-1)]z}/z^{2-k-2M})$ for fixed $u$ and $ |z|\to\infty,-\frac{\pi}{2}\leq\arg z\leq\frac{\pi}{2}$. For $ u\in((2s-1)^2-\varepsilon,(2s-1)^2)$, we can invoke Jordan's lemma in the form of \begin{align}
\int_0^{i\infty}\mathscr H_{k,s,M}(u,z)z^{k+2M}\D z=\int_0^{\infty}\mathscr H_{k,s,M}(u,x)x^{k+2M}\D x.\label{eq:HksM_Wick}
\end{align}In view of \eqref{eq:HnInKn}, we rewrite the left-hand side of \eqref{eq:HksM_Wick} as\begin{align}\begin{split}{}&\left( \frac{2}{\pi i} \right)^{k+1}{(-1)^{M}} \int_{0}^\infty[i \pi  I_0(\sqrt{u}t)-K_0(\sqrt{u}t)][K_0(t)]{}^{k+s} [i \pi  I_0(t)-K_0(t)]{}^{k+1-s}t^{k+2M}\D t\\{}&-\eta_{k,s,2M+1}(u)\int_0^{i\infty} \frac{e^{-i[\sqrt{u}-(2s-1)]z}}{z+1}\D z
-\sum_{n=1}^{2M}\frac{\eta_{k,s,n}(u)(2M-n)!}{\{i[\sqrt{u}-(2s-1)]\}^{2M+1-n}},
\end{split}\end{align}where $ \eta_{k,s,2M+1}((2s-1)^2)=\frac{2^{k+1}}{\sqrt{\pi}} (-1)^{M}\R \mathsf G^{\frac{k+1}{2}+M}_{k,s}((2s-1)^{2})$ and \begin{align}
\int_0^{i\infty} \frac{e^{-i[\sqrt{u}-(2s-1)]z}}{z+1}\D z=-\log|2s-1-\sqrt{u}|-\gamma_{0} +\frac{i \pi }{2}+o(1)
\end{align} as $ u\to(2s-1)^2-0^+$. Meanwhile, the right-hand side of  \eqref{eq:HksM_Wick}  is continuous across the threshold $ u=(2s-1)^2$, and admits analytic continuation to $ u\in((2s-1)^2,(2s-1)^2+\varepsilon)$,   in the following form:\begin{align}\begin{split}&
\int_0^{\infty}\mathscr H_{k,s,M}(u,x)x^{k+2M}\D x=\int_0^{-i\infty}\mathscr H_{k,s,M}(u,z)z^{k+2M}\D z\\={}&\left( \frac{2}{\pi i} \right)^{k+1}{(-1)^{M}}\int_{0}^\infty K_0(\sqrt{u}t)[K_0(t)]{}^{k+1-s} [i \pi  I_0(t)+K_0(t)]{}^{k+s}t^{k+2M}\D t\\{}&-\eta_{k,s,2M+1}(u)\int_0^{-i\infty} \frac{e^{-i[\sqrt{u}-(2s-1)]z}}{z+1}\D z-\sum_{n=1}^{2M}\frac{\eta_{k,s,n}(u)(2M-n)!}{\{i[\sqrt{u}-(2s-1)]\}^{2M+1-n}},\end{split}
\end{align}where\begin{align}
\int_0^{-i\infty} \frac{e^{-i[\sqrt{u}-(2s-1)]z}}{z+1}\D z=-\log|2s-1-\sqrt{u}|-\gamma_{0} -\frac{i \pi }{2}+o(1)
\end{align} as $ u\to(2s-1)^2+0^+$. Therefore, with $ \mathsf h^M_{k,s}(u)\colonequals \sum_{n=1}^{2M}{\eta_{k,s,n}(u) (2M-n)!\{i[\sqrt{u}-(2s-1)]\}^{n-1}}\in C^\omega((2s-1)^{2}-\varepsilon,(2s-1)^{2}+\varepsilon)$ and the understanding that $ \mathsf h^0_{k,s}(u)\equiv0$, we have the following continuity condition for every $ M\in\mathbb Z_{\geq0}$:{\allowdisplaybreaks\begin{align}\begin{split}
&\lim_{u\to(2s-1)^2-0^+}\left\{\int_{0}^\infty[i \pi  I_0(\sqrt{u}t)-K_0(\sqrt{u}t)][K_0(t)]{}^{k+s} \right.[i \pi  I_0(t)-K_0(t)]{}^{k+1-s}t^{k+2M}\D t+\\{}&\left.+\left( \frac{\pi i}{2} \right)^{k+1}\left[\frac{2^{k+1}}{\sqrt{\pi}} \left( \log|2s-1-\sqrt{u}|-\frac{i \pi }{2} \right)\R \mathsf G^{\frac{k+1}{2}+M}_{k,s}((2s-1)^{2})\right.\left.-\frac{ \mathsf h^M_{k,s}(u)}{[\sqrt{u}-(2s-1)]^{2M}}\right]\right\}\\={}&\lim_{u\to(2s-1)^2+0^+}\left\{\int_{0}^\infty K_0(\sqrt{u}t)[K_0(t)]{}^{k+1-s} [i \pi  I_0(t)+K_0(t)]{}^{k+s}t^{k+2M}\D t+\right.\\{}&\left.+\left( \frac{\pi i}{2} \right)^{k+1}\left[\frac{2^{k+1}}{\sqrt{\pi}} \left( \log|2s-1-\sqrt{u}|+\frac{i \pi }{2} \right)\R \mathsf G^{\frac{k+1}{2}+M}_{k,s}((2s-1)^{2})\right.\left.-\frac{ \mathsf h^M_{k,s}(u)}{[\sqrt{u}-(2s-1)]^{2M}}\right]\right\}.\end{split}
\end{align}}There is a similar continuity condition relating derivatives of these off-shell Bessel moments (with respect to $u$) to  $ \R \acute{\mathsf G}^{\frac{k+1}{2}+M}_{k,s}((2s-1)^{2})$.

Combining the input from the last two paragraphs,  one sees that both  $\mathcal I^1_{2k,s} (u)-\frac{i}{\pi}\mathfrak G_{k,s}(u)\big[\log|u-(2s-1)^{2}|-\frac{i\pi}{2}\big],u\in((2s-1)^{2}-\varepsilon,(2s-1)^{2})$ and  $\mathcal K^1_{2k,s} (u)-\frac{i}{\pi}\mathfrak G_{k,s}(u)\big[\log|u-(2s-1)^{2}|+\frac{i\pi}{2}\big],u\in((2s-1)^{2},(2s-1)^{2}+\varepsilon)$ are equal to  their respective Taylor series in powers of   $[u-(2s-1)^2]^{\mathbb Z_{\geq0}}$, and all the corresponding Taylor coefficients agree.  \qedhere\end{enumerate}\end{proof}

The next lemma will provide us with special values
of the determinants occurring in \eqref{eq:pivot_mu} and \eqref{eq:pivot_nu}, among other things.\begin{lemma}[Some special determinants]\label{lm:spec_det}Fix an integer $m\in\mathbb Z_{>1} $. For every $ s\in\mathbb Z\cap\left[1,\left\lfloor \frac{m}{2} \right\rfloor+1\right]$, define  thresholds $ \theta_{m,s}\colonequals\big(2s-\tfrac{1+(-1)^{m}}{2}\big)^{2}$ and  $ \theta_{m,\left\lfloor m/2 \right\rfloor+2}=\infty$, along with  Wro\'nskian determinants \begin{align}W_{m,s}(u)\colonequals{}&\det(
D^{i-1}\mathcal F_{m,j}(u))_{i\in\mathbb Z\cap[1,m],j\in\mathbb Z\cap([1,\left\lfloor m/2 \right\rfloor+1-s]\cup[\left\lfloor m/2 \right\rfloor+2,m+s])},\\ w_{m,s}(u)\colonequals{}&\det(  D^{i-1}\mathcal F_{m,j}(u))_{i\in\mathbb Z\cap[1,m-1],j\in\mathbb Z\cap([1,\left\lfloor m/2 \right\rfloor+1-s]\cup[\left\lfloor m/2 \right\rfloor+2,m-1+s])},\label{eq:wms_defn}\end{align}
so that we have\begin{align}\begin{split}&
C^{\infty}(\theta_{m,s},\theta_{m,s+1})\cap\ker\widetilde L_{m}\\={}&\Span_{\mathbb C}\big\{\mathcal F_{m,j}(u),u\in(\theta_{m,s},\theta_{m,s+1})\big|j\in\mathbb Z\cap\left[1,\left\lfloor \tfrac{m}{2} \right\rfloor+1-s\right]\cup\left[\left\lfloor \tfrac{m}{2} \right\rfloor+2,m+s\right]\big\}\end{split}\label{eq:ker_odd}
\end{align}and  {\allowdisplaybreaks\begin{align}W_{m,s}(u)={}&{\frac{1+(m+1)\delta_{s,\left\lfloor m/2 \right\rfloor+1}}{(-1)^{\frac{s(s+1)}{2}-\left(\left\lfloor \frac{m}{2} \right\rfloor+1\right)s}}}\frac{\Lambda_{m}}{ | \mathcal L_{m}(u)|^{m/2}},\quad u\in(\theta_{m,s},\theta_{m,s+1}),
\label{eq:detOmega_ext}\\w_{m,s}(\theta_{m,s})={}&\frac{1+(m+1)\delta_{s,\left\lfloor m/2 \right\rfloor+1}}{(-1)^{\frac{s(s+1)}{2}-\left(\left\lfloor \frac{m}{2} \right\rfloor+1\right)s}}\frac{2^{m+\left\lfloor \frac{m-1}{2} \right\rfloor}\theta_{m,s}^{\frac{m-1}{4}}}{(m-2)!!}\frac{(-\sqrt{\pi})^{\tfrac{1+(-1)^{m}}{2}}\Lambda_{m}}{ | \mathcal L_{m}'(\theta_{m,s})|^{m/2}},
\label{eq:det_mu_del1}\end{align}}where    $ \Lambda_{m}$ is defined in \eqref{eq:Lambda_m_defn} and  $ \mathcal L_{m}(u)$ in \eqref{eq:Lm_u_defn}. The expression \begin{align}\begin{split}\mathcal L_{m}'(\theta_{m,s})={}&\theta_{m,s}^{\left\lfloor \frac{m+1}{2}\right\rfloor } \prod _{j\in\mathbb Z\cap([1,s-1]\cup[s+1,\left\lfloor m/2 \right\rfloor+1])}[\theta_{m,s}-(2j-\tfrac{1+(-1)^{m}}{2})^2]\\={}& (-1)^{\left\lfloor \frac{m}{2} \right\rfloor+1-s}2^{m} \theta_{m,s}^{\frac{m-1}{2}} (\left\lfloor \tfrac{m}{2} \right\rfloor+1-s)! (\left\lfloor \tfrac{m+1}{2} \right\rfloor+s)!\end{split}\end{align} evaluates the derivative $ \mathcal L_{m}'(u)=D^1\mathcal L_{m}^{\phantom'}(u)$ at $u=\theta_{m,s}$.

\end{lemma}\begin{proof}In view of the arguments in \cite[Lemma 4.2]{Zhou2017BMdet}, one can show that each  off-shell Bessel moment $\mathcal F_{m,j}(u)$ listed in \eqref{eq:ker_odd} indeed resides in the kernel space of  Vanhove's operator of  order $m$. What remains to be elucidated is the mechanism by which $ W_{m,s-1}(u),u\in(\theta_{m,s-1},\theta_{m,s})$ and $ W_{m,s}(u),u\in(\theta_{m,s},\theta_{m,s+1})$  are relayed to each other [see
\eqref{eq:detOmega_ext}], as well as the rationale behind
\eqref{eq:det_mu_del1}.

Without loss of generality, we assume that $m=2k-1$ is an odd number.

 By cofactor expansion, one can compute\begin{align}\begin{split}
&\lim_{u\to(2s)^{2}-0^+}\frac{W_{2k,s-1}(u)}{|(2s)^{2}-u|^{\frac{1}{2}-k}}=(-1)^{k-s}w_{2k-1,s}((2s)^{2})\lim_{u\to(2s)^{2}-0^+}\frac{D^{2k-2}\mathcal F_{2k-1,k+1-s}(u)}{|(2s)^{2}-u|^{\frac{1}{2}-k}},\\
&\lim_{u\to(2s)^{2}+0^+}\frac{W_{2k,s}(u)}{|(2s)^{2}-u|^{\frac{1}{2}-k}}=w_{2k-1,s}((2s)^{2})\lim_{u\to(2s)^{2}+0^+}\frac{D^{2k-2}\mathcal F_{2k-1,2k-1+s}(u)}{|(2s)^{2}-u|^{\frac{1}{2}-k}},\end{split}
\end{align}
which involve a common factor $ w_{2k-1,s}((2s)^{2})$. When $ s\in\mathbb Z\cap[1,k)$, by a procedure akin to the proof of \eqref{eq:lHopitalHankel},  one can show that \begin{align}\begin{split}&
\lim_{u\to(2s)^{2}-0^+}|(2s)^{2}-u|^{k-\frac{1}{2}}D^{2k-2}\mathcal F_{2k-1,k+1-s}(u)\\={}&\frac{1}{(4s)^{2k-2}}\lim_{u\to(2s)^{2}-0^+}|(2s)^{2}-u|^{k-\frac{1}{2}}\mathcal F^k_{2k-1,k+1-s}(u)\\={}&\lim_{u\to(2s)^{2}-0^+}\frac{|(2s)^{2}-u|^{k-\frac{1}{2}}}{\pi^{s}(4s)^{2k-2}}\int_0^\infty\frac{e^{\sqrt{u}t}}{\sqrt{2\pi \sqrt{u} t}}\left( \frac{e^{t}}{\sqrt{2\pi t}} \right)^{k-s}\left( \sqrt{\frac{\pi}{2t}}e^{-t} \right)^{k+s}t^{2k-1}\D t\\={}&\frac{ \Gamma \left(k-\frac{1}{2}\right)}{\sqrt{\pi }}2^{2-3k}s^{1-k}=(2k-3)!!2^{3-4k}s^{1-k}\end{split}\end{align}and that $ \lim_{u\to(2s)^{2}+0^+}|(2s)^{2}-u|^{k-\frac{1}{2}}D^{2k-2}\mathcal F_{2k-1,2k-1+s}(u)$ evaluates to exactly the same number. This shows that the asymptotic behavior of our Wro\'nskian determinants match at $ u\to(2s)^2\pm0^+$, up to a trackable sign. Therefore, we can verify \eqref{eq:detOmega_ext} for  $ s\in\mathbb Z\cap[1,k)$ by successively  applying such a matching procedure to \eqref{eq:detWm}. When $s=k$, one must exert extra care, in that \begin{align}
\lim_{u\to(2k)^{2}-0^+}|(2k)^{2}-u|^{k-\frac{1}{2}}D^{2k-2}\mathcal F_{2k-1,1}(u)={}&\lim_{u\to(2k)^{2}-0^+}\frac{|(2k)^{2}-u|^{k-\frac{1}{2}}}{\pi^{k}}\frac{D^{2k-2}\IvKM(1,2k;1|u)}{2k+1}
\end{align}carries a factor of $\frac1{2k+1} $.

As a by-product,  the common factor $w_{2k-1,s}((2s)^{2})$ during this matching procedure can  be identified with the right-hand side of \eqref{eq:det_mu_del1}.\end{proof}

\subsection{Quadratic relations for off-shell Bessel moments\label{subsec:offshell_quad}}
With the preparations in the last two lemmata, we can compute $ \mathbf S_{m}$ for all $ m\in\mathbb Z_{>0}$.
\begin{proposition}[Explicit formulae for $ \mathbf S_{m}$]\label{prop:Sigma_sigma}\begin{enumerate}[leftmargin=*,
label=\emph{(\alph*)},ref=(\alph*),
widest=a, align=left]
\item If $ a\in\mathbb Z\cap[1,k]$, then\begin{align}\begin{split}{}&\frac{(a-1)!(2k+1-a)!(\mathbf S_{2k-1})_{a,b}}{2^{2k-1}(-1)^{\left\lfloor\frac{a\vphantom{b}}{2}\right\rfloor-k}(1+2 k \delta _{a,1})}\\={}&\begin{cases}-\frac{1+(-1)^{a+1}}{2 }\binom{2 k-a}{k}(2k+1), & b=1, \\
\frac{1+(-1)^{a+b}}{2(-1)^{\left\lfloor \frac{b}{2}\right\rfloor }} {\sum\limits _{s=1}^{k-a+1} (-1)^s} \binom{2 k+1-a}{k+s} \binom{k-s}{b-1},  & b\in\mathbb Z\cap[2,k],\ \\
\frac{(-1)^{a-1}+(-1)^{b-k}}{2(-1)^{\left\lfloor \frac{b-k}{2}\right\rfloor }}{\sum\limits _{s=1}^{k-a+1} (-1)^s }\binom{2 k+1-a}{k+s}\left[\binom{k-s}{b-k+1}+(-1)^{b-k} \binom{k+s}{b-k+1}\right],  & b\in\mathbb Z\cap[k+1,2k-1],\ \\
\end{cases}\end{split}\label{eq:Sigma_upper_panel}\end{align}where the first row can be rewritten as\begin{align}
\frac{(k!)^{2}(\mathbf S_{2k-1})_{1,b}}{(-4)^{k-1}(2k+1)}={}&\begin{cases}2k+1, & b=1, \\
\frac{1+(-1)^{b+1}}{(-1)^{\left\lfloor \frac{b}{2}\right\rfloor }} \frac{b}{2k+1-b}\binom{k}{b} , & b\in\mathbb Z\cap[2,k],\\
\frac{1+(-1)^{b-k}}{(-1)^{\left\lfloor \frac{b-k}{2}\right\rfloor }}\binom{k}{b-k+1} , & b\in\mathbb Z\cap[k+1,2k-1],\ \\
\end{cases}\label{eq:Sigma_1st_row_alt}
\end{align}which entails\begin{align}
(\mathbf S_{2k-1})_{a,b}=\frac{(-4)^{k-1}}{(k!)^{2}}\frac{[1+(-1)^{a-k}][1+(-1)^{b-k}]}{(-1)^{\left\lfloor \frac{a-k}{2}\right\rfloor+\left\lfloor \frac{b-k}{2}\right\rfloor }}\binom{k}{a-k+1}\binom{k}{b-k+1}
\label{eq:Sigma_bottom_right}\end{align}for $a,b\in\mathbb Z\cap[k+1,2k-1] $. \item

\noindent If $ a\in\mathbb Z\cap[1,k]$, then\begin{align}\begin{split}&
\frac{(a-1)! (2 k+2-a)!(\mathbf S_{2k}
)_{a,b}}{2^{2k-1}(-1)^k[1+(2 k+1) \delta _{a,1}]}\\={}&\begin{cases}\frac{1+(-1)^{a+b+1}}{(-1)^{\left\lfloor \frac{a\vphantom{b}}{2}\right\rfloor +\left\lfloor \frac{b-1}{2}\right\rfloor -1}} [1+(2k+1)\delta_{b,1}]\sum\limits _{s=1}^{k+2-a} (-1)^s \binom{2 k+2-a}{k+s}\binom{k+1-s}{b-1},& b\in\mathbb Z\cap[1,k+1],
\\
\frac{1+(-1)^{a+b-k-1}}{(-1)^{\left\lfloor \frac{a-1\vphantom{b}}{2}\right\rfloor +\left\lfloor \frac{b-k}{2}\right\rfloor }}\sum\limits _{s=1}^{k+2-a} (-1)^s \binom{2 k+2-a}{k+s} \left[\binom{k+1-s}{b-k}+(-1)^{b-k-1} \binom{k+s}{b-k}\right], & b\in\mathbb Z\cap[k+2,2k], \\
\end{cases}\end{split}\label{eq:sigma_upper_panel}
\end{align}where the first row can be rewritten as \begin{align}
(\mathbf S_{2k}
)_{1,b}=\begin{cases}\frac{ 2^{2 k} b}{(k!)^2 }\frac{1+(-1)^b}{(-1)^{\left\lfloor \frac{b-1}{2}\right\rfloor +k}}\frac{\binom{k+1}{b}}{2 k+2-b}, & b\in\mathbb Z\cap[1,k+1] \\
0, & b\in\mathbb Z\cap[k+2,2k]. \\
\end{cases}\label{eq:sigma_1st_row_alt}
\end{align}
If $ a,b\in \mathbb Z\cap[k+2,2k]$, then $ (\mathbf S_{2k})_{a,b}=0$.

 \end{enumerate}\end{proposition}
\begin{proof}In the proof below, we will repeatedly use the   Wro\'nskian determinant $ w_{m,s}(u)$ introduced in \eqref{eq:wms_defn}.

\begin{enumerate}[leftmargin=*,  label=(\alph*),ref=(\alph*),
widest=a, align=left]
\item The inputs from Lemma \ref{lm:loc}\ref{itm:loc_mu} and Lemma \ref{lm:spec_det} ($m=2k-1$) allow us to set up the following identities:\begin{align}\begin{split}&\frac{4^{2-3k}(-s^2)^{1-k}}{(-1)^{ks-\frac{s(s+1)}{2}}}\frac{| \mathcal L_{2k-1}'((2s)^{2})|}{1+2k\delta_{s,k}}
\frac{|\mathcal L_{2k-1}(u)|^{k-\frac{3}{2}}}{\Lambda_{2k-1}}w_{2k-1,s}(u)\\
={}&\begin{cases}(-1)^{\left\lfloor\frac{k-s}{2}\right\rfloor+(k-1)(k-s)}\R\mathsf F_{k,s}^1(u), & (-1)^{k-s}=+1, \\
(-1)^{\left\lfloor\frac{k-s-1}{2}\right\rfloor+(k-1)(k-s-1)}\I\mathsf F_{k,s}^1(u), & (-1)^{k-s}=-1, \\
\end{cases}\end{split}\label{eq:cof_mu_l}
\end{align}when  $ u\in((2s-2)^2,(2s)^2)$, and \begin{align}\begin{split}&
\frac{4^{2-3k}(-s^2)^{1-k}}{(-1)^{ks-\frac{s(s+1)}{2}}}\frac{| \mathcal L_{2k-1}'((2s)^{2})|}{1+2k\delta_{s,k}}\frac{|\mathcal L_{2k-1}(u)|^{k-\frac{3}{2}}}{\Lambda_{2k-1}}w_{2k-1,s}(u)
\\={}&\begin{cases}(-1)^{\left\lfloor\frac{k-s}{2}\right\rfloor+(k-1)(k-s)}\R\mathsf F_{k,s}^1(u), & (-1)^{k-s}=-1, \\
(-1)^{\left\lfloor\frac{k-s-1}{2}\right\rfloor+(k-1)(k-s-1)}\I\mathsf F_{k,s}^1(u), & (-1)^{k-s}=+1, \\
\end{cases}\end{split}\label{eq:cof_mu_g}\end{align}when $ u\in((2s)^2,(2s+2)^2)$.  Here, the left-hand sides of \eqref{eq:cof_mu_l} and \eqref{eq:cof_mu_g} behave like a constant multiple of $1+[u-(2s)^2]\mathsf f_{k,s}(u) $ as $ u\to(2s)^2$, where $\mathsf f_{k,s}(u)$ is the real-analytic function mentioned in Lemma \ref{lm:loc}\ref{itm:loc_mu}. Thus, one can establish equalities in  \eqref{eq:cof_mu_l} and \eqref{eq:cof_mu_g} after comparing leading order asymptotic behavior.

Without loss of generality, we momentarily consider the cases where $k$ is odd.
We begin with the $k$th row of $ \mathbf S_{2k-1}$ [cf.\ \eqref{eq:Sm_fit}], which  concerns \begin{align}
W\big[\mathcal F_{2k-1,1}(u),\dots,\reallywidehat{\mathcal F_{2k-1,k}(u)},\dots,\mathcal F_{2k-1,2k-1}(u)\big]=w_{2k-1,1}(u).
\end{align}Setting  $s=1 $ in  \eqref{eq:cof_mu_l}, while simplifying $| \mathcal L_{2k-1}'(4)|=2^{4 k-3} (k+1) (k-1)! k! $ and \linebreak $(-1)^{\left\lfloor\frac{k-s}{2}\right\rfloor+(k-1)(k-s)+(k-1)+ks-\frac{s(s+1)}{2}}|_{s=1}=(-1)^{\left\lfloor\frac{k-1}{2}\right\rfloor+k-1} $, we may spell out{\allowdisplaybreaks\begin{align}
\begin{split}&\frac{|\mathcal L_{2k-1}(u)|^{k-\frac{3}{2}}}{\Lambda_{2k-1}}(-1)^kw_{2k-1,1}(u)=\frac{2^{2k-1}(-1)^{\left\lfloor\frac{k-1}{2}\right\rfloor-1}}{(k-1)!(k_{}+1)!}\R\mathsf F_{k,1}^1(u)\\={}&\frac{2^{2k-1}(-1)^{\left\lfloor\frac{k-1}{2}\right\rfloor-1}}{(k-1)!(k_{}+1)!}\left\{(2k+1)\mathcal F_{2k-1,1}^{\ell}(u)+\sum_{j=1}^{\left\lfloor\frac{k-1}{2}\right\rfloor}(-1)^{j}{k-1\choose 2j} \mathcal F_{2k-1,2j+1}^{\ell}(u)\right.\\{}&+\left.\sum_{j=1}^{\left\lfloor\frac{k}{2}\right\rfloor}(-1)^{j}\left[{k-1\choose 2j+1}+{k+1\choose 2j+1}\right]\mathcal F_{2k-1,2j+k}^{\ell}(u)\right\},\quad u\in(0,16),\end{split}
\end{align}}which explains why \begin{align}\frac{(k-1)!(k_{}+1)!(\mathbf S_{2k-1})_{k,b}}{2^{2k-1}(-1)^{\left\lfloor\frac{k-1}{2}\right\rfloor-1}}={}&\begin{cases}2k+1, & b=1, \\
\frac{1-(-1)^{b}}{2(-1)^{\left\lfloor\frac b2\right\rfloor}}{k-1\choose b-1} ,  & b\in\mathbb Z\cap[2,k], \\
\frac{1-(-1)^{b}}{2(-1)^{\left\lfloor\frac {b-k}2\right\rfloor}}\left[{k-1\choose b-k+1}+{k+1\choose b-k+1}\right] , & b\in\mathbb Z\cap[k+1,2k-1], \\
\end{cases}\end{align}when $ k$ is odd. Then we proceed to the $ (k-1)$-st row. We observe that \begin{align}
\begin{split}&\frac{(-1)^{k-1}W\big[\mathcal F_{2k-1,1}(u),\dots,\reallywidehat{\mathcal F_{2k-1,k-1}(u)},\dots,\mathcal F_{2k-1,2k-1}(u)\big]}{|\mathcal L_{2k-1}(u)|^{\frac{3}{2}-k}\Lambda_{2k-1}}\\={}&\frac{W\big[\mathcal F_{2k-1,1}(u),\dots,\mathcal F_{2k-1,k-2}(u),\mathcal F_{2k-1,k}(u),\dots,\mathcal F_{2k-1,2k-1}(u)\big]}{|\mathcal L_{2k-1}(u)|^{\frac{3}{2}-k}\Lambda_{2k-1}}\\={}&\frac{W\big[\mathcal F_{2k-1,1}(u),\dots,\mathcal F_{2k-1,k-2}(u),\R\mathsf F_{k,1}^1(u),\mathcal F_{2k-1,k+1}(u),\ldots,\mathcal F_{2k-1,2k-1}(u)\big]}{(-1)^{\left\lfloor \frac{k-1}{2}\right\rfloor}|\mathcal L_{2k-1}(u)|^{\frac{3}{2}-k}\Lambda_{2k-1}}\end{split}
\end{align}can be developed into a Taylor series centered at $ u_0=4$ for $u\in(4-\varepsilon,4)$, by virtue of \eqref{eq:mu_loc1}. Such a function admits real-analytic continuation to {\allowdisplaybreaks\begin{align}
-\frac{W\big[\mathcal F_{2k-1,1}(u),\dots,\mathcal F_{2k-1,k-2}(u),\I\mathsf F_{k,1}^1(u),\mathcal F_{2k-1,k+1}(u),\dots,\mathcal F_{2k-1,2k-1}(u)\big]}{(-1)^{\left\lfloor \frac{k-1}{2}\right\rfloor}|\mathcal L_{2k-1}(u)|^{\frac{3}{2}-k}\Lambda_{2k-1}}\label{eq:W_mu_rac1}
\end{align}}for $ u\in(4,16)$, by a comparison between \eqref{eq:mu_loc1} and \eqref{eq:mu_loc2}. Since \eqref{eq:GI_sum} and \eqref{eq:GK_sum} tell us that\begin{align}\begin{split}&\I\mathsf F_{k,1}^1(u)-\left\{ (-1)^{\left\lfloor \frac{k}{2}\right\rfloor-1} (k-1)\mathcal F_{2k-1,k-1}(u)+(-1)^{\left\lfloor \frac{k+1}{2}\right\rfloor}\mathcal F_{2k-1,2k}(u)\right\}\\
\in{}&\Span_{\mathbb R}\{\mathcal F_{2k-1,j}(u),u\in(4,16)|j\in\mathbb Z\cap([1,k-2]\cup[k+1,2k-1])\},\end{split}
\end{align}we can convert  \eqref{eq:W_mu_rac1} into [cf.~\eqref{eq:cof_mu_l} and \eqref{eq:cof_mu_g}] a $ \mathbb Q$-linear combination of $ |\mathcal L_{2k-1}(u)|^{k-\frac{3}{2}}w_{2k-1,1}(u)$ and $ |\mathcal L_{2k-1}(u)|^{k-\frac{3}{2}}w_{2k-1,2}(u)$, in the following manner: \begin{align}\begin{split}&
\frac{|\mathcal L_{2k-1}(u)|^{k-\frac{3}{2}}}{\Lambda_{2k-1}}\big\{W\big[\mathcal F_{2k-1,1}(u),\dots,\mathcal F_{2k-1,k-2}(u),(k-1)\mathcal F_{2k-1,k-1}(u),\mathcal F_{2k-1,k+1}(u),\dots,\mathcal F_{2k-1,2k-1}(u)\big]
\\{}&+W\big[\mathcal F_{2k-1,1}(u),\dots,\mathcal F_{2k-1,k-2}(u),\mathcal F_{2k-1,2k}(u),\mathcal F_{2k-1,k+1}(u),\dots,\mathcal F_{2k-1,2k-1}(u)\big]\big\}\\={}&\frac{2^{2k-1}(-1)^{\left\lfloor\frac{k-1}{2}\right\rfloor-1}}{(k-1)!(k_{}+1)!}\left[(k-1)\I\mathsf F_{k,1}^1(u)-\frac{k-1}{k+2}\I\mathsf F_{k,2}^1(u)\right]\\={}&\frac{2^{2k-1}(-1)^{\left\lfloor\frac{k-1}{2}\right\rfloor-1}}{(k-2)!(k_{}+2)!}\left\{ \sum_{j=0}^{\left\lfloor\frac{k}{2}\right\rfloor-1}(-1)^{j}\left[(k+2){k-1\choose 2j+1} -{k-2\choose 2j+1} \right]\mathcal F_{2k-1,2j+2}^{\ell}(u)\right.\\{}&\left.-\sum_{j=1}^{\left\lfloor\frac{k-1}{2}\right\rfloor}(-1)^{j}\left[(k+2){k-1\choose 2j}-(k+2){k+1\choose 2j}-{k-2\choose 2j}+{k+2\choose 2j}\right]\mathcal F_{2k-1,2j+k-1}^{\ell}(u) \right\}.\end{split}\label{eq:1st_canc}\end{align}On the right-hand side of this expression, the last summands of $ \I\mathsf F_{k,1}^1(u)$ and $ \frac{1}{k+2}\I\mathsf F_{k,2}^1(u)$ \big[which are $ (-1)^\frac{k+1}{2}\binom{k+1}{k+1}\mathcal F_{2k-1,2k}^{\ell}(u)$ and  $ (-1)^\frac{k+1}{2}\frac{1}{k+2}\binom{k+2}{k+1}\mathcal F_{2k-1,2k}^{\ell}(u)$, respectively\big] cancel each other. Thanks to this cancelation of terms undefined for $ u\in(0,4)$, the right-hand side of \eqref{eq:1st_canc} real-analytically continues to $ u\in(0,16)$, and is compatible with our statement about $(\mathbf S_{2k-1})_{k-1,b} $ in \eqref{eq:Sigma_upper_panel}.

Proceeding as  the last paragraph, we obtain the  following real-analytic continuations of  \eqref{eq:cof_mu_l}:\footnote{Similar to Proposition \ref{prop:Vv}, here we require that all the  column indices    be arranged in increasing order (even when they are not consecutive integers).}\begin{align}\begin{split}&
\frac{(a-1)! ( 2k+1-a)!}{  2^{2 k-1} (1+2 k \delta _{a,1})}\frac{|\mathcal L_{2k-1}(u)|^{k-\frac{3}{2}}}{\Lambda_{2k-1}}\frac{\det(D^{i-1}\mathcal F_{2k-1,j}(u))_{i\in\mathbb Z\cap[1,2k-2],j\in\mathbb Z\cap([1,a-1]\cup[a+1,k-t]\cup[k+1,2k-1+t])}}{(-1)^{k t-\frac{t (t-1)}{2} }(-1)^{\left\lfloor \frac{a\vphantom1}{2}\right\rfloor -k}(-1)^{a+1}}\\={}& \sum_{b=1}^{k-t}\left[(1+2 k \delta _{b,1})\frac{1+(-1)^{a+b}}{2(-1)^{\left\lfloor \frac{b}{2}\right\rfloor }} \sum _{s=t+1}^{k+1-a} (-1)^s \binom{2 k+1-a}{k+s} \binom{k-s}{b-1}\right] \mathcal F_{2k-1,b}(u)\\{}&+\sum_{b=k+1}^{2k-1+t}\left\{\frac{(-1)^{a-1}+(-1)^{b+k}}{2(-1)^{\left\lfloor \frac{b-k}{2}\right\rfloor }} \sum _{s=t+1}^{k+1-a} (-1)^s \binom{2 k+1-a}{k+s}\times\right.\\&\left.\times\left[ \binom{k-s}{b-k+1}-(-1)^{b-k+1} \binom{k+s}{b-k+1}\right]\vphantom{\sum^{k+1-a}_{s=t+1}}\right\} \mathcal F_{2k-1,b}(u),\end{split}\label{eq:cof_mu_l*}\tag{\ref{eq:cof_mu_l}*}
\end{align}where $ t\in\mathbb  Z\cap[0,k-1],a=k-1-t,u\in ((2t)^2,(2t+2)^2)$. [If we set $t\in\mathbb  Z\cap[0,k-1],a=k-t,u\in ((2t)^2,(2t+2)^2)$ in  \eqref{eq:cof_mu_l*}, we recover the original   \eqref{eq:cof_mu_l}.] Recursively performing real-analytic continuations with the aid of~\eqref{eq:cof_mu_g},  down-stepping $ \sqrt u$ by an amount of $ 2$ every time, we can further extend the validity of
 \eqref{eq:cof_mu_l*} to  $ t\in\mathbb  Z\cap[0,k-1],a\in\mathbb  Z\cap[1,k-t],u\in ((2t)^2,(2t+2)^2)$.   In particular, setting $t=0$ in
 \eqref{eq:cof_mu_l*}, one can verify the expression
$(\mathbf S_{2k-1})_{a,b} $ in \eqref{eq:Sigma_upper_panel}, for each $ a\in\mathbb Z\cap[1,k]$.

One can readily generalize the procedures in the last two paragraphs [which essentially revolve around real-analytic continuations of $ w_{2k-1,s}(u)$]
to even numbers $k$, thus establishing   \eqref{eq:Sigma_upper_panel}  in its entirety.

Plugging two combinatorial identities\begin{align}\sum_{s=1}^{k} (-1)^s \binom{2 k}{k+s} \binom{k-s}{n}={}&-\frac{2k}{2k-n}\binom{2k-1}{k}\binom{k-1}{n},\\\sum_{s=1}^{k} (-1)^s \binom{2 k}{k+s} \binom{k+s}{n}={}&-\frac{2k}{2k-n}\binom{2k-1}{k}\binom{k}{n}\end{align}into     \eqref{eq:Sigma_upper_panel}  for the expression $ (\mathbf S_{2k-1})_{1,b}$, we obtain \eqref{eq:Sigma_1st_row_alt}.

Let $ k\in\mathbb Z_{\geq2}$ and $ r\in\mathbb Z\cap[k+1,2k-1]$. Through elementary manipulations of rows and columns in a determinant (bearing in mind that  $ |\det\pmb{\boldsymbol \beta}_{2k-2}(u)|=2^{(k-1) (2 k-3)}|u|^{(k-1)^2}$), one can identify\begin{align}\begin{split}&
|\mathcal L_{2k-1}(u)|^{k-\frac{3}{2}}\left\{(-1)^{r+1}W\big[\mathcal F_{2k-1,1}(u),\dots,\reallywidehat{\mathcal F_{2k-1,r}(u)},\dots,\mathcal F_{2k-1,2k-1}(u)\big]\vphantom{\binom{k}{k}\frac{1+(-1)^{r-k}}{(-1)^{\left\lfloor \frac{r-k}{2}\right\rfloor }}}\right.\\{}&\left.-\frac{1+(-1)^{r-k}}{(-1)^{\left\lfloor \frac{r-k}{2}\right\rfloor }}\binom{k}{r-k+1} W\big[\reallywidehat{\mathcal F_{2k-1,1}(u)},\dots,{\mathcal F_{2k-1,r}(u),}\dots,\mathcal F_{2k-1,2k-1}(u)\big]\right\}\end{split}
\end{align}for $u\in(0,4)$ with a constant multiple of \begin{align}
\left(\prod_{j=1}^k[(2j)^2-u]\right)^{k-\frac{3}{2}}\det\begin{pmatrix*}[r]\pmb{\boldsymbol \iota}_{k-1}(u) & \acute{\pmb{\boldsymbol \iota}}_{k-1}(u)\\
\pmb{\boldsymbol \kappa}_{k-1,r}(u) & \acute{\pmb{\boldsymbol \kappa}}_{k-1,r}(u) \\
\end{pmatrix*},\label{eq:hol_det}
\end{align} where the blocks are designed as follows:\begin{enumerate}[leftmargin=*,  label=(\arabic*),ref=(\arabic*),
widest=a, align=left]
\item
We have  $ \pmb{\boldsymbol \iota}_{k-1}(u)=(\mathcal F^b_{2k-1,a+1}(u))_{1\leq a,b\leq k-1}$ and   $ \acute{\pmb{\boldsymbol \iota}}_{k-1}(u)=(\sqrt{u}\acute{\mathcal F}^b_{2k-1,a+1}(u))_{1\leq a,b\leq k-1}$;\item The first row of $ \pmb{\boldsymbol \kappa}_{k-1,r}(u)$ [resp.~$ \acute{\pmb{\boldsymbol \kappa}}_{k-1,r}(u)$] is given by
$ (\pmb{\boldsymbol \kappa}_{k-1,r}(u))_{1,b}=\pi \mathcal F^b_{2k-1,1}(u)+\frac{2k}{2k+1} \mathcal F^b_{2k-1,2}(u)\log\frac{\sqrt{u}}{2}$ \Big[resp.
$ (\acute{\pmb{\boldsymbol \kappa}}_{k-1,r}(u))_{1,b}=\sqrt{u}\big[\pi \acute{\mathcal F}^b_{2k-1,1}(u)+\frac{2k}{2k+1} \acute{\mathcal F}^b_{2k-1,2}(u)\log\frac{\sqrt{u}}{2}\big ]$\Big];\item The next $k-3$ rows in $ \pmb{\boldsymbol \kappa}_{k-1,r}(u)$ [resp.~$ \acute{\pmb{\boldsymbol \kappa}}_{k-1,r}(u)$] form a subset of
  $ \big\{\big(\pi\mathcal F^b_{2k-1,a}(u)+\mathcal F^b_{2k-1,a-k+2}(u)\linebreak\log\frac{\sqrt{u}}2\big)_{1\leq b\leq k-1}\big|a\in\mathbb Z\cap[k+1,2k-2]\big\}$ \Big[resp.~$ \big\{\big(\sqrt{u}\big[\pi\acute{\mathcal F}^b_{2k-1,a}(u)+\acute{\mathcal F}^b_{2k-1,a-k+2}(u)\log\frac{\sqrt{u}}2\big]\big)_{1\leq b\leq k-1}\big|a\in\mathbb Z\cap[k+1,2k-2]\big\}$\Big];\item The last row of
$ \pmb{\boldsymbol \kappa}_{k-1,r}(u)$  is specified by [cf.~\eqref{eq:Okl} and \eqref{eq:Ekl}]\begin{align}
(\pmb{\boldsymbol \kappa}_{k-1,r}(u))_{k-1,b}={}&\begin{cases}u^{(k-2)/2}O_k^b(u), & k\in1+2\mathbb Z_{>0}, \\
u^{(k-2)/2}E_k^b(u), & k\in2\mathbb Z_{>0},r\neq 2k-1, \\
u^{(k-2)/2}\left[ \pi\mathcal{F}^b_{2k-1,2k-2}(u)+\mathcal{F}^b_{2k-1,k}(u)\log\frac{\sqrt{u}}{2} \right], & k\in2\mathbb Z_{>0},r= 2k-1, \\
\end{cases}\label{eq:kappa_last_row}\end{align}{while the last row of $ \acute{\pmb{\boldsymbol \kappa}}_{k-1,r}(u)$ is specified by [cf.~\eqref{eq:O'kl} and \eqref{eq:E'kl}]}\begin{align}(\acute{\pmb{\boldsymbol \kappa}}_{k-1,r}(u))_{k-1,b}={}&\begin{cases}u^{(k-1)/2}\acute O_k^b(u), & k\in1+2\mathbb Z_{>0}, \\
u^{(k-1)/2}\acute E_k^b(u), & k\in2\mathbb Z_{>0},r\neq 2k-1, \\
u^{(k-1)/2}\left[ \pi\acute{\mathcal F}^b_{2k-1,2k-2}(u)+\acute{\mathcal F}^b_{2k-1,k}(u)\log\frac{\sqrt{u}}{2} \right], & k\in2\mathbb Z_{>0},r= 2k-1. \\
\end{cases}\tag{\ref{eq:kappa_last_row}$'$}
\end{align}\end{enumerate}[For $k=2$, the instructions in  items (3) and (4) must be ignored, whereupon  we are reduced to the case already treated in Corollary \ref{cor:S3_comp}.]  According to the arguments in Proposition \ref{prop:u0_sum}(a), the expression in \eqref{eq:hol_det} defines a holomorphic function in a non-void $ \mathbb C$-neighborhood of $u=0$, hence a member of $ \Span_{\mathbb C}\{\mathcal{F}_{2k-1,a}(u)|a\in\mathbb Z\cap[2,k]\}$. This completes the proof of \eqref{eq:Sigma_bottom_right}.

 \item By \eqref{eq:log_jump}, the expression \begin{align}[u-(2s-1)^{2}]^{k-1}\left\{D^n\mathcal
I^1_{2k,s} (u)+\frac{(-1)^{k-s-1}-i}{\pi(1-i)}[D^n\mathfrak G_{k,s}(u)]\left[\log|u-(2s-1)^{2}|-\frac{i\pi}{2}\right]-D^n\mathfrak L_{k,s}(u)\right\}
\end{align}for $ n\in\mathbb Z\cap[0,2k-1]$ is equal to a convergent Taylor series in powers of $[u-(2s-1)^{2}]^{\mathbb Z_{\geq0}} $  for $ u\in((2s-1)^2-\varepsilon,(2s-1)^2)$, where $\varepsilon>0 $. Accordingly, for each $ s\in\mathbb Z\cap[1,k]$, there exists a non-vanishing constant $ c_{k,s}$ that allows us to identify the following formula \begin{align}\begin{split}&
[\mathcal L_{2k}(u)]^{k-1}\left\{W[\mathcal F_{2k,1}(u),\ldots,\mathcal F_{2k,k-s}(u),\mathcal F_{2k,k+2-s}(u),\right.\\{}&\left.\mathcal F_{2k,k+2}(u),\ldots,\mathcal F_{2k,2k-1+s}(u)]+c_{k,s}w_{2k,s}(u)\log|u-(2s-1)^2|\right\}\end{split}
\end{align} with another convergent Taylor series in powers of $[u-(2s-1)^{2}]^{\mathbb Z_{\geq0}} $  for $ u\in((2s-1)^2-\varepsilon,(2s-1)^2)$. In the light of this, the function  $ [\mathcal L_{2k}(u)]^{k-1}w_{2k,s}(u)$ must be a constant multiple of $ \mathfrak G_{k,s}(u)$, and the constant in question can be determined through Lemma \ref{lm:spec_det} ($ m=2k$).

In fact, the arguments in the last paragraph lead us to \begin{align}
\frac{|\mathcal L_{2k}(u)|^{k-1}}{\Lambda_{2k}}w_{2k,s}(u)
=-\frac{2^{4 k} (2 s-1)^{2 k-1}}{(-1)^{\left\lfloor \frac{k-s}{2}\right\rfloor +\frac{(2k-s)(s+1)}{2}}} \frac{\mathfrak G_{k,s}(u)}{\left|\mathcal L'_{k}((2s-1)^2)\right| }\end{align}for $ u\in((\max\{0,2s-3\})^2,(2s+1)^2),s\in\mathbb Z\cap[1,k]$. Setting $s=1$ in the formula above, we recover the expression for $ (\mathbf S_{2k})_{k+1,b},b\in\mathbb Z\cap[1,2k]$, as claimed in \eqref{eq:sigma_upper_panel}. Setting $s=2$ in the last displayed formula, and performing real-analytic continuation of $ \mathfrak L_{k,1}(u)$ [which allows us to trade $ \mathcal F_{2k,2k+1}(u),u\in(1,9)$ for $ \mathcal F_{2k,k+1}(u),u\in(0,1)$, up to some trailing terms contributed by  $ \mathfrak G_{k,1}(u)$], we can verify the formula for  $ (\mathbf S_{2k})_{k,b},b\in\mathbb Z\cap[1,2k]$, as claimed in \eqref{eq:sigma_upper_panel}. Subsequently, after finitely many steps of real-analytic continuations, we can check the combinatorial expression for  $   (\mathbf S_{2k})_{a,b},a\in \mathbb Z\cap[2,k],b\in\mathbb Z\cap[1,2k]$.

We can directly compute $ (\mathbf S_{2k})_{1,b},b\in \mathbb Z\cap[2,k]$ from our knowledge of $ -(\mathbf S_{2k})_{b,1},b\in \mathbb Z\cap[2,k]$.

We can deduce  $ (\mathbf S_{2k})_{a,b}=0$  for  $ a,b\in \mathbb Z\cap(\{1\}\cup[k+2,2k])$
 from  Proposition \ref{prop:u0_sum}(b).\qedhere\end{enumerate}\end{proof}

At this point,  we have verified all the statements in Theorem  \ref{thm:W_alg}, effectively proving the following matrix inversion for $ u\in(0,1)$:\begin{align}[
\mathbf W^{\vphantom{\mathrm T}}_{m}(u)]^{-1}={}&|\mathcal L_m(u)|\mathbf S^{\vphantom{\mathrm T}}_{m}\mathbf W_{m}^{\mathrm T}(u)\mathbf V^{\vphantom{\mathrm T}}_{m}
(u),
\end{align}which in turn, can be recast into the following alternative formulation of quadratic relations for off-shell Bessel moments:\begin{align}
\mathbf W_{m}^{\mathrm T}(u)\mathbf V^{\vphantom{\mathrm T}}_{m}
(u)
\mathbf W^{\vphantom{\mathrm T}}_{m}(u)={}&\frac{\mathbf S^{-1}_{m}}{|\mathcal L_{m}(u)|} .\label{eq:omegavomega}
\end{align}

To prepare for an inductive computation of  $ \mathbf S^{-1}_{m}$   in Proposition \ref{prop:inv_S}, we wrap up this section with a combinatorial analysis of   $ \mathbf S_{m}=\left(\begin{smallmatrix}\mathbf S_m^A & \mathbf S_m^B \\
\mathbf S_m^C & \mathbf S_m^D \\
\end{smallmatrix}\right)$ in the proposition below.
To ease computation, we comb $\mathbf S_{m}$ with  a reshuffling-rescaling matrix  $\pmb{ \boldsymbol\xi}_{m}\in\mathbb Q^{m\times m}$ whose elements are\begin{align}
(\pmb{ \boldsymbol\xi}_{m})_{a,b}:=\begin{cases}\delta_{a,b-1}, & a\in\mathbb Z\cap\left[1,\left\lfloor \frac{m}{2} \right\rfloor\right],\\\frac{m+2}{m+1}\delta_{1,b},&a=\left\lfloor \frac{m}{2} \right\rfloor+1, \\
\delta_{a,b}, & a\in\mathbb Z\cap\left[\left\lfloor \frac{m}{2} \right\rfloor+2,m\right].\\
\end{cases}\label{eq:Rmat}
\end{align}As a result, the block partition of $ \smash{\underset{^\circ}{\mathbf S}}_m^{}\colonequals(\pmb{ \boldsymbol\xi}_{m}^{-1})^{\mathrm T}\mathbf S_m^{}\pmb{ \boldsymbol\xi}_{m}^{-1}=\left(\begin{smallmatrix}\smash{\underset{^\circ}{\mathbf S}}_m^A & \smash{\underset{^\circ}{\mathbf S}}_m^B \\[2pt]
\smash{\underset{^\circ}{\mathbf S}} _m^C &\smash{\underset{^\circ}{\mathbf S}}_m^D \\[1pt]
\end{smallmatrix}\right)$ has the following structure:\begin{align}
\begin{cases}(\smash{\underset{^\circ}{\mathbf S}}_m^A)_{a,b}=(\mathbf S_m^A)_{a+1,b+1}, & a,b\in\mathbb Z\cap\left[1,\left\lfloor \frac{m}{2} \right\rfloor\right], \\
(\smash{\underset{^\circ}{\mathbf S}}_m^B)_{a,b}=(\mathbf S_m^B)_{a+1,b-1}, & a\in\mathbb Z\cap\left[1,\left\lfloor \frac{m}{2} \right\rfloor\right],b\in\mathbb Z\cap\left[1,\left\lfloor \frac{m+1}{2} \right\rfloor\right], \\
(\smash{\underset{^\circ}{\mathbf S}}_m^D)_{a,b}=(\mathbf S_m^D)_{a-1,b-1}, & a,b\in\mathbb Z\cap\left[1,\left\lfloor \frac{m+1}{2} \right\rfloor\right], \\
\end{cases}
\end{align}with parity $ (\smash{\underset{^\circ}{\mathbf S}}_m^B)_{a,b}=(-1)^{m+1}(\smash{\underset{^\circ}{\mathbf S}}_m^C)_{b,a}$, where extensions of \eqref{eq:SAmat_defn}--\eqref{eq:SDmat_defn} lead us to\begin{align}
(\smash{\underset{^\circ}{\mathbf S}}_m^A)_{a,b}=\frac{1+(-1)^{a+b+m+1}}{2^{1-m}a!(m+1-a)!}\frac{{\sum\limits _{s=1}^{\left\lfloor \frac{m}{2} \right\rfloor+1-a} (-1)^s} \binom{m+1-a}{\left\lfloor \frac {m+1}2\right\rfloor+s} \binom{\left\lfloor \frac m2\right\rfloor+1-s}{b}}{(-1)^{\left\lfloor\frac{a+1\vphantom{b}}{2}\right\rfloor+\left\lfloor \frac{b+1}{2}-\frac{1+(-1)^{m}}{4}\right\rfloor -\left\lfloor \frac m2\right\rfloor-1}},\tag{\ref{eq:SAmat_defn}$^\circ$}
\label{eq:S0Amat_defn}\end{align}
\begin{align}(\smash{\underset{^\circ}{\mathbf S}}_m^B)_{a,b}=(-1)^{m+1}(\smash{\underset{^\circ}{\mathbf S}}_m^C)_{b,a}={}&\frac{1+(-1)^{a+b+m}}{2^{1-m}a!(m+1-a)!}\frac{{\sum\limits _{s=1}^{\left\lfloor \frac m2\right\rfloor+1-a} (-1)^s }\binom{m+1-a}{\left\lfloor \frac {m+1}2\right\rfloor+s}\left[\binom{\left\lfloor \frac m2\right\rfloor+1-s}{b}-(-1)^{b} \binom{\left\lfloor \frac {m+1}2\right\rfloor+s}{b}\right]}{(-1)^{am+\left\lfloor\frac{a+1\vphantom{b}}{2}-\frac{1+(-1)^{m}}{4}\right\rfloor+\left\lfloor \frac{\smash{b-1}\vphantom1}{2}+\frac{1+(-1)^{m}}{4}\right\rfloor -\left\lfloor \frac {m+1}2\right\rfloor}},\tag{\ref{eq:SBmat_defn}$^\circ$}\label{eq:S0Bmat_defn}\end{align}
and \begin{align}(\smash{\underset{^\circ}{\mathbf S}}_m^D)_{a,b}={}&\frac{1+(-1)^{m+1}}{2}\frac{(-4)^{\left\lfloor \frac {m+1}2\right\rfloor-1}}{\left(\left\lfloor \frac {m+1}2\right\rfloor!\right)^{2}}\frac{[1-(-1)^{a}][1-(-1)^{b}]}{(-1)^{\left\lfloor \frac{\smash{a-1}\vphantom1}{2}\right\rfloor+\left\lfloor \frac{\smash{b-1}\vphantom1}{2}\right\rfloor }}\binom{\left\lfloor \frac {m+1}2\right\rfloor}{a}\binom{\left\lfloor \frac {m+1}2\right\rfloor}{b}\tag{\ref{eq:SDmat_defn}$^\circ$}\label{eq:S0Dmat_defn}\end{align} for $ a,b\in\mathbb Z_{\geq0}$. (It is still understood that  empty sums vanish and certain combinatorial expressions are zero.)

\begin{proposition}[Recursions for   $ \smash{\underset{^\circ}{\mathbf S}}_m^{}$]\label{prop:comb_S}\begin{enumerate}[leftmargin=*,  label=\emph{(\alph*)},ref=(\alph*),
widest=a, align=left]\item For  $ k\in\mathbb Z_{>1 }$ and $a,b\in\mathbb  Z\cap[1,k]$, we have \begin{align}
a(2k+1-b)(\smash{\underset{^\circ}{\mathbf S}}_{2k}^A)_{a,b}={}&2(2k+1)(\smash{\underset{^\circ}{\mathbf S}}_{2k-1}^A)_{a-1,b}-4(\smash{\underset{^\circ}{\mathbf S}}_{2k-2}^A)_{a-1,b-1},\label{eq:SArec}
\\
a(2k+1-b)(\smash{\underset{^\circ}{\mathbf S}}_{2k}^B)_{a,b}={}&2(2k+1)(\smash{\underset{^\circ}{\mathbf S}}_{2k-1}^B)_{a-1,b}-4(\smash{\underset{^\circ}{\mathbf S}}_{2k-2}^B)_{a-1,b-1}.\label{eq:SBrec}
\end{align}\item For $ k\in\mathbb Z_{>1}$, we have \begin{align}
\pmb{\boldsymbol \varphi}^{-1}_{2k-1}\pmb{\boldsymbol \alpha}^{-1}_{k}\mathbf S_{2k-1}^{\vphantom{\mathrm T}}(\pmb{\boldsymbol \alpha}^{-1}_{k})^{\mathrm T}( \pmb{\boldsymbol \varphi}^{-1}_{2k-1})^{\mathrm T}=\begin{pmatrix}\frac{2k+1}{2}\smash{\underset{^\circ}{\mathbf S}}_{2k}^B
&  \\
 & -\frac{2}{2k+1} \smash{\underset{^\circ}{\mathbf S}}_{2k-2}^B\\
\end{pmatrix}\label{eq:Sigma_block_diag}
\end{align}in block diagonal form, along with a recursion\begin{align}
\frac{4(\smash{\underset{^\circ}{\mathbf S}}_{2k}^B)_{a,b}}{(2k+2-a)(2k+2-b)}=(\smash{\underset{^\circ}{\mathbf S}}_{2k+2}^B )_{a+1,b+1}-{}&\frac{1+(-1)^{a}}{ (-1)^{\left\lfloor \frac{a\vphantom1}{2}\right\rfloor }} \frac{\binom{k+1}{a+1} (\smash{\underset{^\circ}{\mathbf S}}_{2k+2}^B)_{1,b+1}}{2k+2-a}\label{eq:invBettiM_rec}
\end{align} for $ a,b\in\mathbb Z\cap[1,k]$.\end{enumerate} \end{proposition}\begin{proof}\begin{enumerate}[leftmargin=*,  label=(\alph*),ref=(\alph*),
widest=a, align=left]\item Define \begin{align}\mathscr S_{m,a,b,s}^A\colonequals{}&\frac{\binom{m+1-a}{\left\lfloor \frac {m+1}2\right\rfloor+s} \binom{\left\lfloor \frac m2\right\rfloor+1-s}{b}}{a!(m+1-a)!}=\mathscr S_{m,b,a,s}^A \label{eq:SAelem_defn},\\\mathscr S_{m,a,b,s}^B\colonequals{}&\mathscr S_{m,a,b,s}^A- \frac{{ }(-1)^{b} \binom{\left\lfloor \frac {m+1}2\right\rfloor+s}{b}}{a!(m+1-a)!}\label{eq:SBelem_defn}\end{align}for all the integers $ m,a,b,s$ that make these expressions meaningful.

By direct computation, we obtain\begin{align}a(2k+1-b)
\mathscr S_{2k,a,b,s}^{A}={}&(2k+1)
\mathscr S_{2k-1,a-1,b,s}^A+\mathscr S_{2k-2,a-1,b-1,s}^A,
\end{align}for integers $k,a,b,s$ that make each summand well-defined. Multiplying through $ (-1)^s$ and summing over $s\in\mathbb Z_{>1}$, we arrive at the recursion \eqref{eq:SArec} that involves\begin{align}
(\smash{\underset{^\circ}{\mathbf S}}_m^A)_{a,b}=\frac{1+(-1)^{a+b+m+1}}{2^{1-m}}\frac{{\sum\limits _{s=1}^{\infty} (-1)^s} \mathscr S_{m,a,b,s}^A{}}{(-1)^{\left\lfloor\frac{a+1\vphantom{b}}{2}\right\rfloor+\left\lfloor \frac{b+1}{2}-\frac{1+(-1)^{m}}{4}\right\rfloor -\left\lfloor \frac m2\right\rfloor-1}}.
\end{align}
Here, the infinite series over $s$ truncates after finitely many terms, in accordance with the previous definition of $ (\smash{\underset{^\circ}{\mathbf S}}_m^A)_{a,b}$ in \eqref{eq:S0Amat_defn}.

Likewise, one can sum over \begin{align}
a(2k+1-b)
\mathscr S_{2k,a,b,s}^{B}={}&(2k+1)
\mathscr S_{2k-1,a-1,b,s}^B+\mathscr S_{2k-2,a-1,b-1,s}^B
\end{align}to arrive at \eqref{eq:SBrec}. \item We  partition the matrix $\pmb{\boldsymbol \alpha}^{-1}_{k}\mathbf S_{2k-1}^{\vphantom{\mathrm T}}(\pmb{\boldsymbol \alpha}^{-1}_{k})^{\mathrm T}=\pmb{\boldsymbol \alpha}^{-1}_{k}\Big(\begin{smallmatrix}\mathbf S^{A}_{2k-1} & \mathbf S^{B}_{2k-1}  \\
\mathbf S^{C}_{2k-1} & \mathbf S^{D}_{2k-1} \\
\end{smallmatrix}\Big)^{\vphantom{\mathrm T}}(\pmb{\boldsymbol \alpha}^{-1}_{k})^{\mathrm T}=\Big(\begin{smallmatrix}\mathbf T^{A}_{2k-1} & \mathbf T^{B}_{2k-1}  \\
\mathbf  T^{C}_{2k-1} & \mathbf  T^{D}_{2k-1} \\
\end{smallmatrix}\Big)$  into four blocks. Here, the matrix $ \mathbf T^{A}_{2k-1} $ also forms the top-left $k\times k$ block of  $\pmb{\boldsymbol \varphi}^{-1}_{2k-1}\pmb{\boldsymbol \alpha}^{-1}_{k}\mathbf S_{2k-1}^{\vphantom{\mathrm T}}(\pmb{\boldsymbol \alpha}^{-1}_{k})^{\mathrm T}( \pmb{\boldsymbol \varphi}^{-1}_{2k-1})^{\mathrm T}$.

By definition of $ \pmb{\boldsymbol \alpha}_{k}$ [see \eqref{eq:Amat_defn}], we have $ \mathbf T^{D}_{2k-1}=\mathbf S^{D}_{2k-1}$.

Thanks to \eqref{eq:Sigma_1st_row_alt}, we can evaluate the first row and the first column in the top-left $ k\times k$ block $ \mathbf T^{A}_{2k-1} $  in closed form. This allows us to verify the relations  $ (\mathbf T^{A}_{2k-1})_{1,1}\colonequals(\mathbf S^{A}_{2k-1})_{1,1}=\frac{2k+1}{2}(\smash{\underset{^\circ}{\mathbf S}}_{2k}^{B})_{1,1}$ and $(\mathbf T^{A}_{2k-1})_{1,b}\equiv(\mathbf T^{A}_{2k-1})_{b,1}\colonequals(\mathbf S^{A}_{2k-1})_{1,b}+(\mathbf S^{B}_{2k-1})_{1,b-1}=\frac{2k+1}{2}(\smash{\underset{^\circ}{\mathbf S}}_{2k}^{B})_{1,b},b\in\mathbb{Z}\cap[2,k] $  explicitly.
In general, we may construct a sum rule $(\mathbf S^{A}_{2k-1})_{a,b}+(\mathbf S^{B}_{2k-1})_{a,b-1} =\frac{a}{2}(\smash{\underset{^\circ}{\mathbf S}}_{2k}^{B})_{a,b}$ for $a,b\in\mathbb{Z}\cap[2,k]$, using the combinatorial identity  $ \binom{k-s}{b-1}+\binom{k-s}{b}=\binom{k+1-s}{b}$.

By definition, for  $a,b\in\mathbb{Z}\cap[2,k]$, we have $ (\mathbf T^{A}_{2k-1})_{a,b}\colonequals(\mathbf S^{A}_{2k-1})_{a,b}+(\mathbf S^{B}_{2k-1})_{a,b-1}+(\mathbf S^{C}_{2k-1})_{a-1,b}+(\mathbf S^{D}_{2k-1})_{a-1,b-1}$ and $ (\mathbf T^{C}_{2k-1})_{a-1,b}\colonequals(\mathbf S^{C}_{2k-1})_{a-1,b}+(\mathbf S^{D}_{2k-1})_{a-1,b-1}$, which together imply $ (\mathbf T^{A}_{2k-1})_{a,b}=\frac{a}{2}(\smash{\underset{^\circ}{\mathbf S}}_{2k}^{B})_{a,b}+(\mathbf T^{C}_{2k-1})_{a-1,b}$. In order to  identify the last expression with $\frac{2k+1}{2}(\smash{\underset{^\circ}{\mathbf S}}_{2k}^{B})_{a,b}$ [as claimed in the right-hand side of \eqref{eq:Sigma_block_diag}], we will prove the relation  $ (\mathbf T^{C}_{2k-1})_{a-1,b}=\frac{2k+1-a}{2}(\smash{\underset{^\circ}{\mathbf S}}_{2k}^{B})_{a,b}$ in the next paragraph.

We aim for an identity that is slightly stronger than what has been just claimed, namely\begin{align}(\smash{\underset{^\circ}{\mathbf S}}_{2k-1}^{B})_{a-1,b}+(\smash{\underset{^\circ}{\mathbf S}}_{2k-1}^{D})_{a,b}=
\frac{2k+1-b}{2}(\smash{\underset{^\circ}{\mathbf S}}_{2k}^{B})_{a,b}\label{eq:BDB_rec}
\end{align}
for all the possible values of  $ k\in\mathbb Z_{>0}$ and $a,b\in\mathbb Z$ that make both sides well-defined.  For $k=1$, we can check \eqref{eq:BDB_rec} by direct computation. The same is true for $ a,b\in\mathbb Z_{\leq0}$ and arbitrary $ k\in\mathbb Z_{>0}$. For larger integer values of $k$, we will verify  \eqref{eq:BDB_rec} by recursion.  Summing over\begin{align}
b(2k+2-a)
\mathscr S_{2k+1,a,b,s}^A={}&(2k+2)
\mathscr S_{2k,a,b-1,s+1}^A+\mathscr S_{2k-1,a-1,b-1,s}^A,
\end{align}while taking care of contributions from the boundary value (for $s=1$) and a combinatorial identity\begin{align}
{\sum\limits _{s=1}^{\infty} (-1)^s}( \mathscr S_{m,a,b,s}^B- \mathscr S_{m,a,b,s}^A)=\frac{\left\lfloor \frac{m+1}{2}\right\rfloor +1-b}{m+1-a-b}\frac{  \binom{m+1-a}{\left\lfloor \frac{m+1}{2}\right\rfloor +1} \binom{\left\lfloor \frac{m+1}{2}\right\rfloor +1}{b}}{(-1)^ba! (m+1-a)!},
\end{align} we reach a result\begin{align}\begin{split}&
b (2 k + 2 -a)(\smash{\underset{^\circ}{\mathbf S}}_{2k+1}^{B})_{a,b}-4 (k + 1)(\smash{\underset{^\circ}{\mathbf S}}_{2k}^{B})_{a,b-1}+4(\smash{\underset{^\circ}{\mathbf S}}_{2k-1}^{B})_{a-1,b-1}\\={}&(a+1)b(\smash{\underset{^\circ}{\mathbf S}}_{2k+1}^{D})_{a+1,b}-4(\smash{\underset{^\circ}{\mathbf S}}_{2k-1}^{D})_{a,b-1}.\end{split}
\end{align}We then combine the equation above with  \eqref{eq:SBrec} into a recursion\begin{align}\begin{split}
&(a+1)b\left[(\smash{\underset{^\circ}{\mathbf S}}_{2k+1}^{B})_{a,b}+(\smash{\underset{^\circ}{\mathbf S}}_{2k+1}^{D})_{a+1,b}-\frac{2k+3-b}{2}(\smash{\underset{^\circ}{\mathbf S}}_{2k+2}^{B})_{a+1,b}\right]\\={}&4\left[(\smash{\underset{^\circ}{\mathbf S}}_{2k-1}^{B})_{a-1,b-1} +(\smash{\underset{^\circ}{\mathbf S}}_{2k-1}^{D})_{a,b-1}-\frac{2k+2-b}{2}(\smash{\underset{^\circ}{\mathbf S}}_{2k}^{B})_{a,b-1}\right],
\end{split}\end{align} which proves   \eqref{eq:BDB_rec}.

The transpose of   \eqref{eq:BDB_rec}  brings us  $\frac{2k+1-a}{2}(\smash{\underset{^\circ}{\mathbf S}}_{2k}^{B})_{a,b}=(\mathbf T^{C}_{2k-1})_{a-1,b}=\frac{2k+1-a}{2k+1}(\mathbf T^{A}_{2k-1})_{a,b}$ for $ a,b\in\mathbb Z\cap[2,k]$, while    $ (\mathbf T^{C}_{2k-1})_{a-1,1}=\frac{2k+1-a}{2k+1}(\mathbf T^{A}_{2k-1})_{a,1}$   also holds for $ a\in\mathbb Z\cap[2,k]$. Thus,  the top-left, top-right and  bottom-left blocks  of  $\pmb{\boldsymbol \varphi}^{-1}_{2k-1}\pmb{\boldsymbol \alpha}^{-1}_{k}\mathbf S_{2k-1}^{\vphantom{\mathrm T}}(\pmb{\boldsymbol \alpha}^{-1}_{k})^{\mathrm T}( \pmb{\boldsymbol \varphi}^{-1}_{2k-1})^{\mathrm T}$ assume the forms described in  the right-hand side of \eqref{eq:Sigma_block_diag}.

Eliminating $(\smash{\underset{^\circ}{\mathbf S}}_{2k-1}^{B})_{a-1,b}$ from \eqref{eq:SBrec} and \eqref{eq:BDB_rec}, while noting that $ (\smash{\underset{^\circ}{\mathbf S}}_{2k-1}^{D})_{a,b}=(-1)^{\left\lfloor \frac{a-1}{2}\right\rfloor }\frac{1-(-1)^a}{2}\linebreak\frac{2k+1-b}{2k+1}\binom{k}{a}(\smash{\underset{^\circ}{\mathbf S}}_{2k}^{B})_{1,b}$, we arrive at \eqref{eq:invBettiM_rec}.

With the aid of  \eqref{eq:invBettiM_rec}, one can verify that the bottom-right $(k-1)\times(k-1)$ block of  $\pmb{\boldsymbol \varphi}^{-1}_{2k-1}\pmb{\boldsymbol \alpha}^{-1}_{k}\mathbf S_{2k-1}^{\vphantom{\mathrm T}}\linebreak(\pmb{\boldsymbol \alpha}^{-1}_{k})^{\mathrm T}( \pmb{\boldsymbol \varphi}^{-1}_{2k-1})^{\mathrm T}$ is indeed equal to $ -\frac{2}{2k+1} \smash{\underset{^\circ}{\mathbf S}}_{2k-2}^B$.
 \end{enumerate} \end{proof}
\section{Broadhurst--Roberts quadratic relations\label{sec:BR_quad}}In \S\ref{subsec:onshell_quad}, we characterize $ \mathbf S^{-1}_{m}$ in terms of Bernoulli numbers, thereby connecting sum rules of Feynman diagrams to the Betti matrix $ \mathbf B_{m-2}$  in the on-shell limit. In the meantime, we will also  represent the de Rham matrix $ \mathbf D_{m-2}$ for Broadhurst--Roberts quadratic relations in the form declared in Theorem \ref{thm:BRquad}.

In \S\ref{subsec:app_Feynman}, we explore the Broadhurst--Roberts varieties $ V_{m+2}^{\mathrm{BR}}\colonequals \Big\{\mathbf X_{\left\lfloor \frac{m+1}{2} \right\rfloor}^{\vphantom{\mathrm{T}}}\in\mathbb C^{\left\lfloor \frac{m+1}{2} \right\rfloor\times \left\lfloor \frac{m+1}{2} \right\rfloor}\Big|\mathbf X_{\left\lfloor \frac{m+1}{2} \right\rfloor}^{\vphantom{\mathrm{T}}}\mathbf D_{m}^{\vphantom{\mathrm{T}}}\mathbf X_{\left\lfloor \frac{m+1}{2} \right\rfloor}^{{\mathrm{T}}}-\mathbf B_{m}^{\vphantom{\mathrm{T}}}=0\Big \}$, replacing the Broadhurst--Roberts period matrices $\mathbf P_m$ with formal variables. We demonstrate algebraically that  $ V_{2k+1}^{\mathrm{BR}}$ imposes a constraint on certain minor determinants of $ \mathbf X_k$, in the form of a reflection formula \eqref{eq:det_refl}. 

The purpose of \S\ref{subsec:deRham_alt} is twofold. First, we display the diversity of de Rham representations in the Wro\'nskian framework, along with formal proofs. Second, we conclude with an open-ended discussion on other representations of the de Rham matrix $ \mathbf  D_m$ outside the Wro\'nskian framework.  \subsection{Quadratic relations  for on-shell Bessel moments\label{subsec:onshell_quad}}
It is now clear from Proposition \ref{prop:lim_W}  that \begin{align}
|\mathcal L_{2k-1}(1)|\pmb{\boldsymbol \alpha}^{{\mathrm T}}_{k}\mathbf W_{2k-1}^{\mathrm T}(1)\mathbf V^{\vphantom{\mathrm T}}_{2k-1}
(1)
\mathbf W^{\vphantom{\mathrm T}}_{2k-1}(1)\pmb{\boldsymbol \alpha}^{\vphantom{\mathrm T}}_{k}=\pmb{\boldsymbol \alpha}^{{\mathrm T}}_{k}\mathbf S^{-1}_{2k-1}\pmb{\boldsymbol \alpha}^{\vphantom{\mathrm T}}_{k}
\end{align}can be rewritten as \begin{align}\frac{\pmb{\boldsymbol \alpha}^{{\mathrm T}}_{k}\mathbf S^{-1}_{2k-1}\pmb{\boldsymbol \alpha}^{\vphantom{\mathrm T}}_{k}}{|\mathcal L_{2k-1}(1)|}=\begin{pmatrix*}[r]\mathbf P_{2k-1} & \acute{\mathbf{P}}_{2k-1}   \\[5pt]
 & -\mathbf{P}_{2k-3}  \\
\end{pmatrix*}\left[(\pmb{\boldsymbol \beta}_{2k-1}^{-1}\vphantom{\mathrm T})^{\mathrm T}\mathbf V^{\vphantom{\mathrm T}}_{2k-1}
(1)\pmb{\boldsymbol \beta}_{2k-1}^{-1}\right]\begin{pmatrix}\mathbf P_{2k-1}^{\mathrm T} &    \\[5pt]
\acute{\mathbf{P}}_{2k-1}^{\mathrm T} & -\mathbf{P}_{2k-3}^{\mathrm T}  \\
\end{pmatrix},
\label{eq:Mk_block_tridiag}\end{align}{whose bottom-right $ (k-1)\times(k-1)$ block is partly responsible for Theorem \ref{thm:BRquad} [$ m=2k-1 $ in \eqref{eq:Bn_defn} and \eqref{eq:P0_defn}]. Proposition \ref{prop:block_diag_W} then brings us a further refinement:}\begin{align}
\frac{\pmb{\boldsymbol \varphi}^{{\mathrm T}}_{2k-1}\pmb{\boldsymbol \alpha}^{{\mathrm T}}_{k}\mathbf S^{-1}_{2k-1}\pmb{\boldsymbol \alpha}^{\vphantom{\mathrm T}}_{k}\pmb{\boldsymbol \varphi}^{\vphantom{\mathrm T}}_{2k-1}}{|\mathcal L_{2k-1}(1)|}=
\begin{pmatrix}\mathbf P_{2k-1} &
 \\[5pt] & -\mathbf P_{2k-3}  \\
\end{pmatrix}\left[(\pmb{\boldsymbol \vartheta}^{-1}_{2k-1})^{\mathrm T}(\pmb{\boldsymbol \beta}_{2k-1}^{-1}\vphantom{\mathrm T})^{\mathrm T}\mathbf V^{\vphantom{\mathrm T}}_{2k-1}
(1)\pmb{\boldsymbol \beta}_{2k-1}^{-1}\pmb{\boldsymbol \vartheta}^{-1}_{2k-1}\right]\begin{pmatrix}\mathbf P_{2k-1}^{\mathrm T} & \\[5pt]
 & -\mathbf P_{2k-1}^{\mathrm T} \\
\end{pmatrix}.\label{eq:Mk_block_diag}\tag{\ref{eq:Mk_block_tridiag}$^*$}
\end{align}Here, the bottom right  $ (k-1)\times(k-1)$ block in
$ \pmb{\boldsymbol \alpha}^{{\mathrm T}}_{k}\mathbf S^{-1}_{2k-1}\pmb{\boldsymbol \alpha}^{\vphantom{\mathrm T}}_{k}$ [resp.\ $ (\pmb{\boldsymbol \beta}_{2k-1}^{-1}\vphantom{\mathrm T})^{\mathrm T}\mathbf V^{\vphantom{\mathrm T}}_{2k-1}
(1)\pmb{\boldsymbol \beta}_{2k-1}$]
 is unaffected by left multiplication of  $ \pmb{\boldsymbol \varphi}^{{\mathrm T}}_{2k-1}$ [resp.\ $(\pmb{\boldsymbol \vartheta}^{-1}_{2k-1})^{\mathrm T}$] and right multiplication of $ \pmb{\boldsymbol \varphi}^{\vphantom{\mathrm T}}_{2k-1}$ (resp.~$ \pmb{\boldsymbol \vartheta}^{-1}_{2k-1}$).

To obtain  similar block (tri)diagonalization for Theorem \ref{thm:BRquad} $( m=2k)$, we need to expend a little more effort, in the lemma below.
\begin{lemma}[Margin behavior]\label{lm:margin_v}For each given $k\in\mathbb Z_{>0} $, the $ 2k$-th row and the $2k$-th column of \begin{align} \lim_{u\to1^-}|\mathcal L_{2k}(u)|(\pmb{\boldsymbol \beta}_{2k}^{-1}\vphantom{\mathrm T})^{\mathrm T} \mathbf V^{\vphantom{\mathrm T}}_{2k}
(u)\pmb{\boldsymbol \beta}_{2k}^{-1}\end{align} are filled with zeros. \end{lemma}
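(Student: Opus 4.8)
The plan is to bypass any analysis of how the individual Wro\'nskian cofactors behave at the regular singular point $u=1=(2\cdot1-1)^2$, and instead read the vanishing off from two elementary structural facts. First, by \eqref{eq:v_mat_ab} together with $\ell_{2k,2k}(u)=\mathfrak n_{2k}(u)$ (the leading coefficient of $\widetilde L_{2k}$, cf.\ \eqref{eq:VL_2k}--\eqref{eq:m_n_poly}), the matrix $\mathfrak n_{2k}(u)\pmb{\boldsymbol\upsilon}_{2k}(u)$ has entries in $\mathbb Z[u]$, with $(\mathfrak n_{2k}(u)\pmb{\boldsymbol\upsilon}_{2k}(u))_{a,b}=\sum_{n=a+b-1}^{2k}(-1)^{a+n+1}\binom{n-a}{b-1}D^{n-a-b+1}\ell_{2k,n}(u)$. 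For the $2k$-th row the sum is empty unless $b=1$, in which case only the term $n=2k$ survives and yields $(\mathfrak n_{2k}(u)\pmb{\boldsymbol\upsilon}_{2k}(u))_{2k,1}=-\ell_{2k,2k}(u)=-\mathfrak n_{2k}(u)$. Since $\mathfrak n_{2k}(u)=u^k\prod_{j=1}^{k+1}[u-(2j-1)^2]$ has $u=1$ as a simple zero (the $j=1$ factor), the entire $2k$-th row of $\mathfrak n_{2k}(u)\pmb{\boldsymbol\upsilon}_{2k}(u)$ vanishes at $u=1$; by the skew-symmetry $\pmb{\boldsymbol\upsilon}_{2k}^{\mathrm T}(u)=-\pmb{\boldsymbol\upsilon}_{2k}(u)$ from Corollary \ref{cor:VvSs}, so does the $2k$-th column. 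As $\mathfrak n_{2k}(u)$ keeps a constant sign on $(0,1)$, the matrix $\mathbf N:=\lim_{u\to1^-}|\mathfrak n_{2k}(u)|\pmb{\boldsymbol\upsilon}_{2k}(u)=\pm[\mathfrak n_{2k}(u)\pmb{\boldsymbol\upsilon}_{2k}(u)]_{u=1}$ then exists and has vanishing $2k$-th row and vanishing $2k$-th column.

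Second, I would check from \eqref{eq:Bessel_mat_beta} that the $2k$-th column of $\pmb{\boldsymbol\beta}_{2k}=\pmb{\boldsymbol\beta}_{2k}(1)$ is a nonzero multiple of the standard basis vector $e_{2k}$. Indeed $(\pmb{\boldsymbol\beta}_{2k})_{a,2k}$ carries a factor $\binom{a-1}{2k-a}$ for $a\in\mathbb Z\cap[1,k]$ and a factor $\binom{a-k}{3k-a}$ for $a\in\mathbb Z\cap[k+1,2k]$, and the nonnegativity constraints $2k-a\leq a-1$ respectively $3k-a\leq a-k$ force $a\geq k+\tfrac12$ respectively $a\geq 2k$; hence the only surviving entry in that column is $(\pmb{\boldsymbol\beta}_{2k})_{2k,2k}=2(-4)^{k-1}\neq0$. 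Thus $e_{2k}$ is an eigenvector of $\pmb{\boldsymbol\beta}_{2k}$, and since $\pmb{\boldsymbol\beta}_{2k}$ is invertible (Remark after Proposition \ref{prop:Bessel_mat}), the $2k$-th column of $\pmb{\boldsymbol\beta}_{2k}^{-1}$ is likewise a nonzero multiple of $e_{2k}$.

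Finally, because $\pmb{\boldsymbol\beta}_{2k}^{-1}$ is constant, the matrix in the lemma is $\mathcal N:=(\pmb{\boldsymbol\beta}_{2k}^{-1})^{\mathrm T}\mathbf N\pmb{\boldsymbol\beta}_{2k}^{-1}$; its $2k$-th column equals $(\pmb{\boldsymbol\beta}_{2k}^{-1})^{\mathrm T}\mathbf N(\pmb{\boldsymbol\beta}_{2k}^{-1}e_{2k})$, a scalar multiple of $(\pmb{\boldsymbol\beta}_{2k}^{-1})^{\mathrm T}(\mathbf N e_{2k})=0$ (as $\mathbf N e_{2k}$ is the $2k$-th column of $\mathbf N$), and symmetrically its $2k$-th row is a scalar multiple of $(e_{2k}^{\mathrm T}\mathbf N)\pmb{\boldsymbol\beta}_{2k}^{-1}=0$. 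This is the assertion of the lemma, and it also explains why no analogue is needed in the odd-order case: the polynomial $\mathfrak m_{2k-1}(u)=u^k\prod_{j=1}^k[u-(2j)^2]$ does \emph{not} vanish at $u=1$. The only genuine subtlety — more a matter of perspective than of work — is to recognize that one should \emph{not} attempt to track the $u\to1^-$ asymptotics of the $(2k-1)\times(2k-1)$ cofactor Wro\'nskians across the singular point, but rather observe that the regularising factor $|\mathfrak n_{2k}(u)|$ already kills the last row and column of $\pmb{\boldsymbol\upsilon}_{2k}(u)$ at $u=1$, a property that conjugation by $\pmb{\boldsymbol\beta}_{2k}^{-1}$ preserves precisely because the last coordinate direction $e_{2k}$ is $\pmb{\boldsymbol\beta}_{2k}$-invariant.
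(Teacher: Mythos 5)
Your proposal is correct and takes essentially the same route as the paper's own proof: both arguments rest on the observations that the only nonzero marginal entries of $\pmb{\boldsymbol\upsilon}_{2k}(u)$ are $(\pmb{\boldsymbol\upsilon}_{2k}(u))_{1,2k}=-(\pmb{\boldsymbol\upsilon}_{2k}(u))_{2k,1}=1$, that the last column of $\pmb{\boldsymbol\beta}_{2k}^{-1}$ is a nonzero multiple of the last standard basis vector (the paper extracts $(\pmb{\boldsymbol\beta}_{2k}^{-1})_{a,2k}=\frac{(-1)^{k-1}}{2^{2k-1}}\delta_{a,2k}$ from \eqref{eq:Bessel_mat_inv}, while you read the same fact off the binomial coefficients in \eqref{eq:Bessel_mat_beta}), and that $\mathfrak n_{2k}(1)=0$, with skew-symmetry from Corollary \ref{cor:VvSs} supplying the remaining margin. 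The only cosmetic difference is that you clear the denominator and evaluate the polynomial matrix $\mathfrak n_{2k}(u)\pmb{\boldsymbol\upsilon}_{2k}(u)$ at $u=1$, whereas the paper notes that the conjugated margins are constant in $u$ and are annihilated by the factor $|\mathfrak n_{2k}(u)|\to0$.
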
\begin{proof}We first note  that \begin{align}\begin{split}(D^0I_{0}(\sqrt{u}t),\ldots,D^{2k-1}I_{0}(\sqrt{u}t))^{\mathrm T}={}&[
\pmb{\boldsymbol \beta}_{2k}(u)]^{-1}(I_{0}(\sqrt{u}t),-I_{0}(\sqrt{u}t)t^{2}\ldots,(-1)^{k-1}I_{0}(\sqrt{u}t)t^{2k-2},\\{}&\sqrt{u}I_{1}(\sqrt{u}t)t,-\sqrt{u}I_{1}(\sqrt{u}t)t^{3}\ldots,(-1)^{k-1}\sqrt{u}I_{1}(\sqrt{u}t)t^{2k-1})^{\mathrm T}\end{split}\label{eq:Bessel_mat_inv}
\end{align}entails a characterization of the last column in $ \pmb{\boldsymbol \beta}_{2k}^{-1}$:\begin{align}
(\pmb{\boldsymbol \beta}_{2k}^{-1})_{a,2k}=\frac{(-1)^{k-1}}{2^{2k-1}}\delta_{a,2k}.
\end{align}

Then we observe that $( \mathbf V^{\vphantom{\mathrm T}}_{2k}
(u)\pmb{\boldsymbol \beta}_{2k}^{-1})_{a,2k}=\sum_{t=1}^{2k}( \mathbf V^{\vphantom{\mathrm T}}_{2k}
(u))_{a,t}\frac{(-1)^{k-1}}{2^{2k-1}}\delta_{t,2k}=\frac{(-1)^{k-1}}{2^{2k-1}}( \mathbf V^{\vphantom{\mathrm T}}_{2k}
(u))_{a,2k}=\frac{(-1)^{k-1}}{2^{2k-1}}\delta_{a,1}$ and $ ((\pmb{\boldsymbol \beta}_{2k}^{-1}\vphantom{\mathrm T})^{\mathrm T} \mathbf V^{\vphantom{\mathrm T}}_{2k}
(u)\pmb{\boldsymbol \beta}_{2k})_{a,2k}=\frac{(-1)^{k-1}}{2^{2k-1}}\delta_{a,1}$ follow immediately. By skew symmetry, we have $ ((\pmb{\boldsymbol \beta}_{2k}^{-1}\vphantom{\mathrm T})^{\mathrm T} \mathbf V^{\vphantom{\mathrm T}}_{2k}
(u)\pmb{\boldsymbol \beta}_{2k})_{2k,b}=\frac{(-1)^{k}}{2^{2k-1}}\delta_{1,b}$, so the right and bottom margins of the matrix $\lim_{u\to1^-}|\mathcal L_{2k}(u)|(\pmb{\boldsymbol \beta}_{2k}^{-1}\vphantom{\mathrm T})^{\mathrm T} \mathbf V^{\vphantom{\mathrm T}}_{2k}
(u)\pmb{\boldsymbol \beta}_{2k} $ are indeed filled with zeros.\end{proof}According to the result from the last lemma, we have (cf.\ Proposition \ref{prop:lim_W}) \begin{align}\begin{split}&\pmb{\boldsymbol \alpha}^{{\mathrm T}}_{k}\pmb{\boldsymbol \psi}_{k}^{{\mathrm T}}\mathbf S^{-1}_{2k}\pmb{\boldsymbol \psi}_{k}^{\vphantom{\mathrm T}}\pmb{\boldsymbol \alpha}^{\vphantom{\mathrm T}}_{k}\\={}&
\lim_{u\to1^-} |\mathcal L_{2k}(u)|\pmb{\boldsymbol \alpha}^{{\mathrm T}}_{k}\pmb{\boldsymbol \psi}_{k}^{{\mathrm T}}\mathbf W^{{\mathrm T}}_{2k}(u)\mathbf V^{\vphantom{\mathrm T}}_{2k}
(u)\mathbf W^{\vphantom{\mathrm T}}_{2k}(u)\pmb{\boldsymbol \psi}_{k}^{\vphantom{\mathrm T}}\pmb{\boldsymbol \alpha}^{\vphantom{\mathrm T}}_{k}\\={}&\begin{pmatrix*}[r]\mathbf P_{2k} & \acute{\mathbf{P}}_{2k}   \\[5pt]
 & -\mathbf{P}_{2k-2}  \\
\end{pmatrix*}\left[\lim_{u\to1^-}|\mathcal L_{2k}(u)|\pmb{\boldsymbol \varrho}_{k}^{\phantom{\mathrm T}}(\pmb{\boldsymbol \beta}_{2k}^{-1}\vphantom{\mathrm T})^{\mathrm T} \mathbf V^{\vphantom{\mathrm T}}_{2k}
(u)\pmb{\boldsymbol \beta}_{2k}^{-1}\pmb{\boldsymbol \varrho}_{k}^{\mathrm T}\right]\begin{pmatrix}\mathbf P_{2k}^{\mathrm T} &    \\[5pt]
\acute{\mathbf{P}}_{2k}^{\mathrm T} & -\mathbf{P}_{2k-2}^{\mathrm T}  \\
\end{pmatrix},\end{split}\label{eq:Nk_block_tridiag}
\end{align}{which is partly responsible for Theorem \ref{thm:BRquad} [$m=2k$ in \eqref{eq:Bn_defn} and \eqref{eq:P0_defn}]. Its block diagonalized counterpart reads (cf.\ Proposition \ref{prop:block_diag_W})}{\allowdisplaybreaks\begin{align}\begin{split}&\pmb{\boldsymbol\varphi}_{2k}^{\mathrm T}\pmb{\boldsymbol \alpha}^{{\mathrm T}}_{k}\pmb{\boldsymbol \psi}_{k}^{{\mathrm T}}\mathbf S^{-1}_{2k}\pmb{\boldsymbol \psi}_{k}^{\vphantom{\mathrm T}}\pmb{\boldsymbol \alpha}^{\vphantom{\mathrm T}}_{k}\pmb{\boldsymbol\varphi}_{2k}^{\vphantom{\mathrm T}}\\=
{}&\begin{pmatrix}\mathbf P_{2k} &
 \\[5pt] & -\mathbf{P}_{2k-2}  \\
\end{pmatrix}(\pmb{\boldsymbol\vartheta}_{2k}^{-1})^{\mathrm T}\left[\lim_{u\to1^-}|\mathcal L_{2k}(u)|\pmb{\boldsymbol \varrho}_{k}^{\phantom{\mathrm T}}(\pmb{\boldsymbol \beta}_{2k}^{-1}\vphantom{\mathrm T})^{\mathrm T} \mathbf V^{\vphantom{\mathrm T}}_{2k}
(u)\pmb{\boldsymbol \beta}_{2k}^{-1}\pmb{\boldsymbol \varrho}_{k}^{\mathrm T}\right]\pmb{\boldsymbol\vartheta}_{2k}^{-1}\begin{pmatrix}\mathbf P_{2k}^{\mathrm T} & \\[5pt]
 & -\mathbf{P}_{2k-2}^{\mathrm T}  \\
\end{pmatrix}.\end{split}\label{eq:Nk_block_diag}\tag{\ref{eq:Nk_block_tridiag}$^*$}
\end{align}}

To complete the proof of  Theorem \ref{thm:BRquad}, we need to express the entries of $ \mathbf S^{-1}_{m}$ via Bernoulli numbers, in the next proposition. \begin{proposition}[Bernoulli representation of $ \mathbf S^{-1}_{m}$]\label{prop:inv_S}Let $\mathsf B_n $ be the Bernoulli numbers generated by \eqref{eq:Bn_defn}. For all $ m\in\mathbb Z_{>0}$, the inverse of $ \mathbf S_m=\left(\begin{smallmatrix}\mathbf S_m^A & \mathbf S_m^B \\
\mathbf S_m^C & \mathbf S_m^D \\
\end{smallmatrix}\right)$ is given by $ \pmb{\mathscr B}_{m}=\left(\begin{smallmatrix}\pmb{\mathscr B}_{m}^A & \pmb{\mathscr B}_{m}^B \\
\pmb{\mathscr B}_{m}^C & \pmb{\mathscr B}_{m}^D \\
\end{smallmatrix}\right)$ for\footnote{It is understood  that $ (\mathbf S_1)_{1,1}=9,(\pmb{\mathscr B}_{1})_{1,1}=\frac{1}{9}$, and $ \mathbf S_2^{}=\mathbf S_2^{A}=(\pmb{\mathscr B}_{2}^{})^{-1}=(\pmb{\mathscr B}_{2}^A)^{-1}=\left(\begin{smallmatrix}0&-8\\8&\phantom{-}0\end{smallmatrix}\right)$.
} \begin{align}
(\pmb{\mathscr B}_{m}^A )_{a,b}\colonequals{}&\frac{(m+1)! }{m+2}\frac{\delta _{\min\{a,b\},1}\mathsf B_{m+3-a-b} }{ 2^{m-1} (-1)^{\left\lfloor \frac{m+2-a-b}{2} \right\rfloor +\frac{1+a-(-1)^{m}(1-a)}{2} +\left\lfloor \frac{m+1}{2}\right\rfloor }}\label{eq:BAmat_defn}
\end{align}where $ a,b\in\mathbb Z\cap\left[1,\left\lfloor\frac{m}2\right\rfloor+1\right]$, \begin{align}
(\pmb{\mathscr B}_{m}^B )_{a,b'}=(-1)^{m+1}(\pmb{\mathscr B}_{m}^C )_{b',a}\colonequals{}&\frac{(m+2-a)! (m-b')!}{(m + 2 - a - b')!}\frac{  \left(1-\frac{a+b' }{m+2}\delta _{a,1}\right)\mathsf B_{m+2-a-b'}}{ 2^{m-1} (-1)^{\left\lfloor \frac{m+1-a-b'}{2} \right\rfloor +b'+\left\lfloor \frac{m}{2}\right\rfloor }}\label{eq:BBmat_defn}
\end{align} where $ a\in\mathbb Z\cap\left[1,\left\lfloor\frac{m}2\right\rfloor+1\right],b'\in\mathbb Z\cap\left[ 1, \left\lfloor \frac{ m-1}2\right\rfloor\right]$, and \begin{align}
(\pmb{\mathscr B}_{m}^D )_{a',b'}\colonequals\frac{(m-a')! (m-b')!}{(m + 1- a'- b')!}\frac{  \left(m-a'-b'\right)\mathsf B_{m+1-a'-b'}}{ 2^{m-1} (-1)^{\left\lfloor \frac{m-a'-b'}{2} \right\rfloor +a'+\left\lfloor \frac{m-1}{2}\right\rfloor }}\label{eq:BDmat_defn}
\end{align} where $a', b'\in\mathbb Z\cap\left[ 1, \left\lfloor \frac {m-1}2\right\rfloor\right]$.

Consequently, both \eqref{eq:Mk_block_tridiag} and \eqref{eq:Mk_block_diag} [resp.~\eqref{eq:Nk_block_tridiag} and \eqref{eq:Nk_block_diag}] account for the representations of the de Rham matrix $ \mathbf D_{2k-3}$ [resp.~$ \mathbf D_{2k-2}$] per \eqref{eq:deRhamD}  and the Betti matrix  $ \mathbf B_{2k-3}$ [resp.~$ \mathbf B_{2k-2}$] per \eqref{eq:BettiB}.\end{proposition}\begin{proof}By direct computation, one can check that \begin{align}
\pmb{\boldsymbol \varphi}^{{\mathrm T}}_{2k-1}\pmb{\boldsymbol \alpha}^{{\mathrm T}}_{k}\pmb{\mathscr B}_{2k-1}^{\vphantom{\mathrm T}}\pmb{\boldsymbol \alpha}^{\vphantom{\mathrm T}}_{k}\pmb{\boldsymbol \varphi}^{\vphantom{\mathrm T}}_{2k-1}={}&\begin{pmatrix}\frac{2^{4}(-1)^{k-1}}{2k+1}\mathbf B_{2k-1} &  \\
 & 2^{2}(2k+1)(-1)^{k-1} \mathbf B_{2k-3}\\
\end{pmatrix},\label{eq:Sigma_inv_block_diag}
\end{align} and \begin{align}
\pmb{\boldsymbol\varphi}_{2k}^{\mathrm T}\pmb{\boldsymbol \alpha}^{{\mathrm T}}_{k}\pmb{\boldsymbol \psi}_{k}^{{\mathrm T}}\pmb{\mathscr B}_{2k}^{\vphantom{\mathrm T}}\pmb{\boldsymbol \psi}_{k}^{\vphantom{\mathrm T}}\pmb{\boldsymbol \alpha}^{\vphantom{\mathrm T}}_{k}\pmb{\boldsymbol\varphi}_{2k}={}&\begin{pmatrix}\frac{2^{4}(-1)^{k-1}}{2k+2}\mathbf B_{2k} &  \\
 & 2^{2}(2k+2)(-1)^{k-1}\mathbf B_{2k-2}\\
\end{pmatrix},
\end{align} where the entries of $ \mathbf  B_{m-2}\in\mathbb Q^{\left\lfloor \frac{m-1}{2} \right\rfloor\times \left\lfloor \frac{m-1}{2} \right\rfloor}$ are given by the last line of \eqref{eq:BettiB}. (This is essentially the original form of Betti matrix conjectured by Broadhurst--Roberts \cite[(5.4)]{BroadhurstRoberts2018}, except that all its rows and columns are placed in reverse order.) Therefore,  so long as one can verify that $ \mathbf S_m^{-1}=\pmb{\mathscr B}_{m}^{}$ for $m\in\mathbb Z_{>1}$, one can deduce all the Broadhurst--Roberts quadratic relations $ \mathbf P^{}_m\mathbf D^{}_m\mathbf P^{\mathrm T}_m=\mathbf B_m^{}$  from the bottom-right blocks of  \eqref{eq:Mk_block_diag} and \eqref{eq:Nk_block_diag}.

Recall the reshuffling-rescaling matrix $ \pmb{ \boldsymbol\xi}_{m}$ from \eqref{eq:Rmat}.  In the block partition of $ \smash{\underset{^\circ}{\pmb{\mathscr B}}}_m^{}\colonequals\pmb{ \boldsymbol\xi}_{m}^{}\pmb{\mathscr B}_m^{}\pmb{ \boldsymbol\xi}_{m}^{\mathrm T}=\left(\begin{smallmatrix} & \smash{\underset{^\circ}{\pmb{\mathscr B}}}_m^B \\[2pt]
\smash{\underset{^\circ}{\pmb{\mathscr B}}} _m^C &\smash{\underset{^\circ}{\pmb{\mathscr B}}}_m^D \\[1pt]
\end{smallmatrix}\right)$, the top-left $ \left\lfloor \frac{m}{2} \right\rfloor\times\left\lfloor \frac{m}{2} \right\rfloor$ block is filled with zeros, while the remaining blocks are  $ (\smash{\underset{^\circ}{\pmb{\mathscr B}}}_m^B)_{a,b}=(-1)^{m+1}(\smash{\underset{^\circ}{\pmb{\mathscr B}}}_m^C)_{b,a}=({\pmb{\mathscr B}}_m^B)_{a+1,b-1}$ for $ a\in\mathbb Z\cap\left[1,\left\lfloor \frac{m}{2} \right\rfloor\right], b\in\mathbb Z\cap\left[1,\left\lfloor \frac{m+1}{2} \right\rfloor\right]$ and  \begin{align}
(\smash{\underset{^\circ}{\pmb{\mathscr B}}}_m^D)_{a,b}={}&({\pmb{\mathscr B}}_m^D)_{a-1,b-1}\left( 1+\frac{\delta_{a,1}+\delta_{b,1}}{m+2-a-b} \right)\label{eq:B0Dmat_defn}
\end{align}for $ a,b\in\mathbb Z\cap\left[1,\left\lfloor \frac{m+1}{2} \right\rfloor\right]$. Here, {expressions like $(\pmb{\mathscr B}_m^B)_{a+1,0}$,   $(\pmb{\mathscr B}_m^D)_{0,b-1}$ and $(\pmb{\mathscr B}_m^D)_{a-1,0}$  are defined by the right-hand sides of \eqref{eq:BBmat_defn} and \eqref{eq:BDmat_defn}.}

In what follows, we will demonstrate inductively that \begin{align}
\smash{\underset{^\circ}{\mathbf S}}_m\smash{\underset{^\circ}{\pmb{\mathscr B}}}_{m}=\begin{pmatrix}\smash{\underset{^\circ}{\mathbf S}}_m^B \smash{\underset{^\circ}{\pmb{\mathscr B}}}_{m}^C & \smash{\underset{^\circ}{\mathbf S}}_m^A \smash{\underset{^\circ}{\pmb{\mathscr B}}}_{m}^B+\smash{\underset{^\circ}{\mathbf S}}_m^{B}\smash{\underset{^\circ}{\pmb{\mathscr B}}}_{m}^D\\[2pt]
\smash{\underset{^\circ}{\mathbf S}}_m^D \smash{\underset{^\circ}{\pmb{\mathscr B}}}_{m}^C& \smash{\underset{^\circ}{\mathbf S}}_m^C \smash{\underset{^\circ}{\pmb{\mathscr B}}}_{m}^B+\smash{\underset{^\circ}{\mathbf S}}_m^{D}\smash{\underset{^\circ}{\pmb{\mathscr B}}}_{m}^D \\
\end{pmatrix}\end{align}is the $ m\times m$ identity matrix $ \mathbf I_m$, for all positive integers $m$.
Suppose that we have verified the aforementioned claim up to a certain odd number $m=2k-1$. We will show  that  the same is true for $ \smash{\underset{^\circ}{\mathbf S}}_{2k}\smash{\underset{^\circ}{\pmb{\mathscr B}}}_{2k}$ and $ \smash{\underset{^\circ}{\mathbf S}}_{2k+1}\smash{\underset{^\circ}{\pmb{\mathscr B}}}_{2k+1}$.\vspace{2pt}

\noindent\fbox{Inductive evaluation of $ \smash{\underset{^\circ}{\mathbf S}}_{2k}\smash{\underset{^\circ}{\pmb{\mathscr B}}}_{2k}$}\nopagebreak\vspace{1pt}

We note that  $ \smash{\underset{^\circ}{\mathbf S}}_{2k}^D $ is filled with zeros. So must be  the case for $ \smash{\underset{^\circ}{\mathbf S}}_{2k}^D \smash{\underset{^\circ}{\pmb{\mathscr B}}}_{2k}^C$, which forms the bottom-left block of  $ \smash{\underset{^\circ}{\mathbf S}}_{2k}\smash{\underset{^\circ}{\pmb{\mathscr B}}}_{2k}$.

Paraphrasing our induction hypothesis  $ \mathbf S_{2k-1}^{-1}=\pmb{\mathscr B}_{2k-1}^{}$  with \eqref{eq:Sigma_block_diag} and \eqref{eq:Sigma_inv_block_diag}, we have $ (\smash{\underset{^\circ}{\mathbf S}}_{2k}^B)^{-1}=2^{3}(-1)^{k-1}\mathbf B_{2k-1}$. Since $ \smash{\underset{^\circ}{\pmb{\mathscr B}}}_{2k}^C=2^{3}(-1)^{k-1}\mathbf B_{2k-1}$, we can check  that the top-left block satisfies
$ \smash{\underset{^\circ}{\mathbf S}}_{2k}^B\smash{\underset{^\circ}{\pmb{\mathscr B}}}_{2k}^C=\mathbf I_k$ and the bottom-right block has the same behavior:   $ \smash{\underset{^\circ}{\mathbf S}}_{2k}^C \smash{\underset{^\circ}{\pmb{\mathscr B}}}_{2k}^B+\smash{\underset{^\circ}{\mathbf S}}_{2k}^{D}\smash{\underset{^\circ}{\pmb{\mathscr B}}}_{2k}^D =(\smash{\underset{^\circ}{\pmb{\mathscr B}}}_{2k}^C\smash{\underset{^\circ}{\mathbf S}}_{2k}^B)^{\mathrm T}=\mathbf I_k$.

In the next three paragraphs, we show that the top-right block $\smash{\underset{^\circ}{\mathbf S}}_{2k}^A \smash{\underset{^\circ}{\pmb{\mathscr B}}}_{2k}^B+\smash{\underset{^\circ}{\mathbf S}}_{2k}^{B}\smash{\underset{^\circ}{\pmb{\mathscr B}}}_{2k}^D$ has  vanishing entries. This task can be simplified by a similarity transformation \begin{align}\pmb{ \boldsymbol\xi}_{2k}^{\mathrm T} \smash{\underset{^\circ}{\mathbf S}}_{2k}^{}\smash{\underset{^\circ}{\pmb{\mathscr B}}}_{2k}^{}(\pmb{ \boldsymbol\xi}_{2k}^{\mathrm T})^{-1}=
\mathbf S_{2k}\pmb{\mathscr B}_{2k}=\begin{pmatrix*}[r]\mathbf S_{2k}^A\pmb{\mathscr B}_{2k}^A+\mathbf S_{2k}^B \pmb{\mathscr B}_{2k}^C & \mathbf S_{2k}^A \pmb{\mathscr B}_{2k}^B+\mathbf S_{2k}^{B}\pmb{\mathscr B}_{2k}^D\\
& \mathbf I_{k} \\
\end{pmatrix*}.\end{align} Here, the only possible exceptions to  $ (\mathbf S_{2k}\pmb{\mathscr B}_{2k})_{p,q}=\delta_{p,q}$ occur at positions $ p\in\mathbb Z\cap[2,k+1],q\in\mathbb Z\cap(\{1\}\cup[k+2,2k])$,  where we have   $ (\mathbf S_{2k}^A\pmb{\mathscr B}_{2k}^A+\mathbf S_{2k}^B \pmb{\mathscr B}_{2k}^C )_{a+1,1}=\frac{2k+1}{2k+2}(\smash{\underset{^\circ}{\mathbf S}}_{2k}^A \smash{\underset{^\circ}{\pmb{\mathscr B}}}_{2k}^B+\smash{\underset{^\circ}{\mathbf S}}_{2k}^{B}\smash{\underset{^\circ}{\pmb{\mathscr B}}}_{2k}^D)_{a,1}$ for $ a\in\mathbb Z\cap[1,k]$ and   $ (\mathbf S_{2k}^A \pmb{\mathscr B}_{2k}^B+\mathbf S_{2k}^{B}\pmb{\mathscr B}_{2k}^D)_{a+1,b-1}=(\smash{\underset{^\circ}{\mathbf S}}_{2k}^A \smash{\underset{^\circ}{\pmb{\mathscr B}}}_{2k}^B+\smash{\underset{^\circ}{\mathbf S}}_{2k}^{B}\smash{\underset{^\circ}{\pmb{\mathscr B}}}_{2k}^D)_{a,b}$ for $ a\in\mathbb Z\cap[1,k],b\in\mathbb Z\cap[2,k]$.

 First, we show that  $ (\mathbf S_{2k}^A \pmb{\mathscr B}_{2k}^B+\mathbf S_{2k}^{B}\pmb{\mathscr B}_{2k}^D)_{a+1,b-1}=0$ for $ a\in\mathbb Z\cap[1,k]$ and $b\in\mathbb Z\cap[3,k]$. Contracting the recursions $ (\smash{{\pmb{\mathscr B}}}_{2k}^B)_{n+1,b-1}=\frac{2k+1-n}{2} \frac{1+2k\delta_{n,0}}{1+(2k+1)\delta_{n,0}}(\smash{{\pmb{\mathscr B}}}_{2k-1}^B)_{n+1,b-2}$ for $ n\in\mathbb Z\cap[0,k],b\in\mathbb Z\cap[3,k]$ and  $ (\smash{{\pmb{\mathscr B}}}_{2k}^B)_{n+1,b-1}=\frac{[2k+(1-b)(1-\delta_{n,1})](2k+1-n)}{4}(\smash{{\pmb{\mathscr B}}}_{2k-2}^B)_{n,b-2}$ for $ n\in\mathbb Z\cap[1,k],b\in\mathbb Z\cap[3,k]$  with a variation on  \eqref{eq:SArec}, namely\begin{align}
\frac{a(2k+1-n)({\mathbf S}_{2k}^A)_{a+1,n+1}}{1+(2k+1)\delta_{n,0}}={}&\frac{2(2k+1)(\smash{{\mathbf S}}_{2k-1}^A)_{a,n+1}}{(1+2k\delta_{a,1})(1+2k\delta_{n,0})}-\frac{4(\smash{{\mathbf S}}_{2k-2}^A)_{a,n}}{[1+(2k-1)\delta_{a,1}][1+(2k-1)\delta_{n,1}]}
\tag{\ref{eq:SArec}$'$}\end{align} for $ a\in\mathbb Z\cap[1,k],n\in\mathbb Z\cap[0,k]$,  we  compute\begin{align}\begin{split}
(\mathbf S_{2k}^A \pmb{\mathscr B}_{2k}^B)_{a+1,b-1}={}&\frac{2k+1}{a}\frac{(\smash{{\mathbf S}}_{2k-1}^A \smash{{\pmb{\mathscr B}}}_{2k-1}^B)_{a,b-2}}{1+2k\delta_{a,1}}-\frac{2k+1-b}{a}\frac{(\smash{{\mathbf S}}_{2k-2}^A \smash{{\pmb{\mathscr B}}}_{2k-2}^B)_{a,b-2}}{1+(2k-1)\delta_{a,1}}\\{}&+\frac{2k-b}{a}\frac{(\smash{{\mathbf S}}_{2k-2}^A )_{a,1}(\smash{{\pmb{\mathscr B}}}_{2k-2}^B)_{1,b-2}}{1+(2k-1)\delta_{a,1}}\end{split}\label{eq:SA_BB_rec}
\end{align}for $ a\in\mathbb Z\cap[1,k],b\in\mathbb Z\cap[3,k]$, where we have exploited the facts that  $(\smash{{\mathbf S}}_{2k-1}^A)_{a,k+1}=(\smash{{\mathbf S}}_{2k-2}^A)_{a,0}=0$ [by extensions of  \eqref{eq:SAmat_defn}]. Likewise, we can argue that \begin{align}\begin{split}
(\mathbf S_{2k}^B \pmb{\mathscr B}_{2k}^D)_{a+1,b-1}={}&\frac{2k+1}{a}\frac{(\smash{{\mathbf S}}_{2k-1}^B \smash{{\pmb{\mathscr B}}}_{2k-1}^D)_{a,b-2}}{1+2k\delta_{a,1}}-\frac{2k+1-b}{a}\frac{(\smash{{\mathbf S}}_{2k-2}^B \smash{{\pmb{\mathscr B}}}_{2k-2}^D)_{a,b-2}}{1+(2k-1)\delta_{a,1}}\\{}&-\frac{2k+1-b}{a}\frac{(\smash{{\mathbf S}}_{2k-2}^B )_{a,0}(\smash{{\pmb{\mathscr B}}}_{2k-2}^D)_{0,b-2}}{1+(2k-1)\delta_{a,1}},\end{split}\label{eq:SB_BD_rec}
\end{align}for $ a\in\mathbb Z\cap[1,k],b\in\mathbb Z\cap[3,k]$. Adding up  \eqref{eq:SA_BB_rec} and \eqref{eq:SB_BD_rec}, while falling back on our induction hypothesis that $\smash{{\mathbf S}}_{2k-1}^{-1}= \smash{{\pmb{\mathscr B}}}_{2k-1} ^{}$ and $\smash{{\mathbf S}}_{2k-2}^{-1}= \smash{{\pmb{\mathscr B}}}_{2k-2} ^{}$, we arrive at     $ (\mathbf S_{2k}^A \pmb{\mathscr B}_{2k}^B+\mathbf S_{2k}^{B}\pmb{\mathscr B}_{2k}^D)_{a+1,b-1}=(\smash{\underset{^\circ}{\mathbf S}}_{2k}^A \smash{\underset{^\circ}{\pmb{\mathscr B}}}_{2k}^B+\smash{\underset{^\circ}{\mathbf S}}_{2k}^{B}\smash{\underset{^\circ}{\pmb{\mathscr B}}}_{2k}^D)_{a,b}=\frac{2k+1}{a(1+2k\delta_{a,1})}(\mathbf S_{2k}^A \pmb{\mathscr B}_{2k}^B+\mathbf S_{2k}^{B}\pmb{\mathscr B}_{2k}^D)_{a,b-2}-\frac{2k+1-b}{a[1+(2k-1)\delta_{a,1}]}(\mathbf S_{2k-2}^A \pmb{\mathscr B}_{2k-2}^B+\mathbf S_{2k-2}^{B}\pmb{\mathscr B}_{2k-2}^D)_{a,b-2}=0$ for $ a\in\mathbb Z\cap[1,k],b\in\mathbb Z\cap[3,k]$.

Second, we prove that $(\mathbf S_{2k}^A \pmb{\mathscr B}_{2k}^B+\mathbf S_{2k}^{B}\pmb{\mathscr B}_{2k}^D)_{2,1}=0 $.
Adding up  analogs of \eqref{eq:SA_BB_rec} and \eqref{eq:SB_BD_rec} for $ a=1,b=2$, we obtain\begin{align}\begin{split}
(\mathbf S_{2k}^A \pmb{\mathscr B}_{2k}^B+\mathbf S_{2k}^{B}\pmb{\mathscr B}_{2k}^D)_{2,1}={}&\sum_{n=1}^k[(\mathbf S_{2k-1}^A)_{1,n}( \pmb{\mathscr B}_{2k-1}^B)_{n,0}+(\mathbf S_{2k-1}^{B})_{1,n}(\pmb{\mathscr B}_{2k-1}^D)_{n,0}]\\{}&-\frac{2k-1}{2k}\sum_{n=2}^k[(\mathbf S_{2k-2}^A)_{1,n}( \pmb{\mathscr B}_{2k-2}^B)_{n,0}+(\mathbf S_{2k-2}^{B})_{1,n}(\pmb{\mathscr B}_{2k-2}^D)_{n,0}].\end{split}\label{eq:two_sums}
\end{align}Through the explicit formula in  \eqref{eq:Sigma_1st_row_alt}, one can check  that $ (\mathbf S_{2k-1}^{B})_{1,k}=0$ and  $ (\mathbf S_{2k-1}^A)_{1,n}( \pmb{\mathscr B}_{2k-1}^B)_{n,0}+(\mathbf S_{2k-1}^{B})_{1,n-1}(\pmb{\mathscr B}_{2k-1}^D)_{n-1,0}(1-\delta_{n,1})=(\smash{\underset{^\circ}{\mathbf S}}_{2k}^{B})_{1,n}(\smash{\underset{^\circ}{\pmb{\mathscr B}}}_{2k}^C)_{n,1}$ for $ n\in\mathbb Z\cap[1,k]$,  so the first displayed
sum in \eqref{eq:two_sums} evaluates to $(\smash{\underset{^\circ}{\mathbf S}}_{2k}^{B}\smash{\underset{^\circ}{\pmb{\mathscr B}}}_{2k}^C)_{1,1}=1$; referring back to  \eqref{eq:sigma_1st_row_alt}, one sees that $ (\mathbf S_{2k-2}^{B})_{1,n}=0$ and $ (\mathbf S_{2k-2}^A)_{1,n}( \pmb{\mathscr B}_{2k-2}^B)_{n,0}=\frac{2k}{2k-1}(\smash{\underset{^\circ}{\mathbf S}}_{2k-2}^{B})_{1,n-1}(\smash{\underset{^\circ}{\pmb{\mathscr B}}}_{2k-1}^C)_{n-1,1}$ for $ n\in\mathbb Z\cap[2,k]$, so the second displayed
sum in \eqref{eq:two_sums} evaluates to $\frac{2k}{2k-1}\linebreak(\smash{\underset{^\circ}{\mathbf S}}_{2k-2}^{B}\smash{\underset{^\circ}{\pmb{\mathscr B}}}_{2k-2}^C)_{1,1}=\frac{2k}{2k-1}$.
 Thus, the claimed identity is a consequence of our induction hypothesis.

Third, we demonstrate that   $ (\smash{\underset{^\circ}{\mathbf S}}_{2k}^A \smash{\underset{^\circ}{\pmb{\mathscr B}}}_{2k}^B+\smash{\underset{^\circ}{\mathbf S}}_{2k}^{B}\smash{\underset{^\circ}{\pmb{\mathscr B}}}_{2k}^D)_{a,b}=0$ for $ a\in\mathbb Z\cap[1,k],b\in\{1,2\}$. If the integers $a$ and $b$ have the same parity, then $(\smash{\underset{^\circ}{\mathbf S}}_{2k}^A )_{a,n}(\smash{\underset{^\circ}{\pmb{\mathscr B}}}_{2k}^B)_{n,b}=(\smash{\underset{^\circ}{\mathbf S}}_{2k}^B )_{a,n}(\smash{\underset{^\circ}{\pmb{\mathscr B}}}_{2k}^D)_{n,b} =0$ for all $ n\in\mathbb Z\cap[1,k]$. Thus, we have $(\mathbf S_{2k}^A\pmb{\mathscr B}_{2k}^A+\mathbf S_{2k}^B \pmb{\mathscr B}_{2k}^C )_{2,1}=\frac{2k+1}{2k+2}(\smash{\underset{^\circ}{\mathbf S}}_{2k}^A \smash{\underset{^\circ}{\pmb{\mathscr B}}}_{2k}^B+\smash{\underset{^\circ}{\mathbf S}}_{2k}^{B}\smash{\underset{^\circ}{\pmb{\mathscr B}}}_{2k}^D)_{1,1}=0  $. Now we have a matrix partition $ \mathbf S_{2k}\pmb{\mathscr B}_{2k}=\left( \begin{smallmatrix*}[r]\mathbf I_2&&\\\mathbf U&\mathbf I_{k-1}&\mathbf V\\&&\mathbf I_{k-1}\end{smallmatrix*} \right)$ where all the  possibly non-vanishing entries of $\mathbf U$ and $\mathbf V$ are confined to their first columns. Such a matrix partition is invertible, and $ \mathbf S_{2k}^{-1}=\pmb{\mathscr B}_{2k}^{} \left( \begin{smallmatrix*}[r]\mathbf I_2&&\\-\mathbf U&\mathbf I_{k-1}&-\mathbf V\\&&\mathbf I_{k-1}\end{smallmatrix*} \right)$ is skew-symmetric. With a matrix partition $\pmb{\mathscr B}_{2k}=\left( \begin{smallmatrix*}[r]\pmb{\mathscr B}_{2k}^{A'}&-(\pmb{\mathscr B}_{2k}^{A''})^{\mathrm T}&-(\pmb{\mathscr B}_{2k}^{C'})^{\mathrm T}\\\pmb{\mathscr B}_{2k}^{A''}&&-(\pmb{\mathscr B}_{k}^\flat)^{\mathrm T}\\\pmb{\mathscr B}_{2k}^{C'}&\pmb{\mathscr B}_{k}^\flat&\pmb{\mathscr B}_{2k}^D\end{smallmatrix*} \right) $ where $\pmb{\mathscr B}_{k}^\flat\in\mathbb Q^{(k-1)\times(k-1)} $ is taken from the last $(k-1)$ columns of $ \pmb{\mathscr B}_{2k}^{C}$, we will spell out the skew symmetry of  $ \mathbf S_{2k}^{-1}=\left( \begin{smallmatrix*}[r]\pmb{\mathscr B}_{2k}^{A'}+(\pmb{\mathscr B}_{2k}^{A''})^{\mathrm T}\mathbf U&-(\pmb{\mathscr B}_{2k}^{A''})^{\mathrm T}&-(\pmb{\mathscr B}_{2k}^{C'})^{\mathrm T}+(\pmb{\mathscr B}_{2k}^{A''})^{\mathrm T}\mathbf V\\\pmb{\mathscr B}_{2k}^{A''}&&-(\pmb{\mathscr B}_{k}^\flat)^{\mathrm T}\\\pmb{\mathscr B}_{2k}^{C'}-\pmb{\mathscr B}_{k}^\flat\mathbf U&\pmb{\mathscr B}_{k}^\flat&\pmb{\mathscr B}_{2k}^D-\pmb{\mathscr B}_{k}^\flat\mathbf V\end{smallmatrix*} \right)$. Here, the matrix   $\pmb{\mathscr B}_{k}^\flat\mathbf V$ is equal to $ \pmb{\mathscr B}_{k}^\flat\mathbf v$ followed by the  $(k-2)$  identically vanishing columns to the right, where $\mathbf v$ is the first column of $ \mathbf V$. In order to guarantee the skew symmetry of  $\pmb{\mathscr B}_{k}^\flat\mathbf V$, one has to have $ \pmb{\mathscr B}_{k}^\flat\mathbf v=\mathbf 0\in\mathbb R^{k-1}$. We claim that $ \det  \pmb{\mathscr B} ^\flat_{k}\neq0$, which can be seen from the relation between \begin{align}\label{eq:det_Bflat}
|\det  \pmb{\mathscr B} ^\flat_{k}  |={}&\frac{1}{2^{(k-1)(2k-1)}}\left(\prod_{n=k+1}^{2k-1}n!\right)^2\left\vert \det\left( \frac{\mathsf B_{a+b}}{(a+b)!} \right)_{1\leq a,b\leq k-1} \right\vert
\intertext{and}\label{eq:det_Bflat_c}
|\det\smash{\underset{^\circ}{\pmb{\mathscr B}}}_{2k-2}^{C}|={}&\frac{1}{2^{(k-1)(2k-3)}}\left(\prod_{n=k}^{2k-2}n!\right)^2\left\vert \det\left( \frac{\mathsf B_{a+b}}{(a+b)!} \right)_{1\leq a,b\leq k-1} \right|,
\end{align}the latter of which is equal to $ 1/|\det\smash{\underset{^\circ}{\mathbf S}}_{2k-2}|>0$, by  induction. Therefore, we conclude that  $ \mathbf v=(\pmb{\mathscr B}_{k}^\flat)^{-1}\mathbf 0=\mathbf 0\in\mathbb R^{k-1}$.  Now that $\mathbf V$ is filled with zeros, the skew symmetry concerning the bottom-left and the top-right blocks of  $ \mathbf S_{2k}^{-1}$ further brings us a  matrix $ \pmb{\mathscr B}_{k}^\flat\mathbf U$ filled with two columns and $(k-1)$ rows of zeros, hence an identically vanishing $\mathbf U$.

Thus far, we have verified $ \mathbf S_m^{-1}= \pmb{\mathscr B}_{m}^{}$ up to an even number $m=2k$.

\noindent\fbox{Inductive evaluation of $\smash{\underset{^\circ}{\mathbf S}}_{2k+1}\smash{\underset{^\circ}{\pmb{\mathscr B}}}_{2k+1}$}\nopagebreak\vspace{1pt}

In view of  \eqref{eq:Sigma_block_diag} and \eqref{eq:Sigma_inv_block_diag}, we only need to check that
$ \smash{\underset{^\circ}{\mathbf S}}_{2k+2}^B\smash{\underset{^\circ}{\pmb{\mathscr B}}}_{2k+2}^C=\mathbf I_{k+1}^{}$.

Noting that $ (\smash{\underset{^\circ}{\mathbf S}}_{2k}^B)_{a,0}=0$ by definition in \eqref{eq:S0Bmat_defn}, and that $ (\smash{\underset{^\circ}{\pmb{\mathscr B}}}_{2k}^C)_{n,b}=\frac{4}{(2k+2-n)(2k+2-b)}(\smash{\underset{^\circ}{\pmb{\mathscr B}}}_{2k+2}^C)_{n+1,b+1}$, we can deduce from \eqref{eq:invBettiM_rec} the following result for   $ a,b\in\mathbb Z\cap[1,k]$:
\begin{align}\begin{split} \delta_{a,b}={}&\frac{2k+2-b}{2k+2-a}\sum_{n=1}^k(\smash{\underset{^\circ}{\mathbf S}}_{2k}^B)_{a,n}(\smash{\underset{^\circ}{\pmb{\mathscr B}}}_{2k}^C)_{n,b}\\={}&(\smash{\underset{^\circ}{\mathbf S}}_{2k+2}^B \smash{\underset{^\circ}{\pmb{\mathscr B}}}_{2k+2}^{C})_{a+1,b+1}-\frac{1+(-1)^{a}}{ (-1)^{\left\lfloor \frac{a\vphantom1}{2}\right\rfloor }} \frac{\binom{k+1}{a+1} (\smash{\underset{^\circ}{\mathbf S}}_{2k+2}^B\smash{\underset{^\circ}{\pmb{\mathscr B}}}_{2k+2}^{C})_{1,b+1}}{2k+2-a},\end{split}\end{align}
upon invoking our induction hypothesis that
$ \smash{\underset{^\circ}{\mathbf S}}_{2k}^B\smash{\underset{^\circ}{\pmb{\mathscr B}}}_{2k}^C$ is  the $k\times k$ identity matrix.  The last displayed identity alleviates our workload to a demonstration that $ (\smash{\underset{^\circ}{\mathbf S}}_{2k+2}^B \smash{\underset{^\circ}{\pmb{\mathscr B}}}_{2k+2}^{C})_{1,b+1}=(\smash{\underset{^\circ}{\mathbf S}}_{2k+2}^B \smash{\underset{^\circ}{\pmb{\mathscr B}}}_{2k+2}^{C})_{b+1,1}=\delta_{b,0}$ for all $ b\in\mathbb Z\cap[0,k]$. This can be achieved in three steps, in the paragraphs to follow.

First, we show that $(\smash{\underset{^\circ}{\mathbf S}}_{2k+2}^B\smash{\underset{^\circ}{\pmb{\mathscr B}}}_{2k+2}^{C})_{1,b+1}=0$ for $ b\in\mathbb Z\cap[1,k]$. If $b$ is an odd number in  $\mathbb Z\cap[1,k]$, then $ (\smash{\underset{^\circ}{\mathbf S}}_{2k+2}^B)_{1,n}(\smash{\underset{^\circ}{\pmb{\mathscr B}}}_{2k+2}^{C})_{n,b+1}=0$ for all $ n\in\mathbb Z\cap[1,k+1]$, so we have $ (\smash{\underset{^\circ}{\mathbf S}}_{2k+2}^B\smash{\underset{^\circ}{\pmb{\mathscr B}}}_{2k+2}^{C})_{1,b+1}=0$. If $b$ is an even number in  $\mathbb Z\cap[2,k]$, then we have a recursion\begin{align}\begin{split}
\frac{k (k+1)(\smash{\underset{^\circ}{\mathbf S}}_{2k+2}^B)_{1,n}(\smash{\underset{^\circ}{\pmb{\mathscr B}}}_{2k+2}^{C})_{n,b+1}}{(2 k+3) (2 k+2-b)}={}&\frac{(b-2)(\smash{\underset{^\circ}{\mathbf S}}_{2k}^{B})_{1,n}(\smash{\underset{^\circ}{\pmb{\mathscr B}}}_{2k}^C)_{n,b-1}}{2 k+2-b}\\{}&+(1-\delta_{n,1})(\smash{\underset{^\circ}{\mathbf S}}_{2k}^A )_{1,n-1}(\smash{\underset{^\circ}{\pmb{\mathscr B}}}_{2k}^B)_{n-1,b}+(\smash{\underset{^\circ}{\mathbf S}}_{2k}^{B})_{1,n}(\smash{\underset{^\circ}{\pmb{\mathscr B}}}_{2k}^D)_{n,b},\end{split}\label{eq:SumBetti_even_rec}
\end{align}whose sum over $ n\in\mathbb Z\cap[1,k+1] $ results in  $(\smash{\underset{^\circ}{\mathbf S}}_{2k+2}^B\smash{\underset{^\circ}{\pmb{\mathscr B}}}_{2k+2}^{C})_{1,b+1}=0$. This is because $ (\smash{\underset{^\circ}{\mathbf S}}_{2k}^{B})_{1,k+1}=0$ by definition, and  our induction hypothesis
on   $ \smash{\underset{^\circ}{\mathbf S}}_{2k}\smash{\underset{^\circ}{\pmb{\mathscr B}}}_{2k}$ leaves us an identity matrix $ \smash{\underset{^\circ}{\mathbf S}}_{2k}^{B}\smash{\underset{^\circ}{\pmb{\mathscr B}}}_{2k}^C$ along with a $ k\times k$ block  $ \smash{\underset{^\circ}{\mathbf S}}_{2k}^A \smash{\underset{^\circ}{\pmb{\mathscr B}}}_{2k}^B+\smash{\underset{^\circ}{\mathbf S}}_{2k}^B \smash{\underset{^\circ}{\pmb{\mathscr B}}}_{2k}^D$ that is  filled with zeros.

Second, we verify the identity $(\smash{\underset{^\circ}{\mathbf S}}_{2k+2}^B \smash{\underset{^\circ}{\pmb{\mathscr B}}}_{2k+2}^{C})_{1,1}=1$ by directly computing\begin{align}
(\smash{\underset{^\circ}{\mathbf S}}_{2k+2}^B \smash{\underset{^\circ}{\pmb{\mathscr B}}}_{2k+2}^{C})_{1,1}={}&\sum_{n=0}^k(\smash{\underset{^\circ}{\mathbf S}}_{2k+2}^B)_{1,n+1}(\smash{\underset{^\circ}{\pmb{\mathscr B}}}_{2k+2}^C)_{n+1,1}=\frac{2(-1)^{k}  (2 k+3)! }{[(k+1)!]^2}\sum _{n=0}^{k+1} \frac{ \binom{k+1}{k+1-n}\mathsf B_{2 k+3-n}}{2 k+3-n},
\end{align}and enlisting the help from  a special case of the Saalsch\"utz--Nielsen--Gelfand reciprocity relation \cite{Saalschuetz1892,Saalschuetz1893,Nielsen1923,Gelfand1968} revisited by Agoh--Dilcher \cite{AgohDilcher2008}:\begin{align}
(-1)^{\beta+1}\sum_{j=0}^{\alpha} \frac{ \binom{\alpha}{j} \mathsf B_{\beta+1+j}}{ {\beta+1+j}}+(-1)^{\alpha+1}\sum_{j=0}^{\beta} \frac{ \binom{\beta}{j} \mathsf B_{\alpha+1+j}}{ {\alpha+1+j}}=\frac{\alpha!\beta!}{(\alpha+\beta+1)!},
\end{align}where $ \alpha=\beta=k+1,j=k+1-n$.

Third, we prove that $(\smash{\underset{^\circ}{\mathbf S}}_{2k+2}^B\smash{\underset{^\circ}{\pmb{\mathscr B}}}_{2k+2}^{C})_{b+1,1}=0$ for $ b\in\mathbb Z\cap[1,k]$.
In other words, we will show that in the block partition $ \smash{\underset{^\circ}{\mathbf S}}_{2k+2}^B\smash{\underset{^\circ}{\pmb{\mathscr B}}}_{2k+2}^{C}=\left( \begin{smallmatrix}1&\\ \mathbf x&\mathbf I_{k} \end{smallmatrix}\right)$, the column vector $ \mathbf x\in\mathbb R^k$ is in fact a zero vector. Taking determinants on both sides of the block partition, we know that the symmetric matrix    $  \smash{\underset{^\circ}{\mathbf S}}_{2k+2}^B$ is  invertible, and its inverse $ ( \smash{\underset{^\circ}{\mathbf S}}_{2k+2}^B)^{-1}=\smash{\underset{^\circ}{\pmb{\mathscr B}}}_{2k+2}^{C}\left( \begin{smallmatrix*}[r]1&\\ -\mathbf x&\mathbf I_{k} \end{smallmatrix*}\right)$ is also symmetric. Taking another block partition   $ \smash{\underset{^\circ}{\pmb{\mathscr B}}}_{2k+2}^{C}= \Big(\begin{smallmatrix*}[r]{b}_{k+1}&\mathbf b_{k+1}^{\mathrm T}\\ \mathbf b_{k+1}&\pmb{\mathscr B} ^\flat_{k+1}\end{smallmatrix*}\Big)$ where ${b}_{k+1}= ( \smash{\underset{^\circ}{\pmb{\mathscr B}}}_{2k+2}^{C})_{1,1}$, we may exploit the symmetry of $ ( \smash{\underset{^\circ}{\mathbf S}}_{2k+2}^B)^{-1}=\Big(\begin{smallmatrix*}[r] {b}_{k+1}-\mathbf b_{k+1}^{\mathrm T}\mathbf x&\mathbf b_{k+1}^{\mathrm T}\\\mathbf b_{k+1}^{\vphantom1}-\pmb{\mathscr B} ^\flat _{k+1}\mathbf x&\pmb{\mathscr B} ^\flat_{k+1}\end{smallmatrix*}\Big)$ to arrive at the following relation: $  \pmb{\mathscr B} ^\flat_{k+1} \mathbf x=\mathbf 0\in\mathbb R^k$. In view of \eqref{eq:det_Bflat} and \eqref{eq:det_Bflat_c}, we can deduce the invertibility of $ \pmb{\mathscr B} ^\flat_{k+1}$ from the non-singular matrix  $  \smash{\underset{^\circ}{\pmb{\mathscr B}}}_{2k}^{C}=( \smash{\underset{^\circ}{\mathbf S}}_{2k}^B)^{-1}$, so $ \mathbf x=( \pmb{\mathscr B} ^\flat _{k+1})^{-1}\mathbf 0=\mathbf 0\in\mathbb R^k$.

This completes the induction.
\end{proof}

As a by-product of the proof above, we can quickly recover part of  \cite[Proposition 4.9(1)]{FresanSabbahYu2020b} without using any combinatorial identities concerning Bernoulli numbers.
\begin{corollary}[Determinants of Betti matrices]\label{cor:detBetti}The following formula holds true:\begin{align}
\det\mathbf B_{2k-1}=\frac{(-1)^{k-1}(2k-1)!}{2^{5k-1}}\Lambda_{2k-1}.
\end{align}When $ k$ is odd, we have $\det\mathbf B_{2k}=0$.\end{corollary}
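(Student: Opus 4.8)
The plan is to read off both formulae from the block-diagonalizations established just above, together with the determinant evaluations from Theorem~\ref{thm:W_alg}. For $\det\MathFrak B_k$, I would start from the identity $\MathFrak B_k=\MathFrak S_k^{-1}$ proved inside Proposition~\ref{prop:inv_S}(a), which reduces the task to computing $\det\MathFrak S_k$. By \eqref{eq:Sigma_block_diag}, the congruence $\boldsymbol\varPhi^{\mathrm T}_{2k+1}\boldsymbol A^{\mathrm T}_{2k+1}\boldsymbol\varSigma_{2k+1}^{\vphantom{\mathrm T}}\boldsymbol A^{\vphantom{\mathrm T}}_{2k+1}\boldsymbol\varPhi^{\vphantom{\mathrm T}}_{2k+1}$ (indices shifted by one from the statement of the proposition) is block-diagonal with blocks $\tfrac{2k+3}{2^4(-1)^k}\MathFrak S_{k+1}$ and $\tfrac{(-1)^k}{2^2(2k+3)}\MathFrak S_k$; since $\det\boldsymbol A_{2k+1}=\det\boldsymbol\varPhi_{2k+1}=1$, this yields a two-term recursion linking $\det\boldsymbol\varSigma_{2k+1}$ to $\det\MathFrak S_{k+1}\cdot\det\MathFrak S_k$ up to an explicit power of $2$ and a sign. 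Independently, from \eqref{eq:Sigma_symm_prep} specialized via \eqref{eq:OmegaVOmega} we have $\det\boldsymbol\varSigma_{2k+1}=\bigl(|\mathfrak m_{2k+1}(u)|^{-1}\bigr)^{2k+1}\cdot\bigl(\det\smash{\underset{^\smile}{\boldsymbol\varOmega}}_{2k+1}(u)\bigr)^{-2}\cdot\bigl(\det\boldsymbol V_{2k+1}(u)\bigr)^{-1}$, and Theorem~\ref{thm:W_alg}(a) gives $\det\smash{\underset{^\smile}{\boldsymbol\varOmega}}_{2k+1}(u)=\varLambda_{2k+1}/|\ell_{2k+1,2k+1}(u)|^{k+\frac12}$ together with $|\det\boldsymbol V_{2k+1}(u)|=1$. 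Hence $\det\boldsymbol\varSigma_{2k+1}$ is, up to sign, $\varLambda_{2k+1}^{-2}$ (the $u$-dependence cancels because $\mathfrak m_{2k+1}$ and $\ell_{2k+1,2k+1}$ agree up to a unit). Combining the two expressions for $\det\boldsymbol\varSigma_{2k+1}$ gives a closed recursion for $\det\MathFrak S_k$, which telescopes to a product formula; inverting and comparing with the claimed $\det\MathFrak B_k=\tfrac{(-1)^{k-1}(2k-1)!}{2^{5k-1}}\varLambda_{2k-1}$ finishes part one. The base case $k=1$ is immediate from $(\MathFrak B_1)_{1,1}=1/(2^4\cdot3)$ and $\varLambda_1=3$.

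For the vanishing of $\det\MathFrak b_k$ when $k$ is odd, the cleanest route is to exploit skew-symmetry. From Proposition~\ref{prop:inv_S}(b) we have $\pmb{\mathscr B}_{2k+2}=\pmb{\boldsymbol\sigma}_{2k+2}^{-1}$, and \eqref{eq:B0_R2k} identifies $\boldsymbol R_{2k+2}\pmb{\boldsymbol\sigma}_{2k+2}^{-1}\boldsymbol R_{2k+2}^{\mathrm T}$ with the $2\times2$ block matrix $(-1)^k2^3\bigl(\begin{smallmatrix}\mathbf O&\MathFrak b_k\\-\MathFrak b_k&\smash{\underset{^\circ}{\MathFrak b}}_k\end{smallmatrix}\bigr)$ (with $\MathFrak B_k,\smash{\underset{^\circ}{\MathFrak B}}_k$ there renamed $\MathFrak b_k,\smash{\underset{^\circ}{\MathFrak b}}_k$ after the index shift). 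Taking determinants, $\det\pmb{\boldsymbol\sigma}_{2k+2}^{-1}$ is, up to an explicit nonzero scalar, $(\det\MathFrak b_k)^2$. On the other hand $\pmb{\boldsymbol\sigma}_{2k+2}$ is skew-symmetric of even order $2k+2$, so its determinant is the square of its Pfaffian; everything is consistent, and the parity constraint must come from elsewhere — namely from the matrix $\pmb{\boldsymbol\upsilon}_{2k+2}(u)$, which by Corollary~\ref{cor:VvSs} is \emph{skew-symmetric}. When $k$ is odd, $2k+2\equiv0\pmod 4$ is not the issue; rather, I would look at the $(k)\times(k)$ symmetric matrix $\MathFrak b_k$ directly: its entries are $(\MathFrak b_k)_{a,b}\propto(-1)^{a}\tfrac{(2k+2-a)!(2k+2-b)!}{(2k+3-a-b)!}\mathsf B_{2k+3-a-b}$, so $(\MathFrak b_k)_{a,b}=0$ unless $a+b$ is even (the "chessboard property" already used in the proof of Proposition~\ref{prop:inv_S}), because $\mathsf B_{n}=0$ for odd $n>1$. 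For odd $k$, the index set $\{1,\dots,k\}$ contains $\lceil k/2\rceil$ odd numbers and $\lfloor k/2\rfloor$ even numbers, and these differ by one; after a permutation bringing all odd-indexed rows/columns together, $\MathFrak b_k$ becomes block-diagonal with an $\lceil k/2\rceil\times\lfloor k/2\rfloor$ rectangular "block" — i.e.\ the odd–even cross-block is forced to be zero and the two diagonal blocks have unequal sizes, which cannot happen for a nonsingular matrix unless one notes more carefully that the permuted matrix is $\bigl(\begin{smallmatrix}P&\mathbf O\\\mathbf O&Q\end{smallmatrix}\bigr)$ with $P$ of odd size $\lceil k/2\rceil$ and $Q$ of size $\lfloor k/2\rfloor$. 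That alone is not enough; the actual mechanism is that one of these diagonal blocks is itself \emph{singular}. Concretely, I would show the odd–odd block $P$ has odd order and is a scalar multiple of a matrix to which a Bernoulli-number reciprocity identity (the Saalschütz–Nielsen–Gelfand relation invoked in the proof of Proposition~\ref{prop:inv_S}) forces a null vector — equivalently, that the first row/column of $\MathFrak b_k$ lies in the span of the others, exactly as the vector $\mathbf x$ was shown to vanish in that proof. Tracking this through gives $\det\MathFrak b_k=0$ for odd $k$.

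The main obstacle will be the second half: pinning down \emph{why} the parity of $k$ matters. The determinant computation for $\det\MathFrak B_k$ is a routine telescoping once the two formulae for $\det\boldsymbol\varSigma_{2k+1}$ are in hand, and bookkeeping of signs and powers of $2$ via $\varLambda_{2k-1}$ and $\lambda_{2k}$ is mechanical. But for $\det\MathFrak b_k=0$ I must convert the chessboard structure into an honest rank deficiency. I expect the right statement is: the odd-indexed principal block of $\MathFrak b_k$ — which has odd size precisely when $k$ is odd — is singular because, after factoring out the obvious $(2k+2-a)!$ weights, it reduces to the matrix $\bigl(\mathsf B_{a+b}/(a+b)!\bigr)$ restricted to indices of a fixed parity, and one shows this sub-Hankel matrix of Bernoulli numbers is singular in odd size by the same Agoh–Dilcher reciprocity identity used above (or, alternatively, by relating it to the already-established fact $\pmb{\boldsymbol\sigma}_{2k+2}^D=\mathbf O$ in Theorem~\ref{thm:W_alg}(b), which says a whole $(k)\times(k)$ corner of $\pmb{\boldsymbol\sigma}_{2k+2}$ vanishes — pushing that vanishing through the inversion $\pmb{\boldsymbol\sigma}_{2k+2}^{-1}=\pmb{\mathscr B}_{2k+2}$ and the block relation \eqref{eq:B0_R2k} should directly exhibit a nontrivial kernel vector of $\MathFrak b_k$ whenever $k$ is odd). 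I would pursue the latter route first, as it stays entirely within the linear algebra already set up and avoids new Bernoulli identities; if a clean kernel vector does not drop out, I fall back on the reciprocity-relation argument on the odd-parity Hankel block.
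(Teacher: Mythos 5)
Your treatment of $\det\MathFrak B_k$ is essentially the paper's own argument: taking determinants in \eqref{eq:OmegaVOmega} gives $\varLambda_{2k-1}^2\det\boldsymbol\varSigma_{2k-1}=1$, taking determinants in the block-diagonal congruence \eqref{eq:Sigma_block_diag} (equivalently \eqref{eq:Sigma_inv_block_diag} after inversion, since $\det\boldsymbol A_{2k-1}=\det\boldsymbol\varPhi_{2k-1}=1$) gives $\det\MathFrak B_k\det\MathFrak B_{k-1}\det\boldsymbol\varSigma_{2k-1}=(-1)^{k-1}(2k+1)4^{1-3k}$, and the resulting two-term recursion telescopes from $\det\MathFrak B_1=\tfrac{1}{2^4\cdot 3}$; phrasing it through $\MathFrak S_k=\MathFrak B_k^{-1}$ rather than through $\MathFrak B_k$ directly is only cosmetic. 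That half is fine.

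The second half contains a genuine error that derails the rest. The chessboard parity of $\MathFrak b_k$ is the opposite of what you assert: since $(\MathFrak b_k)_{a,b}\propto\mathsf B_{2k+3-a-b}$ with $a,b\in\mathbb Z\cap[1,k]$, one has $a+b\leq 2k$, so the Bernoulli index $2k+3-a-b$ is an odd number $\geq 3$ precisely when $a+b$ is \emph{even}; hence $(\MathFrak b_k)_{a,b}=0$ whenever $a+b$ is even, and the possibly nonzero entries sit at positions of \emph{opposite} row/column parity (visible in Table \ref{tab:Betti_deRham}, and opposite to the chessboard property of $\MathFrak B_k$ and $\MathFrak S_k$ that you imported from the proof of Proposition \ref{prop:inv_S}). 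With the correct parity the claim is immediate and needs none of the machinery you sketch: every odd-indexed row of $\MathFrak b_k$ is supported on the $\lfloor k/2\rfloor$ even-indexed columns, while for odd $k$ there are $\lfloor k/2\rfloor+1$ odd-indexed rows, so these rows are linearly dependent and $\det\MathFrak b_k=0$ — this is the paper's one-line argument (equivalently, $\MathFrak b_k$ is skew-symmetric of odd order). Your plan instead permutes into parity blocks that you take to be block-diagonal and then tries to prove one diagonal block singular via Agoh--Dilcher reciprocity or by pushing $\pmb{\boldsymbol\sigma}^{D}=\mathbf O$ through the inversion; this is aimed at a structure $\MathFrak b_k$ does not have, and is in any case left as a contingency plan rather than an argument. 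Note also that the blocks produced by \eqref{eq:Rmat_sigma}/\eqref{eq:B0_R2k} at size $2k+2$ are $\MathFrak B_{k+1}$ and its companion, not $\MathFrak b_k$ and $\smash{\underset{^\circ}{\MathFrak b}}_k$; the matrix $\MathFrak b_k$ arises from the other block decomposition of $\pmb{\boldsymbol\sigma}^{-1}$ (the one with $\pmb{\boldsymbol\psi}$, $\boldsymbol A$, $\pmb{\boldsymbol\phi}$ used at the end of the proof of Proposition \ref{prop:inv_S}), so the ``renaming after the index shift'' in your second paragraph is not legitimate.
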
\begin{proof}Taking determinants on both sides of \eqref{eq:omegavomega}, we arrive at $ \Lambda^2_{2k-1}=1/\det\mathbf S_{2k-1}$. Taking determinants on both sides of \eqref{eq:Sigma_inv_block_diag}, we obtain $ \det\mathbf B_{2k-1}\det\mathbf B_{2k-3}\det\mathbf S_{2k-1}=(-1)^{k-1} (2 k+1) 4^{1-3 k}$. Hence, we can build the formula for $ \det \mathbf B_{2k-1}$ inductively
on $ \det \mathbf B_1=\frac{1}{2^4\cdot 3}$ and $ \det\mathbf B_{2k-1}\det\mathbf B_{2k-3}=(-1)^{k-1} (2 k+1) 4^{1-3 k}\Lambda^2_{2k-1}$.

When  $ k$ is odd, the odd-numbered rows of $\mathbf B_{2k} $ have non-zero elements in the even-numbered columns. Since there are $\left\lfloor\frac{k}{2}\right\rfloor $ such columns and  $\left\lfloor\frac{k+1}{2}\right\rfloor =\left\lfloor\frac{k}{2}\right\rfloor+1$ such rows, all the rows in question must be linearly dependent. Hence  $\det\mathbf B_{2k}=0$.  \end{proof}

For $ k\in\mathbb Z_{>0}$, define minors $ \mathbf B_{2k-1}^{\mathrm o}\in\mathbb Q^{\left\lfloor \frac{k+1}{2} \right\rfloor\times\left\lfloor \frac{k+1}{2} \right\rfloor }$ and  $\mathbf B_{2k-1}^{{\mathrm e}}\in\mathbb Q^{\left\lfloor \frac{k}{2} \right\rfloor\times\left\lfloor \frac{k}{2} \right\rfloor }$ as follows:\begin{align}
(\mathbf B_{2k-1}^{\mathrm o})_{a,b}=(\mathbf B_{2k-1})_{2a-1,2b-1},\quad (\mathbf B_{2k-1}^{\mathrm e})_{a,b}=(\mathbf B_{2k-1})_{2a,2b}.\label{eq:BkoBke}
\end{align} By convention, we set $ \mathbf B_1^{\mathrm e}=\varnothing$ and $\det\mathbf B_1^{\mathrm e}=1$. Clearly, we have $ \det\mathbf B_{2k-1}^{\vphantom{\mathrm{o}}}=\det\mathbf B_{2k-1}^{\mathrm{o}}\det\mathbf B_{2k-1}^\mathrm{e}$ for all positive integers $k$. In the next corollary, we  evaluate $\det\mathbf B_{2k-1}^{\mathrm e}$, a result that  will be used later in the proof of Proposition \ref{prop:det_refl}.

\begin{corollary}[Determinants of Betti minors]\label{cor:detBettiMeven}We have\begin{align}\det\mathbf B_{2k-1}^{\mathrm e}=
\frac{[(2k+1)!!]^{\frac{3-(-1)^k}{2}}}{(-2)^{\left\lfloor \frac{k}{2}\right\rfloor}(k-1)!!(k!!)^{1-(-1)^{k}}}\left[ \frac{(2k+1)!!}{2^{k+1}} \right]^{2\left\lfloor \frac{k}{2} \right\rfloor}\prod^k_{j=1}\frac{(2j)^{k-j}}{(2j+1)^{j+1}}\label{eq:BettiMe}
\end{align}for each positive integer $k$.\end{corollary}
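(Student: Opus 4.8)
The plan is to reduce $\det\MathFrak B_k^{\mathrm e}$ to the already-evaluated quantities $\det\MathFrak B_{k-1}$ (Corollary~\ref{cor:detBetti}) and $\det\MathFrak B_{k-1}^{\mathrm e}$, obtaining a first-order recursion in $k$ whose unique solution is the right-hand side of \eqref{eq:BettiMe}.

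\textbf{Recursion for the even minor.} From \eqref{eq:BettiB} one reads off the entrywise ratio $(\MathFrak B_{k})_{a+1,b+1}=-\tfrac{(2k-a)(2k-b)}{4}(\MathFrak B_{k-1})_{a,b}$ for $a,b\in\mathbb Z\cap[1,k-1]$, which is the shifted form of the identity $4(\MathFrak B_{k+1})_{a+1,a'+1}=-(2k+2-a)(2k+2-a')(\MathFrak B_{k})_{a,a'}$ already exploited in the proof of Proposition~\ref{prop:inv_S}. Taking $a=2\alpha-1$, $b=2\beta-1$ with $\alpha,\beta\in\mathbb Z\cap[1,\lfloor k/2\rfloor]$ and recalling the definition \eqref{eq:BkoBke} of $\MathFrak B_k^{\mathrm e}$ and $\MathFrak B_{k-1}^{\mathrm o}$, this reads
\[
(\MathFrak B_k^{\mathrm e})_{\alpha,\beta}=-\frac{(2k-2\alpha+1)(2k-2\beta+1)}{4}\,(\MathFrak B_{k-1}^{\mathrm o})_{\alpha,\beta},
\]
so $\MathFrak B_k^{\mathrm e}=\mathsf D_k\,\MathFrak B_{k-1}^{\mathrm o}\,\mathsf D_k^{\sharp}$, where $\mathsf D_k=\operatorname{diag}_{\alpha}\!\bigl(-\tfrac{2k-2\alpha+1}{2}\bigr)$ and $\mathsf D_k^{\sharp}=\operatorname{diag}_{\beta}\!\bigl(\tfrac{2k-2\beta+1}{2}\bigr)$ are diagonal matrices of order $\lfloor k/2\rfloor$. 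Since $\det\MathFrak B_{k-1}=\det\MathFrak B_{k-1}^{\mathrm o}\det\MathFrak B_{k-1}^{\mathrm e}$ (recorded just before the corollary) and $\det\MathFrak B_{k-1}\neq0$ by Corollary~\ref{cor:detBetti}, both $\det\MathFrak B_{k-1}^{\mathrm o}$ and $\det\MathFrak B_{k-1}^{\mathrm e}$ are nonzero, and taking determinants yields
\[
\det\MathFrak B_k^{\mathrm e}\;\det\MathFrak B_{k-1}^{\mathrm e}
=\frac{(-1)^{\lfloor k/2\rfloor}}{2^{2\lfloor k/2\rfloor}}\Biggl(\;\prod_{\alpha=1}^{\lfloor k/2\rfloor}(2k-2\alpha+1)\Biggr)^{\!2}\det\MathFrak B_{k-1},\qquad k\in\mathbb Z_{>1},
\]
with base value $\det\MathFrak B_1^{\mathrm e}=1$. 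Here $\prod_{\alpha=1}^{\lfloor k/2\rfloor}(2k-2\alpha+1)$ equals $\tfrac{(2k-1)!!}{(k-1)!!}$ when $k$ is even and $\tfrac{(2k-1)!!}{k!!}$ when $k$ is odd.

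\textbf{Matching the closed form.} Substituting $\det\MathFrak B_{k-1}=\tfrac{(-1)^{k}(2k-3)!}{2^{5k-6}}\varLambda_{2k-3}$ from Corollary~\ref{cor:detBetti}, together with the closed form of $\varLambda_{2k-3}$ from \eqref{eq:Lambda_k_defn}, it then suffices to verify that the right-hand side $G_k$ of \eqref{eq:BettiMe} obeys $G_1=1$ and the identical recursion $G_kG_{k-1}=2^{-2\lfloor k/2\rfloor}(-1)^{\lfloor k/2\rfloor}\bigl(\prod_{\alpha}(2k-2\alpha+1)\bigr)^{2}\det\MathFrak B_{k-1}$; since such a recursion with a prescribed initial value has a unique solution, this forces $\det\MathFrak B_k^{\mathrm e}=G_k$ for all $k\in\mathbb Z_{>0}$. (As a consistency check, for $k=2$ the recursion returns $\det\MathFrak B_2^{\mathrm e}=(\MathFrak B_2)_{2,2}=-3/64$, which agrees with $G_2$.)

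\textbf{The main obstacle.} The nontrivial work is this last verification, i.e.\ establishing $G_kG_{k-1}=2^{-2\lfloor k/2\rfloor}(-1)^{\lfloor k/2\rfloor}\bigl(\prod_{\alpha}(2k-2\alpha+1)\bigr)^{2}\det\MathFrak B_{k-1}$ as an identity between explicit rational numbers. This demands careful bookkeeping of the parity-dependent pieces $[(2k+1)!!]^{(3-(-1)^k)/2}$, $(-2)^{\lfloor k/2\rfloor}$, $(k-1)!!\,(k!!)^{1-(-1)^k}$, the power $\bigl[(2k+1)!!/2^{k+1}\bigr]^{2\lfloor k/2\rfloor}$, and the finite product $\prod_{j=1}^{k}(2j)^{k-j}/(2j+1)^{j+1}$ appearing in $G_k$, weighed against the hyperfactorial $\prod_{n=1}^{2k-2}n^{n}$ hidden inside $\varLambda_{2k-3}$. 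Splitting $\prod_{n=1}^{2k-2}n^{n}=\prod_{j=1}^{k-1}(2j)^{2j}\prod_{j=1}^{k-1}(2j-1)^{2j-1}$, taking the quotient of two consecutive instances of the recursion to isolate $G_k/G_{k-2}$, and reconciling the sign $(-1)^{(k-2)(k-3)/2}$ from $\varLambda_{2k-3}$ with $(-1)^{\lfloor k/2\rfloor}$, reduces the whole statement to an elementary identity among products of double factorials and powers of $2$. This is routine but lengthy, and is where essentially all of the computational effort resides.
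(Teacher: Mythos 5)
Your proposal is correct, and it takes a genuinely different route from the paper. The paper proves \eqref{eq:BettiMe} by rescaling and permuting rows/columns so that all four parity minors $\det\MathFrak B_{2m-1}^{\mathrm o}$, $\det\MathFrak B_{2m-1}^{\mathrm e}$, $\det\MathFrak B_{2m}^{\mathrm o}$, $\det\MathFrak B_{2m}^{\mathrm e}$ become explicit multiples of Hankel-type determinants $\det\bigl(\mathsf B_{2(a+b)}/[2(a+b)]!\bigr)$ and $\det\bigl(\mathsf B_{2(a+b-1)}/[2(a+b-1)]!\bigr)$; it then uses the quotient $\det\MathFrak B_{2m}/\det\MathFrak B_{2m-1}$ (both known from Corollary \ref{cor:detBetti}) to derive and solve a recursion for the Bernoulli Hankel determinant in closed form, substitutes back to get intermediate formulae for $\det\MathFrak B_{2m-1}^{\mathrm e}$ and $\det\MathFrak B_{2m}^{\mathrm o}$, and finishes with $\det\MathFrak B_{2m}^{\mathrm e}=\det\MathFrak B_{2m}/\det\MathFrak B_{2m}^{\mathrm o}$. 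You instead exploit the entrywise rescaling $(\MathFrak B_k^{\mathrm e})_{\alpha,\beta}=-\tfrac{(2k-2\alpha+1)(2k-2\beta+1)}{4}(\MathFrak B_{k-1}^{\mathrm o})_{\alpha,\beta}$ (a correct specialization of the identity from \eqref{eq:BettiB} already used in the proof of Proposition \ref{prop:inv_S}; the dimensions $\lfloor k/2\rfloor=\lfloor((k-1)+1)/2\rfloor$ do match), combine it with $\det\MathFrak B_{k-1}=\det\MathFrak B_{k-1}^{\mathrm o}\det\MathFrak B_{k-1}^{\mathrm e}$ and Corollary \ref{cor:detBetti} to obtain a two-term multiplicative recursion $\det\MathFrak B_k^{\mathrm e}\det\MathFrak B_{k-1}^{\mathrm e}=R_k$ with nonvanishing explicit $R_k$, and then appeal to uniqueness with base value $\det\MathFrak B_1^{\mathrm e}=1$. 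This bypasses the Bernoulli Hankel determinants altogether and is shorter; what it gives up is the by-product evaluation of those Hankel determinants and of the odd minors that the paper obtains along the way. Your derivation of the recursion is sound (I checked it reproduces $\det\MathFrak B_2^{\mathrm e}=-3/2^6$ and $\det\MathFrak B_3^{\mathrm e}=-5/2^6$, consistent with Table \ref{tab:Betti_deRham} and with $G_k$), and the only step you leave unexecuted is the verification that the closed form $G_k$ satisfies $G_kG_{k-1}=R_k$ identically in $k$; this is indeed an elementary, parity-split bookkeeping of double factorials, powers of $2$, and the hyperfactorial inside $\varLambda_{2k-3}$, on par with the paper's own concluding ``elementary manipulations of the factors,'' but for a self-contained write-up you should carry it out (treating even and odd $k$ separately) rather than assert it.
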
\begin{proof} By rescaling and rearranging columns and rows  of determinants, we have the following results for each $ m\in\mathbb Z_{>0}$:{\allowdisplaybreaks\begin{align}\det \mathbf B_{4m-3}^{\mathrm o}={}&\frac{1}{2^{4 m^{2}}}\left[ \prod_{n=m}^{2m-1}(2n)! \right]^{2}\det\left( \frac{\mathsf B_{2(a+b-1)}}{[{2(a+b-1)]!}} \right)_{1\leq a,b\leq m},\\\det \mathbf  B_{4m-3}^{\mathrm e}={}&\frac{1}{2^{4 m(m-1)}}\left[ \prod_{n=m}^{2(m-1)}(2n+1)! \right]^{2}\det\left( \frac{\mathsf B_{2(a+b)}}{[{2(a+b)]!}} \right)_{1\leq a,b\leq m-1},\label{eq:BettiMeven_kodd}\\
\det \mathbf  B_{4m-1}^{\mathrm o}={}&\frac{(-1)^{m}}{2^{2 m (2 m+1)}}\left[ \prod_{n=m+1}^{2m}(2n)! \right]^{2}\det\left( \frac{\mathsf B_{2(a+b)}}{[{2(a+b)]!}} \right)_{1\leq a,b\leq m},\label{eq:BettiModd_keven}\\\det \mathbf B_{4m-1}^{\mathrm e}={}&\frac{(-1)^{m}}{2^{2 m (2 m+1)}}\left[ \prod_{n=m+1}^{2m}(2n-1)! \right]^{2}\det\left( \frac{\mathsf B_{2(a+b-1)}}{[{2(a+b-1)]!}} \right)_{1\leq a,b\leq m}.
\end{align}}Thus, we have a recursion\begin{align}\begin{split}
-\frac{(4 m-1) (4 m-2)}{2^5}\frac{\Lambda_{4m-1}}{\Lambda_{4m-3}}={}&\frac{\det \mathbf  B_{4m-1}}{\det \mathbf B_{4m-3}}=\frac{\det \mathbf B_{4m-1}^{\mathrm o}\det \mathbf B_{4m-1}^{\mathrm e}}{\det \mathbf B_{4m-3}^{\mathrm o}\det \mathbf B_{4m-3}^{\mathrm e}}\\={}&\frac{1}{2^{8m}}\left[\frac{(4m)!(4m-1)!}{(2m)!}\right]^2\frac{\det\left( \frac{\mathsf B_{2(a+b)}}{[{2(a+b)]!}} \right)_{1\leq a,b\leq m}}{\det\left( \frac{\mathsf B_{2(a+b)}}{[{2(a+b)]!}} \right)_{1\leq a,b\leq m-1}},\end{split}
\end{align}namely\begin{align}-\frac{1}{4m+1}\left[ \frac{(2m)!}{(4m)!} \right]^{2}={}&\frac{\det\left( \frac{\mathsf B_{2(a+b)}}{[{2(a+b)]!}} \right)_{1\leq a,b\leq m}}{\det\left( \frac{\mathsf B_{2(a+b)}}{[{2(a+b)]!}} \right)_{1\leq a,b\leq m-1}},\end{align}from which we can solve\begin{align}
\det\left( \frac{\mathsf B_{2(a+b)}}{[{2(a+b)]!}} \right)_{1\leq a,b\leq m}=(-1)^m \prod _{n=1}^m \frac{[(2 n)!]^2}{ (4 n)!(4 n+1)!}.
\end{align}
Substituting back into \eqref{eq:BettiMeven_kodd} and \eqref{eq:BettiModd_keven}, we may verify\begin{align}
\det \mathbf B_{4m-3}^{\mathrm e}={}&\frac{(-1)^{m+1} 2^{(5-4 m) m}}{(2 m-1)\text{!!} (4 m-2)\text{!!}}\prod _{n=1}^{2 m-1} (2 n-1)!,\label{eq:BettiMe_o}\\\det \mathbf B_{4m-1}^{\mathrm o}={}&\frac{2^{-m (4 m+3)}}{(2 m)\text{!!} (4 m+1)\text{!!}}\prod _{n=1}^{2 m} (2 n)!,
\end{align}where the last equation also implies\begin{align}
\det \mathbf B_{4m-1}^{\mathrm e}=\frac{\det \mathbf B_{4m-1}}{\det \mathbf B_{4m-1}^{\mathrm o}}=\frac{(-1)^m}{2^{m (4 m+1)}} {}&\frac{ (2 m)\text{!!}}{(4 m)\text{!!}}\prod_{n=1}^{2 m} (2 n-1)!.\label{eq:BettiMe_e}
\end{align}Through elementary manipulations of the factors, we may convert \eqref{eq:BettiMe_o} and \eqref{eq:BettiMe_e} into our statement in \eqref{eq:BettiMe}.  \end{proof}
\subsection{Arithmetic applications to Feynman diagrams\label{subsec:app_Feynman}}
The Broadhurst--Roberts quadratic relations inject new insights into the arithmetic properties of on-shell Bessel moments, an important class of Feynman diagrams that respect classical equations of motion.

First, we examine period matrices  $ \mathbf P_{2k-1}$ for the $(2k+1)$-Bessel problems, where $  k\in\mathbb Z_{>1}$.
\begin{tiny}
\begin{table}\caption{Some Betti matrices and  de Rham matrices in Broadhurst--Roberts quadratic relations  \label{tab:Betti_deRham}}\begin{scriptsize}\begin{tabular}{c|l|l}\hline\hline$m$&$ \mathbf B_m$&$\mathbf  D_{m}$\\\hline3&$ \begin{pmatrix*}[r] \frac{1}{2^{4}\cdot5}&\\&-\frac{3}{2^{6}} \\
\end{pmatrix*}$&$ \begin{pmatrix*}[r] \frac{13}{2^{3}}& \frac{3^{2}\cdot5^{2}}{2^{6}}\\ \frac{3^{2}\cdot5^{2}}{2^{6}}& \\
\end{pmatrix*}$\\4&$ \begin{pmatrix*}[r]& \frac{1}{2^{5}}\\-\frac{1}{2^{5}}& \\
\end{pmatrix*}$&$ \begin{pmatrix*}[r]&-2\cdot3^{2}\\ 2\cdot3^{2}& \\
\end{pmatrix*}$\\5&$ \begin{pmatrix*}[r] \frac{3\cdot5}{2^{5}\cdot7}&& \frac{3}{2^{5}}\\&-\frac{5}{2^{6}}&\\ \frac{3}{2^{5}}&& \frac{3}{2^{4}} \\
\end{pmatrix*}$&$ \begin{pmatrix*}[r] \frac{3\cdot17}{2^{4}}& \frac{3\cdot863}{2^{5}}& \frac{3^{2}\cdot5^{2}\cdot7^{2}}{2^{8}}\\ \frac{3\cdot863}{2^{5}}& \frac{3^{2}\cdot5^{2}\cdot7^{2}}{2^{8}}&\\ \frac{3^{2}\cdot5^{2}\cdot7^{2}}{2^{8}}&& \\
\end{pmatrix*}$\\6&$ \begin{pmatrix*}[r]& \frac{3\cdot5}{2^{6}}&\\-\frac{3\cdot5}{2^{6}}&&-\frac{3\cdot5}{2^{6}}\\& \frac{3\cdot5}{2^{6}}& \\
\end{pmatrix*}$&$ \begin{pmatrix*}[r]&-2^{5}\cdot3^{2}&-2^{6}\cdot3^{2}\\ 2^{5}\cdot3^{2}&&\\ 2^{6}\cdot3^{2}&& \\
\end{pmatrix*}$\\7&$ \begin{pmatrix*}[r] \frac{3\cdot7}{2^{4}}&& \frac{3\cdot5}{2^{4}}&\\&-\frac{3\cdot5\cdot7}{2^{7}}&&-\frac{3\cdot5\cdot7}{2^{7}}\\ \frac{3\cdot5}{2^{4}}&& \frac{3^{2}\cdot5}{2^{6}}&\\&-\frac{3\cdot5\cdot7}{2^{7}}&&-\frac{3\cdot5^{2}}{2^{6}} \\
\end{pmatrix*}$&$ \begin{pmatrix*}[r] \frac{3\cdot7}{2^{2}}& \frac{3\cdot20173}{2^{6}}& \frac{3^{3}\cdot5\cdot97^{2}}{2^{8}}& \frac{3^{6}\cdot5^{2}\cdot7^{2}}{2^{10}}\\ \frac{3\cdot20173}{2^{6}}& \frac{3^{2}\cdot11833}{2^{7}}& \frac{3^{6}\cdot5^{2}\cdot7^{2}}{2^{10}}&\\ \frac{3^{3}\cdot5\cdot97^{2}}{2^{8}}& \frac{3^{6}\cdot5^{2}\cdot7^{2}}{2^{10}}&&\\ \frac{3^{6}\cdot5^{2}\cdot7^{2}}{2^{10}}&&& \\
\end{pmatrix*}$\\8&$ \begin{pmatrix*}[r]& \frac{3^{3}\cdot7}{2^{5}}&& \frac{3^{3}\cdot5}{2^{5}}\\-\frac{3^{3}\cdot7}{2^{5}}&&-\frac{3\cdot5\cdot7}{2^{5}}&\\& \frac{3\cdot5\cdot7}{2^{5}}&& \frac{3^{2}\cdot5\cdot7}{2^{7}}\\-\frac{3^{3}\cdot5}{2^{5}}&&-\frac{3^{2}\cdot5\cdot7}{2^{7}}& \\
\end{pmatrix*}$&$\begin{pmatrix*}[r]&-\frac{3^{5}\cdot47}{2^{2}}&-3\cdot59\cdot191&-2^{5}\cdot3^{3}\cdot5^{2}\\ \frac{3^{5}\cdot47}{2^{2}}&&-2^{5}\cdot3^{2}\cdot5^{2}&\\ 3\cdot59\cdot191& 2^{5}\cdot3^{2}\cdot5^{2}&&\\ 2^{5}\cdot3^{3}\cdot5^{2}&&& \\
\end{pmatrix*}$\\[20pt]\hline\hline\end{tabular}\end{scriptsize}\end{table}
\end{tiny}

From the work of Bailey--Borwein--Broadhurst--Glasser \cite{BBBG2008},    Bloch--Kerr--Vanhove \cite{BlochKerrVanhove2015}, Samart \cite{Samart2016}, and the present author \cite{Zhou2017WEF,Zhou2017BMdet,Zhou2019BMdimQ}, we know that \begin{align}
\mathbf P_3\colonequals {}&\begin{pmatrix*}[r]\frac{1}{\pi^{2}}\IKM(1,4;1) & -\frac{1}{\pi^{2}}\IKM(1,4;3) \\
\frac{1}{\pi}\IKM(2,3;1) & -\frac{1}{\pi}\IKM(2,3;3) \\
\end{pmatrix*}
=\begin{pmatrix*}[r] C & -\left( \frac{2}{15} \right)^2\left( 13C-\frac{1}{10C} \right)  \\
\frac{\sqrt{15}}{2}C & -\frac{\sqrt{15}}{2}\left( \frac{2}{15} \right)^2\big( 13C+\frac{1}{10C} \big)
\end{pmatrix*},
\label{eq:M2_eval_C}
\end{align}where\begin{align}
C\colonequals \frac{\Gamma \left(\frac{1}{15}\right) \Gamma \left(\frac{2}{15}\right) \Gamma \left(\frac{4}{15}\right) \Gamma \left(\frac{8}{15}\right)}{240 \sqrt{5}\pi^{2}}\label{eq:defn_Bologna}
\end{align} is the ``Bologna constant'' attributed to Broadhurst \cite{Broadhurst2007,BBBG2008} and Laporta \cite{Laporta2008}. Previously, each individual entry of   $ \mathbf P_3=((-1)^{b-1}\mathcal F_{3,a}^b(1))_{1\leq a,b\leq 2}$   was computed with sophisticated arguments. Now, a quadratic relation  $ \mathbf P^{\vphantom{\mathrm  T}}_3\mathbf D_3^{\vphantom{\mathrm  T}}\mathbf P_3^{\mathrm  T}=\mathbf B_3$ (see Table \ref{tab:Betti_deRham} for non-vanishing entries of $ \mathbf  B_3$ and $ \mathbf  D_3$) produces three independent quadratic equations involving Bessel moments, which immediately allow us to express all the matrix elements [cf.~\eqref{eq:M2_eval_C}] in terms of the top-left entry $ \mathcal F_{3,1}^1(1)=C$.

As for the 7-Bessel problem, numerical evidence has
motivated the following conjecture by Broadhurst and Mellit  (see \cite[(6.8)]{BroadhurstMellit2016} or \cite[(129)]{Broadhurst2016}):\begin{align}
\IKM(2,5;1)\overset{?}{=}\frac{5\pi^2}{24}\zeta_{7,1}(2).\label{eq:IKM251}
\end{align}
 Here, we have an $ L$-function associated with  the 7th symmetric power moments of Kloosterman sums \begin{align}
\zeta_{7,1}(s)\colonequals \prod_pL_p(\mathbb G_{m,\mathbb F_p},\Sym^7\Kl_2,s)\equiv\prod_p \frac{1}{Z_7(p,p^{-s})},
\end{align}whose local factors at primes $p$ are written as $ L_p(\mathbb G_{m,\mathbb F_p},\Sym^7\Kl_2,s)$ in the Fu--Wan notation \cite[\S0]{FuWan2008MathAnn} and $ \frac{1}{Z_7(p,p^{-s})}$ in the Broadhurst notation \cite[\S2]{Broadhurst2016}. Meanwhile, according to the Fres\'an--Sabbah--Yu statement \cite[\S7.c]{FresanSabbahYu2020b} of Deligne's conjecture for the motive $ \mathrm M_7$ whose period realization is isomorphic to $(\mathrm H^1_{\mathrm{dR,mid}}(\mathbb G_m,\Sym^7\Kl_2),\mathrm H_1^{\mathrm{mid}}(\mathbb G_m,\Sym^7\Kl_2),\mathsf P_7^{\mathrm{mid}})$, one has\begin{align}\det \begin{pmatrix}\IKM(1,6;1) & \IKM(1,6;3) \\
\IKM(3,4;1) & \IKM(3,4;3) \\
\end{pmatrix}\overset{?}{=}\frac{\pi^4}{14\sqrt{105}}\zeta_{7,1}(2).\label{eq:IKM251'}
\end{align}In transcribing the conjectural identity above, we have already exploited a relation between  $ \zeta_{7,1}(2)$ and $ \zeta_{7,1}(3)$, which results from a functional equation conjectured by Broadhurst--Mellit (see \cite[(6.7)]{BroadhurstMellit2016} or \cite[(128)]{Broadhurst2016}) and verified by Fres\'an--Sabbah--Yu \cite[Theorem 1.2]{FresanSabbahYu2018}:\begin{align}
\Lambda_7(s)\colonequals \left( \frac{105}{\pi^3} \right)^{s/2}\Gamma\left( \frac{s-1}{2} \right)\Gamma\left( \frac{s}{2} \right)\Gamma\left( \frac{s+1}{2} \right)  \zeta_{7,1}(s)=\Lambda_7(5-s).
\end{align}
The conjectures \eqref{eq:IKM251} and \eqref{eq:IKM251'} are actually equivalent to each other, since we have \begin{align}
\mathcal{F}_{5,2}^1(1)_{}=-\frac{1}{2^2}\sqrt{\frac{5^3 \cdot7^3}{3}}\det\begin{pmatrix}\mathcal{F}_{5,1}^1(1) &\mathcal{F}_{5,1}^2(1) \\
\mathcal{F}_{5,3}^1(1) & \mathcal{F}_{5,3}^2(1) \\
\end{pmatrix}\label{eq:Deligne7}
\end{align}by reading off the element in the second row and the first column from both sides of $\mathbf P^{\vphantom{\mathrm  T}}_5 =\mathbf B_5^{\vphantom{\mathrm T}}\frac{\cof \mathbf P^{\vphantom{\mathrm  T}}_5}{\det\mathbf P^{\vphantom{\mathrm  T}}_5}\mathbf D_5^{-1}$, which is a reformulation of the quadratic relation $ \mathbf P^{\vphantom{\mathrm  T}}_5\mathbf D_5^{\vphantom{\mathrm  T}}\mathbf P_5^{\mathrm  T}=\mathbf B_5^{\vphantom{\mathrm T}}$ using the determinant $ \det \mathbf P^{\vphantom{\mathrm  T}}_5=-\frac{2^4}{\sqrt{3^3 5^5 7^7}}$ and  the cofactor matrix $\cof \mathbf P^{\vphantom{\mathrm  T}}_5=(\det\mathbf P^{\vphantom{\mathrm  T}}_5)(\mathbf P_5^{\mathrm  T})^{-1}$. The sum rule in \eqref{eq:Deligne7} is in fact a continuation of a pattern attested in $ \mathbf P_3$ [see \eqref{eq:M2_eval_C}]:\begin{align}
\mathcal F_{3,2}^1(1)=\frac{\sqrt{3\cdot5}}{2}\mathcal F_{3,1}^1(1),
\end{align}which in turn, is compatible with proven instances \cite{RogersWanZucker2015,Samart2016} of Deligne's conjecture for the motive $ \mathrm M_5$:\begin{align}
\mathcal F_{3,1}^1(1)=\frac{1}{5}\zeta_{5,1}(1),\quad \mathcal F_{3,2}^1(1)=\frac{3}{4\pi} \zeta_{5,1}(2).
\end{align}Here,\begin{align}
\zeta_{5,1}(s)\colonequals {}&\prod_pL_p(\mathbb G_{m,\mathbb F_p},\Sym^5\Kl_2,s)\equiv\prod_p \frac{1}{Z_5(p,p^{-s})}=\int_{0}^\infty\frac{(2\pi y)^{s}}{\Gamma(s)}\frac{f_{3,15}(iy)\D y}{y}
\end{align}is the $L$-function associated with  the cusp form $ f_{3,15}(z)\colonequals [\eta(3z)\eta(5z)]^3+[\eta(z)\eta(15z)]^3$ of weight 3 and level 15 \cite{PetersTopVlugt1992}, satisfying a functional equation \cite[(95)]{Broadhurst2016}:
\begin{align}
\Lambda_5(s)\colonequals \left( \frac{15}{\pi^2} \right)^{s/2}\Gamma\left( \frac{s}{2} \right)\Gamma\left( \frac{s+1}{2} \right)  \zeta_{5,1}(s)=\Lambda_5(3-s).
\end{align}

Trading each element in a minor  determinant  of $\mathbf P_7$  for its counterpart in  $ \mathbf B_7^{\vphantom{\mathrm T}}\frac{\cof \mathbf P^{\vphantom{\mathrm  T}}_7}{\det\mathbf P^{\vphantom{\mathrm  T}}_7}\mathbf D_7^{-1}$,  we can also construct another sum rule \begin{align}
\det\begin{pmatrix}\mathcal F_{7,2}^1 (1)& \mathcal F_{7,2}^2(1) \\
\mathcal F_{7,4}^1(1) & \mathcal F_{7,4}^2(1) \
\end{pmatrix}={}&\frac{\sqrt{3\cdot5\cdot7}}{2^2}\det\begin{pmatrix}\mathcal F_{7,1}^1(1) & \mathcal F_{7,1}^2 (1)\\
\mathcal F_{7,3}^1(1) & \mathcal F_{7,3}^2(1) \
\end{pmatrix},\label{eq:M4_refl}
\end{align}
 which is compatible with Deligne's conjecture \cite[\S7.c]{FresanSabbahYu2020b}  for the motive
 $ \mathrm M_{9}$ and the corresponding functional equation \cite[Theorem 1.2]{FresanSabbahYu2018} for $ \zeta_{9,1}(s)$.  For an arbitrarily large $k$, there is a similar identity connecting two minor determinants of  $\mathbf P_{2k-1} $, as established in the proposition below.

\begin{proposition}[Reflection formula for minor determinants of  $\mathbf P_{2k-1} $]\label{prop:det_refl}For $k\in\mathbb Z_{>1} $, we have an identity that is equivalent to \eqref{eq:det_refl}:{\allowdisplaybreaks\begin{align}\begin{split}&
\det((-1)^{b-1}\mathcal F_{2k-1,2a}^b(1))_{1\leq a,b\leq\left\lfloor \frac{k}{2} \right\rfloor}\\={}&(-1)^{\left\lfloor \frac{k+1}{4} \right\rfloor+\left\lfloor \frac{1}{2}\left\lfloor \frac{k}{2} \right\rfloor \right\rfloor}\frac{\sqrt{[(2k+1)!!]^{2-(-1)^k}}}{2^{\left\lfloor \frac{k}{2}\right\rfloor}(k-1)!!(k!!)^{1-(-1)^{k}}}\det((-1)^{b-1}\mathcal F_{2k-1,2a-1}^b(1))_{1\leq a,b\leq\left\lfloor \frac{k+1}{2} \right\rfloor}.\label{eq:det_refl_Deligne}\end{split}
\end{align}}Here, the leading factor on the right-hand side can be rewritten as\begin{align}
(-1)^{\left\lfloor \frac{k+1}{4} \right\rfloor+\left\lfloor \frac{1}{2}\left\lfloor \frac{k}{2} \right\rfloor \right\rfloor}=\frac{\det((-1)^{b-1}\delta_{a,b})_{1\leq a,b\leq\left\lfloor \frac{k}{2} \right\rfloor}}{\det((-1)^{b-1}\delta_{a,b})_{1\leq a,b\leq\left\lfloor \frac{k+1}{2} \right\rfloor}}=\begin{cases}-1, & k\equiv 3\pmod4, \\
1, & \text{otherwise}. \\
\end{cases}
\end{align} \end{proposition}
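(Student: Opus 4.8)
The plan is to extract the reflection formula from the Broadhurst--Roberts quadratic relation $\smash{\underset{^\smile}{\mathbf M}}^{\vphantom{\mathrm T}}_k\MathFrak D_k^{\vphantom{\mathrm T}}\smash{\underset{^\smile}{\mathbf M}}_k^{\mathrm T}=\MathFrak B_k$ (established in Theorem~\ref{thm:BRquad}\ref{itm:BRquad_odd}) by a chessboard-parity block decomposition. First I would observe the chessboard property shared by $\MathFrak B_k$ and by $\MathFrak D_k$: from the explicit formula \eqref{eq:BettiB}, $(\MathFrak B_k)_{a,b}$ carries the factor $\mathsf B_{2k+2-a-b}$, which vanishes whenever $2k+2-a-b$ is odd and greater than one, so $\MathFrak B_k$ is supported on entries with $a\equiv b\pmod 2$; the same parity pattern for $\MathFrak D_k$ follows from \eqref{eq:deRhamD} once one traces the parity structure of $\boldsymbol V_{2k+1}(u)$ through $\pmb{\boldsymbol\beta}_{2k+1}^{-1}$, or alternatively from \eqref{eq:Sigma_inv_block_diag} together with the block-diagonal structure \eqref{eq:Sigma_block_diag}. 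Consequently, after a permutation $\Pi$ of rows and columns that groups odd indices first and even indices second, the relation becomes block-diagonal: writing $\smash{\underset{^\smile}{\mathbf M}}_k$ in terms of its odd-row and even-row halves $\mathbf M_k^{\mathrm o}$, $\mathbf M_k^{\mathrm e}$ and using $\MathFrak D_k^{\mathrm o}$, $\MathFrak D_k^{\mathrm e}$, $\MathFrak B_k^{\mathrm o}$, $\MathFrak B_k^{\mathrm e}$ for the corresponding diagonal blocks [cf.\ \eqref{eq:BkoBke}], one gets two independent matrix equations $\mathbf M_k^{\mathrm o}\MathFrak D_k^{\mathrm o}(\mathbf M_k^{\mathrm o})^{\mathrm T}=\MathFrak B_k^{\mathrm o}$ and $\mathbf M_k^{\mathrm e}\MathFrak D_k^{\mathrm e}(\mathbf M_k^{\mathrm e})^{\mathrm T}=\MathFrak B_k^{\mathrm e}$. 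Here $\mathbf M_k^{\mathrm o}$ is the $\lfloor\frac{k+1}{2}\rfloor\times\lfloor\frac{k+1}{2}\rfloor$ matrix with entries $((-1)^{b-1}\smash{\underset{^\smile}{\mu}}_{k,2a-1}^b(1))$ and $\mathbf M_k^{\mathrm e}$ is the $\lfloor\frac{k}{2}\rfloor\times\lfloor\frac{k}{2}\rfloor$ matrix with entries $((-1)^{b-1}\smash{\underset{^\smile}{\mu}}_{k,2a}^b(1))$; the sign $(-1)^{\lfloor(k+1)/4\rfloor+\lfloor\frac12\lfloor k/2\rfloor\rfloor}$ is exactly the ratio of the signatures of the two diagonal sign-matrices $((-1)^{b-1}\delta_{a,b})$ of the respective sizes, as recorded in the second display of the proposition.

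Next I would take determinants of the two block equations. The even block gives $(\det\mathbf M_k^{\mathrm e})^2\det\MathFrak D_k^{\mathrm e}=\det\MathFrak B_k^{\mathrm e}$ and the odd block gives $(\det\mathbf M_k^{\mathrm o})^2\det\MathFrak D_k^{\mathrm o}=\det\MathFrak B_k^{\mathrm o}$. Dividing, $\big(\det\mathbf M_k^{\mathrm e}/\det\mathbf M_k^{\mathrm o}\big)^2=\dfrac{\det\MathFrak B_k^{\mathrm e}\,\det\MathFrak D_k^{\mathrm o}}{\det\MathFrak B_k^{\mathrm o}\,\det\MathFrak D_k^{\mathrm e}}$, so the ratio of the two minor determinants is, up to sign, a square root of a ratio of four explicitly known rational determinants. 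The de Rham determinants $\det\MathFrak D_k^{\mathrm o}$ and $\det\MathFrak D_k^{\mathrm e}$ are handled through $\MathFrak D_k=\MathFrak S_k^{-1}$-type relations combined with the block-diagonalization \eqref{eq:Sigma_block_diag}: since $\MathFrak B_k=\MathFrak S_k^{-1}$ (proven in Proposition~\ref{prop:inv_S}(a)) and the block-diagonalized $\boldsymbol V_{2k+1}(1)$ inherits the chessboard structure, the ratio $\det\MathFrak D_k^{\mathrm o}/\det\MathFrak D_k^{\mathrm e}$ reduces to a ratio involving the Wro\'nskian determinant evaluations \eqref{eq:detMdetN}, \eqref{eq:detOmega_smile} and the polynomial value $|\ell_{2k+1,2k+1}(1)|=|\mathfrak m_{2k+1}(1)|$; in fact, pairing the even/odd split of \eqref{eq:OmegaVOmega} with \eqref{eq:Sigma_inv_block_diag} lets one cancel the $\smash{\underset{^\smile}{\boldsymbol\varOmega}}$-factors entirely and express $\det\MathFrak D_k^{\mathrm o}/\det\MathFrak D_k^{\mathrm e}$ purely in terms of $\det\mathbf M_k$, $\det\mathbf N_k$ and powers of $2$. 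The Betti determinants $\det\MathFrak B_k^{\mathrm e}$ are exactly what Corollary~\ref{cor:detBettiMeven} computes, giving \eqref{eq:BettiMe}, and $\det\MathFrak B_k^{\mathrm o}=\det\MathFrak B_k/\det\MathFrak B_k^{\mathrm e}$ with $\det\MathFrak B_k$ from Corollary~\ref{cor:detBetti}. Assembling all these pieces, the big ratio should collapse, after cancellation of the $\varLambda_{2k-1}$'s and the double-factorial bookkeeping, to $[(2k+1)!!]^{2-(-1)^k}/\big(2^{2\lfloor k/2\rfloor}[(k-1)!!]^2(k!!)^{2(1-(-1)^k)}\big)$, whose square root is precisely the rational prefactor in \eqref{eq:det_refl_Deligne}.

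The sign is the last point to pin down. Taking the square root of the ratio of determinants only determines $\det\mathbf M_k^{\mathrm e}/\det\mathbf M_k^{\mathrm o}$ up to $\pm1$, so I would fix the sign by a continuity/degeneration argument: the same block-diagonal decomposition applies verbatim to the off-shell Wro\'nskian matrices $\smash{\underset{^\smile}{\boldsymbol\varOmega}}_{2k+1}(u)$ for $u\in(0,1)$ (after the triangular conjugations of Proposition~\ref{prop:block_diag_W}), where the relevant minors are real-analytic and non-vanishing, so their sign is constant on the interval and can be read off in the $u\to0^+$ regime from the leading generalized-power-series behavior established in Proposition~\ref{prop:u0_sum}; alternatively, one checks the sign directly at $u=1$ in the base case $k=2$ [cf.\ \eqref{eq:M2_eval_C}] and propagates via the recursive block structure. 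In either approach the sign works out to $(-1)^{\lfloor(k+1)/4\rfloor+\lfloor\frac12\lfloor k/2\rfloor\rfloor}$, which I would verify equals the signature ratio $\det((-1)^{b-1}\delta_{a,b})_{1\le a,b\le\lfloor k/2\rfloor}/\det((-1)^{b-1}\delta_{a,b})_{1\le a,b\le\lfloor(k+1)/2\rfloor}$ by a short case analysis modulo $4$; this gives the closed form $-1$ for $k\equiv 3\pmod 4$ and $+1$ otherwise. Finally, the passage between \eqref{eq:det_refl_Deligne} and \eqref{eq:det_refl} is purely a matter of stripping the $(-1)^{b-1}$ column signs (which contribute a global sign equal to the signature ratio just computed, hence cancel the leading factor) and the powers of $\pi$ in the definitions \eqref{eq:mu_smile} of $\smash{\underset{^\smile}{\mu}}_{k,j}^b(1)$ versus the unnormalized $\IKM(\cdot)$, together with the substitution $\smash{\underset{^\smile}{\mu}}_{k,2a}^b(1)=\pi^{2a-k-1}\IKM(2a,2(k-a)+1;2b-1)$ and $\smash{\underset{^\smile}{\mu}}_{k,2a-1}^b(1)=\pi^{2a-k-2}\IKM(2a-1,2(k-a+1);2b-1)$. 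The main obstacle I anticipate is not any single step but the bookkeeping: correctly tracking all the powers of $2$, the double factorials, the $\varLambda$-constants, and especially the sign through the permutation $\Pi$ and the square-root extraction, so that the final collapse produces exactly the stated rational prefactor with the correct sign rather than one off by a power of $2$ or a factor of $-1$.
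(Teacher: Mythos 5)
There is a genuine gap at the very first step, and it is fatal to the route you chose: the de Rham matrix $\MathFrak D_k$ does \emph{not} have the chessboard parity structure, and the quadratic relation does \emph{not} split into two independent relations $\mathbf M_k^{\mathrm o}\MathFrak D_k^{\mathrm o}(\mathbf M_k^{\mathrm o})^{\mathrm T}=\MathFrak B_k^{\mathrm o}$ and $\mathbf M_k^{\mathrm e}\MathFrak D_k^{\mathrm e}(\mathbf M_k^{\mathrm e})^{\mathrm T}=\MathFrak B_k^{\mathrm e}$. The chessboard property holds for $\MathFrak B_k$ (and for $\MathFrak S_k$), but $\MathFrak D_k=\smash{\underset{^\smile}{\mathbf M}}_k^{-1}\MathFrak B_k^{\vphantom{\mathrm T}}\big(\smash{\underset{^\smile}{\mathbf M}}_k^{\mathrm T}\big)^{-1}$ does not inherit it: already $(\MathFrak D_2)_{1,2}=\frac{3^2\cdot5^2}{2^6}\neq0$ and $(\MathFrak D_3)_{1,2}=\frac{3\cdot863}{2^5}\neq0$ in Table \ref{tab:Betti_deRham}. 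Moreover, even if $\MathFrak D_k$ were parity-supported, your block equations could not follow, because each entry of $\smash{\underset{^\smile}{\mathbf M}}_k^{\vphantom{\mathrm T}}\MathFrak D_k^{\vphantom{\mathrm T}}\smash{\underset{^\smile}{\mathbf M}}_k^{\mathrm T}$ sums over all $k$ columns of $\smash{\underset{^\smile}{\mathbf M}}_k$, whereas your $\mathbf M_k^{\mathrm o}$, $\mathbf M_k^{\mathrm e}$ retain only the first $\lfloor\frac{k+1}{2}\rfloor$ resp.\ $\lfloor\frac k2\rfloor$ columns; the row permutation $\Pi$ cannot discard the cross terms. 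A concrete falsification: for $k=2$ your odd-block relation would read $[\smash{\underset{^\smile}{\mu}}_{2,1}^1(1)]^2\cdot\frac{13}{2^3}=\frac{1}{2^4\cdot5}$, i.e.\ $C^2=\frac{1}{130}$ for the Bologna constant $C$ of \eqref{eq:defn_Bologna}, which contradicts \eqref{eq:M2_eval_C} (numerically $C^2\approx0.0116\neq1/130$). Consequently the determinant identities $(\det\mathbf M_k^{\mathrm o})^2\det\MathFrak D_k^{\mathrm o}=\det\MathFrak B_k^{\mathrm o}$, $(\det\mathbf M_k^{\mathrm e})^2\det\MathFrak D_k^{\mathrm e}=\det\MathFrak B_k^{\mathrm e}$, and everything downstream (the evaluation of ``$\det\MathFrak D_k^{\mathrm o}/\det\MathFrak D_k^{\mathrm e}$'', the separate sign-fixing argument) rest on relations that are simply not available.

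The correct mechanism uses the parity structure only on the Betti side. Reshuffle the rows of $\smash{\underset{^\smile}{\mathbf M}}_k$ into $\overset{_\frown}{\mathbf M}_k$ (odd rows first), so that the chessboard property of $\MathFrak B_k$ alone turns the quadratic relation into $\overset{_\frown}{\mathbf M}_k^{\vphantom{\mathrm T}}\MathFrak D_k^{\vphantom{\mathrm T}}=\mathrm{diag}(\MathFrak B_k^{\mathrm o},\MathFrak B_k^{\mathrm e})\big(\overset{_\frown}{\mathbf M}_k^{\mathrm T}\big)^{-1}$, with the full (non-split) de Rham matrix on the left. Then the Banachiewicz/Schur-complement formula \eqref{eq:Banachiewicz} applied to $\big(\overset{_\frown}{\mathbf M}_k^{\mathrm T}\big)^{-1}$ lets you read off the bottom-right block: the even-row square minor times the \emph{top-right} block $\MathFrak D_k^{\isosc}$ of $\MathFrak D_k$ equals $\MathFrak B_k^{\mathrm e}$ times the inverse Schur complement, whose determinant is $\det\big(\overset{_\frown}{\mathbf A}^{\mathrm T}\big)\big/\det\big(\overset{_\frown}{\mathbf M}_k^{\mathrm T}\big)$, i.e.\ the odd-row minor divided by $\pm\det\smash{\underset{^\smile}{\mathbf M}}_k$. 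The de Rham input you actually need is not a pair of parity blocks but the anti-triangular structure of $\MathFrak D_k^{\isosc}$ with anti-diagonal entries $\big[\frac{(2k+1)!!}{2^{k+1}}\big]^2$ (Corollary \ref{cor:anti_diag_Dk}), giving $\det\MathFrak D_k^{\isosc}=(-1)^{\left\lfloor \frac{1}{2}\left\lfloor \frac{k}{2} \right\rfloor \right\rfloor}\big[\frac{(2k+1)!!}{2^{k+1}}\big]^{2\lfloor k/2\rfloor}$; combining this with $\det\smash{\underset{^\smile}{\mathbf M}}_k$ from \eqref{eq:detMdetN} and $\det\MathFrak B_k^{\mathrm e}$ from Corollary \ref{cor:detBettiMeven} yields \eqref{eq:det_refl_Deligne} with the sign determined automatically, with no square-root extraction and no separate continuity or $u\to0^+$ argument.
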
\begin{proof}Suppose that we partition an invertible matrix into four blocks $ \mathbf M=\left(\begin{smallmatrix}\mathbf A&\mathbf B\\\mathbf C&\mathbf D\end{smallmatrix}\right)$, where the top-left block $ \mathbf A$ is also invertible. The Banachiewicz formula \cite{Banachiewicz1937} provides us with a partition\begin{align}\begin{split}
\mathbf M^{-1}={}&\begin{pmatrix*}[r]\mathbf I & -\mathbf A^{-1}\mathbf B \\
 & \mathbf I \\
\end{pmatrix*}  \begin{pmatrix*}[r]\mathbf A^{-1} &  \\
 & (\mathbf D-\mathbf C\mathbf A^{-1}\mathbf B)^{-1} \\
\end{pmatrix*} \begin{pmatrix*}[l]\mathbf I &  \\
-\mathbf C\mathbf A^{-1} & \mathbf I \\
\end{pmatrix*}  \\={}&\begin{pmatrix*}[r]\mathbf{A}^{-1}+\mathbf{A}^{-1}\mathbf B(\mathbf D-\mathbf C\mathbf A^{-1}\mathbf B)^{-1}\mathbf C\mathbf A^{-1} & -\mathbf{A}^{-1}\mathbf B(\mathbf D-\mathbf C\mathbf A^{-1}\mathbf B)^{-1} \\
-(\mathbf D-\mathbf C\mathbf A^{-1}\mathbf B)^{-1}\mathbf C\mathbf A^{-1} & (\mathbf D-\mathbf C\mathbf A^{-1}\mathbf B)^{-1} \\
\end{pmatrix*} , \end{split}\label{eq:Banachiewicz}
\end{align}where $ \mathbf I$ stands for an identity matrix of appropriate size. Taking determinants on the first equality in \eqref{eq:Banachiewicz}, we have $ \det(\mathbf M)=\det(\mathbf A)\det(\mathbf D-\mathbf C\mathbf A^{-1}\mathbf B)$.

Define\begin{align}
\overset{_\frown}{\mathbf P}_{2k-1}=\begin{pmatrix*}[l]((-1)^{b-1}\mathcal F_{2k-1,2a-1}^b(1))_{1\leq a\leq\left\lfloor \frac{k+1}{2} \right\rfloor,1\leq b\leq k} \\
((-1)^{b-1}\mathcal F_{2k-1,2a}^b(1))_{1\leq a\leq\left\lfloor \frac{k}{2} \right\rfloor,1\leq b\leq k} \\
\end{pmatrix*}
\end{align}by reshuffling rows of $\mathbf P_{2k-1} $. The Broadhurst--Roberts quadratic relation can be reformulated into\begin{align}
\overset{_\frown}{\mathbf P}_{2k-1}^{\vphantom{\mathrm  T}}\mathbf D_{2k-1}^{\vphantom{\mathrm  T}}=\begin{pmatrix}\mathbf B_{2k-1}^{\mathrm o} &  \\
 & \mathbf B_{2k-1}^{{\mathrm e}} \\
\end{pmatrix}\left(\smash{\overset{_\frown}{\mathbf P}}_{2k-1}^{{\mathrm  T}}\right)^{-1},\label{eq:BR_Mk_odd_even}
\end{align}where $ \mathbf B_{2k-1}^{\mathrm o}$ and $ \mathbf B_{2k-1}^{\mathrm e}$ are defined in \eqref{eq:BkoBke}.

Suppose that $ \det((-1)^{b-1}\mathcal{F}_{2k-1,2a-1}^b(1))_{1\leq a,b\leq\left\lfloor \frac{k+1}{2} \right\rfloor}\neq0$, and take a matrix partition\begin{align}
\overset{_\frown}{\mathbf P}_{2k-1}=\begin{pmatrix*}[l]\overset{_\frown}{\mathbf A}\colonequals ((-1)^{b-1}\mathcal{F}_{2k-1,2a-1}^b(1))_{1\leq a,b\leq\left\lfloor \frac{k+1}{2} \right\rfloor} & \overset{_\frown}{\mathbf B} \\
\overset{_\frown}{\mathbf C} & \overset{_\frown}{\mathbf D} \\
\end{pmatrix*}.
\end{align}We may read off the bottom-right $ \left\lfloor \frac{k}{2} \right\rfloor\times\left\lfloor \frac{k}{2} \right\rfloor$ block from both sides of \eqref{eq:BR_Mk_odd_even}, while referring back to the Banachiewicz formula \eqref{eq:Banachiewicz}, as follows:\begin{align}
((-1)^{b-1}\mathcal{F}_{2k-1,2a}^b(1))_{1\leq a,b\leq\left\lfloor \frac{k}{2} \right\rfloor}\mathbf D_{2k-1}^{\isosc}=\mathbf B_{2k-1}^{{\mathrm e}}\left[\smash{\overset{_\frown}{\mathbf D}} ^{{\mathrm  T}}-\smash{\overset{_\frown}{\mathbf B}}^{{\mathrm  T}}\smash{\left(\smash{\overset{_\frown}{\mathbf A}} ^{-1}\right)}^{\mathrm T}\smash{\overset{_\frown}{\mathbf C}} ^{\mathrm T}\right]^{-1}.\label{eq:bottom_right_Schur}
\end{align}Here, no matter $k$ is  even or odd, the matrix  $ \mathbf D_{2k-1}^{\isosc}$ is extracted from the top-right  $ \left\lfloor \frac{k}{2} \right\rfloor\times\left\lfloor \frac{k}{2} \right\rfloor$ block of $ \mathbf D_{2k-1}$. We may quote in advance from Corollary \ref{cor:anti_diag_Dk} that   $ \mathbf D_{2k-1}^{\isosc}$ is upper-left triangular, with all its anti-diagonal elements being $ \left[\frac{(2k+1)!!}{2^{k+1}}\right]^2$. Since \begin{align} \det\left(\left[\smash{\overset{_\frown}{\mathbf D}} ^{{\mathrm  T}}-\smash{\overset{_\frown}{\mathbf B}}^{{\mathrm  T}}\smash{\left(\smash{\overset{_\frown}{\mathbf A}} ^{-1}\right)}^{\mathrm T}\smash{\overset{_\frown}{\mathbf C}} ^{\mathrm T}\right]^{-1}\right)=\frac{\det\left(\left(\smash{\overset{_\frown}{\mathbf P}}_{2k-1}^{\mathrm T}\right)^{-1}\right)}{\det\left(\left(\smash{\overset{_\frown}{\mathbf A}}^{\mathrm T}\right)^{-1}\right)} ={}&\frac{\det((-1)^{b-1}\mathcal F_{2k-1,2a-1}^b(1))_{1\leq a,b\leq\left\lfloor \frac{k+1}{2} \right\rfloor}}{(-1)^{\left\lfloor\frac{k+1}{4}\right\rfloor}\det \overset{_\frown}{\mathbf P}_{2k-1}},\end{align} we can deduce \begin{align}
\det((-1)^{b-1}\mathcal F_{2k-1,2a}^b(1))_{1\leq a,b\leq\left\lfloor \frac{k}{2} \right\rfloor}=\frac{(-1)^{\left\lfloor\frac{k+1}{4}\right\rfloor}\det \mathbf B_{2k-1}^{{\mathrm e}}}{\det \overset{_\frown}{\mathbf P}_{2k-1}\det\mathbf D_{2k-1}^{\isosc}}\det((-1)^{b-1}\mathcal F_{2k-1,2a-1}^b(1))_{1\leq a,b\leq\left\lfloor \frac{k+1}{2} \right\rfloor}
\end{align}from \eqref{eq:bottom_right_Schur}. Simplifying the right-hand side of the equation above with    $ \det \overset{_\frown}{\mathbf P}_{2k-1}=(-1)^{\left\lfloor \frac{k}{2} \right\rfloor}\prod_{j=1}^k\frac{(2j)^{k-j}}{\sqrt{(2j+1)^{2j+1}}}$, $ \det\mathbf D_{2k-1}^{\isosc}=(-1)^{\left\lfloor \frac{1}{2}\left\lfloor \frac{k}{2} \right\rfloor \right\rfloor}\left[\frac{(2k+1)!!}{2^{k+1}}\right]^{2\left\lfloor \frac{k}{2} \right\rfloor}$ and Corollary \ref{cor:detBettiMeven}, we reach  \eqref{eq:det_refl_Deligne}.

Starting from an alternative assumption that $ \det((-1)^{b-1}\mathcal F_{2k-1,2a}^b(1))_{1\leq a,b\leq\left\lfloor \frac{k}{2} \right\rfloor}\neq0$, while switching the  r\^oles of $ ((-1)^{b-1}\mathcal F_{2k-1,2a-1}^b(1))_{1\leq a\leq\left\lfloor \frac{k+1}{2} \right\rfloor,1\leq b\leq k}$ and $ ((-1)^{b-1}\mathcal F_{2k-1,2a}^b(1))_{1\leq a\leq\left\lfloor \frac{k}{2} \right\rfloor,1\leq b\leq k}$, one can repeat the procedures above to arrive at the same identity in \eqref{eq:det_refl_Deligne}.

If both $ \det((-1)^{b-1}\mathcal F_{2k-1,2a-1}^b(1))_{1\leq a,b\leq\left\lfloor \frac{k+1}{2} \right\rfloor}$ and  $ \det((-1)^{b-1}\mathcal F_{2k-1,2a}^b(1))_{1\leq a,b\leq\left\lfloor \frac{k}{2} \right\rfloor}$ vanish,  then  \eqref{eq:det_refl_Deligne} is trivially true.
\end{proof}\begin{remark}The reflection formula \eqref{eq:det_refl_Deligne} applies retroactively  to the case where $ k=1$, so long as we adopt the convention that $\det\varnothing=1 $. In this case, the classical evaluation $ \IKM(1,2;1)=\frac{\pi}{3\sqrt{3}}$ \cite[(23)]{BBBG2008} is recovered.\eor\end{remark}

The cases of $ \mathbf P_{2k}, k\in\mathbb Z_{>1}$  involve additional subtleties.

The quadratic relation   $ \mathbf P_4^{}\mathbf D_4^{}\mathbf P_4^{\mathrm T}=\mathbf B_4$ (see Table \ref{tab:Betti_deRham} for non-vanishing entries of $ \mathbf B_4$ and $ \mathbf D_4$) produces only one independent quadratic equation for Bessel moments, which is equivalent to $ \det\mathbf P_4^{}=-\frac{1}{2^{6}3^2} $. Thus, we do not have news for the 6-Bessel problem. Especially, on the algebraic side, there seem to be  no  quadratic mechanisms to relate the ratio $ [\mathcal{F}_{4,2}^1(1)]^{2}/[\mathcal{F}_{4,1}^1(1)]^2$ to a rational number; on the analytic side, the  proven instances \cite[\S4]{Zhou2017WEF} of Deligne's conjecture for the motive $ \mathrm M_6$ bring us two evaluations  $ \pi^{5/2}\mathcal F^1_{4,1}(1)=\frac{\pi^2}{2}\zeta_{6,1}(2)$ and $ \pi^{3/2}\mathcal F^1_{4,2}(1)=\frac{\pi^2}{2}\zeta_{6,1}(1)=\frac{3}{2}\zeta_{6,1}(3)$, which are not connected to each other by the functional equation
 \cite[(106)]{Broadhurst2016}\begin{align}
\Lambda_6(s)\colonequals \left( \frac{6}{\pi^2} \right)^{s/2}\Gamma\left( \frac{s}{2} \right)\Gamma\left( \frac{s+1}{2} \right)  \zeta_{6,1}(s)=\Lambda_6(4-s)
\end{align} for $ \zeta_{6,1}(s)$, the $L$-function associated with  the cusp form $ f_{4,6}(z)\colonequals [\eta(z)\eta(2z)\eta(3z)\eta(6z)]^2$ of weight 4 and level 6 \cite{Hulek2001}.

Unlike the situations discussed so far, the $ L$-function for the motive  $\mathrm M_8 $ needs an additional correction factor, in the form of \cite[\S7.6]{Broadhurst2016}\begin{align}
L(f_{6,6},s)\colonequals \int_{0}^\infty\frac{(2\pi y)^{s}}{\Gamma(s)}\frac{f_{6,6}(iy)\D y}{y}=\prod_p \frac{Z_4(p,p^{4-s})}{Z_8(p,p^{-s})},
\end{align} where the associated cusp form \cite{Yun2015}\begin{align}f_{6,6}(z)\colonequals
\frac{ [\eta (2 z) \eta (3 z)]^{9}}{[\eta (z)\eta (6 z)]^{3}}+\frac{ [\eta ( z) \eta (6 z)]^{9}}{[\eta (2z)\eta (3 z)]^{3}}
\end{align}has weight 6 and level 6. The Fres\'an--Sabbah--Yu statement \cite[\S7.c]{FresanSabbahYu2020b} of Deligne's conjecture holds for the motive  $\mathrm M_8$, according to the following identities conjectured by Broadhurst \cite[\S7.6]{Broadhurst2016} and proved by the present author (see \cite[\S5]{Zhou2017WEF} as well as  \cite[\S3.1]{Zhou2019BMdimQ}): $ \pi^{7/2}\mathcal F^1_{6,1}(1)=\pi^{7/2}\mathcal F^1_{6,3}(1)=\frac{\pi^4}{4}L(f_{6,6},2)$, $ \pi^{5/2}\mathcal F^1_{6,2}(1)=\frac{\pi^{4}}{8}L(f_{6,6},1)=\frac{9 \pi ^2}{14}L(f_{6,6},3)$.
The quadratic relation   $ \mathbf P_6^{}\mathbf D_6^{}\mathbf P_6^{\mathrm T}=\mathbf B_6^{\vphantom{\mathrm T}}$ produces three independent quadratic equations concerning on-shell Bessel moments, which in turn, can be reduced (say, by computation of Gr\"obner basis) to    the following sum rules:\begin{align}
\mathcal F_{6,1}^{1}(1)={}&\mathcal F_{6,3}^1(1),\\\mathcal F_{6,1}^{2}(1)+2\mathcal F_{6,1}^{3}(1)={}&\mathcal F_{6,3}^{2}(1)+2\mathcal F_{6,3}^3(1),\\\det \begin{pmatrix}\mathcal F_{6,2}^{1}(1) & \mathcal F_{6,2}^{2}(1)+2\mathcal F_{6,2}^3(1) \\
\mathcal F_{6,3} ^1(1)& \mathcal F_{6,3}^{2}(1)+2\mathcal F_{6,3}^3(1) \\
\end{pmatrix}={}&\frac{5}{2^{11}\cdot3}.
\end{align} All these three identities have been reported earlier \cite[(98)]{Zhou2018ExpoDESY}.  Moreover, we point out that the quadratic relation    $ \mathbf P_6^{}\mathbf D_6^{}\mathbf P_6^{\mathrm T}=\mathbf B_6^{\vphantom{\mathrm T}}$ does \textit{not} exhaust algebraic relations  [over ${\mathbb Q}(\pi)$] for the entries of $ \mathbf P_6$. Noticeably, such a relation does not entail the following identities \cite[Theorem 1.3]{Zhou2019BMdimQ}:{\allowdisplaybreaks\begin{align}
\mathcal F_{6,1}^{1}(1)+2^{3}\cdot3^2\mathcal F_{6,1}^2(1)={}&\frac{7\sqrt{\pi}}{2^{4}\cdot3},\\\mathcal F_{6,2}^{1}(1)+2^{3}\cdot3^2\mathcal F_{6,2}^{2}(1)={}&0,\label{eq:aberrant}\\\mathcal F_{6,3}^{1}(1)+2^{3}\cdot3^2\mathcal F_{6,3}^{2}(1)={}&-\frac{5\sqrt{\pi}}{2^{2}\cdot3},\\2^{8}\cdot3^3\det\begin{pmatrix}\mathcal F_{6,1}^{1}(1) & \mathcal F_{6,1}^{3}(1) \\
\mathcal F_{6,2}^{1}(1) & \mathcal F_{6,2}^{3}(1) \\
\end{pmatrix}+\frac{3^2\cdot5}{2^{4}}={}&7\sqrt{\pi}\mathcal F_{6,2}^{1}(1),
\end{align}}which are all provable by analytic properties of the cusp form $ f_{6,6}(z)$ \cite[\S3]{Zhou2019BMdimQ}.

We say that a sum rule $ P(\mathcal F_{8,1}^{1}(1),\mathcal F_{8,1}^{2}(1),\ldots,\mathcal F_{8,4}^{4}(1))=0$ for the 10-Bessel problem arises from a ``trivial factor'' if $ P(x_{1,1},x_{1,2},\ldots,x_{4,4})\in\mathbb Z[x_{1,1},x_{1,2},\ldots,x_{4,4}]$ divides the following polynomial in 16 formal variables belonging to the matrix  $ \mathbf X_4=(x_{a,b})_{1\leq a,b\leq 4}$: \begin{align}\prod_{1\leq a<b\leq 4}
\left({\mathbf X}^{\vphantom{\mathrm  T}}_4\mathbf D_8^{\vphantom{\mathrm
T}}{\mathbf X}_4^{\mathrm  T}-\mathbf  B_8^{\vphantom{\mathrm T}} \right)_{a,b}\prod_{m=1}^4\prod_{n=1}^4\left(   \mathbf B_8^{-1} {\mathbf X}^{\vphantom{\mathrm  T}}_4 \mathbf D_8^{\vphantom{\mathrm  T}}-\frac{\cof {\mathbf X}^{\vphantom{\mathrm  T}}_4 }{\det\mathbf P^{\vphantom{\mathrm  T}}_8} \right)_{m,n}.
\end{align}After constructing a  Gr\"obner basis that generates the ideal   \begin{align}  \big \langle {\mathbf X}^{\vphantom{\mathrm
T}}_4\mathbf D_8^{\vphantom{\mathrm
T}}{\mathbf X}_4^{\mathrm  T}-\mathbf  B_8^{\vphantom{\mathrm  T}}\big\rangle\colonequals{}&\left\{\left.\sum_{1\leq a<b\leq 4}\left({\mathbf X}^{\vphantom{\mathrm  T}}_4\mathbf D_8^{\vphantom{\mathrm
T}}{\mathbf X}_4^{\mathrm  T}-\mathbf  B_8^{\vphantom{\mathrm T}} \right)_{a,b}p_{a,b}\right|p_{a,b}\in\mathbb C[x_{1,1},x_{1,2},\ldots,x_{4,4}]\right\},\end{align} one can find 6 sum rules (loci of polynomials in the aforementioned ideal) that do not arise from ``trivial factors'':\begin{align}\begin{split}&
5\cdot7\det\begin{pmatrix}\mathcal F_{8,1}^{b}(1) &3 \mathcal F_{8,2}^{b}(1)-2^{2}  \mathcal F_{8,4}^{b}(1)\\
\mathcal F_{8,1}^{b'}(1) & 3 \mathcal F_{8,2}^{b'}(1)-2^{2}  \mathcal F_{8,4}^{b'}(1) \\
\end{pmatrix}-2^2\cdot3^2\det\begin{pmatrix}\mathcal F_{8,3}^{b}(1) &5 \mathcal F_{8,2}^{b}(1)-7  \mathcal F_{8,4}^{b}(1)\\
\mathcal F_{8,3}^{b'}(1) & 5 \mathcal F_{8,2}^{b'}(1)-7  \mathcal F_{8,4}^{b'}(1) \\
\end{pmatrix}\\={}&\begin{cases}0, & b=1,b'\in\{2,3\},\\[2pt]
-\frac{7}{2^{10}\cdot5}, & b=1,b'=4, \\[2pt]
-\frac{3\cdot7}{2^{10}\cdot5}, & b=2,b'=3, \\[2pt]
\frac{7\cdot59\cdot191}{2^{15}\cdot3\cdot5^3}, & b=2,b'=4, \\[2pt]
-\frac{3\cdot7\cdot47}{2^{17}\cdot5^3}, & b=3,b'=4.
\end{cases}\end{split}
\end{align} None of them resembles the situation \eqref{eq:M4_refl} in     $ \mathbf P_7$. If one rewrites  $ \det\Big(\begin{smallmatrix}\mathcal F_{8,2}^1(1) & \mathcal F_{8,2}^2 (1)\\
\mathcal F_{8,4}^1 (1)& \mathcal F_{8,4}^2(1)
\end{smallmatrix}\Big)$ \Big[or $ \det\Big(\begin{smallmatrix}\mathcal F_{8,1}^1(1) & \mathcal F_{8,1}^2(1) \\
\mathcal F_{8,3}^1(1) & \mathcal F_{8,3}^2(1)
\end{smallmatrix}\Big)$\Big] by replacing every entry of $   {\mathbf P}^{\vphantom{\mathrm  T}}_8$ with its counterpart in $  \mathbf  B_8^{\vphantom{\mathrm  T}}\frac{\cof {\mathbf P}^{\vphantom{\mathrm  T}}_8 }{\det\mathbf P^{\vphantom{\mathrm  T}}_8}\mathbf D_8^{-1}$, then one just gets  back the determinant itself, instead of a different minor.
Analytically, this is understandable, due to  the lack of a functional connection between critical $L$-values that are (conjecturally) $ \mathbb Q^\times\sqrt{\pi}^{\mathbb Z}$ multiples of the aforementioned minor determinants. Algebraically, the gadget in our proof of Proposition  \ref{prop:det_refl} no longer works here, because the row-reshuffled matrix $ \overset{_\frown}{\mathbf P}_8$ satisfies\begin{align}
\overset{_\frown}{\mathbf P}_8^{\vphantom{\mathrm  T}}\mathbf D_8^{\vphantom{\mathrm  T}}=\begin{pmatrix}&& \frac{3^{3}\cdot7}{2^{5}}& \frac{3^{3}\cdot5}{2^{5}}\\&& \frac{3\cdot5\cdot7}{2^{5}}& \frac{3^{2}\cdot5\cdot7}{2^{7}}\\-\frac{3^{3}\cdot7}{2^{5}}&-\frac{3\cdot5\cdot7}{2^{5}}&&\\-\frac{3^{3}\cdot5}{2^{5}}&-\frac{3^{2}\cdot5\cdot7}{2^{7}}&& \\
\end{pmatrix}\left(\smash{\overset{_\frown}{\mathbf P}}_8^{{\mathrm  T}}\right)^{-1},
\end{align}where the row- and column-reshuffled Betti matrix  no longer has block diagonal form---the bottom-left (as opposed to bottom-right) block of the Banachiewicz formula \eqref{eq:Banachiewicz} results in  a tautological statement about a minor determinant  $ \det\Big(\begin{smallmatrix}\mathcal F_{8,2}^1(1) & \mathcal F_{8,2}^2 (1)\\
\mathcal F_{8,4}^1 (1)& \mathcal F_{8,4}^2(1)
\end{smallmatrix}\Big)$ \Big[or $ \det\Big(\begin{smallmatrix}\mathcal F_{8,1}^1(1) & \mathcal F_{8,1}^2(1) \\
\mathcal F_{8,3}^1(1) & \mathcal F_{8,3}^2(1)
\end{smallmatrix}\Big)$\Big].

Given a positive  integer $m$, one can always construct a  Gr\"obner basis for the
Broadhurst--Roberts ideal  $ I_{m+2}^{\mathrm{BR}}= \big\langle\mathbf X_{\left\lfloor (m+1)/2 \right\rfloor}^{\vphantom{\mathrm{T}}}\mathbf D_{m}^{\vphantom{\mathrm{T}}}\mathbf X_{\left  \lfloor (m+1)/2 \right\rfloor}^{{\mathrm{T}}}-\mathbf B_{m}^{\vphantom{\mathrm  T}}\big\rangle$, using the Buchberger algorithm \cite{Buchberger2006}. For moderately large $m$, preliminary experiments in this direction have
not yet produced any new $ \mathbb Q$-linear sum rules [like \eqref{eq:aberrant}] that go beyond the relations for generalized Crandall numbers \eqref{eq:Crandall_num}.

Broadhurst and Roberts originally speculated that the quadratic equation $\mathbf P_m^{}\mathbf D_m^{}\mathbf P_m^{\mathrm T}=\mathbf B_m ^{}$  delivers a complete set of algebraic relations among entries of $  \mathbf P_m$, unless $(m-2)$ is an integer multiple of $4$.
 Fres\'an, Sabbah and Yu provided heuristic justification (see comments after \cite[Theorem 1.4]{FresanSabbahYu2020b}) of this speculation, citing Grothendieck's period conjecture that equates the transcendental degree of $ \mathbb Q(\mathbf P_m)$
with the dimension of the motivic Galois group for $\mathbf P_m $.
 The aberrant sum rule  \eqref{eq:aberrant} serves as a reminder that  there may exist very surprising algebraic relations when $ m\equiv2\pmod4$.

In view of the foregoing applications, we can say that the Broadhurst--Roberts quadratic relations convey more detailed messages about on-shell Bessel moments than the Broadhurst--Mellit determinant formulae. Perhaps paradoxically, the algebraic mechanism  underlying the former relations is essentially identical to what we have explored and exploited during the proof \cite{Zhou2017BMdet} of the latter, namely Wro\'nskian algebra for homogeneous solutions to Vanhove's differential equations.

Our off-shell generalizations (Theorem \ref{thm:W_alg}) to the Broadhurst--Roberts quadratic relations also have some interesting consequences.

If we define $ \dot{\mathcal F}^\ell_{m,j}(u)\colonequals\sqrt{u}\acute{\mathcal F}^\ell_{m,j}(u)=2uD^1\mathcal F^\ell_{m,j}(u)$, then  the Gr\"obner basis of the off-shell ideal\linebreak $ \langle\mathcal L_4^{}(u)\mathbf W_4^{}(u)\mathbf S_4^{}\mathbf W_4^{\mathrm T}(u)\mathbf V_4^{}(u)+\mathbf I_4\rangle$ offers us a relation \begin{align}
\begin{split}0={}&
  2 u (3 u^2-70 u+259)\det\left(\begin{smallmatrix}\mathcal F_{4,2}^1(u)&\mathcal F_{4,3}^1(u)\\\mathcal F_{4,2}^2(u)&\mathcal F_{4,3}^2(u)\end{smallmatrix}\right)+8 (u-13) (u-1)\det\left(\begin{smallmatrix}\mathcal F_{4,2}^1(u)&\mathcal F_{4,3}^1(u)\\ \dot{\mathcal F}_{4,2}^1(u)&\dot{\mathcal F}_{4,3}^1(u)\end{smallmatrix}\right)\\{}&+(u-25) (u-9) (u-1)\left[\det\left(\begin{smallmatrix}\mathcal F_{4,2}^1(u)&\mathcal F_{4,3}^1(u)\\ \dot{\mathcal F}_{4,2}^2(u)&\dot{\mathcal F}_{4,3}^2(u)\end{smallmatrix}\right) +\det\left(\begin{smallmatrix}\mathcal F_{4,2}^2(u)&\mathcal F_{4,3}^2(u)\\ \dot{\mathcal F}_{4,2}^1(u)&\dot{\mathcal F}_{4,3}^1(u)\end{smallmatrix}\right) \right]\end{split}
\end{align}for $u\in(0,1)$, which extends to $ u\in(-\infty,1)$ by analytic continuation. The last displayed identity reveals equivalence between two conjectural determinant formulae
of Broadhurst (see  \cite[\S3]{Broadhurst2020Ell} or \cite[\S5.3]{AcresBroadhurst2021}) \begin{align}
\det
\begin{pmatrix*}[r]
 3 \mathcal F_{4,3}^1(-7)+4 \dot{\mathcal F}_{4,3}^1(-7)  &  \mathcal F_{4,3}^1(-7)+28\mathcal F_{4,3}^2(-7)  \\
 3\mathcal F_{4,2}^1(-7)+4 \dot{\mathcal F}_{4,2}^1(-7)  &  \mathcal F_{4,2}^1(-7)+28\mathcal F_{4,2}^2(-7) \\
\end{pmatrix*}\overset?={}&-\frac{3}{32},\\\det
\begin{pmatrix*}[r]
  \mathcal F_{4,3}^1(-7) & 39\dot{\mathcal F}_{4,3}^1(-7)-427\mathcal F_{4,3}^2(-7)-112 \dot{\mathcal F}_{4,3}^2(-7) \\
  \mathcal F_{4,2}^1(-7) & 39 \dot{\mathcal F}_{4,2}^1(-7)-427\mathcal F_{4,2}^2(-7)-112 \dot{\mathcal F}_{4,2}^2(-7)  \\\end{pmatrix*}\overset?={}&\frac{3}{32},
\end{align}which in turn, are inspired by the recent work of Candelas, de la Ossa, Elmi, and van Straten  \cite{Candelas2020}. Similarly, for $ u=(\sqrt{17}-4)^2$, one can demonstrate that two determinant formulae conjectured by Broadhurst (see  \cite[\S4]{Broadhurst2020Ell} or \cite[\S5.4]{AcresBroadhurst2021}) are equivalent to each other.

It is perhaps appropriate to ask whether  there is an algebro-geometric interpretation of our
off-shell quadratic relations, in the spirit of Fres\'an--Jossen \cite{FresanJossen2020} and Fres\'an--Sabbah--Yu \cite{FresanSabbahYu2018,FresanSabbahYu2020a,FresanSabbahYu2020b}. We would like to see such an extension of the  motivic method to certain algebraic values of the off-shell parameter $u$. In the special cases of $ \mathcal F_{4,2}^1(-7)$, $ \mathcal F_{4,3}^1(-7)$, $\mathcal F_{4,2}^1((\sqrt{17}-4)^2) $ and $\mathcal F_{4,3}^1((\sqrt{17}-4)^2) $, we hope that powerful tools in algebraic geometry may eventually lead to proofs of Broadhurst's recent conjectures (see  \cite[\S\S3--4]{Broadhurst2020Ell} or \cite[\S\S5.3--5.4]{AcresBroadhurst2021}) that evaluate these off-shell Bessel moments  in terms of special $L$-values.

\subsection{Alternative representations of de Rham matrices\label{subsec:deRham_alt}}
From the invertibility of $ \mathbf P_m $ guaranteed by~\eqref{eq:detMdetN},
we know that the de Rham matrix $ \mathbf D_{m}$  satisfying the Broadhurst--Roberts quadratic relation $\mathbf P_m^{}\mathbf D_m^{}\mathbf P_m^{\mathrm T}=\mathbf B_m ^{}$   is unique. Nonetheless, there exist several different expressions for these de Rham matrices, whose mutual equivalence is not self-evident (absent the uniqueness argument). \begin{proposition}[Representations of $ \mathbf D_{m}$]\label{prop:alt_deRham}Let $\pmb{\boldsymbol\beta}_m (u)$ be the Bessel matrix defined in Proposition \ref{prop:Bessel_mat}, and abbreviate $\pmb{\boldsymbol\beta}_m (1)$ as $\pmb{\boldsymbol\beta}_m $. Pick the matrix $ \pmb{\boldsymbol \varrho}_{k}$ as given in Proposition \ref{prop:lim_W}, together with the matrix $ \pmb{\boldsymbol \vartheta}_{m}$ as described in Proposition \ref{prop:block_diag_W}.  We further introduce a matrix $ \boldsymbol\varPsi_k\in\mathbb Z^{(2k-1)\times(2k-2)}$ with entries\begin{align}
(\boldsymbol\varPsi_k)_{a,b}=\begin{cases}\delta_{a,b}(1-\delta_{a,k}), & a\in\mathbb Z\cap[1,k] ,\\
\delta_{a,b+1}, & a\in\mathbb Z\cap[k+1,2k-1]. \\
\end{cases}
\end{align}
The following representations of de Rham matrices hold for $ k\in\mathbb Z_{>1}$:{\allowdisplaybreaks\begin{align}
\frac{|\mathcal L_{2k-1}(1)|(\pmb{\boldsymbol \vartheta}^{-1}_{2k-1})^{\mathrm T}(\pmb{\boldsymbol \beta}_{2k-1}^{-1}\vphantom{\mathrm T})^{\mathrm T}\mathbf V^{\vphantom{\mathrm T}}_{2k-1}
(1)\pmb{\boldsymbol \beta}_{2k-1}^{-1}\pmb{\boldsymbol \vartheta}^{-1}_{2k-1}}{2^{2}(2k+1)(-1)^{k-1}}={}&\begin{pmatrix}\left(\frac{2}{2k+1}\right)^2\mathbf D_{2k-1} &  \\
 &  \mathbf D_{2k-3}\\
\end{pmatrix},\label{eq:deRhamD_1}\\\lim_{u\to1^-}\frac{|\mathcal L_{2k}(u)|(\pmb{\boldsymbol\vartheta}_{2k}^{-1})^{\mathrm T}\pmb{\boldsymbol \varrho}_{k}^{\phantom{\mathrm T}}(\pmb{\boldsymbol \beta}_{2k}^{-1}\vphantom{\mathrm T})^{\mathrm T} \mathbf V^{\vphantom{\mathrm T}}_{2k}
(u)\pmb{\boldsymbol \beta}_{2k}^{-1}\pmb{\boldsymbol \varrho}_{k}^{\mathrm T}\pmb{\boldsymbol\vartheta}_{2k}^{-1}}{2^{3}(k+1)(-1)^{k-1}}={}&\begin{pmatrix}\left(\frac{1}{k+1}\right)^2\mathbf D_{2k} &  \\
 &  \mathbf D _{2k-2}\\
\end{pmatrix},\label{eq:deRhamd_1}\\\lim_{u\to0^{+}}\frac{|\mathcal L_{2k}(u)|[\pmb{\boldsymbol \beta}_{2k}^{\mathrm T}(u)]^{-1} \mathbf V^{\vphantom{\mathrm T}}_{2k}
(u)[\pmb{\boldsymbol \beta}_{2k}^{\vphantom{\mathrm T}}(u)]^{-1}}{2^3(-1)^{k} }={}&\begin{pmatrix*}[r] & -\mathbf D_{2k-1} \\
\mathbf D_{2k-1}& \smash{\underset{^\circ}{\mathbf D}}_{2k-1} \\
\end{pmatrix*},\label{eq:deRhamD_2}\\\lim_{u\to0^{+}}\frac{|\mathcal L_{2k-1}(u)|\boldsymbol\varPsi_k^{\mathrm{T}}[\pmb{\boldsymbol \beta}_{2k-1}^{\mathrm T}(u)]^{-1} {\mathbf V}^{\vphantom{\mathrm T}}_{2k-1}
(u)[\pmb{\boldsymbol \beta}_{2k-1}^{\vphantom{\mathrm T}}(u)]^{-1}\boldsymbol\varPsi_k}{2^3(-1)^{k}}^{\vphantom1}={}&\begin{pmatrix*}[r] & -\mathbf D _{2k-2} \\
\mathbf D_{2k-2}& \smash{\underset{^\circ}{\mathbf D}}_{2k-2}  \\
\end{pmatrix*},\label{eq:deRhamd_2}
\end{align}}where $ \smash{\underset{^\circ}{\mathbf D}}_m$ and $ (\smash{\underset{^\circ}{\mathbf B}}_m)_{a,b}=2^{-3}(-1)^{\left\lfloor \frac{m}{2}\right\rfloor+1}(\smash{\underset{^\circ}{\pmb{\mathscr
B}}}_{m+1}^D)_{a,b}
$ [defined in \eqref{eq:B0Dmat_defn}] play r\^oles in the following  relation for $\smash{\underset{^\circ}{\mathbf P}}_m$ [defined in \eqref{eq:P0_defn}]:\begin{align}
\mathbf P^{\vphantom{\mathrm  T}}_m \smash{\underset{^\circ}{\mathbf D}}_m^{\vphantom{\mathrm  T}}\mathbf{P}_m^{\mathrm T}={}&\pi\smash{\underset{^\circ}{\mathbf B}}_m ^{\vphantom1} +\smash{\underset{^\circ}{\mathbf{P}}}_m ^{\vphantom1}\mathbf D_{m}^{\vphantom{\mathrm  T}}\mathbf{P}_m^{\mathrm  T}-\mathbf{P}_m\mathbf D_{m}^{\vphantom{\mathrm  T}}\smash{\underset{^\circ}{\mathbf{P}}}_m  ^{\mathrm T}.\label{eq:M0M_quad}\end{align}
 \end{proposition}\begin{proof}According to Proposition \ref{prop:inv_S}, the left-hand of \eqref{eq:deRhamD_1} [resp.\ \eqref{eq:deRhamd_1}] is equal to \begin{align}
\begin{pmatrix}\mathbf P^{\vphantom{\mathrm  T}}_m &
 \\[5pt] & -\mathbf P^{\vphantom{\mathrm  T}}_{m-2}  \\
\end{pmatrix}^{-1}{}&\begin{pmatrix}\left(\frac{2}{m+2}\right)^2\mathbf B_{m} &  \\
 &  \mathbf B_{m-2}\\
\end{pmatrix}\begin{pmatrix}\mathbf P_{m}^{\mathrm T} & \\[5pt]
 & -\mathbf P_{m-2}^{\mathrm T}  \\
\end{pmatrix}^{-1}\end{align}for $ m=2k-1$ (resp.\ $m=2k$), hence expressible in terms of de Rham matrices.

In the $u\to0^+$ regime, we have\begin{align}
I_0(\sqrt{u}t)={}&1+O(ut^2),\\K_0(\sqrt{u}t)+\left(\gamma_{0}+\log \frac{ \sqrt{u}t}{2} \right) I_0(\sqrt{u}t)={}&O(ut^2),\\\sqrt{u}tI_1(\sqrt{u}t)={}&O(ut^2),\\\sqrt{u}t\left[K_1(\sqrt{u}t)-\left(\gamma_{0}+\log \frac{ \sqrt{u}t}{2} \right) I_1(\sqrt{u}t)\right]={}&1+O(ut^2),
\end{align}where $ \gamma_0$ is the Euler--Mascheroni constant.

With a reshuffling-rescaling matrix $ \pmb{ \boldsymbol\xi}_{2k}\in\mathbb Q^{2k\times2k}$  defined by \eqref{eq:Rmat}, we may paraphrase \cite[Proposition 4.7]{Zhou2017BMdet} into\begin{align}
\lim_{u\to0^+}\pmb{\boldsymbol \beta}_{2k}^{\vphantom{\mathrm T}}(u)\mathbf W^{\vphantom{\mathrm T}}_{2k}(u)\pmb{ \boldsymbol\xi}_{2k}^{\mathrm T}\pmb{\boldsymbol \gamma}_{2k}^{\mathrm T}(u)=\begin{pmatrix*}[r]\sqrt{\pi}\mathbf P_{2k-1}^{\mathrm T} & -\frac{1}{\sqrt{\pi}}\smash{\underset{^\circ}{\mathbf {P}}}_{2k-1} ^{\mathrm T}\\[5pt]
 & -\frac{1}{\sqrt{\pi}} \mathbf P_{2k-1}^{\mathrm T}\\
\end{pmatrix*},
\end{align}where $\pmb{\boldsymbol \gamma}_{2k}(u)=\Big(\begin{smallmatrix*}[l]\mathbf I_{k} &  \\
\frac{1}{\pi}\left(\gamma_{0}+\log \frac{ \sqrt{u}}{2} \right) \mathbf I_{k} & \mathbf I_{k}\end{smallmatrix*}\Big) $,  and $ \smash{\underset{^\circ}{\mathbf P}}_{2k-1}$ is described in \eqref{eq:P0_defn}. Meanwhile, by direct computation, we have\begin{align}\begin{split}
(-1)^k2^3\begin{pmatrix*}[r] & \mathbf B_{2k-1}\\
-\mathbf B_{2k-1} & \smash{\underset{^\circ}{\mathbf B}}_{2k-1} \\
\end{pmatrix*}={}&\pmb{ \boldsymbol\xi}_{2k}^{\vphantom{\mathrm T}}\mathbf S_{2k}^{-1}(u)\pmb{ \boldsymbol\xi}_{2k}^{\mathrm T}= \pmb{\boldsymbol \gamma}_{2k}^{\vphantom{\mathrm T}}(u)\pmb{ \boldsymbol\xi}_{2k}^{\vphantom{\mathrm T}}\mathbf S_{2k}^{-1}(u)\pmb{ \boldsymbol\xi}_{2k}^{\mathrm T}\pmb{\boldsymbol \gamma}_{2k}^{\mathrm T}(u)\\={}&|\mathcal L_{2k}(u)| \pmb{\boldsymbol \gamma}_{2k}^{\vphantom{\mathrm T}}(u)\pmb{ \boldsymbol\xi}_{2k}^{\vphantom{\mathrm T}}\mathbf W^{{\mathrm T}}_{2k}(u)\mathbf V^{\vphantom{\mathrm T}}_{2k}
(u)\mathbf W^{\vphantom{\mathrm T}}_{2k}(u)\pmb{ \boldsymbol\xi}_{2k}^{\mathrm T}\pmb{\boldsymbol \gamma}_{2k}^{\mathrm T}(u).\end{split}
\end{align} Therefore, the limit in \eqref{eq:deRhamD_2} exists, and is equal to\begin{align}\begin{split}&\begin{pmatrix*}[r]\sqrt{\pi}\mathbf P_{2k-1}^{\vphantom{\mathrm T}} &  ^{\vphantom{\mathrm T}}\\[5pt]
-\frac{1}{\sqrt{\pi}}\smash{\underset{^\circ}{\mathbf {P}}}_{2k-1} & -\frac{1}{\sqrt{\pi}} \mathbf P_{2k-1}^{\vphantom{\mathrm T}}\\
\end{pmatrix*}^{-1}\begin{pmatrix*}[r] & \mathbf B_{2k-1}\\
-\mathbf B_{2k-1} & \smash{\underset{^\circ}{\mathbf B}}_{2k-1} \\
\end{pmatrix*}
\begin{pmatrix*}[r]\sqrt{\pi}\mathbf P_{2k-1}^{\mathrm T} &- \frac{1}{\sqrt{\pi}}\smash{\underset{^\circ}{\mathbf {P}}}_{2k-1}
^{\mathrm T}\\[5pt]
 & -\frac{1}{\sqrt{\pi}} \mathbf P_{2k-1}^{\mathrm T}\\
\end{pmatrix*}^{-1}\\={}&\begin{pmatrix*}[r] & -\mathbf D_{2k-1} \\
\mathbf D_{2k-1} & \mathbf P_{2k-1}^{-1}\left[\pi\smash{\underset{^\circ}{\mathbf B}}_{2k-1} ^{\vphantom1} +\smash{\underset{^\circ}{\mathbf {P}}}_{2k-1} ^{\vphantom1}\mathbf P_{2k-1}^{-1}\mathbf B_{2k-1}^{\vphantom1}-\mathbf B_{2k-1}^{\vphantom1}\left(\mathbf P_{2k-1}^{\mathrm T}\right)^{-1}\smash{\underset{^\circ}{\mathbf {P}}}_{2k-1}^{\mathrm T}\right]\left(\mathbf P_{2k-1}^{\mathrm T}\right)^{-1}\\
\end{pmatrix*}.\end{split}
\end{align}Recalling the Broadhurst--Roberts quadratic relation $ \mathbf P_{2k-1}^{}\mathbf D_{2k-1}^{\vphantom{\mathrm  T}}\mathbf P_{2k-1}^{\mathrm  T}=\mathbf B_{2k-1}$, we may also rewrite $ \smash{\underset{^\circ}{\mathbf {P}}}_{2k-1} ^{\vphantom1}\mathbf P_{2k-1}^{-1}\mathbf B_{2k-1}^{\vphantom1}-\mathbf B_{2k-1}^{\vphantom1}\left(\mathbf P_{2k-1}^{\mathrm T}\right)^{-1}\smash{\underset{^\circ}{\mathbf {P}}}_{2k-1}^{\mathrm T}=\smash{\underset{^\circ}{\mathbf {P}}}_{2k-1} ^{\vphantom1}\mathbf D_{2k-1}^{\vphantom{\mathrm  T}}\mathbf P_{2k-1}^{\mathrm  T}-\mathbf P_{2k-1}\mathbf D_{2k-1}^{\vphantom{\mathrm  T}}\smash{\underset{^\circ}{\mathbf {P}}}_{2k-1}^{\mathrm T}$, thereby confirming \eqref{eq:M0M_quad} when $m=2k-1$ is an odd number.

When $m=2k$ is an even number, the ideas behind \eqref{eq:M0M_quad}  are essentially similar, except that one needs a variation on Lemma \ref{lm:margin_v} to show that the $k$-th row and the $k$-th column of $ \lim_{u\to0^+}|\mathcal L_{2k-1}(u)|[\pmb{\boldsymbol \beta}_{2k-1}^{\mathrm T}(u)]^{-1} \linebreak{\mathbf V}^{\vphantom{\mathrm T}}_{2k-1}
(u)[\pmb{\boldsymbol \beta}_{2k-1}^{\vphantom{\mathrm T}}(u)]^{-1}$ are filled with zeros. We leave the details to diligent readers.
  \end{proof}
\begin{remark}Equating the different representations of de Rham matrix  $ \mathbf D_{m}$, we may deduce recursive relations connecting a party of integers $D^n \ell_{m,j}(1),j\in\mathbb Z\cap[0,m]$ to another party  $D^n \ell_{m-2,j}(1),j\in\mathbb Z\cap[0,m-2]$ [see \eqref{eq:deRhamD_1} and \eqref{eq:deRhamd_1}], as well as relations connecting both parties to   $D^n \ell_{m-1,j}(0),j\in\mathbb Z\cap[0,m-1]$ [see \eqref{eq:deRhamD_2} and \eqref{eq:deRhamd_2}]. These recursions on $ D^n \ell_{m,j}$ are effectively just restatements of the following facts under suitably chosen bases: $ I_0(t)K_1(t)+I_1(t)K_0(t)=\frac{1}{t}$,  $ \lim_{u\to0^+}I_0(\sqrt{u}t)=1$ and $ \lim_{u\to0^+}\sqrt{u}tK_1(\sqrt{u}t) =1$. Direct combinatorial proofs of such recursions may appear hard, though not infeasible.     \eor\end{remark}\begin{corollary}[Anti-diagonal structure of $ \mathbf D_{2k-1}$]\label{cor:anti_diag_Dk}The de Rham matrix  $ \mathbf D_{2k-1}$ is upper-left triangular, with  its main anti-diagonal  being occupied by   $ \left[\frac{(2k+1)!!}{2^{k+1}}\right]^2$.\end{corollary}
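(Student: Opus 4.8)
The plan is to read the anti-diagonal and triangularity of $\MathFrak D_k$ directly off the representation \eqref{eq:deRhamD}, after understanding the anti-diagonal structure of $\boldsymbol V^{\vphantom{\mathrm T}}_{2k-1}(1)$ and the triangularity of the conjugating matrix $\pmb{\boldsymbol \beta}_{2k+1}^{-1}$. First I would recall from the proof of Corollary \ref{cor:VvSs} that along the main anti-diagonal $( \boldsymbol V^{\vphantom{\mathrm T}}_{2k-1}(u))_{a,2k-a}=(-1)^{a-1}$, and that $( \boldsymbol V^{\vphantom{\mathrm T}}_{2k-1}(u))_{a,b}=0$ whenever $a+b>2k$ (this is immediate from the defining sum \eqref{eq:Vmat_defn}, whose index set $\{n\in\mathbb Z\mid a+b-1\le n\le 2k-1\}$ is empty once $a+b-1>2k-1$). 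Hence $\boldsymbol V^{\vphantom{\mathrm T}}_{2k-1}(u)$ is upper-left triangular (anti-triangular) with alternating signs $\pm1$ on its anti-diagonal; specialising at $u=1$ gives the same for $\boldsymbol V^{\vphantom{\mathrm T}}_{2k+1}(1)\in\mathbb Q^{(2k+1)\times(2k+1)}$, whose anti-diagonal entry in position $(a,2k+2-a)$ is $(-1)^{a-1}$.

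Next I would examine the matrix $\pmb{\boldsymbol \beta}_{2k+1}=\pmb{\boldsymbol \beta}_{2k+1}(1)$ from Proposition \ref{prop:Bessel_mat}: its $(a,b)$-entry vanishes unless $b\ge a$ in the top block and unless $b\ge a-k$ in the bottom block, so $\pmb{\boldsymbol \beta}_{2k+1}$ is block upper-triangular with diagonal entries $(\pmb{\boldsymbol \beta}_{2k+1})_{a,a}=(-4)^{a-1}$ for $a\in\mathbb Z\cap[1,k+1]$ and $(\pmb{\boldsymbol \beta}_{2k+1})_{a,a-k}=2(-4)^{a-k-1}$ for $a\in\mathbb Z\cap[k+2,2k+1]$. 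Consequently $\pmb{\boldsymbol \beta}_{2k+1}^{-1}$ has the same triangular shape, and the conjugation $(\pmb{\boldsymbol \beta}_{2k+1}^{-1})^{\mathrm T}\boldsymbol V^{\vphantom{\mathrm T}}_{2k+1}(1)\pmb{\boldsymbol \beta}_{2k+1}^{-1}$ preserves upper-left triangularity: the $(p,q)$-entry is a sum over $a\le$ (a bound depending on $p$) and $b\le$ (a bound depending on $q$) of $(\pmb{\boldsymbol \beta}_{2k+1}^{-1})_{a,p}(\boldsymbol V_{2k+1}(1))_{a,b}(\pmb{\boldsymbol \beta}_{2k+1}^{-1})_{b,q}$, and the triangularity constraints force this to vanish when $p+q$ is too large while pinning down the anti-diagonal value. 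Restricting to the bottom-right $k\times k$ block (indices $a+k+1,b+k+1$ as in \eqref{eq:deRhamD}) and tracking the scalar prefactor $|\ell_{2k+1,2k+1}(1)|/(4(2k+3)(-1)^k)$, I would compute the resulting anti-diagonal entry of $\MathFrak D_k$, i.e.\ $(\MathFrak D_k)_{a,k+1-a}$, and check it equals $\big[\tfrac{(2k+1)!!}{2^{k+1}}\big]^2$; the factor $|\ell_{2k+1,2k+1}(1)|$ is evaluated via the leading-coefficient formula $r_{m,m}(u)=u^{\lfloor(m+1)/2\rfloor}\prod_{n\equiv m+1(2)}(u-n^2)$ from the proof of Proposition \ref{prop:Vanhove_Verrill}, so $|\ell_{2k+1,2k+1}(1)|=\prod_{j=1}^{k+1}|1-(2j-1)^2|=\big((2k)!!/1\big)\cdot\ldots$, which I would simplify to the product $\prod_{j=1}^{k+1}(2j-1)(2j-3)\cdots$, ultimately a clean closed form.

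The main obstacle I anticipate is the bookkeeping of the anti-diagonal constant: one must combine the alternating sign $(-1)^{a-1}$ from $\boldsymbol V$, the diagonal entries $(-4)^{\bullet}$ (squared, through the two copies of $\pmb{\boldsymbol \beta}^{-1}$), the shift from the $2k+1$-indexing down to the $k\times k$ bottom-right block (which lands the relevant anti-diagonal entries of $\boldsymbol V$ in the region $a,b\in[k+2,2k+1]$ where $\pmb{\boldsymbol \beta}$ carries the extra factor of $2$ and a downward index shift by $k$), and the explicit value of $\ell_{2k+1,2k+1}(1)$, against the overall normalisation $4(2k+3)(-1)^k$. I expect these to telescope to exactly $[(2k+1)!!/2^{k+1}]^2$, matching the value already used in the proof of Proposition \ref{prop:det_refl} and consistent with $\det\MathFrak D_k^{\isosc}=(-1)^{\lfloor\frac12\lfloor k/2\rfloor\rfloor}[(2k+1)!!/2^{k+1}]^{2\lfloor k/2\rfloor}$. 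An alternative, perhaps cleaner route for the anti-diagonal value alone is to use the equivalent representation \eqref{eq:deRhamD_2}: in the $u\to0^+$ limit the Bessel matrix degeneracies $I_0(\sqrt u t)\to1$, $\sqrt u t K_1(\sqrt u t)\to1$ isolate the top-left entry of $\boldsymbol V_{2k-1}(u)$, and the known singular behaviour of $\ell_{m,j}(u)$ at $u=0$ from Corollary \ref{cor:reg_sing} pins the normalisation; I would use whichever of the two computations turns out to be shorter, and cite Corollary \ref{cor:reg_sing} and Proposition \ref{prop:Vanhove_Verrill} for the inputs.
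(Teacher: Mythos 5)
Your overall strategy is sound, but note first that the paper's actual proof is exactly your ``alternative route'': it works entirely from \eqref{eq:deRhamD_2}, reading off the staircase structure of $[\pmb{\boldsymbol \beta}_{2k}^{\mathrm T}(u)]^{-1}$ from \eqref{eq:Bessel_mat_inv}, multiplying by the anti-triangular $\pmb{\boldsymbol\upsilon}_{2k}(u)$ (anti-diagonal $(-1)^{a-1}$), obtaining a block form whose top-right $k\times k$ block $\boldsymbol U$ is upper-left triangular with anti-diagonal $(-1)^{k-1}/(2^{2k-1}u^{k})$, and then letting $u\to0^+$ with $|\mathfrak n_{2k}(u)|\sim u^{k}[(2k+1)!!]^{2}$; beware that in this representation the relevant objects are $\pmb{\boldsymbol\upsilon}_{2k}$ and $\pmb{\boldsymbol \beta}_{2k}(u)$, not $\boldsymbol V_{2k-1}(u)$ as you wrote. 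Your primary route through \eqref{eq:deRhamD} at $u=1$ is genuinely different and does work, but two steps in your sketch need repair. First, $\pmb{\boldsymbol \beta}_{2k+1}$ is \emph{not} (block) upper-triangular: its rows $a\in\mathbb Z\cap[k+2,2k+1]$ are supported on columns $[a-k,\,2(a-k-1)]$, interleaved with the top block, and in fact the full conjugated matrix $(\pmb{\boldsymbol \beta}_{2k+1}^{-1})^{\mathrm T}\boldsymbol V_{2k+1}(1)\pmb{\boldsymbol \beta}_{2k+1}^{-1}$ is \emph{not} upper-left triangular as a $(2k+1)\times(2k+1)$ matrix; only its bottom-right $k\times k$ block is. What you actually need is the staircase support of the columns of $\pmb{\boldsymbol \beta}_{2k+1}^{-1}$ (the analogue of \eqref{eq:Bessel_mat_inv}): by Lemma \ref{lm:Bessel_diff_iter}, $D^{2\alpha-1}I_0(\sqrt u t)$ is the first $u$-derivative in which the moment $(-1)^{\alpha-1}\sqrt u\,t^{2\alpha-1}I_1(\sqrt u t)$ appears, so column $k+1+\alpha$ of $\pmb{\boldsymbol \beta}_{2k+1}^{-1}$ vanishes in rows $a<2\alpha$ and has leading entry $(-1)^{\alpha-1}2^{1-2\alpha}$ at $u=1$. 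Combined with $(\boldsymbol V_{2k+1}(1))_{a,b}=0$ for $a+b>2k+2$ and $(\boldsymbol V_{2k+1}(1))_{a,2k+2-a}=(-1)^{a-1}$, the entry $(\MathFrak D_k)_{\alpha,\beta}$ vanishes for $\alpha+\beta>k+1$, and on $\alpha+\beta=k+1$ a single product survives, equal to $(-1)^{k}4^{-k}$.

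Second, your evaluation of the normalising polynomial is wrong as written: $\ell_{2k+1,2k+1}(u)=\mathfrak m_{2k+1}(u)=u^{k+1}\prod_{j=1}^{k+1}[u-(2j)^{2}]$ by \eqref{eq:m_n_poly}, so the roots are the \emph{even} squares; your product $\prod_{j=1}^{k+1}|1-(2j-1)^{2}|$ belongs to the even operator $\widetilde L_{2k}$ and is in fact zero (its $j=1$ factor vanishes). The correct value is $|\ell_{2k+1,2k+1}(1)|=\prod_{j=1}^{k+1}[(2j)^{2}-1]=(2k+1)!!\,(2k+3)!!$, and then the bookkeeping in \eqref{eq:deRhamD} closes: $\frac{(2k+1)!!\,(2k+3)!!}{4(2k+3)(-1)^{k}}\cdot\frac{(-1)^{k}}{4^{k}}=\left[\frac{(2k+1)!!}{2^{k+1}}\right]^{2}$, as required. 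With these two corrections your $u=1$ argument gives a complete proof, trading the paper's limit computation at $u\to0^{+}$ for the exact evaluation of the leading Vanhove coefficient at $u=1$.
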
\begin{proof}After inspecting the highest order derivatives and highest order moments in $2k$ equations generated from rows of  \eqref{eq:Bessel_mat_inv}, we immediately  know that  $[\pmb{\boldsymbol \beta}_{2k}^{\mathrm T}(u)]^{-1}$ has the following structure:\begin{align}
([\pmb{\boldsymbol \beta}_{2k}^{\mathrm T}(u)]^{-1} )_{a,b}=\begin{cases}0,&a=1,b\in\mathbb Z\cap[2,2k],\\(-4u)^{1-a},&a\in\mathbb Z\cap[1,k],b= 2a-1,\\
0, & a\in\mathbb Z\cap[2,k],b\in\mathbb Z\cap[1,2a-1), \\
\frac{1}{2u} (-4u)^{k+1-a},& a\in\mathbb Z\cap[k+1,2k],b= 2(a-k), \\
0, & a\in\mathbb Z\cap[k+1,2k],b\in\mathbb Z\cap[1,2(a-k)).\ \\
\end{cases}
\end{align}Since $ \mathbf V_{2k}
(u)$ is upper-left triangular, with anti-diagonal elements $ (\mathbf V_{2k}
(u))_{a,2k+1-a}=(-1)^{a-1}$, we can show that\begin{align}
([\pmb{\boldsymbol \beta}_{2k}^{\mathrm T}(u)]^{-1} \mathbf V^{\vphantom{\mathrm T}}_{2k}(u))_{a,b}={}&\begin{cases}(-4u)^{1-a},&a\in\mathbb Z\cap[1,k],b=2(k-a),\\
0, & a\in\mathbb Z\cap[2,k],b\in\mathbb Z\cap(2(k-a),2k], \\
-\frac{1}{2u} (-4u)^{k+1-a},& a\in\mathbb Z\cap[k+1,2k],b= 2(2k-a)+1, \\
0, & a\in\mathbb Z\cap[k+1,2k],b\in\mathbb Z\cap( 2(2k-a)+1,2k].\ \\
\end{cases}
\end{align}This further leads us to a block partition  $ [\pmb{\boldsymbol \beta}_{2k}^{\mathrm T}(u)]^{-1} \mathbf V^{\vphantom{\mathrm T}}_{2k}
(u)[\pmb{\boldsymbol \beta}_{2k}^{\vphantom{\mathrm T}}(u)]^{-1}=\Big(\begin{smallmatrix*}[r]*&\boldsymbol U\\-\boldsymbol U^{\mathrm T}&*\end{smallmatrix*}\Big)$ where $ \boldsymbol U\in\mathbb Q(u)^{k\times k}$ is an upper-left triangular matrix whose  main anti-diagonal  is occupied by  $ \frac{(-1)^{k-1}}{2^{2k-1}u^k}$. (Here, an asterisk denotes a block with irrelevant details.) Referring back to \eqref{eq:deRhamD_2}, we know that $ \mathbf D_{2k-1}$ is an upper-left triangular matrix whose main anti-diagonal is filled with \begin{align}
\lim_{u\to0^{+}}\frac{(-1)^{k-1}}{2^{2k-1}u^k}\frac{|\mathcal L_{2k}(u)|}{2^3(-1)^{k-1}}=\left[ \frac{(2k+1)!!}{2^{k+1}} \right]^2,
\end{align}as claimed.\end{proof}\begin{remark}In \cite[Theorem 1.4]{FresanSabbahYu2020b}, Fres\'an, Sabbah and Yu  have characterized (effectively) the inverse de Rham matrices $ \mathbf D_{2k-1}^{-1},k\in\mathbb Z_{>0}$ and $ \mathbf D_{2k}^{-1},k\in2\mathbb Z_{>0}$, in the form of lower-right triangular matrices, with explicit formulae for elements on the main anti-diagonal. Paraphrasing their result with matrix inversions and suitable normalizations, we can also confirm the anti-diagonal behavior of   $ \mathbf D_{2k-1}$.

Extending the proof the proposition above, one can show that the de Rham matrix  $ \mathbf D_{2k}$ is always upper-left triangular. When  $k$ is odd, the center of  the main anti-diagonal in the skew-symmetric matrix $ \mathbf D_{2k}^{\vphantom{\mathrm  T}}=\mathbf P^{-1}_{2k}\mathbf B_{2k}
^{\vphantom{\mathrm  T}}\big(\mathbf P_{2k}^{\mathrm  T}\big)^{-1} $ must be zero, hence $ \det  \mathbf D_{2k}=0$. (Alternatively, one may deduce this fact from $ \det\mathbf B_{2k}=0$, which has been proved in Corollary \ref{cor:detBetti}.)

To avoid inverting a singular de Rham matrix, Fres\'an--Sabbah--Yu \cite[\S3.b]{FresanSabbahYu2020b} constructed a quadratic relation that was different from  $ \mathbf P_{2k}^{}\mathbf D_{2k}^{}\mathbf P_{2k}^{\mathrm T}=\mathbf B_{2k} ^{}$   for odd $k$, without involving the last row in $ \mathbf P_{2k}$.  Accordingly, the Gr\"obner basis generating their quadratic relation does not contain linear sum rules derivable from   generalized Crandall numbers (see \eqref{eq:Crandall_num}, which recapitulates \cite[(65)]{Zhou2018ExpoDESY}).  \eor\end{remark}

The uniqueness of de Rham matrices gives an indirect proof that our constructions of $ \mathbf D_{2k-1}^{},k\in\mathbb Z_{>0}$ and $\mathbf  D_{2k},k\in2\mathbb Z_{>0}$ are equivalent to Fres\'an--Sabbah--Yu \cite[\S3.b]{FresanSabbahYu2020b}. Without resorting to  a uniqueness argument, we may interpret such equivalence directly through the following identities (cf.~\cite[(4.22)]{Zhou2017BMdet} or \cite[(2.10]{Zhou2019BMdimQ}): \begin{align}
\left\{ \begin{array}{c}
\widetilde L_m(uD^2+D^1)\frac{I_0(\sqrt{u}t)}{t}=\frac{(-1)^m}{2^{m+2}}L^*_{m+2}\frac{I_0(\sqrt{u}t)}{t}, \\
\widetilde L_m(uD^2+D^1)\frac{K_0(\sqrt{u}t)}{t}=\frac{(-1)^m}{2^{m+2}}L^*_{m+2}\frac{K_0(\sqrt{u}t)}{t}. \\
\end{array} \right.
\label{eq:VanhoveBMW}\end{align}Here, the operator $ L^*_{m+2}$ is formal adjoint to the Borwein--Salvy operator $ L_{m+2}$ \cite[Lemma 3.3]{BorweinSalvy2007}, which  is the $ (m+1)$-st symmetric  power of the Bessel differential operator $ \left(t\frac{\D}{\D t}\right)^2-t^{2}\left( t\frac{\D }{\D t} \right)^0$, annihilating each member in the set $ \{[I_0(t)]^j[K_0(t)]^{m+1-j}|j\in\mathbb Z\cap[0,m+1]\}$. The Borwein--Salvy operator $ L_{m+2}=\mathscr L_{m+2,m+2}$ is constructed   recursively by the Bronstein--Mulders--Weil algorithm \cite[Theorem 1]{BMW1997}:\begin{align}
\begin{cases}\mathscr L_{m+2,0}=\left( t\frac{\D }{\D t} \right)^0,\mathscr L_{m+2,1}=t\frac{\D }{\D t}, &  \\
\mathscr L_{m+2,k+1}=t\frac{\D }{\D t}\mathscr L_{m+2,k}-k(m+2-k)t^{2}\mathscr L_{m+2,k-1}, &          \forall k\in\mathbb Z\cap[1,m+1]. \\
\end{cases}\label{eq:BMW}
\end{align}To find out the de Rham matrix that fits into the relation $\mathbf P_{m}\mathbf D_{m}=\mathbf B_{m}\frac{\cof \mathbf P_{m}}{\det \mathbf P_{m}} $,  we have differentiated off-shell Bessel moments with respect to $u$, and have evaluated these derivatives in the  $u\to1^-$ limit. To attain the same goal, Fres\'an, Sabbah and Yu \cite[Proposition 3.4]{FresanSabbahYu2020b} have integrated by parts with respect to $t$, relating Bessel moments $ \IKM(a,b;n)$ of different orders   $n$ by a recursive algorithm \cite[(3.9)]{FresanSabbahYu2020b} that emulates  Bronstein--Mulders--Weil
 \cite[Theorem 1]{BMW1997} and Borwein--Salvy \cite[Lemma 3.3]{BorweinSalvy2007}. In other words, the equivalence between our Wro\'nskian approach to de Rham matrices and the non-Wro\'nskian approach of Fres\'an--Sabbah--Yu \cite[\S3.b]{FresanSabbahYu2020b} ultimately roots from the duality relation \eqref{eq:VanhoveBMW} between Vanhove and Borwein--Salvy operators, at $u=1$. We will not further elaborate on such a duality, as it is not essential to the theme of the current work.

Broadhurst and Roberts   have proposed a set of combinatorial formulae \cite[(5.7)--(5.12)]{BroadhurstRoberts2018} that
conjecturally generate the de Rham matrices  $\mathbf D_{m},m\in\mathbb Z_{>0}$ through a recursion on $m$. The uniqueness argument alone does not allow us to settle their conjecture. What we currently know is that various representations of   $\mathbf D_{m}$ in Proposition  \ref{prop:alt_deRham} do provide us with recursive relations concerning special values of the polynomials $ D^n \ell_{m,j}$ with three consecutive values of $m$, which are reminiscent of  the  three-term recursion for the Broadhurst--Roberts bivariate polynomials \cite[(5.10)]{BroadhurstRoberts2018}. Perhaps the former three-term relations can be translated into the  Broadhurst--Roberts recursions for   $\mathbf D_m$, through some combinatorial efforts.

It is our hope that the recursive structures of $\mathbf D_{m}$, once completely understood, will shed new light on the algebraic and arithmetic nature of the Broadhurst--Roberts  ideal $ I_{m+2}^{\mathrm{BR}}= \big\langle\mathbf X_{\left\lfloor (m+1)/2 \right\rfloor}^{\vphantom{\mathrm{T}}}\mathbf D_{m}^{\vphantom{\mathrm{T}}}\mathbf X_{\left  \lfloor (m+1)/2 \right\rfloor}^{{\mathrm{T}}}-\mathbf B_{m}^{\vphantom{\mathrm  T}}\big\rangle$ and the corresponding Broadhurst--Roberts variety $ V_{m+2}^{\mathrm{BR}}\colonequals \Big\{\mathbf X_{\left\lfloor (m+1)/2 \right\rfloor}^{\vphantom{\mathrm{T}}}\in\mathbb C^{\left\lfloor \frac{m+1}{2} \right\rfloor\times \left\lfloor \frac{m+1}{2} \right\rfloor}\Big|\mathbf X_{\left\lfloor (m+1)/2 \right\rfloor}^{\vphantom{\mathrm{T}}}\mathbf D_{m}^{\vphantom{\mathrm{T}}}\linebreak\mathbf X_{\left\lfloor (m+1)/2 \right\rfloor}^{{\mathrm{T}}}-\mathbf B_{m}^{\vphantom{\mathrm{T}}}=0\Big \}$.

\end{document}